\numberwithin{equation}{section}
\theoremstyle{plain}
\newtheorem{lemma}[subsection]{Lemma}
\newtheorem{theorem}[subsection]{Theorem}
\newtheorem{corollary}[subsection]{Corollary}
\newtheorem{proposition}[subsection]{Proposition}
\theoremstyle{definition}
\newtheorem{construction}[subsection]{Construction}
\newtheorem{definition}[subsection]{Definition}
\newtheorem{example}[subsection]{Example}
\newtheorem{remark}[subsection]{Remark}
\newcommand{\mC}{{\mathbb C}}
\newcommand{\mL}{{\mathbb L}}
\newcommand{\mN}{{\mathbb N}}
\newcommand{\mR}{{\mathbb R}}
\newcommand{\mS}{{\mathbb S}}
\newcommand{\mZ}{{\mathbb Z}}
\newcommand{\cA}{{\mathcal A}}
\newcommand{\cB}{{\mathcal B}}
\newcommand{\cC}{{\mathcal C}}
\newcommand{\cD}{{\mathcal D}}
\newcommand{\cE}{{\mathcal E}}
\newcommand{\cI}{{\mathcal I}}
\newcommand{\cK}{{\mathcal K}}
\newcommand{\cM}{{\mathcal M}}
\newcommand{\cN}{{\mathcal N}}
\newcommand{\cP}{{\mathcal P}}
\newcommand{\cS}{{\mathcal S}}
\newcommand{\cT}{{\mathcal T}}
\newcommand{\cU}{{\mathcal U}}
\DeclareMathOperator{\id}{id}
\DeclareMathOperator{\Mod}{Mod}
\DeclareMathOperator{\Map}{Map}
\DeclareMathOperator{\Fun}{Fun}
\DeclareMathOperator{\Ho}{Ho}
\DeclareMathOperator{\Ev}{Ev}
\DeclareMathOperator{\const}{const}
\DeclareMathOperator{\colim}{colim}
\DeclareMathOperator{\hocolim}{hocolim}
\DeclareMathOperator{\Sing}{Sing}
\DeclareMathOperator{\concat}{\sqcup}
\newcommand{\eins}{\mathbf{1}}
\newcommand{\tensor}{\otimes}
\newcommand{\ovl}{\overline}
\newcommand{\ot}{\leftarrow}
\newcommand{\iso}{\cong}
\newcommand{\op}{{\mathrm{op}}}
\newcommand{\sm}{\wedge}
\newcommand{\wdg}{\vee}
\newcommand{\Sp}{\mathit{Sp}}
\newcommand{\Spsym}{{\mathrm{\textrm{Sp}^{\Sigma}}}}
\newcommand{\bld}[1]{{\mathbf{#1}}}
\newcommand{\gp}{\mathrm{gp}}
\newcommand{\un}{\mathrm{un}}
\newcommand{\GL}{\mathrm{GL}}
\newcommand{\gl}{\mathrm{gl}}
\newcommand{\geor}[1]{\lvert#1\rvert}
\DeclareMathOperator{\diag}{diag}
\newcommand{\xr}{\xrightarrow}
\newcommand{\xl}{\xleftarrow}
\newcommand{\arxivlink}[1]{\href{http://arxiv.org/abs/#1}{\texttt{arXiv:#1}}}
\begin{document} \title{Group completion and units in
  \texorpdfstring{$\cI$}{I}-spaces}

\author{Steffen Sagave} \address{Mathematical
Institute, University of Bonn, Endenicher Allee 60, 53115 Bonn,
Germany} \email{sagave@math.uni-bonn.de}
\author{Christian Schlichtkrull}
\address{Department of Mathematics, University of Bergen, Johannes
  Brunsgate 12, 5008 Bergen, Norway}
\email{krull@math.uib.no}

\date{\today}
\begin{abstract}
The category of $\cI$-spaces is the diagram category of spaces indexed by finite sets and injections. This is a symmetric monoidal category whose commutative monoids model all $E_{\infty}$ spaces. Working in the category of $\cI$-spaces enables us to simplify and strengthen previous work on group completion and units of $E_{\infty}$ spaces. As an application we clarify the relation to $\Gamma$-spaces and show how the spectrum of units associated with a commutative symmetric ring spectrum arises through a chain of Quillen adjunctions.
\end{abstract}
\subjclass[2010]{Primary 55P48; Secondary 55P43}
\keywords{E-infinity spaces, group completion, units of ring spectra, Gamma-spaces}
\maketitle
\section{Introduction}
In homotopy theory, an $E_{\infty}$ space is a space equipped with a multiplicative structure arising from the action of an $E_{\infty}$ operad. Such an operad action encodes all higher coherence homotopies between iterated multiplications in the space. 
For the purpose of homotopy theory, this is often the right way to express commutativity.
 In contrast, the notion of a strictly commutative monoid in spaces is usually too rigid since it does not model enough homotopy types.  

However, there is a different way to get at a notion of commutativity suitable for doing homotopy theory: Instead of changing the meaning of ``commutative'' to ``$E_{\infty}$'', one may change the meaning of ``space''. Specifically, working in the category of 
\emph{$\cI$-spaces} studied by the authors in \cite{Sagave-S_diagram}, one obtains a setting in which the commutative monoids do model all $E_{\infty}$ spaces.  

In more detail, let $\cI$ the category with objects the finite sets $\bld{n}
= \{1,\dots,n\}$, including the empty set $\bld0$,  and morphisms the injective maps.  By definition, an $\cI$-space is a
functor from $\cI$ to the category of (unbased) spaces $\cS$. As it is generally the case for a category of diagrams in spaces indexed by a small symmetric monoidal index category,
the resulting category $\cS^{\cI}$ of $\cI$-spaces inherits a
symmetric monoidal structure from the concatenation of finite sets in $\cI$. A commutative monoid with respect to this structure will be called 
a \emph{commutative $\cI$-space monoid}, and we write $\cC\cS^{\cI}$ for the category of such commutative monoids. The statement that commutative $\cI$-space monoids model all $E_{\infty}$ spaces is made precise in \cite{Sagave-S_diagram} where it is shown that the category $\cC\cS^{\cI}$ has a model structure, called the \emph{positive 
$\cI$-model structure}, which makes it Quillen equivalent to the category of $E_{\infty}$ spaces. The weak equivalences in the positive $\cI$-model structure are the 
\emph{$\cI$-equivalences}, that is, the maps 
$A\to B$ that induce a weak equivalence $A_{h\cI}\to B_{h\cI}$ of the associated homotopy colimits. For an $\cI$-space monoid $A$, the homotopy colimit $A_{h\cI}$ inherits a monoid structure which extends to an $E_{\infty}$ structure provided that $A$ is commutative. Based on this we think of the homotopy colimit functor from $\cS^{\cI}$ to $\cS$ as a forgetful functor taking commutative $\cI$-space monoids to $E_{\infty}$ spaces. The simple and explicit combinatorics underlying the category $\cC\cS^{\cI}$ often makes it profitable to translate questions about $E_{\infty}$ spaces to questions about commutative $\cI$-space monoids. Examples of this are given in 
\cite{Blumberg-C-S_THH-Thom, Rognes_TLS,Schlichtkrull_units, 
Schlichtkrull_Thom-symm, Schlichtkrull_higher}, and in the present paper, where the focus is on questions related to group completion and units.  

The category $\cC\cS^{\cI}$ is related to the category $\cC\Spsym$ of commutative symmetric ring spectra through a Quillen adjunction
\begin{equation}\label{eq:adj:csi-spsym} 
\mS^{\cI} \colon \cC\cS^{\cI} \rightleftarrows \cC\Spsym \colon
\Omega^{\cI}
\end{equation}
with respect to the positive $\cI$-model structure on
$\cC\cS^{\cI}$ and the positive model structure on $\cC\Spsym$ introduced
in~\cite{MMSS} (see \cite[Proposition~3.19]{Sagave-S_diagram} for details). The right adjoint $\Omega^{\cI}$ sends a commutative symmetric ring spectrum $R$ to the commutative $\cI$-space monoid $\Omega^{\cI}(R)$
with $\Omega^{\cI}(R)(\bld{n})= \Omega^n(R_n)$ and a monoid structure induced by the multiplication in $R$.  

\subsection{Group completion}\label{subsec:grp-cpl}
Recall that a (simplicial or topological) monoid $M$ is \emph{grouplike} if the monoid of connected components $\pi_0(M)$ is a group. We say that a map of homotopy commutative (simplicial or topological) monoids $M\to N$ is a \emph{group completion} if $N$ is grouplike and the map of classifying spaces $B(M)\to B(N)$ is a weak homotopy equivalence.  This implies that $N$ is equivalent to $\Omega(B(M))$ (with an implicit fibrant replacement of $B(M)$ in the simplicial setting and the extra assumption that $M$ and $N$ be well-based in the topological setting).  

A commutative $\cI$-space monoid $A$ is said to be \emph{grouplike} if the underlying $E_{\infty}$ space $A_{h\cI}$ is grouplike, that is, if the commutative monoid 
$\pi_0(A_{h\cI})$ is a group. While there are well-known constructions of group completions within $E_{\infty}$ spaces (e.g.\ by May~\cite[Theorem 2.3]{May_group-completions} and Basterra-Mandell \cite[Theorem 6.5]{Basterr-M_homology-cohomology}), one aim of the present paper is to show that the process of group completion can be conveniently lifted to the category of commutative $\cI$-space monoids.

Our first approach to group completion uses the bar construction. For a commutative $\cI$-space monoid $A$, we write $B(A)$ for the bar construction formed in $\cC\cS^{\cI}$. This construction is left adjoint to the loop functor $\Omega$ on $\cC\cS^{\cI}$ and the unit for the adjunction is a natural map $A\to \Omega(B(A))$. Composing with a functorial fibrant replacement $B(A)\to B(A)^{\textrm{$\cI$-fib}}$ in the positive $\cI$-model structure, we get a natural map of commutative $\cI$-space monoids  
$\eta_A^{\cI} \colon A\to \Omega(B(A)^{\textrm{$\cI$-fib}})$. The next theorem implies that this models a group completion of $A$ provided the latter is cofibrant.

\begin{theorem}\label{thm:intro-bar-completion}
Let $A$ be a cofibrant commutative $\cI$-space monoid. Then the induced map of homotopy colimits $(\eta_A^{\cI})_{h\cI}\colon A_{h\cI}\to \Omega(B(A)^{\textrm{$\cI$-fib}})_{h\cI}$ is a group completion of the $E_{\infty}$ space $A_{h\cI}$.
\end{theorem}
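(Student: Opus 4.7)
The plan is to reduce the theorem to the classical fact that, for an $E_{\infty}$ space $X$, the natural map $X \to \Omega B(X)$ (with a suitable fibrant model of $BX$) is a group completion, as ensured by May's theorem cited in the introduction. The reduction amounts to translating the bar construction and its right adjoint in $\cC\cS^{\cI}$ into their classical counterparts after applying $(-)_{h\cI}$.

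First, I would identify $B(A)_{h\cI}$ with the classical bar construction of the $E_{\infty}$ space $A_{h\cI}$. The bar construction $B(A)$ in $\cC\cS^{\cI}$ is the realization of a simplicial commutative $\cI$-space monoid with $n$-simplices the $n$-fold Day convolution $A^{\boxtimes n}$. Since $(-)_{h\cI}$ is a left adjoint, it commutes with realizations, so it is enough to verify $(A^{\boxtimes n})_{h\cI}\simeq (A_{h\cI})^n$ and that the resulting simplicial space is Reedy good. This is the homotopical strong symmetric monoidality of $(-)_{h\cI}$ on cofibrant commutative $\cI$-space monoids established in the authors' earlier work on $\cI$-spaces, together with the observation that the cofibrancy of $A$ propagates to each $A^{\boxtimes n}$ and to the simplicial bar object.

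Second, I would show $\Omega(B(A)^{\textrm{$\cI$-fib}})_{h\cI}\simeq \Omega B(A_{h\cI})$. Fibrant replacement in the positive $\cI$-model structure is by definition an $\cI$-equivalence, so $(B(A)^{\textrm{$\cI$-fib}})_{h\cI}\simeq B(A)_{h\cI}$. The remaining point is that the right adjoint $\Omega$ on $\cC\cS^{\cI}$ is compatible with $(-)_{h\cI}$ on $\cI$-fibrant inputs. The cleanest route is to argue via the Quillen adjunction $(B,\Omega)$ on $\cC\cS^{\cI}$ combined with the Quillen equivalence between $\cC\cS^{\cI}$ and $E_{\infty}$ spaces recalled in the introduction: these equivalences identify the derived $B$ and $\Omega$ on $\cC\cS^{\cI}$ with the standard delooping and looping functors on $E_{\infty}$ spaces, so the unit of the adjunction is transported to the classical unit $A_{h\cI}\to \Omega B(A_{h\cI})$.

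The main technical obstacle is the second step, since the loop functor $\Omega$ on $\cC\cS^{\cI}$ adjoint to $B$ is not given by an obvious levelwise formula and must be understood via its model-categorical universal property. Once this compatibility is in place, the composite identifications show that $(\eta_A^{\cI})_{h\cI}$ is (weakly equivalent to) the classical unit $A_{h\cI}\to \Omega B(A_{h\cI})$, which is a group completion by May's theorem. The first step is essentially a packaging of the monoidality of $(-)_{h\cI}$ that is available from \cite{Sagave-S_diagram}; the subtlety there is only the verification that the simplicial bar object is sufficiently cofibrant to make the monoidality propagate through realization.
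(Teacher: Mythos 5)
Your overall strategy --- identify $(\eta_A^{\cI})_{h\cI}$ with the classical map $\eta_{A_{h\cI}}\colon A_{h\cI}\to\Omega(B(A_{h\cI})^{\mathrm{fib}})$ and then invoke the classical group completion theorem --- is the route the paper takes via Proposition~\ref{prop:I-fib-space-completion}, and your first step is essentially Proposition~\ref{bar-bar}: cofibrant implies flat, flatness is preserved by $\boxtimes$, and the monoidality of $(-)_{h\cI}$ (Lemma~\ref{lem:times-boxtimes}) identifies $B(A)_{h\cI}$ with $B(A_{h\cI})$. The only caveat there is that no direct simplicial map exists between $B_{\bullet}(A)_{h\cI}$ and $B_{\bullet}(A_{h\cI})$, so one must pass through the two-sided bar construction $B(B\cI,A_{h\cI},B\cI)$; this is cosmetic.

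The genuine gap is in your second step. Your premise that the right adjoint $\Omega$ of $B$ on $\cC\cS^{\cI}$ ``is not given by an obvious levelwise formula'' is false: the cotensor in $\cC\cS^{\cI}$ is computed on underlying $\cI$-spaces, so $\Omega(X)(\bld{n})=\Map_*(S^1,X(\bld{n}))$. The actual content of your step is the commutation of this levelwise loop functor with $(-)_{h\cI}$, which is what Lemma~\ref{NOmega} and Proposition~\ref{hocolimloop} supply: for a based $\cI$-space that is levelwise fibrant in positive degrees and \emph{semistable}, the map $\Omega(X)_{h^*\cI}\to\Omega((X_{h^*\cI})^{\mathrm{fib}})$ is a weak equivalence, proved by reducing to homotopy colimits over $\cN$ where $\Omega$ commutes with the sequential colimit. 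Semistability is essential (the statement fails for general $\cI$-spaces, cf.\ Remark~\ref{rem:semistablility-hypoth} for the analogous failure of related statements) and holds here precisely because $B(A)^{\cI\text{-fib}}$ is positive $\cI$-fibrant; your proposal never addresses it. The substitute you offer --- transporting the derived $(B,\Omega)$-unit through the Quillen equivalence with $E_\infty$ spaces --- is not a shortcut: you would need that equivalence to respect the based simplicial structure, an identification of the derived $S^1$-(co)tensor on $E_\infty$ spaces with the classical delooping and looping, and a further identification of the transported unit with May's explicit map; none of this is provided, and each piece is comparable in difficulty to the direct argument. Note finally that the paper does not conclude by citing May: with group completion defined as ``target grouplike and $B$ of the map a weak equivalence'', it verifies the $B$-inverting property directly in Lemma~\ref{lem:bar-of-group-compl}, using only the elementary grouplike case of the classical statement.
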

In the statement of the theorem we have included the $\cI$-fibrant replacement of $B(A)$ in order not to make additional assumption on $A$. However, we show in 
Section~\ref{sec:bar-construction} that under a mild ``semistability'' condition on $A$, the $\cI$-fibrant replacement can be dropped. Furthermore, the cofibrancy condition on $A$ can be weakened to a ``flatness'' condition on the underlying $\cI$-space. Thus, under these assumptions the theorem says that the usual group completion for homotopy commutative simplicial or topological monoids in terms of the bar construction lifts to commutative $\cI$-space monoids.  This use of the bar construction is analogous to the suspension of $E_{\infty}$ spaces in \cite[Theorem 6.5]{Basterr-M_homology-cohomology}, but has the advantage of an explicit description in terms of the underlying symmetric monoidal structure. The approach to group completion developed here is used by Rognes in his work on topological logarithmic structures \cite[\S 6]{Rognes_TLS}.

Our second approach to group completion is model categorical. We define a \emph{group completion model structure} $\cC\cS^{\cI}_{\gp}$ on the category of commutative 
$\cI$-space monoids as the left Bousfield localization of $\cC\cS^{\cI}$ with respect to a certain universal group completion map. Here and elsewhere, the notation $\cC\cS^{\cI}$ indicates the category of commutative $\cI$-space monoids equipped  with the positive 
$\cI$-model structure, and a decoration on $\cC\cS^{\cI}$ means that we have kept the underlying category but changed the model structure to something else. The next theorem shows that this localization process has the expected effect on weak equivalences and fibrant objects.

\begin{theorem}\label{thm:csi-gp}
A map $A\to A'$ is a weak equivalence in $\cC\cS^{\cI}_{\gp}$ if and only if the induced map of bar constructions $B(A_{h\cI})\to B(A'_{h\cI})$ is a weak homotopy equivalence. The fibrant objects in $\cC\cS^{\cI}_{\gp}$ are the fibrant objects in $\cC\cS^{\cI}$ which are grouplike, and a fibrant replacement $A\to A^{\gp}$ in $\cC\cS^{\cI}_{\gp}$ induces a group completion $A_{h\cI} \to (A^{\gp})_{h\cI}$ of $A_{h\cI}$. 
\end{theorem}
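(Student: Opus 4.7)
The plan is to invoke Hirschhorn's theorem on left Bousfield localizations at the maps $\eta_A^{\cI}$ from Theorem~\ref{thm:intro-bar-completion} --- concretely, at a set $S$ of such maps indexed by a set of cofibrant representatives of commutative $\cI$-space monoids of bounded cardinality --- and then to identify the local fibrant objects and the local weak equivalences with the homotopical quantities stated in the theorem.

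First, granting that the positive $\cI$-model structure on $\cC\cS^{\cI}$ is cellular and left proper (as established in \cite{Sagave-S_diagram}), Hirschhorn's theorem yields the localized model structure $\cC\cS^{\cI}_{\gp}$. By general nonsense, its fibrant objects are the positive $\cI$-fibrant $X$ for which $\Map(\eta_A^{\cI}, X)$ is a weak equivalence for every $A \in S$, and its weak equivalences are those maps $f$ such that $f^* \colon \Map(A', X) \to \Map(A, X)$ is a weak equivalence for every such local fibrant $X$.

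Second, I would show that $S$-local fibrancy is equivalent to positive $\cI$-fibrancy combined with grouplikeness. For the easy direction, if $X$ is fibrant and grouplike and $A$ is cofibrant, then Theorem~\ref{thm:intro-bar-completion} shows that $A_{h\cI} \to \Omega B(A_{h\cI})$ is a group completion; since $X_{h\cI}$ is already grouplike, mapping into it out of this group completion is a weak equivalence, and transporting this through the Quillen equivalence between $\cC\cS^{\cI}$ and $E_{\infty}$ spaces produces the local condition. For the converse, localness applied to a cofibrant replacement $A \to X$ belonging to $S$ lifts the identity of $X$ through $\eta_A^{\cI}$ up to homotopy and thereby exhibits $X_{h\cI}$ as a homotopy retract of the grouplike space $\Omega B(A_{h\cI})$, forcing $X_{h\cI}$ itself to be grouplike.

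Third, I would identify the local weak equivalences with the $B(-)_{h\cI}$-equivalences. By Step~2, a map $f$ is a local equivalence if and only if $\Map(f, X)$ is a weak equivalence for all fibrant grouplike $X$. Using the bar-loop adjunction together with the identification $X \simeq \Omega(B(X)^{\textrm{$\cI$-fib}})$ for such $X$, this translates into $\Map(Bf, Y)$ being a weak equivalence for $Y$ ranging over fibrant deloopings of grouplike objects. Since bar constructions automatically have connected underlying space, and such $Y$ detect $\cI$-equivalence between connected targets, this is equivalent to $B(f)$ being an $\cI$-equivalence, which via the natural weak equivalence $B(A)_{h\cI} \simeq B(A_{h\cI})$ amounts to $B(A_{h\cI}) \to B(A'_{h\cI})$ being a weak homotopy equivalence. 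The final claim on fibrant replacements is then formal: $A \to A^{\gp}$ is a local weak equivalence into a local fibrant object, so $A^{\gp}$ is grouplike and $B(A_{h\cI}) \to B((A^{\gp})_{h\cI})$ is a weak equivalence, which is precisely the definition of group completion for $A_{h\cI} \to (A^{\gp})_{h\cI}$. The main obstacle I anticipate lies in Step~3, specifically in verifying that the restricted class of test targets really does detect $\cI$-equivalence of bar constructions; this hinges on carefully exploiting the connectedness of $B(A)_{h\cI}$ together with the compatibility between the internal bar construction in $\cC\cS^{\cI}$ and the classical bar construction at the level of homotopy colimits.
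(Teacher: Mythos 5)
Your overall skeleton---construct the localization via Hirschhorn's theorem, identify the local objects with the fibrant grouplike ones, and translate local equivalences into equivalences of bar constructions---matches the paper's, but the execution differs at both key points, and one difference creates a genuine gap. The paper localizes at the \emph{single} map $\xi\colon C_1\to C_1^{\gp}$, a cofibration factorization of $\eta^{\cI}_{C_1}$ for the free object $C_1=\mC F^{\cI}_{\bld 1}(*)$; the converse direction of the fibrancy characterization (local $\Rightarrow$ grouplike) is then immediate because every class in $\pi_0(A_{h\cI})$ of a positive $\cI$-fibrant $A$ is represented by a map $C_1\to A$, so the extension property along $\xi$ forces invertibility (Lemma~\ref{lem:extension-grouplike}). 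You localize at a set $S$ of maps $\eta^{\cI}_A$ indexed by cofibrant objects of bounded cardinality, and your converse argument requires that a cofibrant replacement of the given local object $X$ \emph{itself belong to} $S$. No cardinality bound achieves this for all $X$, so as written your retract argument only establishes grouplikeness for local objects with small cofibrant replacements. The repair is exactly the paper's device: make sure $C_1$ lies in $S$ (or localize at $\xi$ alone) and detect grouplikeness by maps out of $C_1$ rather than through a cofibrant replacement of $X$. Once the local objects are pinned down as the fibrant grouplike objects, your localization and the paper's coincide (same cofibrations, same local objects), so the theorem about $\cC\cS^{\cI}_{\gp}$ does follow.

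The remaining differences are matters of route. For the forward direction of your Step~2 you invoke the universal property of group completion for $E_\infty$ spaces and transport it through the Quillen equivalence; the paper argues internally, using the $(B,\Omega)$-adjunction on $\cC\cS^{\cI}$ to convert $\Map(\xi,A)$ into a map of mapping spaces out of $B(C_1)\to B(C_1^{\gp})$, which is an $\cI$-equivalence by Lemma~\ref{lem:bar-of-group-compl}. For the weak equivalences the paper does not run your detection argument with test objects $Y$: it identifies the fibrant replacement with $\Gamma=\Omega(B(-)^{\cI\textrm{-fib}})$ of a cofibrant replacement (Lemma~\ref{lem:fibrant-replacement-completion}) and quotes Proposition~\ref{prop:I-fib-space-completion} to compute $(A^{\gp})_{h\cI}\simeq\Omega(B(A_{h\cI})^{\mathrm{fib}})$, after which Hirschhorn's characterization of local equivalences via fibrant replacements gives the statement. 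Your detection argument can be made to work, but the step you yourself flag---that deloopings of grouplike fibrant objects detect $\cI$-equivalences between connected cofibrant objects---amounts to showing the counit $B(\Omega Z)\to Z$ is an $\cI$-equivalence for connected fibrant $Z$, which is essentially the content of Lemma~\ref{lem:bar-of-group-compl}; the paper's route through $\Gamma$ avoids isolating this as a separate lemma.
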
 

Thus, the weak equivalences in $\cC\cS^{\cI}_{\gp}$ are the maps $A\to A'$ for which the underlying map of $E_{\infty}$ spaces $A_{h\cI}\to A'_{h\cI}$ becomes a weak equivalence after group completion. One advantage of the model category approach is that it gives a functorial group completion for all objects without further assumptions. Although group completion in the $E_{\infty}$ context has been known for a long time, we do not know of a reference where it is constructed as a fibrant replacement. 

It is a formal consequence of the definition that the identity functor is both the
left and right adjoint in a Quillen adjunction
\begin{equation}\label{eq:adj:csi-gp}
  L_{\textrm{gp}} \colon \cC\cS^{\cI} \rightleftarrows \cC\cS^{\cI}_{\mathrm{gp}} \colon R_{\mathrm{gp}}.
\end{equation}
Passing to the total derived functors, this induces an adjunction
$(L_{\textrm{gp}}^{\mL}, R_{\textrm{gp}}^{\mR})$ on the level of homotopy
categories restricting to an equivalence between $\Ho(\cC\cS^{\cI}_{\textrm{gp}})$ 
and the full subcategory of grouplike objects in
$\Ho(\cC\cS^{\cI})$. Under this identification, $L_{\textrm{gp}}^{\mL}$ becomes left adjoint to the inclusion of the grouplike objects, just as the group completion of commutative monoids is left adjoint to the forgetful functor from commutative groups to commutative
monoids.

The fibrations in $\cC\cS^{\cI}_{\gp}$ form an interesting class of maps and we show in Section~\ref{sec:repletion} that they generalize the notion of \emph{replete maps} used by Rognes~\cite{Rognes_TLS} in his definition of logarithmic topological Hochschild homology.

\subsection{The relation to \texorpdfstring{$\Gamma$}{Gamma}-spaces}
A classical result in homotopy theory, known as the \emph{recognition
  principle} \cite{Boardman-V_homotopy-invariant,May_geometry}, states that the homotopy theory of grouplike $E_{\infty}$ spaces is equivalent
to the homotopy theory of infinite loop spaces, hence also to the homotopy theory of connective spectra. On the other hand, the work of Segal~\cite{Segal_categories} and Bousfield-Friedlander~\cite{Bousfield-F_Gamma-bisimplicial} shows that Segal's category of $\Gamma$-spaces $\Gamma^{\op}\cS_*$ provides a convenient model
for the homotopy theory of connective spectra. Our next result bypasses the theory of operads and establishes a direct equivalence between the homotopy category of 
$\Gamma$-spaces and the homotopy category of grouplike commutative $\cI$-space monoids.  
\begin{theorem}\label{thm:Gamma-csigp}
There is a Quillen equivalence
\begin{equation}\label{eq:adj:Gamma-csigp}
\Lambda \colon \Gamma^{\op}\cS_* \rightleftarrows \cC\cS^{\cI}_{\mathrm{gp}} \colon \Phi
\end{equation}
between the categories of $\Gamma$-spaces with the stable Q-model
structure and commutative $\cI$-space
monoids with the group completion model structure.
\end{theorem}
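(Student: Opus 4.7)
The plan is to build $(\Lambda,\Phi)$ by Kan extension from representables, verify it is a Quillen pair by first handling strict model structures and then descending to the Bousfield localizations, and finally establish the equivalence via the derived unit and counit. Concretely, let $\mathbf{F}\colon\Gamma^{\op}\to\cC\cS^{\cI}$ send $n_+$ to the $n$-fold coproduct (in $\cC\cS^{\cI}$) of a cofibrant representative of the free commutative $\cI$-space monoid on one positive generator, with $\Gamma^{\op}$-morphisms acting through the universal commutative monoid structure. Then $\Lambda X$ is the coend $X\sm_{\Gamma^{\op}}\mathbf{F}$ and $\Phi(A)(n_+)=\Map_{\cC\cS^{\cI}}(\mathbf{F}(n_+),A)$; by the free/forgetful adjunction this equals the pointed mapping space of underlying $\cI$-spaces, basepointed via the unit of $A$.

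The pair $(\Lambda,\Phi)$ is Quillen between the strict Q-model structure on the left and the positive $\cI$-model structure on the right by a routine check on generators, since each $\mathbf{F}(n_+)$ is cofibrant. To descend to the stable Q-model structure and $\cC\cS^{\cI}_{\gp}$, I would invoke the standard criterion for Quillen adjunctions between left Bousfield localizations, reducing matters to verifying that $\Phi$ carries fibrant objects of $\cC\cS^{\cI}_{\gp}$---that is, fibrant grouplike commutative $\cI$-space monoids, by Theorem~\ref{thm:csi-gp}---to very special $\Gamma$-spaces. The key computation is $\Phi(A)(n_+)\simeq (A^{\sqcup n})_{h\cI}\simeq A_{h\cI}^{\times n}$, the second equivalence because coproducts of suitably cofibrant commutative $\cI$-space monoids compute Cartesian products after $h\cI$; this gives the Segal condition, and $\Phi(A)(1_+)\simeq A_{h\cI}$ is grouplike by assumption.

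For the Quillen equivalence I would test the derived unit and counit on suitable classes of objects. The derived counit $\Lambda\Phi(A)^{\mathrm{cof}}\to A$ at a fibrant grouplike $A$ should be a $\cC\cS^{\cI}_{\gp}$-equivalence because both sides model the same grouplike $E_\infty$-space $A_{h\cI}$ via Segal-style recognition. For the derived unit $X\to\Phi(\Lambda X)^{\mathrm{fib}}$ I would reduce by cell arguments to representables $n_+$: there $\Lambda(n_+)=\mathbf{F}(n_+)$ has $h\cI$-homotopy colimit equivalent to $\coprod_k(B\Sigma_k)^{\times n}$, whose group completion is $\Omega^{\infty}(\mathbb{S}^{\vee n})$, matching the free $\Gamma$-space on $n_+$. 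The main obstacle is making the comparison between the two models of connective spectra---Bousfield--Friedlander prolongation of $\Gamma$-spaces versus group-completed $A_{h\cI}$---precise at the point-set level. To bypass a direct comparison I would factor the argument through the Quillen adjunction~\eqref{eq:adj:csi-spsym} and use the known Quillen equivalence between $\Gamma$-spaces and connective symmetric spectra, reducing the claim to a triangle identity among established equivalences in the homotopy category.
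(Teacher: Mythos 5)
Your construction of the adjunction does not get off the ground as stated, and this is the decisive gap. You take $\mathbf{F}(n_+)$ to be the $n$-fold \emph{coproduct} of $C_1$ in $\cC\cS^{\cI}$, i.e.\ the $\boxtimes$-power $C_1^{\boxtimes n}$, "with $\Gamma^{\op}$-morphisms acting through the universal commutative monoid structure." For $\Phi(A)(n_+)=\Map(\mathbf{F}(n_+),A)$ to be a $\Gamma$-space carrying the multiplicative structure of $A$, the fold map $\nabla\colon 2^+\to 1^+$ must induce a map $C_1\to C_1^{\boxtimes 2}$ classifying the product of the two generators. But a map out of $C_1$ is classified by a vertex in $\cI$-degree $\bld 1$, whereas the product of the two generators of $C_1^{\boxtimes 2}\cong\mC(F^{\cI}_{\bld 1}(*)\sqcup F^{\cI}_{\bld 1}(*))$ lives in $\cI$-degree $\bld 2$: multiplication in a commutative $\cI$-space monoid shifts $\cI$-degree. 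So the required structure map does not exist; the only $\Gamma^{\op}$-functoriality available on the coproduct model comes from injections and unit insertions, and the resulting object cannot encode the Segal/addition maps. (Your identification $\Phi(A)(n_+)\simeq A_{h\cI}^{\times n}$ is a strict splitting of mapping spaces out of a coproduct, which is exactly the symptom: nothing forces the spectrum of such a $\Gamma$-space to be the group completion of $A_{h\cI}$.) This is why the paper instead uses the \emph{Cartesian} powers $(C_1^{\gp})^{\times k}$ — these are commutative $\cI$-space monoids with the evident $\Gamma$-functoriality — and then must (a) cofibrantly replace the diagram $k^+\mapsto(C_1^{\gp})^{\times k}$ of non-cofibrant objects, and (b) prove the Segal condition as the homotopical statement that $\boxtimes\to\times$ is an $\cI$-equivalence on flat, semistable (hence on grouplike, group-completed) objects, which is precisely where $C_1^{\gp}$ rather than $C_1$ is needed.

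The equivalence half of your argument also leans on steps that are not available. "Reduce by cell arguments to representables" does not work for the derived unit of an unstable Quillen adjunction, because the derived right adjoint need not commute with the relevant homotopy colimits; the paper's actual route is to stabilize both sides, show $F_0(C(1^+))$ is \emph{compact} in $\Ho(\Sp^{\mN}(\cC\cS^{\cI}))$ so that $R\Phi^{\mathrm{st}}$ preserves coproducts, and then run the Schwede--Shipley localizing-subcategory argument from the single compact generator $F_0(\mS)$. Even the base case — that the derived unit is an equivalence at $\mS$ — is not a formality: it requires identifying $\Phi(C_1^{\gp})$ with the sphere, which the paper obtains from the entire comparison of $\Gamma$-space models in Section~5.3 (via bi-$\Gamma$-spaces) together with the Barratt--Priddy--Quillen-type input from the reference \cite{Schlichtkrull_infinite}. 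Finally, your proposed bypass "through the Quillen adjunction \eqref{eq:adj:csi-spsym} and the known equivalence of $\Gamma$-spaces with connective symmetric spectra" is not a proof: \eqref{eq:adj:csi-spsym} is not a Quillen equivalence and does not relate $\cC\cS^{\cI}_{\gp}$ to connective spectra, so there is no pre-existing commuting triangle to which a triangle identity could be applied — constructing such a comparison is essentially the content of the theorem.
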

The stable $Q$-model structure on $\Gamma^{\op}\cS_*$ was introduced by 
Schwede~\cite{Schwede_Gamma-spaces} and is Quillen equivalent to the stable model structure considered by Bousfield and Friedlander. 
A description of a direct Quillen equivalence between grouplike $E_{\infty}$ spaces
and $\Gamma$-spaces does not seem
to be covered in the literature. One of its advantages is that it not only gives an isomorphism between morphism sets in the respective homotopy categories, but also an equivalence between the derived mapping spaces. 

The spectrum $B^{\infty}(A)$ associated to a commutative $\cI$-space monoid $A$ has the following explicit description: it's $n$th space is the based homotopy colimit $B^n(A)_{h^*\cI}$ of the $n$-fold iterated bar construction $B^n(A)$. This construction is formally very similar to the usual definition of the Eilenberg-Mac Lane spectrum associated to an abelian group. In Section~\ref{subsec:stabilization-proof} we show how to deduce the following corollary from Theorem~\ref{thm:Gamma-csigp}.

\begin{corollary}\label{cor:B-infty-A}
The homotopy category of grouplike commutative $\cI$-space monoids is equivalent to the homotopy category of connective spectra via the functor sending $A$ to the connective spectrum $B^{\infty}(A')$ defined by a cofibrant replacement 
$A'$ of $A$.
\end{corollary}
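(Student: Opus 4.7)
The plan is to chain the Quillen equivalence of Theorem~\ref{thm:Gamma-csigp} with the classical Quillen equivalence between $\Gamma$-spaces (in the stable $Q$-model structure) and connective spectra due to Bousfield--Friedlander and Schwede. Combined with the fact noted after Theorem~\ref{thm:csi-gp} that $\Ho(\cC\cS^{\cI}_{\gp})$ identifies with the full subcategory of grouplike objects in $\Ho(\cC\cS^{\cI})$, this immediately yields an abstract equivalence between the homotopy category of grouplike commutative $\cI$-space monoids and that of connective spectra. What remains is to identify the resulting equivalence with the functor $A \mapsto B^{\infty}(A')$.

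For this I would trace a grouplike $A$ through the composite: take a cofibrant replacement $A' \xr{\sim} A$ in $\cC\cS^{\cI}$, apply $\Phi$ to get a stably fibrant $\Gamma$-space $\Phi(A')$, and then apply the Segal spectrum functor. On the $\Gamma$-space side, the $n$-th space of the Segal spectrum of a very special $\Gamma$-space $X$ is computed by iterating the levelwise bar construction $X \mapsto BX$ and evaluating at $1_+$. The key input is therefore a compatibility between the bar construction on $\cC\cS^{\cI}$ and the levelwise bar construction on $\Gamma$-spaces: one checks that $\Phi$ intertwines these, in the sense that there is a natural weak equivalence $B \circ \Phi \simeq \Phi \circ B$ in the stable $Q$-model structure. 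Given this, iterating yields a $\Gamma$-space weakly equivalent to $\Phi(B^n(A'))$ whose value at $1_+$ is in turn weakly equivalent to $B^n(A')_{h^*\cI}$, which is the $n$-th space of $B^{\infty}(A')$ by definition.

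The main obstacle is establishing the interchange between $\Phi$ and the bar construction. Because $\Phi$ is a right Quillen functor in a Quillen equivalence, it commutes on the derived level with homotopy limits, and in particular with the loop functor $\Omega$, whose left adjoint is the bar construction on both sides. I would compare the two bar constructions through the defining adjunction of Theorem~\ref{thm:Gamma-csigp}, using that for cofibrant $A'$ the unit map $A' \to \Omega(B(A')^{\textrm{$\cI$-fib}})$ is a group completion by Theorem~\ref{thm:intro-bar-completion}, and hence a weak equivalence in the group completion model structure when $A$ is grouplike to begin with. Once this interchange is in place, identifying the composite equivalence with $B^{\infty}$ is essentially bookkeeping, and the corollary follows.
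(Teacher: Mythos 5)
Your first step (chaining Theorem~\ref{thm:Gamma-csigp} with the Bousfield--Friedlander/Schwede equivalence and the identification of $\Ho(\cC\cS^{\cI}_{\gp})$ with the grouplike objects) matches the paper. For the identification of the composite with $A\mapsto B^{\infty}(A')$, however, you take a genuinely different route: the paper does \emph{not} argue by an abstract interchange of $\Phi$ with bar constructions, but instead compares $\Phi(A)$ through a zig-zag of explicit $\Gamma$-space models, $\Phi(A)\simeq A_{h\cD}\simeq A_{\Gamma}$ (Propositions~\ref{prop:comparison-gamma-spaces} and \ref{prop:D-E-Gamma}, the first of which is the substantial bi-$\Gamma$-space argument), and then reads off that the Segal spectrum of $A_{\Gamma}(S)=(A\otimes S)_{h^*\cI}$ is literally $B^{\infty}(A)$ because $A\otimes S^n=B^n(A)$.

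There is a gap in your interchange step as justified. You assert that the left adjoint of $\Omega$ is ``the bar construction on both sides.'' On $\cC\cS^{\cI}$ this is true: the tensor with $S^1$ in commutative monoids is the bar construction (Corollary~\ref{cor:B-Omega-Qadjunction}). On $\Gamma^{\op}\cS_*$ it is not: the left adjoint of the cotensor $(-)^{S^1}$ is the levelwise smash $X\wedge S^1$, whereas what computes the next space of the Segal spectrum is the prolongation $X(-\wedge S^1)$, whose value at $1^+$ is $X(S^1)\simeq |[k]\mapsto X(k^+)|$. (Note also that a ``levelwise bar construction'' does not exist strictly, since for a special $\Gamma$-space the spaces $X(k^+)$ carry only homotopy-coherent multiplications.) Your adjunction argument therefore yields $R\Phi\circ LB\simeq (-\wedge^{\mL} S^1)\circ R\Phi$, and to conclude you still need the nontrivial input that for a very special $\Gamma$-space the assembly map $X\wedge S^1\to X(-\wedge S^1)$ is a stable equivalence (\cite[Lemma 4.1, Theorem 4.2]{Bousfield-F_Gamma-bisimplicial}); this is exactly the ingredient that bridges the categorical suspension and the Segal delooping. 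With that supplied, and with the observation that the $\gp$-fibrant replacements occurring in the derived composites are $\cI$-equivalences (since $A$ is grouplike and $B^n(A')$ is connected for $n\geq 1$, by Lemma~\ref{lem:group-compl-of-grouplike}), your bookkeeping does close up, so the strategy is salvageable --- but as written the central equivalence $B\circ\Phi\simeq\Phi\circ B$ is not yet established.
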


\subsection{Units}
Recall that an $E_{\infty}$ space $X$ has a subspace of (homotopy) units $X^{\times}$ defined as the union of those path components that represent units in the commutative monoid $\pi_0(X)$. This construction lifts to $\cC\cS^{\cI}$ in the sense that a commutative $\cI$-space monoid $A$ has a submonoid $A^{\times}$ of (homotopy) units such that the inclusion $A^{\times}\to A$ induces an isomorphism 
$(A^{\times})_{h\cI}\cong (A_{h\cI})^{\times}$. An important example is the construction of the units of a commutative symmetric ring spectrum $R$ as the commutative $\cI$-space monoid  $\GL_1(R) =\Omega^{\cI}(R)^{\times}$. This model of the units is useful, for instance, in the study of algebraic $K$-theory~\cite{Schlichtkrull_units} and Thom
spectra~\cite{Blumberg-C-S_THH-Thom,Schlichtkrull_Thom-symm}.

The construction of units in $\cC\cS^{\cI}$ also has a model categorical interpretation. We define the units model structure $\cC\cS^{\cI}_{\un}$ as the right Bousfield localization of $\cC\cS^{\cI}$ with respect to the inclusions $A^{\times}\to A$. The next theorem shows that this localization process has the expected effect on weak equivalences and cofibrant objects.

\begin{theorem}\label{thm:intro-units}
A map $A\to A'$ is a weak equivalence in $\cC\cS^{\cI}_{\un}$ if and only if the induced map $A_{h\cI}^{\times} \to A_{h\cI}'^{\times}$ is a weak homotopy equivalence. The cofibrant objects in $\cC\cS^{\cI}_{\un}$ are the cofibrant objects in $\cC\cS^{\cI}$ which are grouplike, and if $A^{\un}\to A$ is a cofibrant replacement in $\cC\cS^{\cI}$, then there is a canonical $\cI$-equivalence $A^{\un}\to A^{\times}$.
\end{theorem}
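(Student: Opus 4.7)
The plan is to apply Hirschhorn's theory of right Bousfield localizations to $\cC\cS^{\cI}$ at a set $S$ of representatives of the inclusions $A^{\times}\to A$. As a preliminary step I would verify that $\cC\cS^{\cI}$ is right proper and either cellular or combinatorial, using standard arguments from \cite{Sagave-S_diagram} or transfer from $\cS^{\cI}$. A suitable set $S$ can be obtained by restricting $A$ to a set of generators (for instance, the sources and targets of a set of generating cofibrations). Hirschhorn's theorem then produces $\cC\cS^{\cI}_{\un}$ with the same fibrations, strictly fewer cofibrations, and $S$-colocal equivalences as weak equivalences.

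For the cofibrant objects, in one direction any grouplike cofibrant object $X$ is colocal: since $\pi_0(X_{h\cI})$ is a group, every map $X\to A$ has image in the units $A^{\times}$, so $\Map(X, A^{\times})\to \Map(X, A)$ is a bijection and in particular a weak equivalence of derived mapping spaces for $A$ fibrant. Conversely, if $X$ is cofibrant in $\cC\cS^{\cI}_{\un}$ and $X^{\mathrm{fib}}$ is a fibrant replacement of $X$ in $\cC\cS^{\cI}$, the $S$-colocal condition gives a weak equivalence $\Map^h(X, (X^{\mathrm{fib}})^{\times})\to \Map^h(X, X^{\mathrm{fib}})$, so the identity on $X$ lifts up to homotopy through the units, which forces $X$ to be grouplike. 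This identifies the cofibrant objects in $\cC\cS^{\cI}_{\un}$ with the grouplike cofibrant objects in $\cC\cS^{\cI}$.

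The canonical $\cI$-equivalence $A^{\un}\to A^{\times}$ then arises as follows: by the previous step $A^{\un}$ is grouplike, so the cofibrant replacement $A^{\un}\to A$ factors canonically through the inclusion $A^{\times}\to A$. Since both $A^{\un}\to A$ and $A^{\times}\to A$ are weak equivalences in $\cC\cS^{\cI}_{\un}$, so is the factorization $A^{\un}\to A^{\times}$, and a weak equivalence in $\cC\cS^{\cI}_{\un}$ between grouplike objects is necessarily an $\cI$-equivalence. Consequently a map $A\to A'$ is a weak equivalence in $\cC\cS^{\cI}_{\un}$ if and only if the induced map $A^{\un}\to A'^{\un}$ is an $\cI$-equivalence, equivalently iff $A^{\times}\to A'^{\times}$ is an $\cI$-equivalence, which combined with the natural isomorphism $(A^{\times})_{h\cI}\iso (A_{h\cI})^{\times}$ gives the criterion in the statement.

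The main obstacle will be verifying the hypotheses of Hirschhorn's theorem for $\cC\cS^{\cI}$—particularly right properness, which is subtle for categories of commutative monoids and may need to be worked around by passing to a Quillen equivalent presentation—and establishing that colocal equivalences between grouplike objects are $\cI$-equivalences. For the latter the key tool is the identification of the homotopy theory of grouplike commutative $\cI$-space monoids with that of connective spectra from Corollary~\ref{cor:B-infty-A}, which shows that grouplike cofibrant objects detect $\cI$-equivalences among grouplike objects through derived mapping spaces.
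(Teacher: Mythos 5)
Your route is genuinely different from the paper's. The paper does \emph{not} invoke Hirschhorn's cellular right Bousfield localization theorem to build $\cC\cS^{\cI}_{\un}$; it instead dualizes Bousfield's ``$Q$-structure'' localization theorem, applied to the explicit functor $P=(-)^{\times}$ and the natural inclusion $\beta_A\colon A^{\times}\to A$. The technical heart there is an algebraic cobase-change lemma for units, proved via the decomposition $A\iso A^{\times}\coprod\widehat{A}$ into units and an $A^{\times}$-nuca of non-units. That method makes the weak equivalences \emph{by definition} the maps inducing $\cI$-equivalences on units (so the first claim of the theorem is immediate from $(A^{\times})_{h\cI}\iso (A_{h\cI})^{\times}$), and it yields an explicit homotopy-pushout characterization of the cofibrations, from which the identification of cofibrant objects and the equivalence $A^{\un}\to A^{\times}$ drop out. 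Your approach defines the weak equivalences as colocal equivalences and must then \emph{prove} they agree with unit-equivalences; the paper only establishes the colocalization description afterwards (Proposition~\ref{prop:units-right-Bousfield}), as a consequence. Incidentally, right properness is not an obstacle: the positive $\cI$-model structure on $\cC\cS^{\cI}$ is proper by Proposition~\ref{prop:I-positive-lift}, and cellularity is Proposition~\ref{prop:csi-cellular}, so the hypotheses you worry about do hold.

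The genuine gap is the choice of the colocalizing set $S$, on which everything downstream depends. Hirschhorn's existence theorem colocalizes at a \emph{set of objects}, and your suggested candidates --- the (co)domains of the generating cofibrations --- are free commutative $\cI$-space monoids, whose units are trivial; colocalizing at the corresponding maps $\ast\to \mC F^{\cI}_{\bld n}(K)$ is not obviously (and probably not) the right thing, and in any case your later step ``the $S$-colocal condition gives a weak equivalence $\Map^h(X,(X^{\mathrm{fib}})^{\times})\to\Map^h(X,X^{\mathrm{fib}})$'' does not follow from $S$-colocality for an arbitrary set $S$: you need to know that $X$ inverts the specific map $(X^{\mathrm{fib}})^{\times}\to X^{\mathrm{fib}}$, which is not in $S$. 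The fix is to colocalize at a single grouplike cofibrant object, e.g.\ a cofibrant replacement $C$ of $C_1^{\gp}$ (the object $C(1^+)$ of Section~\ref{subs:construction-adj}). Then $\Map^h(C,A)\simeq \Map(C_1,\hat A)\iso \hat A(\bld 1)\simeq (A^{\times})_{h\cI}$ for grouplike targets, so $C$ detects unit-equivalences directly without appealing to Corollary~\ref{cor:B-infty-A} (which would otherwise make the units model structure depend on the whole $\Gamma$-space comparison of Section~\ref{sec:Gamma-csigp}); every map $A^{\times}\to A$ with $A$ fibrant is then a $\{C\}$-colocal equivalence, your retraction argument shows colocal objects are grouplike, and the rest of your outline goes through. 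As written, however, the proposal does not pin this down, and with the set you actually name the argument breaks.
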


As for the group completion model structure, the identity functor participates both as the left and right adjoint in a Quillen adjunction
\begin{equation}\label{eq:adj:un-csi}
  L_{\textrm{un}} \colon \cC\cS^{\cI}_{\textrm{un}} \rightleftarrows \cC\cS^{\cI} \colon R_{\textrm{un}}.
\end{equation}
The induced adjunction $(L_{\textrm{un}}^{\mL},R_{\textrm{un}}^{\mR})$ of homotopy categories restricts to an equivalence between $\Ho(\cC\cS^{\cI}_{\textrm{un}})$ and the full subcategory of grouplike objects in $\Ho(\cC\cS^{\cI})$. 
Under this identification, $R_{\textrm{un}}^{\mR}$ becomes right adjoint to the inclusion of the grouplike objects, just as forming the units of a commutative monoid is right adjoint to the forgetful functor from commutative groups to commutative monoids.

One may argue that there is little use in replacing the simple and explicit 
construction of the units by a cofibrant replacement in a complicated
model category. However, we illustrate below that this is useful for
analyzing spectra of units.
For this we need the following result which is an immediate consequence of the fact that 
the cofibrant-fibrant objects in $\cC\cS^{\cI}_{\textrm{un}}$
and $ \cC\cS^{\cI}_{\textrm{gp}}$ coincide.
\begin{proposition}
  The composite of \eqref{eq:adj:un-csi} and \eqref{eq:adj:csi-gp} is
  a Quillen equivalence
\begin{equation}\label{eq:adj:un-gp}
  L_{\mathrm{un/gp}} \colon \cC\cS^{\cI}_{\mathrm{un}} \rightleftarrows \cC\cS^{\cI}_{\mathrm{gp}} \colon R_{\mathrm{un/gp}}.
\end{equation} 
\end{proposition}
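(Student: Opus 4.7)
The plan is to apply a standard characterization of Quillen equivalences and then reduce the question, via Theorems~\ref{thm:csi-gp} and~\ref{thm:intro-units}, to a tautology about $\cI$-equivalences between grouplike objects. Observe first that, since \eqref{eq:adj:un-csi} and \eqref{eq:adj:csi-gp} both have the identity functor of $\cC\cS^{\cI}$ as left and right adjoint, the composite $L_{\mathrm{un/gp}}$ and $R_{\mathrm{un/gp}}$ are again the identity functor, and the composite of two Quillen adjunctions is a Quillen adjunction. It therefore suffices to verify the Quillen equivalence criterion: for every cofibrant $X$ in $\cC\cS^{\cI}_{\un}$ and fibrant $Y$ in $\cC\cS^{\cI}_{\gp}$, a morphism $L_{\mathrm{un/gp}}(X)\to Y$ is a weak equivalence in $\cC\cS^{\cI}_{\gp}$ if and only if its adjoint $X\to R_{\mathrm{un/gp}}(Y)$ is a weak equivalence in $\cC\cS^{\cI}_{\un}$. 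Because both functors are the identity, this is literally the same underlying map $f\colon X\to Y$.

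The next step is to use the two input theorems to pin down $X$ and $Y$. By Theorem~\ref{thm:intro-units}, a cofibrant object $X$ in $\cC\cS^{\cI}_{\un}$ is a cofibrant object of $\cC\cS^{\cI}$ which is moreover grouplike. Dually, by Theorem~\ref{thm:csi-gp}, a fibrant object $Y$ in $\cC\cS^{\cI}_{\gp}$ is a fibrant object of $\cC\cS^{\cI}$ which is grouplike. Thus the map $f$ runs between two grouplike objects, and on such objects the two localized notions of weak equivalence collapse to the same thing:
\begin{itemize}
\item by Theorem~\ref{thm:csi-gp}, $f$ is a weak equivalence in $\cC\cS^{\cI}_{\gp}$ iff $B(X_{h\cI})\to B(Y_{h\cI})$ is a weak equivalence, and since $X_{h\cI}$ and $Y_{h\cI}$ are grouplike this is equivalent to $X_{h\cI}\to Y_{h\cI}$ being a weak equivalence, i.e.\ $f$ being an $\cI$-equivalence;
\item by Theorem~\ref{thm:intro-units}, $f$ is a weak equivalence in $\cC\cS^{\cI}_{\un}$ iff $X^{\times}_{h\cI}\to Y^{\times}_{h\cI}$ is a weak equivalence, and since $X$ and $Y$ are grouplike the inclusions $X^{\times}\to X$ and $Y^{\times}\to Y$ are themselves $\cI$-equivalences, so this again amounts to $f$ being an $\cI$-equivalence.
\end{itemize}
Hence both conditions hold precisely when $f$ is an $\cI$-equivalence, so the criterion is satisfied and \eqref{eq:adj:un-gp} is a Quillen equivalence.

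There is essentially no obstacle to overcome here beyond invoking the two characterization theorems; the proposition really is the formal statement that the common class of cofibrant--fibrant objects of $\cC\cS^{\cI}_{\un}$ and $\cC\cS^{\cI}_{\gp}$, namely the cofibrant fibrant grouplike commutative $\cI$-space monoids, together with the $\cI$-equivalences between them, computes both derived categories. The only point one should be slightly careful about is that in the right Bousfield localization $\cC\cS^{\cI}_{\un}$ the fibrations are unchanged while the cofibrations shrink, whereas in the left Bousfield localization $\cC\cS^{\cI}_{\gp}$ the cofibrations are unchanged while the fibrations shrink; this is precisely why the intersection of cofibrant-fibrant objects is the common subcategory of cofibrant fibrant grouplike objects of $\cC\cS^{\cI}$.
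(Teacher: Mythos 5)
Your proof is correct and is essentially the paper's own argument: the paper simply notes that the proposition is an immediate consequence of the fact that the cofibrant--fibrant objects of $\cC\cS^{\cI}_{\un}$ and $\cC\cS^{\cI}_{\gp}$ coincide (namely the cofibrant, fibrant, grouplike objects), and that on such objects both localized notions of weak equivalence reduce to $\cI$-equivalence. Your write-up just makes this explicit via the standard adjoint-pair criterion for Quillen equivalences, using Theorems~\ref{thm:csi-gp} and~\ref{thm:intro-units} exactly as the paper intends.
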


\subsection{Spectra of units}
Assembling the Quillen adjunctions and Quillen equivalences
\eqref{eq:adj:csi-spsym}, \dots, \eqref{eq:adj:un-gp} considered so far,
we get a diagram
\begin{equation}\label{diag:all-adjunctions}
  \xymatrix{
& & \cC\cS^{\cI} \ar@<.4ex>[rr]^{\mS^{\cI}} \ar@<-.4ex>[ld]_(.72){\!\!\!L_{\textrm{gp}}} \ar@<-.4ex>[rd]_(.35){R_{\textrm{un}}\!\!\!\!} & & \cC\Spsym \ar@<.4ex>[ll]^{\Omega^{\cI}} \\
\Gamma^{\op}\cS_* \ar@<.4ex>[r]^{\Lambda} & \cC\cS^{\cI}_{\textrm{gp}} \ar@<.4ex>[l]^{\Phi} \ar@<-.4ex>[rr]_{R_{\textrm{un/gp}}} \ar@<-.4ex>[ru]_(.65){\!\!\!R_{\textrm{gp}}} & & \cC\cS^{\cI}_{\textrm{un}} \ar@<-.4ex>[ll]_{L_{\textrm{un/gp}}} \ar@<-.4ex>[lu]_(.3){\!\!\!L_{\textrm{un}}} 
  }
 \end{equation}
in which the two bottom horizontal Quillen adjunctions are Quillen
equivalences. Although all the functors in the triangle are identity functors, the same does of course not hold for their derived functors because the model structures differ.

For a commutative symmetric ring spectrum $R$, the spectrum of units
$\gl_1(R)$ can be defined as the $\Gamma$-space which the functor
$\Phi$ of \eqref{eq:adj:Gamma-csigp} associates with the grouplike
commutative $\cI$-space monoid $\GL_1(R)$ considered above. In terms of the derived
functors of the Quillen functors in \eqref{diag:all-adjunctions}, the
spectrum of units is the functor
\begin{equation}\label{eq:gl_1-def} \gl_1 = (\Phi^{\mR})(L_{\textrm{un/gp}}^{\mL})(R_{\textrm{un}}^{\mR})(\Omega^{\cI}_{\mR}) \colon
  \Ho(\cC\Spsym) \to \Ho(\Gamma\cS_*).\end{equation}
Since $( L_{\textrm{un/gp}},R_{\textrm{un/gp}})$ is a Quillen equivalence, it induces an
equivalence of homotopy categories.  So the total derived functor $L_{\textrm{un/gp}}^{\mL}$ is both a left and a right adjoint, and we get the following corollary.

\begin{corollary}
  Passing to total derived functors, the adjunctions in
  \eqref{diag:all-adjunctions} exhibit the spectrum of units as the
  right adjoint in an adjunction
  $ \Ho(\Gamma\cS_*) \rightleftarrows \Ho(\cC\Spsym)$.
\end{corollary}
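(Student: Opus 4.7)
The plan is to read the conclusion off the formula \eqref{eq:gl_1-def} directly: it presents $\gl_1$ as a composite of four derived functors, and I would verify that each factor is a right adjoint on the level of homotopy categories. Since a composition of right adjoints is a right adjoint, this will identify $\gl_1$ itself as a right adjoint and furnish the desired adjunction, with the left adjoint given as the composite of the four corresponding left adjoints in the opposite order.

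First, every Quillen adjunction $(F,G)$ induces an adjunction $(F^{\mL}, G^{\mR})$ of total derived functors in which the right derived functor sits as the right adjoint. Applying this to the Quillen adjunctions \eqref{eq:adj:csi-spsym}, \eqref{eq:adj:un-csi}, and \eqref{eq:adj:Gamma-csigp} identifies $\Omega^{\cI}_{\mR}$, $R_{\mathrm{un}}^{\mR}$, and $\Phi^{\mR}$ as right adjoints, with respective left adjoints $(\mS^{\cI})^{\mL}$, $L_{\mathrm{un}}^{\mL}$, and $\Lambda^{\mL}$.

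The decisive observation concerns the remaining factor $L_{\mathrm{un/gp}}^{\mL}$, which is a priori only a left adjoint. Here I would invoke the Proposition immediately preceding the Corollary, which asserts that $(L_{\mathrm{un/gp}}, R_{\mathrm{un/gp}})$ is a Quillen equivalence. Its derived adjunction is then an equivalence of homotopy categories, so $L_{\mathrm{un/gp}}^{\mL}$ is quasi-inverse to $R_{\mathrm{un/gp}}^{\mR}$; in particular it is a right adjoint to $R_{\mathrm{un/gp}}^{\mR}$, in addition to being its left adjoint.

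Assembling these statements, $\gl_1$ is a composite of four right adjoints, hence a right adjoint, and its left adjoint is the composite of $(\mS^{\cI})^{\mL}$, $L_{\mathrm{un}}^{\mL}$, $R_{\mathrm{un/gp}}^{\mR}$, and $\Lambda^{\mL}$ in the reverse order. No serious obstacle arises; the argument is a purely formal consequence of the diagram \eqref{diag:all-adjunctions} together with the fact that one leg of it, the Quillen adjunction \eqref{eq:adj:un-gp}, is actually a Quillen equivalence.
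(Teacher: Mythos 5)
Your argument is correct and is exactly the paper's: the text preceding the corollary observes that since $(L_{\mathrm{un/gp}},R_{\mathrm{un/gp}})$ is a Quillen equivalence, the derived functor $L_{\mathrm{un/gp}}^{\mL}$ is both a left and a right adjoint, so that $\gl_1$ as defined in \eqref{eq:gl_1-def} is a composite of four right adjoints of homotopy categories and hence itself a right adjoint. Your identification of the corresponding left adjoint as the composite of $(\mS^{\cI})^{\mL}$, $L_{\mathrm{un}}^{\mL}$, $R_{\mathrm{un/gp}}^{\mR}$, and $\Lambda^{\mL}$ is likewise consistent with the paper.
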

The last corollary gives an independent proof of~\cite[Theorem
3.2]{Ando-B-G-H-R_units-Thom} in the language of diagram spaces and
diagram spectra. We also show that the right adjoint of the adjunction is represented by the explicit $\Gamma$-space model of the units defined by the second author 
in~\cite{Schlichtkrull_units}.

\subsection{Conventions regarding spaces}
For the results stated in the introduction, the category of spaces $\cS$ may be interpreted either as the category of compactly generated weak Hausdorff topological spaces or as the category of simplicial sets. However, starting from 
Section~\ref{sec:I-spaces} we stipulate that $\cS$ be the category of simplicial sets and the body of the paper is written in the simplicial context. This is mainly for the 
sake of the exposition: Working in a simplicial context there is an occasional need for fibrant replacement while working topologically we would sometimes have to impose cofibrancy conditions and require base points to be non-degenerate. We show how to obtain the topological version of our results in Appendix~\ref{app:topological}.

\subsection{Organization}
In Section \ref{sec:I-spaces}, we recall and develop some foundational material
on $\cI$-spaces, and Section \ref{sec:I-space-monoids} collects some
results about commutative $\cI$-space monoids. Section
\ref{sec:bar-construction} features the bar construction for
commutative $\cI$-space monoids and the proof of 
Theorem~\ref{thm:intro-bar-completion}. In Section \ref{sec:csi-gp}, we
construct the group completion model structure and prove Theorem
\ref{thm:csi-gp}.  We study the relation to $\Gamma$-spaces and prove
Theorem \ref{thm:Gamma-csigp} in Section \ref{sec:Gamma-csigp}. The
final Section \ref{sec:un-csi} is about units and contains the proof
of Theorem \ref{thm:intro-units}. In Appendix \ref{app:cellular} we verify that the positive 
$\cI$-model structure on $\cC\cS^{\cI}$ is cellular, Appendix \ref{app:bi-Gamma} is about bi-$\Gamma$-spaces, and in Appendix~\ref{app:topological} we derive the topological version of our results 
 
\subsection{Acknowledgments}
The authors benefited from a visit of the second author to Bonn which
was funded by the HCM in Bonn. They thank Jens Hornbostel, John
Rognes, and Stefan Schwede for helpful conversations related to this
project. The suggestions made by an anonymous referee also helped to
improve the manuscript.

\section{Preliminaries on \texorpdfstring{$\cI$}{I}-spaces}\label{sec:I-spaces}
We begin by recalling some basic facts  about $\cI$-spaces. Let $\cI$ be the
category with objects the finite sets $\bld{n} = \{1, \dots, n\}$, including the 
empty set $\bld{0}$, and with morphisms the injective maps. The
concatenation $\bld{m}\concat\bld{n}$ defined by letting $\bld{m}$
correspond to the first $m$ and $\bld{n}$ to the last $n$ elements of
$\{1, \dots, m+n\}$ gives $\cI$ the structure of a symmetric monoidal 
category with unit $\bld 0$. The symmetry isomorphisms are the obvious
$(m,n)$-shuffles $\tau_{m,n} \colon \bld{m}\concat\bld{n} \to
\bld{n}\concat\bld{m}$.

By definition, an \emph{$\cI$-space} is a functor from $\cI$ to the category $\cS$ of 
unbased simplicial sets, and we write $\cS^{\cI}$ for the (functor) category
of $\cI$-spaces. We will frequently consider the Bousfield-Kan
homotopy colimit of an $\cI$-space $X$  (as defined in \cite{Bousfield_K-homotopy}) and abbreviate it by $X_{h\cI}$,
\[ X_{h\cI} = \hocolim_{\cI}X =
\textrm{diag}\left([s]\mapsto \coprod_{\bld{n}_0\ot \dots \ot
    \bld{n}_s}X(\bld{n}_s)\right), \] where $\diag$ denotes the
diagonal of a bisimplicial set (which is
one of the isomorphic incarnations of the realization functor from bisimplicial
sets to simplicial sets, see e.g.\ ~\cite[Theorem 15.11.6]{Hirschhorn_model}).

Given $\cI$-spaces $X$ and $Y$, the product $X\boxtimes Y$ is the $\cI$-space defined by the left Kan extension of the ($\cI\times\cI$)-diagram $(\bld n_1,\bld n_2)\mapsto 
X(\bld n_1)\times Y(\bld n_2)$ along the concatenation $\concat\colon \cI\times\cI\to \cI$, that is, 
\[ 
(X\boxtimes Y)(\bld{n}) \iso
  \colim_{\bld{n}_1\concat\bld{n}_2\to\bld{n}} X(\bld{n}_1) \times Y(\bld{n}_2)
\] 
with the colimit taken over the comma category $(\concat\downarrow \bld{n})$. 
This defines a symmetric monoidal structure on $\cS^{\cI}$ with the constant 
$\cI$-space $\cI(\bld{0},-)$ as the monoidal unit. 

\subsection{The positive \texorpdfstring{$\cI$}{I}-model structure on 
\texorpdfstring{$\cS^{\cI}$}{SI}}\label{sec:I-space-model-str}
A map of $\cI$-spaces $A\to B$ is said to be an \emph{$\cI$-equivalence} if the induced map of homotopy colimits $A_{h\cI}\to B_{h\cI}$ is a weak equivalence. This is the fundamental notion of equivalence for $\cI$-spaces and participates as the weak equivalences in several model structure on $\cS^{\cI}$. Since we shall eventually consider commutative monoids in $\cS^{\cI}$, it will be appropriate for our purposes to consider a ``positive'' model structure on $\cS^{\cI}$. We say that a map of $\cI$-spaces $X\to Y$ is a 
\begin{itemize}
\item \emph{positive $\cI$-fibration} if for all $n\geq 1$ the map $X(\bld n)\to Y(\bld n)$ is a fibration and the inclusion $\bld n\to \bld{n+1}$ induces a homotopy cartesian square
\[ \xymatrix@-1pc{X(\bld{n}) \ar[r] \ar[d] & X(\bld{n+1}) \ar[d] \\
Y(\bld{n}) \ar[r] & Y(\bld{n+1}); }\]
\item
\emph{positive $\cI$-cofibration} if it has the left lifting property with respect to maps 
of $\cI$-spaces $U\to V$ such that $U(\bld n)\to V(\bld n)$ is an acyclic fibration for
$n\geq 1$.
\end{itemize} 
 
\begin{proposition}[{\cite[Proposition~3.2]{Sagave-S_diagram}}]
  The $\cI$-equivalences, positive $\cI$-fibrations, and positive
  $\cI$-cofibrations comprise a model structure on $\cS^{\cI}$.\qed
\end{proposition}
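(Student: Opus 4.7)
The plan is to realize the positive $\cI$-model structure as a left Bousfield localization of a simpler "positive projective" model structure on $\cS^{\cI}$. First, I would establish the \emph{positive projective} (or positive level) model structure in which a map $X \to Y$ is a weak equivalence (resp.\ fibration) if $X(\bld n) \to Y(\bld n)$ is a weak equivalence (resp.\ Kan fibration) of simplicial sets for every $n \geq 1$. This is a standard construction transferred from $\cS$: the generating (acyclic) cofibrations are obtained by applying the free functors $F_{\bld n}\colon \cS \to \cS^{\cI}$ (left adjoint to evaluation at $\bld n$) to the generating (acyclic) cofibrations of $\cS$ for all $n \geq 1$. Cofibrant generation, properness, and simpliciality are inherited from $\cS$ by standard arguments.

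Next, I would form a left Bousfield localization with respect to a set $S$ chosen so that the $S$-local equivalences are precisely the $\cI$-equivalences. A convenient choice is the set of maps $F_{\bld n}(\partial \Delta^k \subset \Delta^k)$ for all $n \geq 1$, $k\geq 0$, together with the maps $F_{\bld{m}}(\Delta^k) \to F_{\bld{n}}(\Delta^k)$ induced by the standard inclusions $\bld{m} \to \bld{n}$ for $1 \leq m \leq n$; these force the transition maps of fibrant $\cI$-spaces to become weak equivalences. Assuming left properness and cellularity of the positive projective structure (the latter is verified in Appendix~\ref{app:cellular}), Hirschhorn's existence theorem for left Bousfield localizations applies and produces a new model structure on $\cS^{\cI}$ with the same cofibrations. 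One then verifies that the $S$-local equivalences coincide with the $\cI$-equivalences: one direction is immediate because homotopy colimits take $S$-local equivalences to equivalences, and the other follows because an $\cI$-space is $S$-local precisely when its values are Kan complexes and its structure maps are weak equivalences in positive degrees, so that its homotopy colimit computes its "stable" value up to equivalence.

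The identification of the three classes with those in the proposition is then a matter of unwinding the localization. The cofibrations agree on the nose with the positive projective cofibrations, and these are exactly the maps with the LLP against levelwise acyclic fibrations in positive degrees, matching the definition of positive $\cI$-cofibration. For the fibrations, I would use the general characterization of fibrations in a left Bousfield localization: a map $X\to Y$ is a fibration if and only if it is a fibration in the underlying model structure and the square
\[
\xymatrix@-1pc{X \ar[r] \ar[d] & X^{\mathrm{loc}} \ar[d] \\ Y \ar[r] & Y^{\mathrm{loc}}}
\]
obtained from a fibrant replacement in the localization is homotopy cartesian. Unwinding this in our setting, fibrant replacement makes all structure maps $Z(\bld n) \to Z(\bld{n+1})$ weak equivalences for $n\geq 1$, and the homotopy cartesian condition on the square above translates, level by level, to the condition that every square
\[
\xymatrix@-1pc{X(\bld n) \ar[r] \ar[d] & X(\bld{n+1}) \ar[d] \\ Y(\bld n) \ar[r] & Y(\bld{n+1})}
\]
is homotopy cartesian for $n\geq 1$, which is exactly the condition defining a positive $\cI$-fibration.

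The main obstacle is the identification of the fibrations: the general Bousfield-localization machinery immediately characterizes fibrant \emph{objects} and gives an abstract characterization of fibrations, but translating the latter into the concrete homotopy-cartesian square condition stated in the proposition requires some care. Specifically, one needs to argue that the global homotopy pullback condition against a fibrant replacement is detected levelwise by comparing neighboring values $\bld n$ and $\bld{n+1}$, which uses that the morphisms $\bld n \to \bld{n+1}$ generate (up to conjugation by symmetries) the relevant transitions in $\cI$ and that homotopy pullbacks of Kan complexes are computed pointwise. Once this is in place, the three classes match the definitions, and the proposition follows from Hirschhorn's theorem.
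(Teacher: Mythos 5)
Your overall strategy --- first build the positive projective (level) model structure on $\cS^{\cI}$ and then left Bousfield localize so that the local objects become the $\cI$-spaces that are levelwise fibrant and homotopy constant in positive degrees --- is a legitimate route, and it genuinely differs from the one in the cited reference \cite[Proposition~3.2]{Sagave-S_diagram}, which follows the MMSS pattern: one writes down an explicit set of generating acyclic cofibrations (mapping-cylinder factorizations of the maps $F^{\cI}_{\bld n}(*)\to F^{\cI}_{\bld m}(*)$ induced by $\bld m\to\bld n$, pushout-producted with the generating cofibrations of $\cS$), verifies by hand that the maps with the right lifting property against this set are exactly the positive $\cI$-fibrations, and then checks the hypotheses of the recognition theorem, the key point being that relative cell complexes built from these maps are $\cI$-equivalences. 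However, as written your proposal has two genuine problems.

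First, your localizing set $S$ is wrong: it includes the generating cofibrations $F^{\cI}_{\bld n}(\partial\Delta^k\to\Delta^k)$, which are \emph{not} $\cI$-equivalences (since $F^{\cI}_{\bld n}(K)_{h\cI}\simeq K$, localizing at these would force every local object to be levelwise weakly contractible and would destroy the intended weak equivalences). Only the comparison maps between the free objects belong in $S$, and with the opposite variance from what you wrote: a morphism $\bld m\to\bld n$ induces $F^{\cI}_{\bld n}(K)\to F^{\cI}_{\bld m}(K)$, and it is localization at these maps (for $1\le m\le n$) that forces the structure maps $W(\bld m)\to W(\bld n)$ of a local object $W$ to be weak equivalences. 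Second, and more seriously, the identification of the fibrations is exactly the part that Hirschhorn's existence theorem does not give you. In a general left Bousfield localization the fibrations are \emph{not} characterized by ``underlying fibration plus homotopy cartesian square against a local fibrant replacement''; that characterization requires right properness of the localized structure (or that the target be local), which is not automatic and is in effect part of what is being proved here. To get the concrete description of the positive $\cI$-fibrations you either need the explicit generating acyclic cofibrations of the direct approach, or you need to run the Bousfield--Friedlander/Bousfield $Q$-structure machinery (with, say, $QX=\const_{\cI}(\Ex^{\infty}X_{h\cI})$), whose axioms are designed precisely to yield that fibration characterization. Finally, the identification of the $S$-local equivalences with the $\cI$-equivalences also needs the homotopy cofinality of $\cI_{\ge 1}\subset\cI$ and the computation of derived mapping spaces into local objects; you gesture at this but it is a real step, not a formality.
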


We refer to this model structure as the \emph{positive $\cI$-model
  structure} (omitting the additional attribute ``projective'' used for it
in \cite{Sagave-S_diagram}). By definition, an $\cI$-space $X$ is positive
$\cI$-fibrant if and only if for all $n\geq 1$ the spaces $X(\bld n)$ are Kan complexes and the maps $ X(\bld{n}) \to X(\bld{n+1})$ weak equivalences. The positive 
$\cI$-cofibrant objects have an explicit description in terms of latching spaces: The $n$th
latching space of an $\cI$-space $X$ is defined by 
$L_{\bld{n}}(X) =\colim_{(\bld{m}\to\bld{n})\in\partial(\cI\downarrow\bld{n})}X(\bld{m})$, where $\partial(\cI\downarrow\bld{n})$ 
is the full subcategory of the comma category $(\cI\downarrow\bld{n})$ 
with objects the non-isomorphisms.  It follows from 
\cite[Proposition~6.8]{Sagave-S_diagram} that 
$X$ is positive $\cI$-cofibrant if and only if $X(\bld{0})=\emptyset$, the canonical map $L_{\bld{n}}(X) \to X(\bld{n})$ is a cofibration for $n\geq 1$, and the symmetric group $\Sigma_n$ acts freely on the complement of the image of this map.

\begin{remark}\label{rem:absolute-model-structure}
There also is an ``absolute'' $\cI$-model structure on $\cS^{\cI}$ in which the weak equivalences are again the $\cI$-equivalences, but where the requirement for a map to be a fibration has been strengthened to hold at all levels 
(see \cite[Section~3]{Sagave-S_diagram}).
This model structure is Quillen equivalent to the positive $\cI$-model structure, but contrary to the latter, it does not lift to a model structure on commutative monoids. 
\end{remark}

It is proved in \cite[Theorem~3.3]{Sagave-S_diagram}
that the adjunction 
$\colim_{\cI} \colon \cS^{\cI} \rightleftarrows \cS : \! \const_{\cI}$   
defines a Quillen equivalence with respect to the positive $\cI$-model structure on $S^{\cI}$ and the standard model structure on $\cS$. The homotopy colimit of an $\cI$-space $X$ represents the ``derived'' colimit, and we view $X_{h\cI}$ as the underlying space of $X$. 

\begin{lemma}\label{lem:hI-of-homotopy-constant}
  If the $\cI$-space $X$ is positive $\cI$-fibrant, then the  map $X(\bld{k}) \to X_{h\cI}$ (induced by $\{\bld k\}\to\cI$) is a weak equivalence for $k \geq 1$.
\end{lemma}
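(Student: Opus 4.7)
The plan is to compare $X_{h\cI}$ with the homotopy colimit over the full subcategory $\cI_{\geq 1} \subset \cI$ spanned by the objects $\bld n$ with $n \geq 1$, where the positive $\cI$-fibrancy can be exploited. The argument proceeds in three steps: first, show that the restriction $X|_{\cI_{\geq 1}}$ is homotopy constant, meaning every morphism induces a weak equivalence on $X$; second, show that $B\cI_{\geq 1}$ is contractible; third, show that the inclusion $j\colon\cI_{\geq 1} \hookrightarrow \cI$ is homotopy cofinal. Together these yield a chain of weak equivalences $X(\bld k) \xrightarrow{\sim} X_{h\cI_{\geq 1}} \xrightarrow{\sim} X_{h\cI}$.

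For homotopy constancy, every morphism $f\colon \bld m \to \bld n$ in $\cI_{\geq 1}$ factors as $\sigma \circ \iota_{m,n}$, where $\iota_{m,n}\colon \bld m \to \bld n$ is the composite of the consecutive standard inclusions $\bld k \to \bld{k+1}$ and $\sigma \in \Sigma_n$ is a permutation. Applying the positive $\cI$-fibrancy condition to $X \to *$ reduces the homotopy cartesian square condition to saying that each $X(\bld k) \to X(\bld{k+1})$ is a weak equivalence for $k \geq 1$, and permutations act by isomorphisms because they are invertible in $\cI$. For contractibility of $B\cI_{\geq 1}$, I use the endofunctor $T(\bld n) = \bld n \concat \bld 1$ together with the natural transformations $\id \Rightarrow T$ and $C_{\bld 1} \Rightarrow T$ induced by the two canonical inclusions $\bld n \to \bld n \concat \bld 1$ and $\bld 1 \to \bld n \concat \bld 1$ respectively (here $C_{\bld 1}$ is the constant functor at $\bld 1$, and naturality of the second transformation holds because concatenating with $\id_{\bld 1}$ preserves the last element). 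These produce simplicial homotopies $\id \simeq T \simeq C_{\bld 1}$ on the nerve, so $B\cI_{\geq 1}$ is contractible.

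For homotopy cofinality of $j$, I verify that each under-category $\bld n \downarrow j$ has contractible nerve: when $n \geq 1$ this category is isomorphic to $\bld n \downarrow \cI$ and has the initial object $(\bld n,\id)$; when $n = 0$, uniqueness of the empty map makes $\bld 0 \downarrow j$ isomorphic to $\cI_{\geq 1}$, which is contractible by the previous step. The standard cofinality criterion then gives $X_{h\cI_{\geq 1}} \xrightarrow{\sim} X_{h\cI}$. Finally, for the homotopy constant diagram $X|_{\cI_{\geq 1}}$ over a category with contractible nerve, the Bousfield--Kan projection $X_{h\cI_{\geq 1}} \to B\cI_{\geq 1}$ has homotopy fibres weakly equivalent to $X(\bld k)$; combined with $B\cI_{\geq 1} \simeq *$ this yields $X(\bld k) \xrightarrow{\sim} X_{h\cI_{\geq 1}}$. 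The main subtlety is Step 2, since without contractibility of $B\cI_{\geq 1}$ the cofinality argument at $\bld 0$ fails; the remaining pieces are either immediate from the definitions or reduce to standard cofinality and fibre-sequence results for Bousfield--Kan homotopy colimits.
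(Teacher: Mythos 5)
Your proof is correct and follows essentially the same route as the paper's: restrict to the full subcategory $\cI_{\geq 1}$, use homotopy cofinality of the inclusion, and then apply the standard fibre-sequence result for a homotopy-constant diagram over a category with contractible nerve (the paper cites \cite[Lemma IV.5.7]{Goerss-J_simplicial} for this last step). You have merely filled in the details the paper leaves implicit — the factorization of morphisms through standard inclusions and permutations, the contraction of $B\cI_{\geq 1}$ via the shift functor, and the comma-category check for cofinality — all of which are correct.
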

\begin{proof}
  Let $\cI_{\geq 1}$ be the full subcategory of $\cI$ with objects
  $\{\bld{n}\,|\,n\geq 1\}$. It is easy to see that the inclusion $\iota
  \colon \cI_{\geq 1} \to \cI$ is homotopy cofinal so that
  $(\iota^*X)_{h\cI_{\geq 1}} \to X_{h\cI}$ is a weak equivalence.
  Because $B \cI_{\geq 1}$ is contractible,~\cite[Lemma
  IV.5.7]{Goerss-J_simplicial} implies the claim.
\end{proof}

\subsection{Semistable \texorpdfstring{$\cI$}{I}-spaces}
\label{subsec:semistability}
Let $\cN$ be the subcategory of $\cI$ whose morphisms are the subset
inclusions. Thus, $\cN$ may be identified with the ordered set of
non-negative integers. We say that $X\to Y$ is an \emph{
  $\cN$-equivalence} if the induced map $X_{h\cN}\to Y_{h\cN}$ is a
weak homotopy equivalence, where $(-)_{h\cN}$ is the homotopy colimit
over $\cN$.  The following important
observation is due to J. Smith and first appeared in~\cite[Proposition 2.2.9]{Shipley_THH}.

\begin{proposition}
  A map of $\cI$-spaces which is an $\cN$-equivalence is also an
  $\cI$-equivalence.
\end{proposition}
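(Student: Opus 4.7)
The stated implication is one-directional; the converse fails in general. For instance, the unique map $\cI(\bld{1},-) \to \pt$ is an $\cI$-equivalence, since both sides have contractible homotopy colimits (the under-category $\bld{1} \downarrow \cI$ has initial object $(\bld 1, \mathrm{id})$, and $\cI$ itself has initial object $\bld 0$), but it is not an $\cN$-equivalence, as the $\cN$-homotopy colimit of $\cI(\bld 1,-)$ is an infinite discrete set. Any proof must therefore exploit a feature of $\cI$ that distinguishes the two directions, and the inclusion $\cN \hookrightarrow \cI$ is not homotopy cofinal.

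My plan follows the strategy of J. Smith in the analogous setting of symmetric spectra, as presented in Shipley's paper~\cite[Proposition~2.2.9]{Shipley_THH}, via a reduction to the ``semistable'' case. Call an $\cI$-space $Z$ \emph{semistable} if the natural map $Z_{h\cN} \to Z_{h\cI}$ is a weak equivalence, so that $\cN$- and $\cI$-equivalences coincide between semistable objects. A positive $\cI$-fibrant $\cI$-space $Z$ is semistable: Lemma~\ref{lem:hI-of-homotopy-constant} gives $Z_{h\cI} \simeq Z(\bld 1)$, and a mapping-telescope argument using that $Z(\bld n) \to Z(\bld{n+1})$ is a weak equivalence for $n \geq 1$ identifies $Z_{h\cN}$ with the mapping cylinder of $Z(\bld 0) \to Z(\bld 1)$, which is weakly equivalent to $Z(\bld 1)$ as well.

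The plan is then to construct, for each $\cI$-space $X$, a natural transformation $X \to \tilde X$ that is simultaneously an $\cN$-equivalence \emph{and} an $\cI$-equivalence, with $\tilde X$ semistable. Given such a construction, if $f \colon X \to Y$ is an $\cN$-equivalence, then functoriality and 2-out-of-3 on $\cN$-equivalences make $\tilde f \colon \tilde X \to \tilde Y$ an $\cN$-equivalence of semistable objects, hence an $\cI$-equivalence by semistability. Two further applications of 2-out-of-3, now for $\cI$-equivalences applied to the square with vertical maps $X \to \tilde X$ and $Y \to \tilde Y$, then yield that $f$ is an $\cI$-equivalence.

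The main obstacle is building $X \to \tilde X$ with all three required properties at once. A standard positive $\cI$-fibrant replacement delivers the $\cI$-equivalence and semistability conditions but may fail to be an $\cN$-equivalence, as the counterexample $\cI(\bld 1,-) \to \pt$ from the first paragraph shows. A successful construction must therefore be tailored: one natural candidate is to combine a levelwise Kan fibrant replacement at positive levels with a homotopy colimit or telescope construction along the subcategory $\cN_{\geq n}$, designed to preserve the $\cN$-homotopy type while producing a positive $\cI$-fibrant output. Verifying that such a construction simultaneously satisfies the $\cN$-equivalence, $\cI$-equivalence, and semistability conditions is the technical heart of the argument.
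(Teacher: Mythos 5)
Your reduction has a fatal flaw: the natural transformation $X \to \tilde X$ you need cannot exist. If $X \to \tilde X$ is simultaneously an $\cN$-equivalence and an $\cI$-equivalence and $\tilde X$ is semistable, then in the commutative square comparing the canonical maps $X_{h\cN} \to X_{h\cI}$ and $\tilde X_{h\cN}\to\tilde X_{h\cI}$ (with vertical arrows induced by $X\to\tilde X$), three of the four arrows are weak equivalences, so by 2-out-of-3 the map $X_{h\cN}\to X_{h\cI}$ is one as well — that is, $X$ was already semistable. Since not every $\cI$-space is semistable (your own example $\cI(\bld 1,-)$ is not, nor is $X^{\bullet}$ for a based connected space $X$), no such replacement functor exists, and the step you defer as "the technical heart of the argument" is not merely unverified but impossible. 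The logical shape of your reduction silently presupposes the conclusion in the strongest possible form.

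The actual argument (following Smith, as recorded in Shipley's Proposition 2.2.9) avoids any semistable replacement. One adjoins to $\cI$ an object $\omega$ (the countably infinite set, with injections as morphisms) to form $\cI_{\omega}$, sets $M=\cI_{\omega}(\omega,\omega)$, and takes the homotopy left Kan extension $L_hX$ of $X$ along $\cI\to\cI_{\omega}$. Homotopy cofinality arguments give natural weak equivalences $(L_hX)(\omega)\simeq X_{h\cN}$ and $X_{h\cI}\simeq \bigl((L_hX)(\omega)\bigr)_{hM}$. The point is that $X_{h\cI}$ is thereby exhibited as the homotopy colimit, over the fixed one-object category $M$, of a diagram whose underlying space is $X_{h\cN}$; since homotopy colimits over a fixed small category preserve objectwise weak equivalences, an $\cN$-equivalence $X\to Y$ induces a weak equivalence $X_{h\cI}\to Y_{h\cI}$. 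Note that this explains the asymmetry you correctly identified at the outset: the $\cN$-homotopy colimit determines the $\cI$-homotopy colimit (after remembering the $M$-action), but not conversely.
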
  
\begin{proof}
  Let $\omega$ be the set of natural numbers and let $\cI_{\omega}$ be
  the category $\cI$ adjoint the additional object $\omega$ and with morphisms 
  the injective maps. We write
  $M=\cI_{\omega}(\omega,\omega)$ for the endomorphism monoid of
  $\omega$ and view it as a full subcategory of $\cI_{\omega}$.

  For an $\cI$-space $X$, let $L_hX$ be its homotopy left Kan
  extension along the inclusion $\cI \to \cI_{\omega}$. Then $M$ acts
  on $L_hX(\omega)$ and the homotopy cofinality arguments given in the
  proof of~\cite[Proposition 2.2.9]{Shipley_THH} imply that there are
  natural weak equivalences
    \[ X_{h\cI} \simeq (L_h X)_{h\cI_{\omega}} \simeq (L_h X(\omega))_{hM} \quad
  \textrm{ and } \quad (L_h X)(\omega) \simeq X_{h\cN}.\] So if $X \to
  Y$ is an $\cN$-equivalence, then $(L_h X)(\omega) \to (L_hY)(\omega)$ is
  a weak equivalence and hence $X_{h\cI} \to Y_{h\cI}$ is a weak
  equivalence.
\end{proof}

\begin{definition}\label{def:semistability}
  An $\cI$-space $X$ is \emph{semistable} if any $\cI$-equivalence $X\to X'$ with
  $X'$ positive $\cI$-fibrant is an $\cN$-equivalence.
\end{definition}

\begin{remark}
 The $\cN$-equivalences can be viewed as the $\cI$-space analogues of
the $\pi_*$-isomorphisms of symmetric spectra. From this point of view, the above definition is the $\cI$-space analogue of semistability for
symmetric spectrum (see \cite[Section~5.6]{HSS}).
\end{remark}

As we shall see below, there are several equivalent formulations of
semistability. Consider the functor $\eins \sqcup(-)\colon \cI\to\cI$
that takes $\mathbf n$ to $\eins\sqcup \mathbf n$, and let $R$ be the
induced functor
\begin{equation}\label{eq:functor-R}
R\colon \cS^{\cI} \to \cS^{\cI},\quad RX=X(\eins\sqcup(-)).
\end{equation}
This is analogous of the functor $R$ for symmetric spectra  from~\cite[Section 3.1]{HSS}. 
\begin{lemma}
  The functor $R\colon \cS^{\cI}\to\cS^{\cI}$ preserves
  $\cN$-equivalences.
\end{lemma}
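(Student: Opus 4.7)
The plan is to compare $(RX)_{h\cN}$ and $X_{h\cN}$ through a canonical natural weak equivalence, and then deduce the statement by two-out-of-three.

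First I would observe that the restriction of $RX$ to $\cN$ coincides with the composite $(X|_{\cN})\circ s$, where $s\colon \cN\to\cN$ is the shift functor $\bld n\mapsto \eins\sqcup\bld n=\bld{n+1}$. Here one just has to check that a subset inclusion $\bld n\hookrightarrow\bld m$ is sent by $\eins\sqcup(-)$ to the same map $\bld{n+1}\to\bld{m+1}$ as it gets under $s$, which is immediate from the description of concatenation in $\cI$. The natural transformation $\iota\colon \id_{\cN}\Rightarrow s$ whose component at $\bld n$ is the inclusion $\bld n\hookrightarrow \bld{n+1}$ then induces a natural map of $\cN$-diagrams $X|_{\cN}\to (RX)|_{\cN}$ and hence a natural map
\[ \alpha_X\colon X_{h\cN} \to (RX)_{h\cN}. \]

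Next I would argue that $\alpha_X$ is always a weak equivalence. Since $\cN$ is the poset $\mathbb{N}$, this amounts to the standard fact that any cofinal subsequence of a sequential homotopy colimit computes the same homotopy type; concretely, $s$ is a homotopy cofinal endofunctor of $\cN$ (its image $\{\bld 1,\bld 2,\dots\}$ meets every over-category nontrivially), so by a standard application of Quillen's Theorem A (or by direct inspection of the mapping telescope) the induced map $(X|_{\cN}\circ s)_{h\cN}\to (X|_{\cN})_{h\cN}$ admits $\alpha_X$ as a section up to a natural homotopy and is a weak equivalence, forcing $\alpha_X$ to be one as well.

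Finally, given an $\cN$-equivalence $f\colon X\to Y$, I would contemplate the commutative square
\[
\xymatrix{
X_{h\cN} \ar[r]^-{\alpha_X} \ar[d]_{f_{h\cN}} & (RX)_{h\cN} \ar[d]^{(Rf)_{h\cN}} \\
Y_{h\cN} \ar[r]^-{\alpha_Y} & (RY)_{h\cN}.
}
\]
By the previous step the two horizontal maps are weak equivalences, and by hypothesis so is the left vertical map; two-out-of-three then makes $(Rf)_{h\cN}$ a weak equivalence, so $Rf$ is an $\cN$-equivalence as claimed. The only genuine point to verify is the cofinality of $s$, which is routine, so no real obstacle is expected.
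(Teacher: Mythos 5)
Your argument is correct and is essentially the paper's: both proofs identify the restriction of $RX$ to $\cN$ with a shift of $X|_{\cN}$, use homotopy cofinality of that shift to produce a natural weak equivalence between $(RX)_{h\cN}$ and $X_{h\cN}$, and conclude by two-out-of-three. The only cosmetic difference is that you realize the comparison as a map $X_{h\cN}\to (RX)_{h\cN}$ induced by the natural transformation $\id_{\cN}\Rightarrow s$, whereas the paper uses the canonical cofinality map $(RX)_{h\cN}\to X_{h\cN}$ in the other direction.
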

\begin{proof}
  The restriction of $RX$ to an $\cN$-diagram may be identified
  with the restriction of $X$ to the category $\cN_{\geq 1}$ of
  positive integers. By cofinality we therefore have a weak homotopy
  equivalence $RX_{h\cN}\to X_{h\cN}$ which is natural when we view both
  sides as functors from $\cS^{\cI}$ to $\cS$. This implies the result.
\end{proof} 
The weak equivalence $RX_{h\cN}\to X_{h\cN}$ in the above proof is not
induced by a map of $\cI$-spaces; hence it does not follow that $RX$
and $X$ are $\cN$-equivalent in general. 

Let $j_X\colon X\to RX$ be
the map of $\cI$-spaces induced from the morphisms $\mathbf n\to\mathbf
1\sqcup\mathbf n$ and let $R^{\infty}X$ be the homotopy colimit of the
sequence of $\cI$-spaces
\[
X\xrightarrow{j_X}RX\xrightarrow{R(j_X)}R^2X\to\dots \to R^kX\xrightarrow{R^k(j_X)} R^{k+1}X \to \dots.
\]
Explicitly, $R^kX(\bld{n})=X(\bld{k}\concat\bld{n})$ and the level
maps of $R^k(j_X)$ are the maps
\[
R^k(j_X)(\bld n)\colon X(\bld{k}\concat\bld{n})\to X(\bld{1}\concat\bld{k}\concat\bld{n})
\]
induced by the morphisms $\mathbf k\sqcup\mathbf n\to\eins\sqcup\mathbf k
\sqcup\mathbf n$.

\begin{proposition}\label{semistabilityprop}
The following conditions on an $\cI$-space $X$ are equivalent.
\begin{enumerate}[(i)]
\item
The canonical map $X_{h\cN}\to X_{h\cI}$ is a weak equivalence.
\item
$X$ is semistable.
\item
The map $j_X\colon X\to RX$ is an $\cN$-equivalence.
\item The map $X\to R^{\infty}X$ is an $\cN$-equivalence and the structure maps of $R^{\infty}X$ are weak equivalences.
\end{enumerate}
\end{proposition}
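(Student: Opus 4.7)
The plan is to establish the cycle of implications $(i)\Rightarrow (ii)\Rightarrow (iii)\Rightarrow (iv)\Rightarrow (i)$. Two preparatory facts about a positive $\cI$-fibrant $Y$ are central. First, the natural map $Y_{h\cN}\to Y_{h\cI}$ is a weak equivalence, since both sides are canonically equivalent to $Y(\bld 1)$: the right-hand equivalence is Lemma~\ref{lem:hI-of-homotopy-constant}, and the left-hand one comes from cofinality of $\cN_{\geq 1}\hookrightarrow\cN$ together with the fact that the positive structure maps of $Y$ are weak equivalences. Second, $R$ preserves positive $\cI$-fibrancy (since $RY(\bld n)=Y(\bld{n+1})$), and for such $Y$ the map $j_Y\colon Y\to RY$ is a positive level-wise weak equivalence: at each $\bld n$ with $n\geq 1$, $j_Y(\bld n)$ factors as $Y$ of the standard inclusion $\bld n\to\bld{n+1}$ composed with $Y$ of the permutation relating the two injections $\bld n\rightrightarrows\bld{n+1}$, which are respectively a weak equivalence and a simplicial automorphism. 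In particular $j_Y$ is an $\cN$-equivalence.

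For $(i)\Rightarrow(ii)$, let $X\to X'$ be an $\cI$-equivalence with $X'$ positive $\cI$-fibrant and consider the natural square
\[\xymatrix{X_{h\cN} \ar[r] \ar[d] & X'_{h\cN} \ar[d]^{\simeq} \\ X_{h\cI} \ar[r]^{\simeq} & X'_{h\cI}}\]
whose right vertical is a weak equivalence by the first fact above and whose bottom is a weak equivalence by assumption. Condition~$(i)$ makes the left vertical a weak equivalence, so by two-out-of-three the top is too, i.e., $X\to X'$ is an $\cN$-equivalence. For $(ii)\Rightarrow (iii)$, take a positive $\cI$-fibrant replacement $X\to X'$. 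By $(ii)$ it is an $\cN$-equivalence, so by the Lemma preceding this Proposition its image $RX\to RX'$ under $R$ is also an $\cN$-equivalence. The map $j_{X'}$ is an $\cN$-equivalence by the second preparatory fact, and two-out-of-three in the naturality square for $j$ then forces $j_X$ to be an $\cN$-equivalence.

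For $(iii)\Rightarrow(iv)$, iteration combined with $R$ preserving $\cN$-equivalences shows that each $j_{R^kX}=R^k(j_X)$ is an $\cN$-equivalence, so $X\to R^\infty X$ is an $\cN$-equivalence as a sequential homotopy colimit of $\cN$-equivalences. To show the structure maps of $R^\infty X$ are weak equivalences, observe that $R^\infty X(\bld n)=\hocolim_k X(\bld k\sqcup\bld n)$ and that the $\cN$-inclusion $\bld n\to\bld{n+1}$ induces at stage $k$ the map $X(\bld k\sqcup\bld n)\to X(\bld k\sqcup\bld{n+1})$; using the symmetric monoidal symmetry in $\cI$ to identify $\bld k\sqcup\bld{n+1}\cong\bld{k+1}\sqcup\bld n$, this map agrees up to an $\cI$-automorphism with the telescope transition $X(\bld k\sqcup\bld n)\to X(\bld{k+1}\sqcup\bld n)$, and passing to the sequential colimit absorbs this shift of indices into a weak equivalence.

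For $(iv)\Rightarrow(i)$, note that $X\to R^\infty X$ is also an $\cI$-equivalence because $\cN$-equivalences are $\cI$-equivalences by the Proposition preceding Definition~\ref{def:semistability}; hence $X_{h\cN}\simeq (R^\infty X)_{h\cN}$ and $X_{h\cI}\simeq (R^\infty X)_{h\cI}$. Since the $\cN$-structure maps of $R^\infty X$ are weak equivalences and all permutations act as simplicial automorphisms, every morphism in $\cI_{\geq 1}$ acts by a weak equivalence on $R^\infty X$. The cofinality of $\cI_{\geq 1}\hookrightarrow\cI$ together with the contractibility of $B\cI_{\geq 1}$ (as in the proof of Lemma~\ref{lem:hI-of-homotopy-constant}) yields $(R^\infty X)_{h\cI}\simeq R^\infty X(\bld 1)$, and the telescope of weak equivalences indexed by $\cN_{\geq 1}$ yields $(R^\infty X)_{h\cN}\simeq R^\infty X(\bld 1)$, so $(i)$ follows. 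The main obstacle in this plan is the structure-maps claim in $(iii)\Rightarrow(iv)$: it requires making precise the shift-invariance argument that uses explicit permutations in $\cI$ to trade a shift in the indexing variable $\bld n$ of $R^\infty X$ for a shift in the telescope index $k$, a step analogous to the semistability arguments for symmetric spectra.
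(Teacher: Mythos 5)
Your proof is correct and follows essentially the same route as the paper's: the same cycle $(i)\Rightarrow(ii)\Rightarrow(iii)\Rightarrow(iv)\Rightarrow(i)$, the same use of positive $\cI$-fibrancy to identify $X'_{h\cN}\simeq X'(\bld 1)\simeq X'_{h\cI}$, and the same permutation trick for the structure maps of $R^{\infty}X$. The one step you flag as needing precision is exactly what the paper supplies: the commuting ladder built from the permutations $\sigma_k(s)=k+1-s$, which intertwine the subset inclusions with the shift maps $s\mapsto s+1$ and give an isomorphism of $\cN$-diagrams identifying $(R^nX)_{h\cN}$ with $(R^{\infty}X)(\bld n)$, under which the structure map of $R^{\infty}X$ corresponds to the $\cN$-equivalence coming from $R^n(j_X)$.
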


\begin{proof}
  To see that (i) implies (ii), suppose that $X\to X'$ is an
  $\cI$-equivalence with $X'$ positive $\cI$-fibrant and consider the
  commutative diagram
  \[
  \xymatrix@-1pc{ X_{h\cN} \ar[d] \ar[r] & X_{h\cI}  \ar[d] \\
    X'_{h\cN} \ar[r] & X'_{h\cI}.}
  \]
  The upper horizontal map is a weak equivalence by assumption. The
  bottom horizontal map is a weak equivalence since the fibrancy
  condition on $X'$ implies that both spaces are weakly equivalent to
  $X'(\bld{1})$. Therefore the vertical map on the left is a weak
  equivalence if and only if the map on the right is. 

  Next we show that (ii) implies (iii). Choose an $\cI$-equivalence
  $X\to X'$ with $X'$ positive $\cI$-fibrant (a fibrant replacement)
  and consider the diagram
  \[
  \xymatrix@-1pc{ X_{h\cN} \ar[d] \ar[r] &   RX_{h\cN} \ar[d] \\
    X'_{h\cN}\ar[r] & RX'_{h\cN}. }
  \]
  Here the vertical maps are weak equivalences since $X$ is assumed to
  be semistable and $R$ preserves $\cN$-equivalences. The horizontal
  map on the bottom is induced by a positive levelwise weak
  equivalence, hence is itself a weak equivalence and (iii) follows.
 
  In order to show that (iii) implies (iv) we first observe that there
  are commutative diagrams in $\cI$ of the form
  \[
  \xymatrix@-.5pc{ \mathbf n \ar[r]^-{i} \ar[d]_{\sigma_n} & \mathbf
    {n+1}\ar[d]_{\sigma_{n+1}} \ar[r]^-{i} &
    \mathbf {n+2} \ar[r]^-{i} \ar[d]_{\sigma_{n+2}}& \dots \\
    \mathbf n \ar[r]^-{j} & \mathbf {n+1} \ar[r]^-{j} & \mathbf {n+2}
    \ar[r]^-{j}
    & \dots \\
  }\]
  where $i$ denote the subset inclusions, the maps $j$ are defined by
  $s\mapsto s+1$, and $\sigma_k$ is the permutation of $\mathbf k$
  that maps $s$ to $k+1-s$. By definition, $(R^nX)_{h\cN}$ is the
  homotopy colimit of the $\cN$-diagram obtained by evaluating $X$ on
  the upper sequence and $(R^{\infty}X)(\bld{n})$ is the homotopy colimit of
  the $\cN$-diagram obtained by evaluating $X$ on the bottom sequence.
  It follows that there is a commutative diagram of spaces for each
  $n$,
  \[
  \xymatrix@-1pc{(R^nX)_{h\cN} \ar[d]_{j} \ar[r]^-{\sigma} &
    (R^{\infty}X)(\bld{n}) \ar[d]^{i} \\
    (R^{n+1}X)_{h\cN}\ar[r]^-{\sigma} & (R^{\infty}X)(\bld{n+1})}\]
  where the horizontal maps are isomorphisms. The condition in (iii)
  therefore implies that the structure maps of $R^{\infty}X$ are weak
  equivalences.  We also observe that $(R^{\infty}X)_{h\cN}$
  may be identified with the homotopy colimit of the $\cN\times
  \cN$-diagram $(\mathbf m,\mathbf n)\mapsto R^mX(\bld{n})$. Evaluating the
  homotopy colimits in the $n$-variable first we get the $\cN$-diagram
  $\mathbf m\mapsto (R^mX)_{h\cN}$ and there is a commutative diagram
  of spaces
  \[
  \xymatrix@-1pc{
    & X_{h\cN}\ar[dl] \ar[dr] &\\
    {\displaystyle\hocolim_{m}}R^mX_{h\cN} \ar[rr]^{\sim}& &
    R^{\infty}X_{h\cN}}\]
  where the horizontal map is an isomorphism. The assumption in (iii)
  implies that $\mathbf m\mapsto (R^mX)_{h\cN}$ is a diagram of weak
  homotopy equivalences and (iv) follows. 

  Finally, assuming (iv) there is a commutative diagram
  \[
  \xymatrix@-1pc{
    X_{h\cN} \ar[r]\ar[d]&  X_{h\cI}\ar[d]\\
    R^{\infty}X_{h\cN}\ar[r] & R^{\infty}X_{h\cI} }
  \]
  where the vertical maps are weak equivalences by assumption and the
  bottom horizontal map is a weak equivalence since the structure maps of 
  $R^{\infty}X$ are weak equivalences. Therefore, (iv) implies~(i).
\end{proof}
\begin{remark}
Referring to the ``absolute'' $\cI$-model structure discussed in 
Remark~\ref{rem:absolute-model-structure}, 
one will get an equivalent definition of semistability and the same conclusions as in Proposition~\ref{semistabilityprop} if one requires the $\cI$-space $X'$ in 
Definition~\ref{def:semistability} to be absolute $\cI$-fibrant instead of merely positive 
$\cI$-fibrant. 
\end{remark}

The previous proposition allows us to give a quick proof of B\"okstedt's approximation lemma for homotopy colimits over $\cI$.

\begin{corollary}
Let $X$ be an $\cI$-space and suppose that there exists an unbounded,
non-decreasing sequence of integers $\{\lambda_k | k \geq0\}$ such that any morphism $\bld{m} \to \bld{n}$ in $\cI$ with $m \geq k$ induces a 
$\lambda_k$-connected map $X(\bld{m})\to X(\bld{n})$. Then the canonical map $X(\bld n)\to X_{h\cI}$ is $\lambda_n$-connected for all $n\geq 0$.
\end{corollary}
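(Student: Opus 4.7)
The plan is to reduce the connectivity estimate for the canonical map $X(\bld n)\to X_{h\cI}$ to an analogous estimate for the sequential homotopy colimit $X_{h\cN}$, using Proposition~\ref{semistabilityprop} to identify the two up to weak equivalence.

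First I would analyze the restriction of $X$ along $\cN\to\cI$, which is the sequential diagram $X(\bld 0)\to X(\bld 1)\to \dots$ whose structure maps are induced by the subset inclusions $\bld m\hookrightarrow \bld{m+1}$. By the hypothesis applied with $k=m$, each of these structure maps is $\lambda_m$-connected; since $\{\lambda_k\}$ is non-decreasing, the classical connectivity estimate for a sequential homotopy colimit then shows that the canonical map $X(\bld n)\to X_{h\cN}$ is $\lambda_n$-connected.

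Next I would show that $X$ is semistable in order to compare $X_{h\cN}$ with $X_{h\cI}$. By the equivalence of (iii) and (i) in Proposition~\ref{semistabilityprop}, it suffices to check that $j_X\colon X\to RX$ is an $\cN$-equivalence. Its component at $\bld n$ is the map $X(\bld n)\to X(\bld 1\sqcup\bld n)=X(\bld{n+1})$ induced by the injection $\bld n\to\bld 1\sqcup\bld n$ defining $j_X$, and the hypothesis with $k=n$ shows that this component is $\lambda_n$-connected. Because $\lambda_n\to\infty$, restricting $j_X$ to $\cN$ yields a map of sequential diagrams whose levelwise connectivities tend to infinity and which therefore induces a weak equivalence on sequential homotopy colimits. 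Thus $j_X$ is an $\cN$-equivalence, so that the canonical map $X_{h\cN}\to X_{h\cI}$ is a weak equivalence.

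Combining the two steps, the canonical map $X(\bld n)\to X_{h\cI}$ factors through $X_{h\cN}$ as a $\lambda_n$-connected map followed by a weak equivalence, and is therefore $\lambda_n$-connected. I do not anticipate a genuine obstacle in this argument; the one point worth verifying explicitly is that the specific morphisms appearing in the two steps (the subset inclusions and the injections $\bld n\to\bld 1\sqcup\bld n$) are indeed particular instances of the blanket hypothesis on all morphisms of $\cI$, which they manifestly are.
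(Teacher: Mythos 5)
Your argument is correct and follows essentially the same route as the paper: verify criterion (iii) of Proposition~\ref{semistabilityprop} to get semistability (hence $X_{h\cN}\to X_{h\cI}$ is a weak equivalence via criterion (i)), and combine with the evident $\lambda_n$-connectivity of $X(\bld n)\to X_{h\cN}$. You merely spell out the two steps the paper leaves implicit.
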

\begin{proof}
The stated conditions on $X$ implies that $X$ satisfies the criterion (iii) in Proposition \ref{semistabilityprop}, hence is semistable. By the same proposition this in turn implies that $X_{h\cN}\to X_{h\cI}$ is a weak equivalence. The corollary follows from this since $X(\bld n)\to X_{h\cN}$ is clearly $\lambda_n$-connected.  
\end{proof}

\begin{remark}
An $\cI$-space satisfying the condition in the above corollary is said to be \emph{convergent}.  This condition played an
important role in B\"okstedt's original definition of topological
Hochschild homology~\cite{Boekstedt_THH}. The fact that 
$X_{h\cN}\to X_{h\cI}$ is a weak equivalence for $X$ semistable can be viewed as a generalization of B\"okstedt's approximation lemma. 
One of the reasons why the semistability condition is convenient is
that it is preserved under many standard operations on $\cI$-spaces. We shall see some examples of this in the following.
\end{remark}

If $X_{\bullet}$ is a simplicial $\cI$-space, we define its
realization $|X|$ to be the $\cI$-space with $|X|(\bld{n}) = \diag
X_{\bullet}(\bld{n})$. 

\begin{proposition}\label{prop:semistable-realization}
  Let $X_{\bullet}$ be a simplicial $\cI$-space which is semistable in
  each simplicial degree. Then the realization $|X_{\bullet}|$ is also
  semistable.
\end{proposition}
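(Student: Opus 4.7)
The plan is to verify criterion~(i) of Proposition~\ref{semistabilityprop}, namely that the canonical map $|X_{\bullet}|_{h\cN}\to |X_{\bullet}|_{h\cI}$ is a weak homotopy equivalence.

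First I would establish that both homotopy colimit functors commute with realization of simplicial $\cI$-spaces, in the form of natural isomorphisms $|X_{\bullet}|_{h\cN}\cong \diag\bigl([q]\mapsto (X_q)_{h\cN}\bigr)$ and $|X_{\bullet}|_{h\cI}\cong \diag\bigl([q]\mapsto (X_q)_{h\cI}\bigr)$. Unwinding the Bousfield--Kan formula given at the beginning of Section~\ref{sec:I-spaces}, each side of each isomorphism arises as the diagonal of one and the same trisimplicial set combining the simplicial direction of $X_\bullet$, the bar-construction direction of the homotopy colimit, and the internal simplicial direction of $X_q(\bld{n}_s)$; the identification is then a matter of relabelling coordinates and using that the diagonal of a trisimplicial set can be computed by diagonalising any two directions first.

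Next, by hypothesis every $X_q$ is semistable, so criterion~(i) applied in each simplicial degree gives that $(X_q)_{h\cN}\to (X_q)_{h\cI}$ is a weak equivalence for each $q$. This yields a levelwise weak equivalence of simplicial spaces, and passing to the diagonal takes such a map to a weak equivalence by the standard fact on bisimplicial sets. Combining with the identifications from the previous step, the canonical map $|X_{\bullet}|_{h\cN}\to |X_{\bullet}|_{h\cI}$ is a weak equivalence, so $|X_{\bullet}|$ satisfies criterion~(i) and is semistable.

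The only real obstacle is making the commutation of $(-)_{h\cN}$ and $(-)_{h\cI}$ with realization precise and checking that the comparison maps so obtained coincide with the canonical ones; this is routine combinatorics with diagonals of multisimplicial sets but must be spelled out. A viable alternative is to verify criterion~(iii) directly: the functor $R$ from \eqref{eq:functor-R} commutes with realization level-wise, so $j_{|X_\bullet|}\colon |X_\bullet|\to R|X_\bullet|=|RX_\bullet|$ is the realization of the simplicial map $j_{X_\bullet}$, which is a levelwise $\cN$-equivalence by semistability of each $X_q$, and hence an $\cN$-equivalence on realization by the same commutation of $(-)_{h\cN}$ with realization.
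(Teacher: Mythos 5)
Your proof is correct and follows essentially the same route as the paper: the paper also verifies criterion (i) of Proposition \ref{semistabilityprop} by commuting the homotopy colimits over $\cN$ and $\cI$ past the realization (it does so by citing the Bousfield--Kan map $\hocolim_{\Delta^{\op}}X_\bullet\to|X_\bullet|$ being a level equivalence, where you instead identify both sides directly as diagonals of the same trisimplicial set) and then applying (i) degreewise. The details you flag as needing to be spelled out are exactly the routine multisimplicial bookkeeping the paper delegates to Hirschhorn's Corollary 18.7.5.
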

\begin{proof}
  Since the Bousfield-Kan map $\hocolim_{\Delta^{\op}} X_{\bullet} \to
  |X|$ is a level equivalence of $\cI$-spaces by~\cite[Corollary
  18.7.5]{Hirschhorn_model}, this follows by commuting homotopy
  colimits using Proposition \ref{semistabilityprop}(i).
\end{proof}

\subsection{Flat \texorpdfstring{$\cI$}{I}-spaces }
Recall from Section \ref{sec:I-space-model-str} the latching
maps $L_{\bld{n}}(X) \to X(\bld{n})$ associated with an $\cI$-space $X$.
The below flatness condition is the $\cI$-space 
analogue of the $S$-cofibrant symmetric spectra introduced in~\cite[Definition 5.3.6]{HSS} (which are called 
\emph{flat} symmetric spectra in~\cite{Schwede_SymSp}). 
\begin{definition}
An $\cI$-space $X$ is \emph{flat} if the map $L_{\bld{n}}(X) \to X(\bld{n})$ is a cofibration of the underlying (non-equivariant) spaces for every $n\geq 0$.
\end{definition}
It is clear from the definition that every positive $\cI$-cofibrant $\cI$-space is flat. 

\begin{remark} It is a consequence of \cite[Proposition~3.10]{Sagave-S_diagram} 
that the flat $\cI$-spaces are the cofibrant objects in a
  \emph{flat model structure} on $\cS^{\cI}$ whose weak
  equivalences are the $\cI$-equivalences. Although some results
  from~\cite{Sagave-S_diagram} proven using the flat model structure are
  crucial ingredients for the present paper, we only need to
  consider the flat $\cI$-spaces here and refer
  to~\cite{Sagave-S_diagram} for details about the flat model
  structure.
\end{remark}

The following explicit flatness criterion from \cite{Sagave-S_diagram} is often convenient. 
\begin{proposition}[{\cite[Proposition~3.11]{Sagave-S_diagram}}]\label{prop:explicit-flatness}
An $\cI$-space $X$ is flat if and only if each morphism $\bld m\to \bld n$ induces a cofibration $X(\bld m)\to X(\bld n)$ and for each diagram of the following form (with maps induced by the evident order preserving morphisms)
\begin{equation}\label{eq:flat-criterion}
\xymatrix@-1pc{
X(\bld m)\ar[r]\ar[d] & X(\bld m\sqcup\bld n)\ar[d]\\
X(\bld l\sqcup\bld m)\ar[r] & X(\bld l\sqcup\bld m\sqcup\bld n)
}
\end{equation}
the intersection of the images of $X(\bld l\sqcup\bld m)$ and 
$X(\bld m\sqcup\bld n)$ in $X(\bld l\sqcup\bld m\sqcup\bld n)$ equals the image of $X(\bld m)$. \qed
\end{proposition}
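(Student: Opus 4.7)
Since a cofibration of simplicial sets is simply a monomorphism, flatness of $X$ amounts to injectivity of the latching map $L_{\bld n}(X)\to X(\bld n)$ for every $n\geq 0$. My plan is to analyze this map via the unique factorization in $\cI$ of every morphism $\alpha\colon\bld m\to\bld n$ as a permutation $\sigma\colon\bld m\to\bld m$ followed by the order-preserving inclusion $\iota_S\colon\bld m\to\bld n$ whose image is the subset $S=\alpha(\bld m)$.

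For a proper subset $S\subsetneq\bld n$ of cardinality $m$, write $Y_S$ for the image of $X(\iota_S)$ inside $X(\bld n)$. Because every pair $(\alpha,y)$ representing a point of the colimit defining $L_{\bld n}(X)$ is equivalent to $(\iota_S,X(\sigma)(y))$ for a suitable permutation $\sigma$, the category $\partial(\cI\downarrow\bld n)$ can be replaced by the poset $\cP(\bld n)\setminus\{\bld n\}$ of proper subsets: $L_{\bld n}(X)$ is identified with the colimit of the diagram $S\mapsto X(\bld{|S|})$ whose transition maps are induced by the order-preserving inclusions. Under condition (a) each $Y_S$ is a simplicial subset of $X(\bld n)$ and the image of the latching map is the pointwise union $\bigcup_{S}Y_S$. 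A degreewise check then shows that the induced surjection $L_{\bld n}(X)\to\bigcup_{S}Y_S$ is bijective iff $Y_{S_1}\cap Y_{S_2}=Y_{S_1\cap S_2}$ holds for every pair of proper subsets $S_1,S_2\subsetneq\bld n$.

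To conclude I would match this lattice intersection condition with the form of (\ref{eq:flat-criterion}). Given $S_1,S_2\subsetneq\bld n$, one decomposes $\bld n$ via an order-preserving bijection as a four-fold ordered disjoint union corresponding to the four regions of the Venn diagram of $\{S_1,S_2\}$; after applying the corresponding permutation of $\bld n$ one lands in exactly the situation of (\ref{eq:flat-criterion}), with the middle piece $\bld m=S_1\cap S_2$ and with $\bld l$ and (the other) $\bld n$ recording $S_1\setminus S_2$ and $S_2\setminus S_1$. Conversely the stated condition (b), combined with $\cI$-equivariance given by permutations and with (a), supplies the required intersection equality for arbitrary pairs of proper subsets.

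The main obstacle is the degreewise colimit identification in the second paragraph: one needs the essentially set-theoretic lemma that a diagram of monomorphisms indexed by a lattice has its colimit computed as the pointwise union precisely when the pairwise intersections of the images realize the meets in the lattice. Given this lemma, conditions (a) and (b) match the two hypotheses of the criterion (injectivity of each transition map, plus correct intersection of images), and the proposition follows.
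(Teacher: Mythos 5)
The paper does not actually prove this proposition; it imports it from \cite[Proposition~3.11]{Sagave-S_diagram}, so I am judging your argument on its own merits. Your first two reductions are correct and are the natural way to start: since every non-isomorphism $\bld m\to\bld n$ factors uniquely as a permutation followed by an order-preserving injection, the indexing category $\partial(\cI\downarrow\bld n)$ is equivalent to the poset of proper subsets $S\subsetneq\bld n$, so $L_{\bld n}(X)$ is the colimit of $S\mapsto X(\bld{|S|})$ over that poset; and, granting (a), the ``if'' half of your lattice lemma does give that the pairwise condition $Y_{S_1}\cap Y_{S_2}=Y_{S_1\cap S_2}$ forces $L_{\bld n}(X)\to X(\bld n)$ to be injective (any two representatives of a common point of $X(\bld n)$ are both identified with a representative over $S_1\cap S_2$). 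Together with the translation of \eqref{eq:flat-criterion} into the pairwise condition --- where you should note that a general pair $S_1,S_2$ has a fourth Venn region, the complement of $S_1\cup S_2$, which is not present in \eqref{eq:flat-criterion} and must be absorbed by first factoring through the injection $X(\bld{|S_1\cup S_2|})\to X(\bld n)$ supplied by (a) --- this establishes the implication (a)$+$(b)$\Rightarrow$ flat.

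The converse implication is where there is a genuine gap: the ``only if'' half of your key lemma is false. Take the poset of proper subsets of $\{1,2,3\}$ with $Y_\emptyset=\emptyset$ and $Y_S=\{y\}$ for every nonempty proper $S$, all transition maps being the evident monomorphisms. The nonempty proper subsets form a connected subposet, so the colimit is the single point $\{y\}$ and maps bijectively onto the union, yet $Y_{\{1\}}\cap Y_{\{2\}}=\{y\}\neq\emptyset=Y_{\{1\}\cap\{2\}}$. Thus injectivity of the latching map at level $\bld n$ alone does not yield the intersection condition at level $\bld n$: a zigzag in the colimit joining $(S_1,x_1)$ to $(S_2,x_2)$ only tells you that $y$ lies in $Y_{T}$ for the intermediate subsets $T$ of the zigzag, not that it lies in $Y_{S_1\cap S_2}$. (In the example above the corresponding $\cI$-space is of course not flat --- the failure is detected by $L_{\bld 2}$ --- which is exactly the point: the correct statement must invoke flatness at all levels $\le n$ simultaneously.) The same issue infects the unaddressed claim that flatness implies (a): the structure map $X(\bld m)\to X(\bld n)$ factors through $L_{\bld n}(X)$, but injectivity of $X(\bld m)\to L_{\bld n}(X)$ is not automatic and itself requires knowing the lower latching maps are monomorphisms. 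Both points are repaired by a single induction on $n$, proving at each stage that the structure maps into $X(\bld n)$ are injective and that the colimit over proper subsets is built by an iterated pushout along monomorphisms over the cardinality filtration, with the pairwise intersection identities extracted stage by stage; as written, your argument does not contain this induction, and the purely lattice-theoretic lemma cannot substitute for it.
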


There is a further characterization of flat $\cI$-spaces which is analogous to the characterization of flat symmetric spectra in~\cite{Schwede_SymSp}.  We say that a map of $\cI$-spaces $X \to Y$ is a \emph{level
  cofibration} if $X(\bld{n}) \to
Y(\bld{n})$ is a cofibration for every object $\bld{n}$.

\begin{lemma}\label{lem:flat-characterization}
An $\cI$-space $X$ is flat if and only if the functor $X\boxtimes(-)$ preserves
level cofibrations of $\cI$-spaces. 
\end{lemma}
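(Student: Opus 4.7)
The plan is to use the explicit flatness criterion (Proposition~\ref{prop:explicit-flatness}) as the operational characterization of flatness for both directions.

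For the forward direction, I assume $X$ is flat. Let $f\colon A \to B$ be a level cofibration of $\cI$-spaces. Using the coend formula
\[
(X \boxtimes A)(\bld n) = \int^{\bld p, \bld q} \cI(\bld p \sqcup \bld q, \bld n) \times X(\bld p) \times A(\bld q),
\]
one can describe $(X \boxtimes A)(\bld n)$ as a colimit organized by injections $\psi\colon \bld q \hookrightarrow \bld n$ (equivalently by subsets $T \subseteq \bld n$ together with an ordering of $T$), where each stratum consists of a copy of $X(\bld n \setminus T) \times A(\bld q)$ glued along the $X$- and $A$-structure maps. I would filter $(X \boxtimes A)(\bld n)$ by the size $|T|$ of the $A$-injection image; at each filtration stage the attaching map decomposes as a pushout-product of the $A$-contribution (a cofibration because $f$ is a level cofibration) with the $X$-contribution, whose cofibrancy is governed by the intersection condition of Proposition~\ref{prop:explicit-flatness}. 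Transfinite composition of these pushouts yields that $X \boxtimes f$ is a level cofibration.

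For the converse, I assume $X \boxtimes (-)$ preserves level cofibrations and verify both conditions of Proposition~\ref{prop:explicit-flatness}. The key observation is that suitable test level cofibrations can be built by truncating free $\cI$-spaces, and their image under $X \boxtimes (-)$ directly encodes the desired cofibrancy or intersection condition. For a morphism $\bld m \to \bld n$, let $W \subseteq \cI(\bld m, -)$ be the sub-$\cI$-space with $W(\bld k) = \cI(\bld m, \bld k)$ for $k \geq n$ and $W(\bld k) = \emptyset$ otherwise; the inclusion $W \hookrightarrow \cI(\bld m, -)$ is a level cofibration, and boxing with $X$ produces, after co-Yoneda reduction at an appropriate level, a map one of whose components is $X(\bld m) \to X(\bld n)$, forcing cofibrancy. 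For the intersection condition in diagram~\eqref{eq:flat-criterion}, apply $X \boxtimes (-)$ to an analogously truncated sub-$\cI$-space of $\cI(\bld l \sqcup \bld m \sqcup \bld n, -)$ built from the pushout pattern in~\eqref{eq:flat-criterion}; the resulting map at level $\bld l \sqcup \bld m \sqcup \bld n$ identifies, after co-Yoneda reduction, with the cofibrancy of the canonical map $X(\bld l \sqcup \bld m) \cup_{X(\bld m)} X(\bld m \sqcup \bld n) \to X(\bld l \sqcup \bld m \sqcup \bld n)$, which is exactly the intersection condition.

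I expect the main obstacle to be the forward direction: making precise the filtration of $(X \boxtimes A)(\bld n)$ by image size and verifying that the attaching maps decompose as claimed. The intersection condition of Proposition~\ref{prop:explicit-flatness} is precisely what makes the bookkeeping work, but care is needed to track how the $X$-contributions for different subsets $T \subseteq \bld n$ of the same size interact under permutations, and how successive strata glue along faces. For the reverse direction, the challenge is primarily to choose the truncated sub-$\cI$-spaces so that the co-Yoneda identifications isolate precisely the maps of Proposition~\ref{prop:explicit-flatness}, but once the right truncations are selected, this is a routine computation.
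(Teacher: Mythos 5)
There are genuine gaps in both directions. In the forward direction your filtration of $(X\boxtimes A)(\bld n)$ by the size of the image of the $A$-injection, with attaching maps decomposing as pushout-products controlled by the intersection condition of Proposition~\ref{prop:explicit-flatness}, is exactly where the content of the lemma lies, and you defer it rather than carry it out; the interaction of strata indexed by different subsets $T$ of the same size under the $\Sigma_n$-action is precisely the delicate point. The paper sidesteps this combinatorics entirely: it invokes the flat model structure to write every flat $X$ as a retract of a flat cell complex built from cells $G^{\cI}_{\bld m}(K\to L)$, checks preservation of level cofibrations on a single cell via the explicit formula $(G^{\cI}_{\bld m}(L)\boxtimes Y)(\bld m\sqcup\bld n)\iso \Sigma_{m+n}\times_{\Sigma_m\times\Sigma_n}(L\times Y(\bld n))$, and then propagates through cobase changes, transfinite compositions, and retracts. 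If you want a self-contained filtration argument you must actually prove the pushout-product decomposition of the attaching maps; as written this is an unexecuted plan, not a proof.

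The converse has a more concrete problem: your test objects do not isolate the maps you claim. For the truncation $W\subseteq \cI(\bld m,-)$ with $W(\bld k)=\cI(\bld m,\bld k)$ for $k\geq n$, the component of $(X\boxtimes W)(\bld N)\to(X\boxtimes\cI(\bld m,-))(\bld N)$ indexed by an injection $\beta\colon\bld m\to\bld N$ is not the single structure map $X(\bld m)\to X(\bld n)$ but a skeletal inclusion $\colim_{\bld a\subseteq \bld N\setminus\beta(\bld m),\, a\leq N-n}X(\bld a)\to X(\bld N\setminus\beta(\bld m))$; the ``co-Yoneda reduction'' produces latching-type maps, not individual structure maps, and likewise it will not hand you the intersection condition of \eqref{eq:flat-criterion} directly. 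This is salvageable, but only by abandoning Proposition~\ref{prop:explicit-flatness} as the target and aiming at the definition of flatness instead: the paper uses the single level cofibration $\ovl{\cI(\bld 0,-)}\to\cI(\bld 0,-)$, whose image under $X\boxtimes(-)$ evaluated at $\bld n$ \emph{is} the latching map $L_{\bld n}(X)\to X(\bld n)$, and the converse is then one line. You should replace your family of truncated free $\cI$-spaces by this one test object.
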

\begin{proof}
  Let $G_{\bld{m}}^{\cI}\colon \cS^{\Sigma_{m}} \to \cS^{\cI}$ be the
  left adjoint of the functor that evaluates an $\cI$-space at $\bld{m}$. 
  By definition (see \cite[Section 6]{Sagave-S_diagram}), a \emph{flat cell complex} is a transfinite composition of a sequence of maps with initial term $\emptyset$ and maps obtained by cobase changes from maps of the form $G_{\bld{m}}^{\cI}(K \to L)$ with $K \to L$ a map of $\Sigma_m$-spaces whose underlying map of spaces is a cofibration.
  As a consequence of the flat model structure~\cite[Proposition~6.7]{Sagave-S_diagram}, we know that every
  flat $\cI$-space is a retract of a flat cell complex.  For a
  $\Sigma_m$-space $L$, \cite[Lemma~5.6]{Sagave-S_diagram} 
  implies that there is a canonical isomorphism
  \[ (G_{\bld{m}}^{\cI}(L) \boxtimes Y)(\bld{m}\sqcup \bld {n}) \iso
  {\Sigma_{m+n}}\times_{(\Sigma_{m}\times\Sigma_{n})} (L \times Y(\bld{n})),\]
  and that $(G_{\bld{m}}^{\cI}(L)\boxtimes Y)(\bld{k})=\emptyset$ if $k < m$.  It
  follows that $G_{\bld{m}}^{\cI}(L) \boxtimes(-)$ preserves level
  cofibrations.

Next suppose that $W$ is an $\cI$-space such that $W\boxtimes(-)$ preserves level cofibrations, let $K\to L$ be a map of $\Sigma_m$-spaces whose underlying map of spaces is a cofibration, and let 
$G_{\bld{m}}^{\cI}(K) \to W$ be a map of $\cI$-spaces. 
Analyzing the pushout of the diagram 
\[
G_{\bld{m}}^{\cI}(L)\boxtimes Y \ot
  G_{\bld{m}}^{\cI}(K)\boxtimes Y \to W\boxtimes Y
\] 
using the above description, we see that
  $(G_{\bld{m}}^{\cI}(L) \cup_{G_{\bld{m}}^{\cI}(K)}W)\boxtimes (-)$
  preserves level cofibrations. Now it follows from an inductive argument that 
$X\boxtimes(-)$ preserves level cofibrations whenever $X$ is a flat cell complex and  since level cofibrations are preserved
under retracts, the same holds for all flat $\cI$-spaces.
    
For the other implication, assume that $X$ is an $\cI$-space such that 
$X\boxtimes(-)$ preserves level cofibrations.     
Let $\cI(\bld{0},-)$ be the monoidal unit in $\cI$-spaces (it equals the
terminal $\cI$-space since $\bld{0}$ is initial) and let
  $\ovl{\cI(\bld{0},-)}$ be $\cI$-space obtained by replacing the
  value of $\cI(\bld{0},-)$ at $\bld{0}$ by the empty space. Then
   $\ovl{\cI(\bld{0},-)} \to \cI(\bld{0},-)$ is a level
  cofibration and evaluating the induced map $X\boxtimes\ovl{\cI(\bld{0},-)} \to
 X\boxtimes \cI(\bld{0},-)$ at $\bld{n}$ we get the map $L_{\bld{n}}(X)
  \to X(\bld{n})$ which is therefore a cofibration.
\end{proof}

We record some useful properties of flat $\cI$-spaces.

\begin{proposition}
\label{prop:flat-boxtimes-preserves-I-and-N-equiv}
  If $X$ is a flat $\cI$-space, then the functor $X\boxtimes(-)$ preserves $\cI$-equivalences and $\cN$-equivalences.
\end{proposition}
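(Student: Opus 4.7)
The plan is to reduce the general case to that of the generating flat cells $G^{\cI}_{\bld m}(L)$ via the cellular structure of flat $\cI$-spaces established in the proof of Lemma~\ref{lem:flat-characterization}, and to handle the generating case by direct analysis of the homotopy colimits.

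\textbf{Reduction to the generating case.} Both $\cI$- and $\cN$-equivalences are closed under retracts, so I may assume $X$ is a flat cell complex, built as a transfinite composition of pushouts along generators $G^{\cI}_{\bld m}(K) \to G^{\cI}_{\bld m}(L)$ with $K \to L$ a cofibration of $\Sigma_m$-spaces whose underlying map is a cofibration of simplicial sets. Since $G^{\cI}_{\bld m}(L)$ is flat, Lemma~\ref{lem:flat-characterization} ensures that the induced filtration $X^{(\alpha)} \boxtimes Y \to X^{(\alpha+1)} \boxtimes Y$ consists of level cofibrations, obtained by pushout along the level cofibration $G^{\cI}_{\bld m}(K) \boxtimes Y \to G^{\cI}_{\bld m}(L) \boxtimes Y$. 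Since $\cI$- and $\cN$-equivalences are preserved under pushouts of level cofibrations (by the gluing lemma applied after taking homotopy colimits) and under transfinite compositions of level cofibrations, I am reduced to the case $X = G^{\cI}_{\bld m}(L)$.

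\textbf{The generating case.} For $X = G^{\cI}_{\bld m}(L)$, the explicit formula
\[ (G^{\cI}_{\bld m}(L) \boxtimes Y)(\bld m \sqcup \bld n) \cong \Sigma_{m+n} \times_{\Sigma_m \times \Sigma_n}(L \times Y(\bld n)) \]
from the proof of Lemma~\ref{lem:flat-characterization}, combined with the homotopy cofinality of the shift functor $\bld m \sqcup (-)$ on $\cI$ (and of the inclusion $\cN_{\geq m} \hookrightarrow \cN$, which admits an initial object), allows a Fubini-style interchange of homotopy colimits. For $\cI$-equivalences this identifies $(G^{\cI}_{\bld m}(L) \boxtimes Y)_{h\cI}$ up to weak equivalence with $L_{h\Sigma_m} \times Y_{h\cI}$, from which preservation is immediate, since Cartesian product with the fixed simplicial set $L_{h\Sigma_m}$ preserves weak equivalences. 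For $\cN$-equivalences, the corresponding interchange expresses $(G^{\cI}_{\bld m}(L) \boxtimes Y)_{h\cN}$ as a homotopy colimit over $\cN$ of the diagram $\bld j \mapsto \Sigma_{m+j}\times_{\Sigma_m\times\Sigma_j}(L\times Y(\bld j))$, which after unpacking the coset combinatorics of $\Sigma_{m+j}/(\Sigma_m\times\Sigma_j)$ can be related back to $Y_{h\cN}$, giving the preservation.

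\textbf{Main obstacle.} The most delicate step is executing the Fubini-style interchange in the generating case with proper attention to the $\Sigma_m \times \Sigma_n$-actions and the cofinality statements needed. The $\cN$-equivalence version is more subtle than the $\cI$-version because $\cN$ lacks the natural symmetric group actions, so the coset analysis must be carried out at the level of the diagrams before passing to homotopy colimits, and one must verify that the relevant inclusions of comma categories are homotopy cofinal despite the $\Sigma$-twists. I expect this bookkeeping to be the primary source of technical difficulty in the proof.
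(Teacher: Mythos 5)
Your proposal matches the paper's proof in all essentials: the paper handles the $\cI$-case by citing \cite[Proposition~8.2]{Sagave-S_diagram} (whose proof is the same cell induction you describe) and, for $\cN$-equivalences, runs exactly your reduction — retracts plus cell induction using level cofibrations and the gluing lemma — followed by an analysis of the generating case $G^{\cI}_{\bld m}(L)$. The one simplification worth pointing out is that in the generating case the paper observes that the underlying $\cN$-space of $G^{\cI}_{\bld m}(L)\boxtimes Y$ is a \emph{strict} coproduct of $\cN$-diagrams, each summand isomorphic to a shifted copy of $L\times Y$ (the subset inclusions $\bld k\to\bld{k+1}$ never mix the summands indexed by $m$-element subsets), so the homotopy-cofinality bookkeeping with $\Sigma$-twists that you flag as the main obstacle is not actually needed there.
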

\begin{proof}
The statement for $\cI$-equivalences is \cite[Proposition~8.2]{Sagave-S_diagram}. 
For the statement about $\cN$-equivalences, one proceeds as in the proof of Lemma~\ref{lem:flat-characterization} and considers first a flat $\cI$-space of the form $G_{\bld m}^{\cI}(L)$ for a $\Sigma_m$-space $L$. Using the description from the proof of that lemma, one sees that the $\cN$-space underlying $G_{\bld{m}}^{\cI}(L)\boxtimes Y$ decomposes as a coproduct of 
$\cN$-spaces with summands isomorphic to $L\times Y$ after appropriate shifts. Hence $G_{\bld{m}}^{\cI}(L)\boxtimes(-)$ preserves 
$\cN$-equivalences. By an inductive argument, using the $\cN$-analogue of 
\cite[Proposition~7.1]{Sagave-S_diagram}, this implies that $X\boxtimes(-)$ preserves $\cN$-equivalences whenever $X$ is a flat cell complex.  Since $\cN$-equivalences are preserved under retracts, this in turn implies the result for all flat 
$\cI$-spaces. 
\end{proof}

\begin{proposition}\label{prop:boxtimes-times-flat}
If $X$ and $Y$ are flat $\cI$-spaces, then so are $X \boxtimes Y$ and $X\times Y$. 
\end{proposition}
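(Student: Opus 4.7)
The two claims are essentially independent, so I would treat them separately and exploit the two different characterizations of flatness that have been set up in the paper.

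For $X \boxtimes Y$, I would apply the characterization in Lemma~\ref{lem:flat-characterization}: an $\cI$-space $W$ is flat if and only if $W \boxtimes (-)$ preserves level cofibrations. Given any level cofibration $Z \to Z'$, flatness of $Y$ implies that $Y \boxtimes Z \to Y \boxtimes Z'$ is a level cofibration, and then flatness of $X$ implies that $X \boxtimes (Y \boxtimes Z) \to X \boxtimes (Y \boxtimes Z')$ is a level cofibration. Associativity of $\boxtimes$ identifies this with $(X \boxtimes Y) \boxtimes Z \to (X \boxtimes Y) \boxtimes Z'$, so $(X \boxtimes Y) \boxtimes (-)$ preserves level cofibrations and $X \boxtimes Y$ is flat.

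For $X \times Y$, I would instead verify the explicit criterion of Proposition~\ref{prop:explicit-flatness}. Working simplicially, a cofibration is precisely a monomorphism, and since the product of two monomorphisms is a monomorphism, every morphism $\bld m \to \bld n$ in $\cI$ induces a cofibration $X(\bld m) \times Y(\bld m) \to X(\bld n) \times Y(\bld n)$. For the intersection condition applied to a diagram of the form~\eqref{eq:flat-criterion}, all maps $(X \times Y)(\bld p) \to (X \times Y)(\bld l \sqcup \bld m \sqcup \bld n)$ are injections, and a pair $(x,y)$ lies in the image of $(X \times Y)(\bld l \sqcup \bld m)$ if and only if $x$ lies in the image of $X(\bld l \sqcup \bld m)$ and $y$ in the image of $Y(\bld l \sqcup \bld m)$, with the analogous statement for $\bld m \sqcup \bld n$. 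Hence the intersection of images in $(X \times Y)(\bld l \sqcup \bld m \sqcup \bld n)$ decomposes componentwise, and applying the intersection condition for $X$ and $Y$ separately forces $x$ to be in the image of $X(\bld m)$ and $y$ in the image of $Y(\bld m)$, i.e.\ $(x,y)$ lies in the image of $(X \times Y)(\bld m)$.

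I do not expect serious obstacles here. The only temptation worth resisting is to attack the $\times$ case directly via latching spaces, since $L_{\bld n}$ does not commute with products and the ensuing pushout-product style comparison map is awkward. The explicit criterion from Proposition~\ref{prop:explicit-flatness} sidesteps this by letting one reason componentwise with honest injections of simplicial sets.
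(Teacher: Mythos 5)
Your argument is correct, but for the $\boxtimes$ half it takes a different route from the paper. The paper disposes of $X \boxtimes Y$ in one line by invoking the pushout-product axiom for the flat model structure of \cite[Proposition 3.10]{Sagave-S_diagram}: $X$ and $Y$ are cofibrant there, and the pushout-product of $\emptyset \to X$ with $\emptyset \to Y$ is $\emptyset \to X \boxtimes Y$, so $X\boxtimes Y$ is flat-cofibrant. You instead use Lemma~\ref{lem:flat-characterization} together with associativity of $\boxtimes$: since $(X\boxtimes Y)\boxtimes Z \cong X \boxtimes (Y \boxtimes Z)$, preservation of level cofibrations by $(X\boxtimes Y)\boxtimes(-)$ follows from the same property for $X$ and $Y$ separately. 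Both arguments ultimately rest on the flat cell-complex technology from \cite{Sagave-S_diagram} (the lemma's proof uses it), but yours has the virtue of staying entirely within results stated and proved in this paper, and it makes transparent that flatness is closed under $\boxtimes$ for purely formal reasons once the ``tensoring preserves level cofibrations'' characterization is in hand. For $X \times Y$ you do exactly what the paper does — it declares the claim an immediate consequence of Proposition~\ref{prop:explicit-flatness} — and your componentwise verification of the image-intersection condition is the correct way to fill in that ``immediate''; your remark about avoiding latching spaces for the cartesian product is well taken, since $L_{\bld n}$ does not commute with $\times$.
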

\begin{proof}
  Assuming that $X$ and $Y$ are flat, the pushout-product axiom for
  the flat model structure, \cite[Proposition 3.10]{Sagave-S_diagram},
  implies that $X\boxtimes Y$ is also flat. The statement for $X\times
  Y$ is an immediate consequence of Proposition
  \ref{prop:explicit-flatness}.
\end{proof}

Next we study how the functor $R$ introduced in \eqref{eq:functor-R}
and the natural transformation $j \colon X \to RX$ behave with respect to the
$\boxtimes$-product. First one checks that the maps
\[ X(\bld{k}\concat\bld{m}) \times Y(\bld{l}\concat\bld{n}) \to
(X\boxtimes Y)(\bld{k}\concat\bld{m}\concat\bld{l}\concat\bld{n})
\xrightarrow{\eins \concat \chi_{m,l}\concat \eins} (X\boxtimes
Y)(\bld{k}\concat\bld{l}\concat\bld{m}\concat\bld{n})\] induce a
natural map of $\cI$-spaces
\[ \xi^{k,l} \colon (R^k X) \boxtimes (R^l Y)  \to R^{k+l}(X \boxtimes Y).\]

\begin{lemma}\label{lem:R-vs-boxtimes-pushout}
Let $X$ and $Y$ be $\cI$-spaces. Then there is a pushout square
\begin{equation}\label{eq:R-vs-boxtimes-pushout}
  \xymatrix@-.5pc{
    X \boxtimes Y \ar[r]^{X \boxtimes j_{Y}} \ar[d]_{j_X \boxtimes Y} & 
    X \boxtimes RY \ar[d]^{\xi^{0,1}}\\
    (RX) \boxtimes Y \ar[r]^{\xi^{1,0}} & R(X\boxtimes Y) }
\end{equation}
and the composite $X\boxtimes Y \to R(X\boxtimes Y)$ equals $j_{X\boxtimes Y}$. 
\end{lemma}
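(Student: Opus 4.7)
Plan:

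Since colimits in $\cS^{\cI}$ are formed levelwise, I verify both assertions at each level $\bld n$. Using the colimit description of $\boxtimes$ quoted in the excerpt,
\[
R(X \boxtimes Y)(\bld n) \;=\; (X \boxtimes Y)(\eins \sqcup \bld n) \;\iso\; \colim_{\bld p \sqcup \bld q \to \eins \sqcup \bld n} X(\bld p)\times Y(\bld q),
\]
the colimit being taken over the comma category $(\sqcup \downarrow \eins \sqcup \bld n)$. The key observation is that every object $(\bld p, \bld q, f)$ of this indexing category falls into exactly one of three mutually exclusive cases according to the location of the preimage of the distinguished point $1 \in \eins \sqcup \bld n$: in $\bld p$ (case A), in $\bld q$ (case B), or empty (case C).

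I next match the three cases with the terms of the diagram. Case C consists exactly of those $f$ factoring as $\bld p \sqcup \bld q \to \bld n \hookrightarrow \eins \sqcup \bld n$, and its contribution to the colimit is the image of $j_{X\boxtimes Y}\colon (X\boxtimes Y)(\bld n) \to R(X\boxtimes Y)(\bld n)$. For case A, an $f$ with $f(a) = 1$ for a unique $a \in \bld p$ can, after acting by a permutation of $\bld p$ moving $a$ to position $1$ (absorbed using the functoriality relations of the colimit), be rewritten as $\eins \sqcup f'$ for a unique $f' \colon \bld{p'} \sqcup \bld q \to \bld n$, where $\bld p = \eins \sqcup \bld{p'}$; combined with the identification $X(\bld p) = X(\eins \sqcup \bld{p'}) = RX(\bld{p'})$ this identifies the case-A subcolimit with $(RX \boxtimes Y)(\bld n)$ in such a way that its inclusion into $R(X\boxtimes Y)(\bld n)$ is exactly $\xi^{1,0}$. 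A symmetric argument using the shuffle $\chi_{p,1}$ from the definition of $\xi^{0,1}$ identifies the case-B subcolimit with $(X \boxtimes RY)(\bld n)$ mapping in by $\xi^{0,1}$. The second claim of the lemma—that the composite $X \boxtimes Y \to R(X \boxtimes Y)$ through either side of the square equals $j_{X\boxtimes Y}$—then follows by a direct unwinding: in either path the factor $j_X$ (resp.\ $j_Y$) applied to a representative can be absorbed into the hom-part of the colimit via the naturality relation (which sends $X(\iota)(x)$ upstairs to precomposition by $\iota\sqcup\eins$ downstairs), yielding a case-C representative.

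To upgrade these identifications to the pushout statement, I use the combinatorial observation that a morphism in $(\sqcup \downarrow \eins \sqcup \bld n)$ acts on $\bld p$ and $\bld q$ separately and can only enlarge the image of $f$; in particular, it cannot move the preimage of $1$ across the $\bld p/\bld q$ divide. Hence the full subcategories of case-A and of case-B triples are each closed under outgoing morphisms, while a case-C triple admits outgoing morphisms into any of the three cases. A direct diagram chase with the coequalizer presentation of the colimit, using this asymmetry, shows that $R(X\boxtimes Y)(\bld n)$ decomposes as the pushout of the case-A and case-B subcolimits along the case-C subcolimit, which under the identifications above is exactly the claimed pushout square. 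The main technical obstacle is precisely this final pushout verification—ensuring that no hidden relations identify strictly case-A representatives with strictly case-B representatives—but the disjointness of the three cases together with the restricted behavior of morphisms described above rules this out.
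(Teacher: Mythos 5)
Your proposal is correct and is exactly the approach the paper takes: the paper's entire proof is the one-line remark that the lemma ``can be checked by decomposing the colimit defining $R(X\boxtimes Y)$,'' and your three-case decomposition of the comma category $(\concat\downarrow\eins\concat\bld n)$ according to the preimage of the distinguished element, together with the closure-under-outgoing-morphisms observation, is the intended (and correct) way to carry that out.
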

\begin{proof}
This can be checked by decomposing the colimit defining $R(X\boxtimes Y)$.
\end{proof}

\begin{proposition}\label{prop:boxtimes_of_semistable}
  If $X$ and $Y$ are flat and semistable $\cI$-spaces, then
  $X\boxtimes Y$ is also semistable. 
\end{proposition}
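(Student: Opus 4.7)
The plan is to verify criterion (iii) of Proposition \ref{semistabilityprop} for $X\boxtimes Y$, namely that $j_{X\boxtimes Y}\colon X\boxtimes Y \to R(X\boxtimes Y)$ is an $\cN$-equivalence, by exploiting the pushout square of Lemma \ref{lem:R-vs-boxtimes-pushout}. The factorization $j_{X\boxtimes Y} = \xi^{0,1}\circ(X\boxtimes j_Y)$ from that lemma suggests showing that both factors are $\cN$-equivalences.

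First, I would check that $j_X$ and $j_Y$ are level cofibrations. Since $X$ and $Y$ are flat, Proposition \ref{prop:explicit-flatness} implies that every morphism in $\cI$ induces a cofibration on $X$ and $Y$; applying this to the morphisms $\bld n\to\bld 1\sqcup\bld n$ gives the claim. Combining this with Lemma \ref{lem:flat-characterization} (and the symmetry of $\boxtimes$), both $X\boxtimes j_Y$ and $j_X\boxtimes Y$ are level cofibrations. Next, semistability of $X$ and $Y$ together with Proposition \ref{semistabilityprop}(iii) gives that $j_X$ and $j_Y$ are $\cN$-equivalences, and Proposition \ref{prop:flat-boxtimes-preserves-I-and-N-equiv} then implies that $X\boxtimes j_Y$ and $j_X\boxtimes Y$ are also $\cN$-equivalences.

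Applying a gluing lemma for $\cN$-equivalences to the pushout square of Lemma \ref{lem:R-vs-boxtimes-pushout}: since $X\boxtimes j_Y$ is a level cofibration and $j_X\boxtimes Y$ is an $\cN$-equivalence, the cobase change $\xi^{0,1}\colon X\boxtimes RY\to R(X\boxtimes Y)$ is an $\cN$-equivalence as well. Hence $j_{X\boxtimes Y} = \xi^{0,1}\circ(X\boxtimes j_Y)$ is a composite of two $\cN$-equivalences, and Proposition \ref{semistabilityprop}(iii) concludes that $X\boxtimes Y$ is semistable.

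The only nonformal ingredient is the gluing lemma for $\cN$-equivalences, and this is where I expect the small amount of work to lie. It is standard, however: pushouts of $\cN$-spaces are computed levelwise, so along a level cofibration the pushout is a homotopy pushout at each level, and since the Bousfield--Kan homotopy colimit $\hocolim_{\cN}$ commutes with (homotopy) pushouts, one reduces to the gluing lemma for simplicial sets. Alternatively, one can identify $\hocolim_\cN$ with the sequential homotopy colimit along the natural-number poset and invoke the fact that this construction preserves homotopy pushouts.
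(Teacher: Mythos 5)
Your proof is correct and follows essentially the same route as the paper's: both use the pushout square of Lemma \ref{lem:R-vs-boxtimes-pushout}, observe that $X\boxtimes j_Y$ and $j_X\boxtimes Y$ are level cofibrations (via flatness and Lemma \ref{lem:flat-characterization}) and $\cN$-equivalences (via Proposition \ref{prop:flat-boxtimes-preserves-I-and-N-equiv}), and then conclude by a gluing argument for $\hocolim_{\cN}$ that the cobase change $\xi^{0,1}$, and hence the composite $j_{X\boxtimes Y}$, is an $\cN$-equivalence. Your justification of the gluing step — that $\hocolim_{\cN}$ preserves pushouts, sends level cofibrations to cofibrations and $\cN$-equivalences to weak equivalences, so one reduces to left properness of simplicial sets — is exactly the content of the corresponding sentence in the paper.
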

\begin{proof}
  This is similar to the corresponding statement about symmetric
  spectra~\cite{Schwede_SymSp}. Since $X$ and $Y$ are flat, 
  $j_X$ and $j_Y$ are level cofibrations. Hence the maps $X \boxtimes
  j_Y$ and $j_X \boxtimes Y$ in \eqref{eq:R-vs-boxtimes-pushout}
  are level cofibrations by Lemma~\ref{lem:flat-characterization} and $\cN$-equivalences by Proposition \ref{prop:flat-boxtimes-preserves-I-and-N-equiv}.
Since the homotopy colimit functor over $\cN$ takes level cofibrations to cofibrations and $\cN$-equivalences to weak equivalences, the cobase changes of these maps are also $\cN$-equivalences. 
The claim therefore follows from Lemma~\ref{lem:R-vs-boxtimes-pushout}.
\end{proof}

\subsection{Comparison of the cartesian and the 
\texorpdfstring{$\boxtimes$}{boxtimes}-product}
Let $X$ and $Y$ be $\cI$-spaces and consider the natural transformation 
\[
\mu_{X,Y} \colon X_{h\cI} \times Y_{h\cI} \xrightarrow{\iso} (X \times
Y)_{h(\cI\times\cI)} \to ((-\concat -)^* (X\boxtimes
Y))_{h(\cI\times\cI)} \to (X\boxtimes Y)_{h\cI}
\]
where the second map is induced by the universal natural transformation
of $\cI\times \cI$-diagrams $X(\bld{m}) \times Y(\bld{n}) \to (X\boxtimes Y)(\bld{m}\concat\bld{n})$. These maps gives rise to a monoidal structure on the functor $(-)_{h\cI}$, cf.\ \cite[Proposition 4.17]{Schlichtkrull_Thom-symm}.
 
 \begin{lemma}\label{lem:times-boxtimes}
If one of $X$ and $Y$ is flat, then $\mu_{X,Y}$ is a weak equivalence.
\end{lemma}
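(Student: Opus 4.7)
\medskip

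\noindent\textbf{Proof plan.} Without loss of generality assume $X$ is flat. I would prove this by a cell induction, reducing to the case of a free $\cI$-space $X=G_{\bld m}^{\cI}(L)$ for a $\Sigma_m$-space $L$, and then closing up under cell attachments and retracts. The starting point is that by \cite[Proposition~6.7]{Sagave-S_diagram} (and as used in the proof of Lemma~\ref{lem:flat-characterization}) every flat $\cI$-space is a retract of a cell complex built from maps $G_{\bld m}^{\cI}(K)\to G_{\bld m}^{\cI}(L)$ with $K\to L$ a cofibration of $\Sigma_m$-spaces.

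For the base case $X=G_{\bld m}^{\cI}(L)$, the explicit formula recalled in the proof of Lemma~\ref{lem:flat-characterization} gives
\[
(G_{\bld m}^{\cI}(L)\boxtimes Y)(\bld m\concat\bld n)\iso \Sigma_{m+n}\times_{\Sigma_m\times \Sigma_n}\bigl(L\times Y(\bld n)\bigr),
\]
and the value is empty in degrees $<m$. I would then choose a suitable cofinal subcategory of $\cI$ on the objects $\bld m\concat\bld n$ (in the spirit of Lemma~\ref{lem:hI-of-homotopy-constant}) and apply the Fubini-type identification of iterated homotopy colimits to rewrite both $(G_{\bld m}^{\cI}(L)\boxtimes Y)_{h\cI}$ and $(G_{\bld m}^{\cI}(L))_{h\cI}\times Y_{h\cI}$ as the homotopy colimit of the same $\cI$-diagram $\bld n\mapsto\Sigma_{m+n}\times_{\Sigma_m\times\Sigma_n}(L\times Y(\bld n))$, up to $\cI$-equivalence. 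A diagram chase shows that under this identification the map is $\mu$.

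For the inductive step, I would verify that the class of flat $\cI$-spaces $X$ for which $\mu_{X,Y}$ is a weak equivalence for every $Y$ is closed under retracts, filtered colimits along level cofibrations, and pushouts of generating cells. Both functors $Z\mapsto Z_{h\cI}\times Y_{h\cI}$ and $Z\mapsto (Z\boxtimes Y)_{h\cI}$ preserve these operations up to homotopy: $(-)\boxtimes Y$ preserves level cofibrations by Lemma~\ref{lem:flat-characterization}, and $(-)_{h\cI}$ turns level cofibrations into cofibrations, preserves filtered colimits, and sends pushouts along level cofibrations to homotopy pushouts. Combined with naturality of $\mu$, this yields the desired closure and hence the lemma.

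The main obstacle is the base case: one must carefully set up the cofinality argument for the subcategory of $\cI$ on the objects $\bld m\concat\bld n$ and keep track of the $\Sigma_m\times\Sigma_n$-actions through the isomorphism with $L_{h\Sigma_m}\times Y_{h\cI}$, so that the resulting equivalence is identified with the map $\mu_{G_{\bld m}^{\cI}(L),Y}$. Once this explicit comparison is in place, the cell induction proceeds by standard homotopical arguments.
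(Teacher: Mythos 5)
Your argument is correct in outline, but it takes a genuinely different route from the paper. The paper's proof does no cell induction at this stage: it first observes, using Proposition~\ref{prop:flat-boxtimes-preserves-I-and-N-equiv}, that both $X\mapsto X_{h\cI}\times Y_{h\cI}$ and $X\mapsto (X\boxtimes Y)_{h\cI}$ take $\cI$-equivalences to weak equivalences when the other variable is flat, which reduces the claim to the case where both $X$ and $Y$ are positive $\cI$-cofibrant; it then quotes from \cite{Sagave-S_diagram} that for cofibrant $\cI$-spaces the natural map $\hocolim_{\cI}\to\colim_{\cI}$ is a weak equivalence and that $\colim_{\cI}$ is strong symmetric monoidal, so that the colimit version of $\mu_{X,Y}$ is an isomorphism. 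That route buys brevity, since all cell-by-cell work is outsourced to results already established elsewhere. Your route runs the induction directly on $\mu$. The inductive step is unproblematic for the reasons you give (both functors of $X$ carry pushouts along level cofibrations to homotopy pushouts and commute with filtered colimits, and one concludes by the gluing lemma and naturality of $\mu$), but essentially all of the content then sits in the base case: identifying $(G_{\bld m}^{\cI}(L)\boxtimes Y)_{h\cI}$ with $L_{h\Sigma_m}\times Y_{h\cI}$ compatibly with $\mu$. Be aware that this is a bit more than a Fubini-plus-cofinality statement, because in
\[
\Sigma_{m+n}\times_{\Sigma_m\times\Sigma_n}\bigl(L\times Y(\bld n)\bigr)\iso \cI(\bld m,\bld m\concat\bld n)\times_{\Sigma_m}\bigl(L\times Y(\bld n)\bigr)
\]
the $\Sigma_m$-orbits and the $\cI$-direction are entangled; one must first let $\cI(\bld m,\bld m\concat\bld n)$ stabilize to a free contractible $\Sigma_m$-object before the product decouples as $E\Sigma_m\times_{\Sigma_m}(L\times Y_{h\cI})\iso L_{h\Sigma_m}\times Y_{h\cI}$. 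This is in the spirit of the computations in Example~\ref{ex:free_comm_ispace-monoid} and in the proof of Proposition~\ref{prop:flat-boxtimes-preserves-I-and-N-equiv}, so it is certainly feasible; what your approach buys in exchange for carrying it out is a self-contained, explicit proof that never passes through the comparison of homotopy colimits with strict colimits on cofibrant objects.
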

\begin{proof}
We may assume without loss of generality that $Y$ is flat. 
As functors of $X$, both the domain and the codomain of $\mu_{X,Y}$ then take 
$\cI$-equivalences to weak equivalences by Proposition \ref{prop:flat-boxtimes-preserves-I-and-N-equiv}. Choosing a cofibrant replacement of $X$ in the positive $\cI$-model structure, it therefore suffices to prove the proposition when $X$ is cofibrant. 
Furthermore, since a cofibrant $\cI$-space is also flat, we may repeat the argument and thereby reduce to the case where both $X$ and $Y$ are cofibrant. Then $X\boxtimes Y$ is also cofibrant because the positive $\cI$-model structure on $\cS^{\cI}$ is monoidal by \cite[Proposition~3.2]{Sagave-S_diagram}. It is proved in 
\cite[Lemma 6.22]{Sagave-S_diagram} that for a cofibrant
$\cI$-space $Z$, the natural map $\hocolim_{\cI}Z \to \colim_{\cI}Z$ is a
weak equivalence. Hence the claim in the lemma follows because the colimit version of the map $\mu_{X, Y}$ is an isomorphism (that is, the colimit functor is strong symmetric monoidal, cf.\  \cite[Lemma~8.8]{Blumberg-C-S_THH-Thom} and the discussion following that lemma).
\end{proof}

\begin{lemma}\label{lem:hI-of-product}
  Let $X$ and $Y$ be semistable $\cI$-spaces. Then the diagonal functor 
  $\cI\to \cI \times \cI$ induces a weak equivalence $(X\times Y)_{h\cI}
  \to X_{h\cI} \times Y_{h\cI}$.
\end{lemma}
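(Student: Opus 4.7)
The plan is to exploit the semistability of $X$ and $Y$ to transfer the question from a homotopy colimit over $\cI$ to one over $\cN$, where products are better behaved because $\cN$ is a filtered (in fact sequential) category. First I would establish that $X \times Y$ is itself semistable. Applying criterion~(iii) of Proposition~\ref{semistabilityprop} reduces this to checking that $j_{X\times Y}\colon X \times Y \to R(X\times Y)$ is an $\cN$-equivalence. Since $R$ acts levelwise, it commutes with finite products, so $R(X\times Y) = RX \times RY$ and $j_{X\times Y} = j_X \times j_Y$. The maps $j_X$ and $j_Y$ are $\cN$-equivalences by the semistability of $X$ and $Y$, so it suffices to invoke the standard observation that the homotopy colimit over $\cN$ commutes with finite products up to weak equivalence; that is, the canonical map $(A \times B)_{h\cN} \to A_{h\cN}\times B_{h\cN}$ is always a weak equivalence. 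In particular, products of $\cN$-equivalences are $\cN$-equivalences, confirming the semistability of $X\times Y$.

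Next I would assemble the commutative square
\[
\xymatrix@-1pc{
(X \times Y)_{h\cN} \ar[r] \ar[d] & X_{h\cN} \times Y_{h\cN} \ar[d]\\
(X \times Y)_{h\cI} \ar[r] & X_{h\cI} \times Y_{h\cI}
}
\]
in which the vertical maps are induced by the inclusion $\cN\subset \cI$ and the horizontal maps are the natural comparison maps coming from the diagonals of $\cN$ and $\cI$. The top horizontal map is a weak equivalence by the filtered-colimit observation above; the left vertical map is a weak equivalence by criterion~(i) of Proposition~\ref{semistabilityprop} applied to the semistable $\cI$-space $X\times Y$; and the right vertical map is a product of two weak equivalences obtained by applying the same criterion to $X$ and $Y$ individually. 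Commutativity of the square then forces the remaining edge --- the bottom horizontal map --- to be a weak equivalence, which is precisely the assertion of the lemma.

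The main technical obstacle is really just the preservation of $\cN$-equivalences under finite products, and this is a standard consequence of the fact that filtered (in particular sequential) homotopy colimits commute with finite products. Once this is in hand, the rest is a two-step reduction using the equivalent characterizations of semistability already supplied by Proposition~\ref{semistabilityprop}.
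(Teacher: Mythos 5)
Your proof is correct and follows essentially the same route as the paper's: establish that $X\times Y$ is semistable, form the same commutative square comparing homotopy colimits over $\cN$ and over $\cI$, and use that the diagonal $\cN\to\cN\times\cN$ is homotopy cofinal (equivalently, that sequential homotopy colimits commute with finite products) to handle the $\cN$-level comparison map. The only difference is that you spell out the semistability of $X\times Y$ via criterion (iii) of Proposition~\ref{semistabilityprop}, which the paper asserts without elaboration.
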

\begin{proof}
The assumption that $X$ and $Y$ are semistable
implies that $X\times Y$ is semistable and consequently that the
horizontal maps in the diagram
\[
\xymatrix@-1pc{
(X\times Y)_{h\cN} \ar[r] \ar[d] &  (X\times Y)_{h\cI}\ar[d]\\
X_{h\cN}\times Y_{h\cN}\ar[r] & X_{h\cI}\times Y_{h\cI}
}
\]
are weak equivalences. The vertical map on the left is a weak
equivalence since the diagonal inclusion $\cN\to \cN\times \cN$ is
homotopy cofinal and the conclusion follows.
\end{proof}

Since the terminal object $*$ of $\cS^{\cI}$ is also the monoidal unit for
$\boxtimes$, the projections $X \to *$ and $Y \to *$ induce a map
$\rho_{X,Y} \colon X\boxtimes Y \to X \times Y$.
\begin{proposition}\label{prop:boxtimes-times}
  If $X$ and $Y$ are semistable and one of $X$ and $Y$ is flat, then 
  $\rho_{X,Y}\colon X\boxtimes Y\to X\times Y$ is an $\cI$-equivalence.
\end{proposition}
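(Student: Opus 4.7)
The plan is to apply the two-out-of-three property to the triangle
\[
\xymatrix@-.5pc{
& X_{h\cI}\times Y_{h\cI} \ar[dl]_{\mu_{X,Y}} \\
(X\boxtimes Y)_{h\cI} \ar[r]_{(\rho_{X,Y})_{h\cI}} & (X\times Y)_{h\cI}, \ar[u]_{\delta}
}
\]
where $\delta$ is the natural map from Lemma~\ref{lem:hI-of-product} induced by the diagonal $\cI\to\cI\times\cI$ together with the pair of projections $X\times Y\to X$ and $X\times Y\to Y$. By Lemma~\ref{lem:times-boxtimes}, the flatness hypothesis makes $\mu_{X,Y}$ a weak equivalence, and by Lemma~\ref{lem:hI-of-product}, the semistability hypothesis makes $\delta$ a weak equivalence. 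Thus it remains to show that $\delta \circ (\rho_{X,Y})_{h\cI} \circ \mu_{X,Y}$ is a weak equivalence; in fact I will show it is homotopic to the identity, so that two-out-of-three yields the claim.

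The commutativity check is a naturality unpacking. On the defining $(\cI\times\cI)$-diagram, the map $\mu_{X,Y}$ sends a point $(x,y) \in X(\bld{m})\times Y(\bld{n})$ to its canonical image in $(X\boxtimes Y)(\bld{m}\sqcup\bld{n})$; then $\rho_{X,Y}$ at level $\bld{m}\sqcup\bld{n}$ sends this, by the definition of $\boxtimes$ as a colimit, to the pair $(X(\bld{m}\to\bld{m}\sqcup\bld{n})(x),\, Y(\bld{n}\to\bld{m}\sqcup\bld{n})(y))$ in the $(\bld{m}\sqcup\bld{n})$-level of $X\times Y$, where the maps are the two coproduct inclusions. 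Projecting via $\delta$ therefore yields, in each coordinate, the image of $x$ (resp.\ $y$) under a morphism $\bld{m}\to\bld{m}\sqcup\bld{n}$ (resp.\ $\bld{n}\to\bld{m}\sqcup\bld{n}$) of $\cI$. In the Bousfield-Kan model, the inclusions of $X(\bld{m})$ and $X(\bld{m}\sqcup\bld{n})$ into $X_{h\cI}$ are connected by the edge coming from this morphism, and similarly for $Y$; hence the composite is canonically homotopic to the identity on $X_{h\cI}\times Y_{h\cI}$.

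The main obstacle is the verification of this last homotopy, which requires chasing $\mu_{X,Y}$ through its definition as a zigzag factoring through $(X\times Y)_{h(\cI\times\cI)}\to((\sqcup)^*(X\boxtimes Y))_{h(\cI\times\cI)}\to(X\boxtimes Y)_{h\cI}$. The cleanest way to carry this out is to work throughout on the level of $(\cI\times\cI)$-diagrams and invoke the natural comparison $(-\circ\sqcup)_{h(\cI\times\cI)}\to(-)_{h\cI}$ only at the end, so that the triangle reduces to the identity already at the $(\cI\times\cI)$-level. Once this is in place, Lemmas~\ref{lem:times-boxtimes} and~\ref{lem:hI-of-product} immediately complete the proof.
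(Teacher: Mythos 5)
Your argument is correct and is essentially the paper's own proof: both show that the composite $\delta\circ(\rho_{X,Y})_{h\cI}\circ\mu_{X,Y}$ is homotopic to the identity and then conclude by two-out-of-three using Lemmas~\ref{lem:times-boxtimes} and~\ref{lem:hI-of-product}. The only (cosmetic) difference lies in how that homotopy is produced: the paper factors the two components through $\mu_{X,*}$ and $\mu_{*,Y}$ and uses contractibility of $B\cI$, whereas you extract it directly from the natural transformations of index functors $\mathrm{pr}_i\Rightarrow\sqcup$ --- just note that the composite is not literally the identity at the $(\cI\times\cI)$-diagram level, only homotopic to the pair of projections via the corresponding edges in the homotopy colimit.
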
 
\begin{proof}
  The monoidal structure map $\mu_{X,Y}$ fits into a commutative diagram
  \[\xymatrix@-1pc{X_{h\cI} \times B\cI \ar[dd]_{\mu_{X,*}} & X_{h\cI}
    \times Y_{h\cI} \ar[d]_{\mu_{X,Y}} \ar[l] \ar[r] &
    B\cI \times Y_{h\cI}  \ar[dd]^{\mu_{*,Y}}\\
    & (X \boxtimes Y)_{h\cI} \ar[d]_{(\rho_{X,Y})_{h\cI}} \\
    X_{h\cI} & (X \times Y)_{h\cI}  \ar[l]
    \ar[r]&Y_{h\cI}}
\]
in which the horizontal maps are induced by $X \to *$ and $Y \to *$.
Since the diagonal composite in both outer squares is homotopic to the
respective projection,
\begin{equation}\label{eq:mu-eta-composition} 
X_{h\cI} \times Y_{h\cI} \xrightarrow{\mu_{X,Y}} (X \boxtimes
Y)_{h\cI} \xrightarrow{(\rho_{X,Y})_{h\cI}} (X \times Y)_{h\cI} \to
X_{h\cI} \times Y_{h\cI}
\end{equation} 
is homotopic to the identity. The claim now
follows by Lemmas \ref{lem:times-boxtimes} and
\ref{lem:hI-of-product}.
\end{proof} 

\begin{remark}\label{rem:semistablility-hypoth}
  The statement in Proposition \ref{prop:boxtimes-times} does not remain true
  in general without the semistability hypothesis. For instance, given a based space $X$, there is an associated $\cI$-space $X^{\bullet}$ with 
  $X^{\bullet}(\bld n)=X^n$. This is flat but usually not semistable, and in fact it follows from~\cite{Schlichtkrull_infinite} that for a pair of based connected spaces $X$ and $Y$ there are weak equivalences  
\[
(X^{\bullet} \boxtimes Y^{\bullet})_{h\cI} 
\simeq \Omega^{\infty}\Sigma^{\infty}(X \wdg
  Y) \quad \textrm{ and } \quad (X^{\bullet} \times
  Y^{\bullet})_{h\cI} \simeq \Omega^{\infty}\Sigma^{\infty}(X \times Y).
\]
That Proposition~ \ref{prop:boxtimes-times} does not hold in general is related to the fact that a cartesian product of $\cI$-equivalences is not necessarily an $\cI$-equivalence.
\end{remark}

Finally we observe that the argument in the proof of 
Proposition~\ref{prop:boxtimes-times} shows that the composition of the last two maps in \eqref{eq:mu-eta-composition} is a homotopy left inverse of 
$\mu_{X,Y}$. Hence Lemma~\ref{lem:times-boxtimes} has the following corollary.

\begin{corollary}\label{cor:flat-boxtimes-times}
If one of $X$ and $Y$ is flat, then the map $(X\boxtimes Y)_{h\cI}\to 
X_{h\cI}\times Y_{h\cI}$ induced by the projections is a weak equivalence.\qed
\end{corollary}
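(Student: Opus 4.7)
The plan is to read the corollary directly off of the argument already carried out in the proof of Proposition~\ref{prop:boxtimes-times}, noting that the part of that argument which constructs a homotopy left inverse to $\mu_{X,Y}$ never used the semistability hypothesis on $X$ and $Y$. Combining this with Lemma~\ref{lem:times-boxtimes} and two-out-of-three will finish the proof.

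More precisely, let $f \colon (X\boxtimes Y)_{h\cI} \to X_{h\cI}\times Y_{h\cI}$ denote the map in the statement of the corollary, which is the composite of $(\rho_{X,Y})_{h\cI}\colon (X\boxtimes Y)_{h\cI}\to (X\times Y)_{h\cI}$ with the comparison map $(X\times Y)_{h\cI}\to X_{h\cI}\times Y_{h\cI}$ induced by the diagonal $\cI\to\cI\times\cI$. This is exactly the composition of the last two arrows in \eqref{eq:mu-eta-composition}. Inspecting the commutative diagram in the proof of Proposition~\ref{prop:boxtimes-times}, one sees that the homotopy commutativity of its two outer squares (and hence the fact that the total composite $f\circ\mu_{X,Y}$ is homotopic to the identity on $X_{h\cI}\times Y_{h\cI}$) depends only on the observation that the diagonal composite through each outer square is homotopic to a projection; it does not invoke the semistability of $X$ or $Y$.

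Under the flatness hypothesis of the corollary, Lemma~\ref{lem:times-boxtimes} guarantees that $\mu_{X,Y}$ is a weak equivalence. Since $f\circ \mu_{X,Y}\simeq \id$, the map $f$ is a weak equivalence by the two-out-of-three property. The only substantive point to verify carefully is that the portion of the Proposition~\ref{prop:boxtimes-times} diagram needed here really is unconditionally homotopy commutative, which is immediate from the monoidal naturality of $\mu$ and the fact that the projections $X\boxtimes Y\to X$ and $X\boxtimes Y\to Y$ (which induce $\rho_{X,Y}$) sit in the expected commutative square with the unit maps to the terminal $\cI$-space.
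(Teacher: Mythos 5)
Your proof is correct and is essentially the paper's own argument: the paper likewise observes that the portion of the Proposition~\ref{prop:boxtimes-times} diagram producing a homotopy left inverse to $\mu_{X,Y}$ requires no semistability, and then concludes via Lemma~\ref{lem:times-boxtimes} and two-out-of-three. Your identification of the map in the corollary with the composite of the last two arrows in \eqref{eq:mu-eta-composition} is exactly the intended reading.
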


\section{Commutative \texorpdfstring{$\cI$}{I}-space
  monoids}\label{sec:I-space-monoids}
By definition, a \emph{commutative $\cI$-space monoid} is a commutative monoid in $\cS^{\cI}$ with respect to the $\boxtimes$-product. 
We write $\cC\cS^{\cI}$ for the category of commutative $\cI$-space monoids.
Unraveling the definitions, a commutative $\cI$-space monoid $A$ is an $\cI$-space $A$ together with a unit element in $A(\bld{0})$ and
a natural transformation of ($\cI\times\cI$)-diagrams  $A(\bld{m})\times A(\bld{n}) \to A(\bld{m+n})$ that is associative and unital in the appropriate sense and which makes the diagrams 
\[
\xymatrix@-1pc{
A(\bld m)\times A(\bld n) \ar[r] \ar[d]& A(\bld m\sqcup \bld n)\ar[d]^{\tau_{m,n}}\\
A(\bld n)\times A(\bld m) \ar[r] & A(\bld n\sqcup \bld m)
}
\] 
commutative.
The next result is the main reason for considering the positive $\cI$-model structure on $\cS^{\cI}$.
\begin{proposition}[{\cite[Proposition~3.5]{Sagave-S_diagram}}]\label{prop:I-positive-lift}
The positive $\cI$-model structure on $\cS^{\cI}$ lifts to a proper model structure on $\cC\cS^{\cI}$ in which a map is a weak equivalence or fibration if and only if the underlying map of $\cI$-spaces is. 
\qed
\end{proposition}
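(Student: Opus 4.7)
The plan is to apply the standard lifting theorem (Kan's transfer principle, as used for instance by Schwede--Shipley for monoids and by Mandell--May--Schwede--Shipley for commutative symmetric ring spectra) along the free-commutative-monoid adjunction
\[ \mathrm{Sym} \colon \cS^{\cI} \rightleftarrows \cC\cS^{\cI} \colon U, \]
where $U$ denotes the forgetful functor. One declares a map in $\cC\cS^{\cI}$ to be a weak equivalence or fibration if and only if $U$ sends it to one, and takes the cofibrations to be determined by the left lifting property. To invoke the transfer theorem I need: (a) that the positive $\cI$-model structure on $\cS^{\cI}$ is cofibrantly generated with small sources; (b) the pushout--product axiom holds in $\cS^{\cI}$; and (c) every relative $\mathrm{Sym}(J)$-cell complex (where $J$ is a set of generating positive acyclic cofibrations) is a weak equivalence of underlying $\cI$-spaces.

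The smallness in (a) is automatic since everything lives over $\cS$ and sources are simplicial sets of bounded size. For (b), one checks the pushout--product axiom on the generators $G^{\cI}_{\bld m}(\partial\Delta^k \hookrightarrow \Delta^k)$ with $m\geq 1$ using the explicit formula $(G^{\cI}_{\bld m}(L)\boxtimes G^{\cI}_{\bld n}(L'))(\bld m\sqcup\bld n\sqcup\bld k)\cong \Sigma_{m+n+k}\times_{\Sigma_m\times\Sigma_n\times\Sigma_k}(L\times L'\times\cI(\bld k,-))$ from Lemma~\ref{lem:flat-characterization}; since the generators sit in positive levels, the $\Sigma_{m+n}$-orbits involved are free on the part outside the latching data, which is exactly what makes the pushout--product of a cofibration with an acyclic cofibration a positive $\cI$-acyclic cofibration.

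Step (c) is the main obstacle. Given a pushout
\[ \xymatrix@-1pc{ \mathrm{Sym}(K)\ar[r]\ar[d] & A\ar[d] \\ \mathrm{Sym}(L)\ar[r] & P } \]
in $\cC\cS^{\cI}$ with $K\to L$ a generating positive acyclic cofibration, I would analyze $P$ via the canonical filtration $A=F_0P\to F_1P\to\dots\to P$ whose $n$-th subquotient, on the underlying $\cI$-space, is
\[ F_nP / F_{n-1}P \;\cong\; A\boxtimes \bigl(L^{\boxtimes n}/\Sigma_n\bigr)/\bigl(\text{lower pushout-product filtration in }K\to L\bigr). \]
The positive condition on $K\to L$ forces the $\Sigma_n$-action on each level of $L^{\boxtimes n}$ outside the relevant latching object to be free, so that the orbit quotient $L^{\boxtimes n}/\Sigma_n$ is a homotopy orbit in every level $\bld m$ with $m\geq n$. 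Combined with the flatness-preservation properties of $\boxtimes$ recorded in Proposition~\ref{prop:flat-boxtimes-preserves-I-and-N-equiv} and Lemma~\ref{lem:flat-characterization}, this shows that each $F_{n-1}P\to F_nP$ is a level acyclic cofibration, hence an $\cI$-equivalence, and the transfinite composite to $P$ is as well; the same argument applied to cobase changes and transfinite compositions closes (c).

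Properness is then easy: right properness is inherited from $\cS^{\cI}$ because $U$ detects and preserves fibrations, weak equivalences, and pullbacks. For left properness, the key point is that cofibrations in $\cC\cS^{\cI}$ have underlying maps that are retracts of $\mathrm{Sym}$-cell complexes, and the filtration analysis from step (c) shows these have flat underlying $\cI$-spaces; then Proposition~\ref{prop:flat-boxtimes-preserves-I-and-N-equiv} (or its analogue for pushouts under a flat map) yields that pushouts along them preserve $\cI$-equivalences.
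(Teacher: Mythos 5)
The paper offers no proof of this proposition; it is imported verbatim from \cite[Proposition~3.5]{Sagave-S_diagram}, and your overall architecture (transfer along $\mathrm{Sym}\dashv U$ together with the filtration of a pushout of a free map) is exactly the one used there. However, your step (c) contains a genuine gap. You conclude that each filtration stage $F_{n-1}P\to F_nP$ is a \emph{level} acyclic cofibration, ``hence an $\cI$-equivalence.'' This cannot hold for all generating acyclic cofibrations $K\to L$ of the positive $\cI$-model structure: that model structure is not a level model structure (its weak equivalences are detected on homotopy colimits over $\cI$), and any generating set for its acyclic cofibrations must contain, besides the positive level generators $G^{\cI}_{\bld{m}}(\Lambda^k_i\to\Delta^k)$, additional maps (in \cite{Sagave-S_diagram} these are pushout-products of $\partial\Delta^k\to\Delta^k$ with mapping-cylinder factorizations of the maps $F^{\cI}_{\bld{n}}(*)\to F^{\cI}_{\bld{m}}(*)$ induced by morphisms of $\cI$) which enforce the homotopy-cartesian condition in the definition of a positive $\cI$-fibration. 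These extra generators are $\cI$-equivalences but not level equivalences, so already the first filtration quotient, a cobase change of $A\boxtimes K\to A\boxtimes L$, is not a level equivalence, and your argument does not cover them.

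The statement one actually needs, and what the cited proof establishes, is that each $F_{n-1}P\to F_nP$ is an $h$-cofibration (in particular levelwise injective) \emph{and} an $\cI$-equivalence. The iterated pushout-product $Q^n_{n-1}\to L^{\boxtimes n}$ is a $\Sigma_n$-equivariant acyclic cofibration on which $\Sigma_n$ acts freely away from the image of the latching data --- this is precisely where positivity enters --- so the quotient $Q^n_{n-1}/\Sigma_n\to L^{\boxtimes n}/\Sigma_n$ computes homotopy orbits and remains an $\cI$-equivalence; one then invokes that $A\boxtimes(-)$ preserves $\cI$-equivalences between flat $\cI$-spaces (Proposition~\ref{prop:flat-boxtimes-preserves-I-and-N-equiv}) and that cobase change along $h$-cofibrations preserves $\cI$-equivalences. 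With this repair your argument closes; the smallness claim, right properness, and your sketch of left properness (resting on the fact that underlying maps of cofibrations in $\cC\cS^{\cI}$ are $h$-cofibrations of $\cI$-spaces) are essentially correct.
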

We shall also refer to this as the \emph{positive $\cI$-model} structure on $\cC\cS^{\cI}$. It is proved in \cite[Theorem~3.6]{Sagave-S_diagram} that this model structure makes $\cC\cS^{\cI}$ Quillen equivalent to the category of $E_{\infty}$ spaces (for any choice of $E_{\infty}$ operad) and one may think of commutative $\cI$-space monoids as strictly commutative models of $E_{\infty}$ spaces. 

Together with 
Proposition~\ref{prop:flat-boxtimes-preserves-I-and-N-equiv}, the next result ensures that cofibrant commutative $\cI$-space monoids are homotopically well-behaved with respect to the $\boxtimes$-product. 

\begin{proposition}\label{prop:underlying-flat-com-I}
  If a commutative $\cI$-space monoid $A$ is cofibrant in the positive
  $\cI$-model structure, then its underlying $\cI$-space is flat.\qed
\end{proposition}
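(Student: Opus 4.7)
The plan is to exploit the cofibrant generation of the lifted model structure on $\cC\cS^{\cI}$ and show by induction that flatness is preserved along each step in the construction of a cofibrant commutative $\cI$-space monoid. Since the explicit criterion of Proposition~\ref{prop:explicit-flatness} is closed under retracts, it suffices to prove that every cell complex built from the generating cofibrations of $\cC\cS^{\cI}$ has flat underlying $\cI$-space. The generating cofibrations of $\cC\cS^{\cI}$ have the form $\mathrm{Sym}(X)\to\mathrm{Sym}(Y)$ where $\mathrm{Sym}$ denotes the free commutative monoid functor and $X\to Y$ runs through the generating positive $\cI$-cofibrations of $\cS^{\cI}$, namely the maps $G^{\cI}_{\bld m}(\Sigma_m\times(\partial\Delta^n\to\Delta^n))$ with $m\geq 1$. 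The base case --- that the initial commutative $\cI$-space monoid $\cI(\bld 0,-)$ is flat --- is immediate from Proposition~\ref{prop:explicit-flatness}, and transfinite compositions of level cofibrations of flat $\cI$-spaces are again flat since both the latching conditions and the intersection conditions of Proposition~\ref{prop:explicit-flatness} pass to filtered colimits along monomorphisms.

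The induction step amounts to analyzing a pushout in $\cC\cS^{\cI}$
\[
\xymatrix@-1pc{
\mathrm{Sym}(X) \ar[r] \ar[d] & A \ar[d] \\
\mathrm{Sym}(Y) \ar[r] & A'
}
\]
where $A$ is a flat commutative $\cI$-space monoid and $X\to Y$ is as above. The standard technique is to filter $A'$ over $A$ by the number of copies of $Y$ adjoined, yielding a sequence $A=A_0\to A_1\to\cdots\to\colim_n A_n=A'$ in which $A_{n-1}\to A_n$ sits in a pushout square
\[
\xymatrix@-1pc{
A\boxtimes(Q^n/\Sigma_n) \ar[r] \ar[d] & A_{n-1} \ar[d] \\
A\boxtimes (Y^{\boxtimes n}/\Sigma_n) \ar[r] & A_n,
}
\]
where $Q^n$ is the domain of the iterated pushout-product of $X\to Y$ with itself. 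The key point is that positivity of $X\to Y$ (no cell sits over $\bld 0$) forces $\Sigma_n$ to act freely on the complement of $Q^n$ in $Y^{\boxtimes n}$ at each level that contributes, so the quotients by $\Sigma_n$ are computed by \emph{free} $\Sigma_n$-actions in the explicit description of $G^{\cI}_{\bld m}(-)\boxtimes(-)$ used in the proof of Lemma~\ref{lem:flat-characterization}. Combined with flatness of $A$ and Lemma~\ref{lem:flat-characterization}, this implies that each map $A_{n-1}\to A_n$ is a level cofibration whose target is again flat.

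The main obstacle is this free-action analysis of the filtration quotient $A\boxtimes(Y^{\boxtimes n}/\Sigma_n)$ and the verification that the left-hand vertical map in the square above is a level cofibration. Once that is established, Lemma~\ref{lem:flat-characterization} gives that $\boxtimes$-producting with the flat $\cI$-space $A$ preserves level cofibrations, and the transfinite composition argument then delivers flatness of $A'$ and, by iteration, of any cell commutative $\cI$-space monoid. (This technical analysis is essentially that which establishes the cellularity of the positive $\cI$-model structure on $\cC\cS^{\cI}$ in Appendix~\ref{app:cellular}, and is the $\cI$-space analogue of the corresponding statement for commutative symmetric ring spectra.) Passing to retracts concludes the proof.
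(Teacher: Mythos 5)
Your argument is correct in outline but takes a genuinely different route from the paper. The paper's proof is essentially a citation: it invokes the \emph{positive flat model structure} on $\cC\cS^{\cI}$ from \cite[Proposition~3.15]{Sagave-S_diagram}, notes that a positive $\cI$-cofibrant object is also cofibrant there by \cite[Proposition~6.20]{Sagave-S_diagram}, and concludes flatness from the characterization of the flat-cofibrant objects. What you propose is, in effect, a self-contained re-derivation of those cited results by the standard cell-induction on the free commutative monoid functor, with the filtration of $\mathrm{Sym}(Y)\cup_{\mathrm{Sym}(X)}A$ by symmetric powers and the observation that positivity forces the $\Sigma_n$-action on the relevant complement to be free. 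That is indeed the mechanism underlying the cited results, so your approach buys transparency at the cost of redoing a substantial amount of the technical work of \cite{Sagave-S_diagram}; the paper's approach buys brevity by outsourcing it.

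Two points in your sketch need strengthening. First, knowing that $A_{n-1}\to A_n$ is a level cofibration does not by itself give that $A_n$ is flat: flatness is a condition on the object, not on the map into it. What you actually need is that $Q^n/\Sigma_n\to Y^{\boxtimes n}/\Sigma_n$ is a cofibration in the \emph{flat} sense (i.e.\ built from cells $G^{\cI}_{\bld m}(K\to L)$ with $K\to L$ an underlying cofibration, which is where the freeness of the $\Sigma_n$-action enters), that $A\boxtimes(-)$ applied to such a map is again of this type when $A$ is flat (the pushout-product/induction argument of Lemma~\ref{lem:flat-characterization}), and that cobase change along such a map preserves flatness. With those three ingredients the induction closes; without them the step ``whose target is again flat'' is unsupported. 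Second, your attribution of this analysis to Appendix~\ref{app:cellular} is misplaced: that appendix establishes compactness and the effective-monomorphism condition for cellularity, not the flatness of cofibrant commutative monoids. The correct reference for the free-action filtration argument is the body of \cite{Sagave-S_diagram} (the results the paper's own proof cites).
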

\begin{proof}
  In~\cite[Proposition~3.15(i)]{Sagave-S_diagram} we establish a
  \emph{positive flat $\cI$-model structure} on $\cC\cS^{\cI}$. It follows
  from~\cite[Proposition~6.20]{Sagave-S_diagram} that $A$ is also
  cofibrant in this model structure. Hence its underlying $\cI$-space
  is flat by~\cite[Proposition~3.15(ii)]{Sagave-S_diagram}.
\end{proof}

\subsection{The simplicial structure on 
\texorpdfstring{$\cC\cS^{\cI}$}{\cC\cS{\cI}}}
The category of $\cI$-spaces is enriched, tensored and cotensored  over simplicial sets. The simplicial mapping spaces are defined by 
\[
\Map(X,Y) \iso \int_{\bld{n}\in\cI}\Map(X(\bld{n}),Y(\bld{n})) \iso
\left\{ [k] \mapsto \cS^{\cI}(X \times \Delta^{k}, Y)\right\},
\]
while for an $\cI$-space $X$ and a simplicial set $K$, the
tensor $X \times K$ and cotensor $X^K$ are the $\cI$-spaces defined by
\[
(X\times K)(\bld{n}) =X(\bld{n})\times K\quad\textrm{ and }\quad 
X^K(\bld n)=\Map(K,X(\bld n)).
\]  
By \cite[Proposition~3.2]{Sagave-S_diagram}, the 
positive $\cI$-model structure on $\cS^{\cI}$ makes the latter a simplicial model category

The category $\cC\cS^{\cI}$ is again enriched, tensored, and cotensored over simplicial sets. While the cotensor is defined on the underlying $\cI$-spaces, the tensor and simplicial mapping spaces are defined respectively by
\[
A \tensor K = |[m] \mapsto A^{\boxtimes K_m}| \quad \textrm{
  and } \quad \Map(A,B) = \left\{[k]\mapsto \cC\cS^{\cI}(A \tensor \Delta^{k}, B) \right\}
\]
for $A$ and $B$ in $\cC\cS^{\cI}$, $K$ in $\cS$, and
$\geor{-}$ the usual restriction to the simplicial diagonal. Since the condition for being a simplicial model category can be expressed in terms of the cotensor structure, 
see e.g.\ \cite[Lemma 4.2.2]{Hovey_model}, the fact that the positive model structure on $\cS^{\cI}$ is simplicial implies that the same holds for $\cC\cS^{\cI}$.
\begin{proposition}
The category $\cC\cS^{\cI}$ is enriched, tensored, and cotensored over 
$\cS$, and the positive $\cI$-model structure is a simplicial model structure.\qed
\end{proposition}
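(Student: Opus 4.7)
The plan is to verify two separate assertions: first, that the formulas given for $\Map$, tensor, and cotensor make $\cC\cS^{\cI}$ a simplicial category (that is, the appropriate adjunction chain holds), and second, that the positive $\cI$-model structure satisfies the pushout-product/cotensor axiom in its cotensor form as in \cite[Lemma~4.2.2]{Hovey_model}.

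For the enrichment, I would first check that the cotensor $B^K$, defined at the underlying $\cI$-space level by $B^K(\bld n) = \Map(K, B(\bld n))$, inherits a natural commutative monoid structure: the multiplication is obtained from $B(\bld m) \times B(\bld n) \to B(\bld m \sqcup \bld n)$ by applying $\Map(K, -)$, which commutes with finite products, and symmetry/associativity follow because $\Map(K,-)$ is a functor. The content of the simplicial adjunctions is then the chain
\[
\cC\cS^{\cI}(A \tensor K, B) \iso \ssets(K, \Map(A,B)) \iso \cC\cS^{\cI}(A, B^K),
\]
for $A, B \in \cC\cS^{\cI}$ and $K \in \ssets$. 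The right-hand isomorphism reduces, at each simplicial degree $k$, to the adjunction with simplices $\Delta^k$, which is built into the definition of $\Map(A,B)$. For the left-hand isomorphism, the key observation is that the tensor $A \tensor K = |[m] \mapsto A^{\boxtimes K_m}|$ is designed precisely so that maps out of it in $\cC\cS^{\cI}$ correspond to compatible collections of maps out of the boxtimes powers $A^{\boxtimes K_m}$, and these in turn correspond via the free commutative monoid/underlying adjunction to a simplicial map $K \to \cC\cS^{\cI}(A, B)$. One must also verify the coherence isomorphism $A \tensor (K \times L) \iso (A \tensor K) \tensor L$, which follows from manipulating realizations and the symmetric monoidal structure of $\boxtimes$.

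For the simplicial model category axiom, I would invoke the cotensor version of SM7: for every cofibration $i\colon K \to L$ in $\ssets$ and every fibration $p\colon A \to B$ in $\cC\cS^{\cI}$, the induced pullback-corner map
\[
A^L \longrightarrow A^K \times_{B^K} B^L
\]
must be a positive $\cI$-fibration in $\cC\cS^{\cI}$, which is acyclic if either $i$ or $p$ is. By Proposition~\ref{prop:I-positive-lift}, a map in $\cC\cS^{\cI}$ is a (acyclic) positive $\cI$-fibration exactly when its underlying map of $\cI$-spaces is. Since the cotensor is computed at the underlying $\cI$-space level and pullbacks in $\cC\cS^{\cI}$ are created in $\cS^{\cI}$, this pullback-corner condition on the $\cC\cS^{\cI}$-level reduces directly to the analogous pullback-corner condition for the positive $\cI$-model structure on $\cS^{\cI}$, which is simplicial by \cite[Proposition~3.2]{Sagave-S_diagram}.

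The only mildly delicate point is the adjunction for the tensor: one needs to know that the realization $|[m] \mapsto A^{\boxtimes K_m}|$ is the genuine pushout/colimit corresponding to the universal construction in $\cC\cS^{\cI}$, and that this operation commutes with the free commutative monoid functor in the right way. This is standard for tensoring commutative monoids in a cocomplete symmetric monoidal simplicial category by a simplicial set; once it is in hand, everything else is a formal unpacking.
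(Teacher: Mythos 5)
Your proposal is correct and follows essentially the same route as the paper: the paper also defines the cotensor on underlying $\cI$-spaces, takes the tensor and mapping spaces as you describe, and then verifies SM7 via the cotensor formulation of \cite[Lemma~4.2.2]{Hovey_model}, reducing to the fact that the positive $\cI$-model structure on $\cS^{\cI}$ is simplicial because fibrations and weak equivalences in $\cC\cS^{\cI}$ are detected on underlying $\cI$-spaces. The paper treats the existence of the enriched/tensored/cotensored structure more tersely than you do, but the substance is the same.
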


We next observe that the monoidal unit for the $\boxtimes$-product can be identified with both the initial object and the terminal object $*$ in 
$\cC\cS^{\cI}$, so that the latter is a based category. Hence the simplicial mapping spaces $\Map(A,B)$ are canonically based and $\cC\cS^{\cI}$ is a category enriched over the category $\cS_*$ of based simplicial sets. Given a commutative $\cI$-space monoid $A$ and a based simplicial set $(K,v)$, define the tensor $A\otimes(K,v)$ to be the pushout (in $\cC\cS^{\cI}$) of the diagram $*\leftarrow A\otimes\{v\}\to A\otimes K$, and the cotensor $A^{(K,v)}$ to be the pullback of the diagram $*\to A^{\{v\}}\leftarrow A^K$. Thus, $A^{(K,v)}(\bld n)$ is the space of based maps $\Map_*(K,A(\bld n))$.
We claim that this 
structure makes $\cC\cS^{\cI}$ a based simplicial model category. 
This means that given a cofibration 
$A\to B$ in $\cC\cS^{\cI}$ and a cofibration $(K,v)\to (L,w)$ in $\cS_*$, the pushout-product
\[
f\Box g\colon A\otimes(L,w)\boxtimes_{A\otimes(K,v)}B\otimes(K,v)\to
B\otimes(L,w)
\]
is a cofibration in $\cC\cS^{\cI}$ which is acyclic if either $f$ or $g$ is acyclic.
By adjointness (see \cite[Lemma 4.2.2]{Hovey_model}), this condition can be reformulated in terms of the mapping spaces $\Map(A,B)$ so the claim that $\cC\cS^{\cI}$ is a based simplicial model category follows from the fact that it is a simplicial model category, 
cf.\ ~\cite[Proposition 4.2.19]{Hovey_model}. We summarize the above discussion in the next corollary. 
\begin{proposition}\label{prop:based-simplicial}
  The category $\cC\cS^{\cI}$ is enriched, tensored, and cotensored over $\cS_*$, and the positive $\cI$-model structure is a based simplicial model structure.
  \qed
\end{proposition}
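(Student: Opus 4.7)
The proof plan is to formalize the three ingredients listed in the paragraph preceding the proposition, in order: pointedness of $\cC\cS^{\cI}$, the $\cS_*$-enrichment/tensor/cotensor, and the based pushout--product axiom.

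First I would verify that the monoidal unit $\cI(\bld 0,-)$ for $\boxtimes$ is simultaneously initial and terminal in $\cC\cS^{\cI}$. Initiality is the usual observation that a map of commutative monoids out of the unit is pinned down by the requirement that it preserve the unit element, while terminality follows because the only morphism $\bld 0 \to \bld n$ in $\cI$ makes $\cI(\bld 0,\bld n)$ a singleton for every $\bld n$. Consequently $\cC\cS^{\cI}$ has a zero object, the unenriched hom-sets $\cC\cS^{\cI}(A,B)$ are canonically based, and therefore so are the simplicial sets $\Map(A,B)$.

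Next I would check that the based tensor $A\tensor (K,v) = *\cup_{A\tensor\{v\}} (A\tensor K)$ and the based cotensor $A^{(K,v)} = *\times_{A^{\{v\}}} A^K$ give $\cC\cS^{\cI}$ the structure of a $\cS_*$-enriched, tensored, cotensored category. For this I would combine the universal properties of the defining pushouts/pullbacks with the already-established unbased adjunction between the unbased tensor and cotensor (part of the simplicial structure in the proposition immediately preceding this one); directly rewriting $\cC\cS^{\cI}(A\tensor (K,v),B)$ via the pushout description yields the subset of $\cS(K,\Map(A,B))$ consisting of maps sending $v$ to the basepoint, which is $\cS_*(K,\Map(A,B))$, and similarly on the cotensor side. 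Associativity/unitality coherences are inherited from the unbased simplicial structure.

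For the based simplicial model category axiom I would invoke the standard adjoint reformulation (Hovey, Lemma~4.2.2 and Proposition~4.2.19): the pushout-product condition for a cofibration $A\to B$ in $\cC\cS^{\cI}$ and a cofibration $(K,v)\to(L,w)$ in $\cS_*$ is equivalent to a lifting/mapping-space statement phrased purely in terms of the cotensor $B^{(L,w)}\to B^{(K,v)}\times_{A^{(K,v)}}A^{(L,w)}$ being a (possibly acyclic) positive $\cI$-fibration when $A\to B$ is (acyclic). Because the based cotensor is defined as a pullback of unbased cotensors along maps that arise from the unbased cofibration $\{v\}\to K$, and because $\cC\cS^{\cI}$ is already simplicial by the preceding proposition, this reduces to the unbased pushout-product axiom applied to the given cofibration $A\to B$ together with the underlying unbased cofibration $(K,v)\to(L,w)$; fibrations being closed under pullback then delivers the conclusion.

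There is no genuinely hard step: all homotopical content sits in the unbased simplicial model structure from Proposition~\ref{prop:I-positive-lift}, and the only thing to organize is the bookkeeping showing that the based tensor/cotensor, being pushouts/pullbacks of the unbased ones, inherit the pushout-product axiom. The mildest care is needed to ensure that the natural basepoints of $\Map(A,B)$ (coming from the zero object) agree with those induced by the enrichment, which is routine once initiality and terminality of the $\boxtimes$-unit are in place.
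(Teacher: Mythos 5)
Your proposal is correct and follows essentially the same route as the paper: identify the $\boxtimes$-unit as a zero object to make $\cC\cS^{\cI}$ based, define the based tensor and cotensor as the pushout $*\cup_{A\otimes\{v\}}(A\otimes K)$ and the corresponding pullback, and deduce the based pushout--product axiom from the unbased simplicial model structure via the adjoint (cotensor/mapping-space) reformulation of Hovey, Lemma~4.2.2 and Proposition~4.2.19. The paper's own argument is exactly this, presented as the discussion preceding the proposition.
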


\subsection{The monoid of path components}\label{sec:monoid-path-comp}
Recall that for a (not necessarily fibrant) simplicial set $K$, the
set of path components $\pi_0(K)$ is defined to be the coequalizer of 
the maps $d_0, d_1\colon K_1 \rightrightarrows K_0$.

If $A$ is a commutative $\cI$-space monoid, then $A_{h\cI}$ is an
associative and homotopy commutative simplicial monoid. The
multiplication on $A_{h\cI}$ is induced from that of $A$ by means of
the monoidal structure map $\mu$ of 
Lemma~\ref{lem:times-boxtimes}. One can check the homotopy commutativity directly or deduce it from~\cite[\S 6.1]{Schlichtkrull_Thom-symm}, where it is shown that $A_{h\cI}$ is an $E_{\infty}$ space over the Barratt-Eccles operad. It follows in particular that $\pi_0(A_{h\cI})$ is a commutative monoid. Given vertices $x_0 \in A(\bld{m})_0$ and $y_0 \in A(\bld{n})_0$
representing classes $[x_0]$ and $[y_0]$ in $\pi_0(A_{h\cI})$, their
product $[x_0][y_0]$ is represented by $\mu(x_0,y_0) \in
A(\bld{m}\concat\bld{n})_0$.

\begin{example} \label{ex:free_comm_ispace-monoid} 
Let $F_{\bld n}^{\cI}\colon \cS\to \cS^{\cI}$ be the left adjoint of the functor that takes an $\cI$-space $X$ to $X(\bld n)$, and let 
$\mathbb C\colon \cS^{\cI}\to \cC\cS^{\cI}$ be the left adjoint of the forgetful functor. We define $C_1$ to be the free commutative $\cI$-space monoid on a point in $\cI$-space degree one, that is,  
\[
C_1 = \mC (F^{\cI}_{\bld{1}}(*)) = \coprod_{n \geq 0}
F_{\bld 1}^{\cI}(*)^{\boxtimes n}/ \Sigma_n.
\] 
It follows from the definition of the $\boxtimes$-product that 
$F_{\bld 1}^{\cI}(*)^{\boxtimes n}$ is isomorphic to $F^{\cI}_{\bld n}(*)=\cI(\bld n,-)$, and hence that 
 \[\big(F_{\bld{1}}^{\cI}(*)^{\boxtimes n}/ \Sigma_n\big)_{h\cI} \iso
  \left(\big(F_{\bld{1}}^{\cI}(*)^{\boxtimes
        n}\big)_{h\cI}\right)/\Sigma_n \iso B(\bld{n}\downarrow
  \cI)/\Sigma_n.
\]
Since $B(\bld{n}\downarrow\cI)$ is $\Sigma_n$-free and contractible, this in turn implies that the simplicial monoid $(C_1)_{h\cI}$ associated
with $C_1$ is weakly equivalent to $\coprod_{n\geq 0} B\Sigma_{n}$. In
particular, it has $\pi_0((C_1)_{h\cI}) \iso \mN_0$. Given a commutative 
$\cI$-space monoid $A$, it follows from the definitions that vertices in $A(\bld{1})$ correspond to maps of commutative $\cI$-space monoids $C_1 \to A$. Furthermore, if $A$ is positive $\cI$-fibrant, then we know from 
Lemma \ref{lem:hI-of-homotopy-constant} that every class in
$\pi_0(A_{h\cI})$ can be represented by a vertex $a$ in $A(\bld{1})$ 
such that the induced map $\pi_0((C_1)_{h\cI}) \to \pi_0(A_{h\cI})$ sends the generator for  $\pi_0((C_1)_{h\cI})$ to the class $[a]$.
\end{example}

\begin{definition}
A commutative $\cI$-space monoid $A$ is \emph{grouplike} if the monoid
$\pi_0(A_{h\cI})$ is a group.
\end{definition}
It is clear that if $A\to B$ is an $\cI$-equivalence of commutative $\cI$-space monoids, then $A$ is grouplike if and only if $B$ is. 
We shall later define a ``group completion'' 
$C_1\to C_1^{\mathrm{gp}}$ with the property that a positive $\cI$-fibrant commutative $\cI$-space monoid $A$ is grouplike if and only if every map $C_1\to A$ extends to $C_1^{\mathrm{gp}}$ 
(see Lemma~\ref{lem:extension-grouplike}).

\section{Group completion via the bar
  construction}\label{sec:bar-construction}
\subsection{The bar construction in \texorpdfstring{$\cI$}{I}-spaces}
In this section we examine the bar construction of
$\cI$-space monoids and its relationship to the usual space-level bar
construction. It is illuminating to consider the general setting of a
monoid $A$ in a monoidal category $(\mathcal A,\Box,1_{\cA})$ with
monoidal structure $\Box$ and unit~$1_{\cA}$. Given a right $A$-module
$M$ and a left $A$-module $N$ in $\cA$, the two-sided bar construction
is the simplicial object
\[   
B_{\bullet}(M,A,N)\colon [k]\mapsto M\Box\underbrace{A\Box\dots\Box
  A}_k\Box N
\]
with structure maps as for the usual space level bar
construction (see \cite{May_geometry}). Here we suppress an implicit choice
of placement of the parenthesis in the iterated monoidal product. Now
let $\Phi\colon \cA\to\cB$ be a monoidal functor relating the monoidal
categories $(\mathcal A,\Box,1_{\cA})$ and $(\cB,\triangle,1_{\cB})$.
By definition (see \cite[XI.\S 2]{MacLane_categories}), this means that
$\Phi$ is a functor equipped with a morphism $1_{\cB}\to
\Phi(1_{\cA})$ and a natural transformation of functors on $\cA\times
\cA$,
\[
\mu\colon \Phi(A_1)\triangle \Phi(A_2)\to \Phi(A_1\Box A_2),
\]
satisfying the usual unitality and associativity conditions (this is
what is sometimes called lax monoidal). It follows from the
definition, that if $A$ is a monoid in $\cA$, then $\Phi(A)$
inherits the structure of a monoid in $\cB$. If $M$ is a right
$A$-module in $\cA$, then $\Phi(M)$ inherits the structure of a
right $\Phi(A)$-module in $\cB$ and similarly for left
$A$-modules. Applying the functor $\Phi$ degree-wise to
$B_{\bullet}(M,A,N)$ we get a simplicial object in $\cB$ and
the monoidal structure maps give rise to a simplicial map
\[     
\mu\colon B_{\bullet}(\Phi(M),\Phi(A),\Phi(N))\to \Phi B_{\bullet}(M,A,N).
\]

Now we specialize to the monoidal category $\cS^{\cI}$ of
$\cI$-spaces. Let $A$ be an $\cI$-space monoid and notice that the
augmentation $A\to *$ makes the final object $*$ a left and
right $A$-module. We write $B_{\bullet}(A)$ for the simplicial bar
construction $B_{\bullet}(*,A,*)$ and $B(A)$ for its realization.  As
proven in \cite[Proposition 4.17]{Schlichtkrull_Thom-symm}, the
homotopy colimit functor $ (-)_{h\cI}\colon \cS^{\cI}\to \cS $
canonically has the structure of a monoidal functor, where the natural
transformation $\mu$ is the monoidal structure map of Lemma
\ref{lem:times-boxtimes}. Its value on the final object $*$ is the
classifying space $B\cI$ which is contractible since $\cI$ has
an initial object. Implementing the above discussion in the case at
hand we get a chain of simplicial maps
\[
B_{\bullet}(A)_{h\cI}\leftarrow B_{\bullet}(B\cI,A_{h\cI},B\cI)\to
B_{\bullet}(A_{h\cI})
\] 
where the right hand map is induced by the projection $B\cI\to *$.\  
(There is no simplicial map relating
$B_{\bullet}(A_{h\cI})$ and $B_{\bullet}(A)_{h\cI}$ directly.)  
\begin{proposition}\label{bar-bar}
If $A$ is an $\cI$-space monoid with underlying flat $\cI$-space, then the induced maps of realizations 
\[
B(A)_{h\cI}\xleftarrow{\sim} B(B\cI,A_{h\cI},B\cI)\xrightarrow{\sim}
B(A_{h\cI})
\]
are weak equivalences. 
\end{proposition}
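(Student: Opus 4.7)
The plan is to show that both maps in the displayed chain are the realizations of level-wise weak equivalences of simplicial simplicial sets, and then to invoke the classical fact that the diagonal of a bisimplicial set preserves level-wise weak equivalences. The first step is to interchange $(-)_{h\cI}$ with the realization of the simplicial $\cI$-space $B_\bullet(A)$: viewing both sides as diagonals of the same trisimplicial set $([s],[k],[n])\mapsto \coprod_{\bld{n}_0\ot\dots\ot\bld{n}_s} B_k(A)(\bld{n}_s)_n$ in different orders, a Fubini-style argument yields a natural weak equivalence
\[ B(A)_{h\cI} \simeq \geor{[k]\mapsto B_k(A)_{h\cI}}. \]
Under this identification the chain in the statement becomes the realization of a chain of simplicial maps which at level $k$ reads
\[ (A^{\boxtimes k})_{h\cI} \xleftarrow{\mu_k} B\cI \times (A_{h\cI})^k \times B\cI \xrightarrow{\pi_k} (A_{h\cI})^k, \]
where we have used that the terminal $\cI$-space $*$ is the unit for $\boxtimes$ (so $B_k(A)=A^{\boxtimes k}$), that $(*)_{h\cI}=B\cI$, and that $\mu_k$ is the iterated lax monoidal structure map discussed in this section.

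Next I would check that both level-$k$ maps are weak equivalences. For $\pi_k$ this is immediate since $\cI$ has the initial object $\bld{0}$ and hence $B\cI$ is contractible. For $\mu_k$ I would decompose it as a composite of $k+1$ applications of $\mu_{X,Y}$ from Lemma~\ref{lem:times-boxtimes}, attaching one factor at a time from left to right. At each step the factor being attached is either the terminal $\cI$-space $*$ (which is flat by Proposition~\ref{prop:explicit-flatness}) or $A$ (flat by hypothesis), while the running partial $\boxtimes$-product is of the form $*\boxtimes A^{\boxtimes j}$ or $*\boxtimes A^{\boxtimes j}\boxtimes\cdots$, which remains flat by Proposition~\ref{prop:boxtimes-times-flat} applied inductively. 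Each intermediate application of $\mu$ is therefore a weak equivalence, and hence so is the composite $\mu_k$.

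Finally, since the diagonal of a bisimplicial set preserves level-wise weak equivalences (by e.g.\ \cite[Lemma~IV.1.9]{Goerss-J_simplicial}), the level-wise weak equivalences above realize to weak equivalences of simplicial sets, yielding the desired conclusion. The main technical point is the Fubini interchange of $(-)_{h\cI}$ with the realization of a simplicial $\cI$-space; once this is in place the proof reduces to a direct unwinding of the lax monoidal structure on $(-)_{h\cI}$ together with repeated application of Lemma~\ref{lem:times-boxtimes}, using the flatness hypothesis to control each monoidal structure map entering the level-$k$ description.
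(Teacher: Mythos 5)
Your argument is correct and is essentially the paper's own proof, just written out in more detail: the paper likewise observes that both maps are realizations of bisimplicial maps that are weak equivalences in each bar degree, with Lemma~\ref{lem:times-boxtimes} (plus the flatness hypothesis) handling the left-hand map and contractibility of $B\cI$ handling the right-hand one. Your extra care with the Fubini interchange (which is in fact an isomorphism of total diagonals) and with the flatness of the intermediate $\boxtimes$-products via Proposition~\ref{prop:boxtimes-times-flat} correctly fills in the details the paper leaves implicit.
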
 
\begin{proof}
  The right hand map is the realization of a map of bisimplicial sets
  which is a weak equivalence at each simplicial degree of the bar construction and is therefore a weak
  equivalence. Lemma \ref{lem:times-boxtimes} enables us to apply the
  same argument to the left hand map.
\end{proof}

We also note the following corollary of Proposition~\ref{prop:semistable-realization}, Proposition~\ref{prop:boxtimes-times-flat}, and  
Proposition~\ref{prop:boxtimes_of_semistable}.
\begin{corollary}\label{B(A)semistable}
  If $A$ is an $\cI$-space monoid with underlying flat and semistable 
  $\cI$-space, then $B(A)$ is semistable.\qed
\end{corollary}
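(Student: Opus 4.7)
The plan is to apply Proposition~\ref{prop:semistable-realization} to the simplicial $\cI$-space $B_{\bullet}(A) = B_{\bullet}(*,A,*)$, whose $k$th term is $A^{\boxtimes k}$ (with $A^{\boxtimes 0}$ interpreted as the monoidal unit). Since Proposition~\ref{prop:semistable-realization} only requires semistability in each simplicial degree, everything reduces to showing that $A^{\boxtimes k}$ is semistable for each $k \geq 0$.

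I would prove this by induction on $k$, carrying along flatness as an auxiliary hypothesis (this is necessary because Proposition~\ref{prop:boxtimes_of_semistable} requires both tensor factors to be flat as well as semistable). The base case $k=0$ is immediate: the monoidal unit $\cI(\bld{0},-)$ is constant with value a point, hence trivially flat and semistable. The case $k=1$ holds by the hypothesis on $A$. For the inductive step, writing $A^{\boxtimes(k+1)} = A^{\boxtimes k}\boxtimes A$, flatness is preserved by Proposition~\ref{prop:boxtimes-times-flat} and semistability is then preserved by Proposition~\ref{prop:boxtimes_of_semistable}.

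With semistability of each simplicial degree established, Proposition~\ref{prop:semistable-realization} gives semistability of $|B_{\bullet}(A)| = B(A)$, which is the conclusion. There is no real obstacle here; the proof is a straightforward assembly of the three cited propositions, and the only minor point worth flagging is that one must maintain flatness throughout the induction in order to keep feeding Proposition~\ref{prop:boxtimes_of_semistable}.
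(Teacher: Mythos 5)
Your proof is correct and is exactly the paper's argument: the paper states the corollary as an immediate consequence of Propositions~\ref{prop:semistable-realization}, \ref{prop:boxtimes-times-flat}, and \ref{prop:boxtimes_of_semistable}, and your induction on $k$ (carrying flatness along as an auxiliary hypothesis) just makes the implicit bookkeeping explicit.
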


\subsection{The loop functor}\label{subsec:loop-functor}
Given a based space $X$, we write $\Omega(X)$ for the based simplicial mapping space $\Map_*(S^1,X)$ where $S^1$ denotes the simplicial circle $\Delta^1/\partial\Delta^1$. In order for this to represent the correct homotopy type we should of course stipulate that $X$ be fibrant, or otherwise choose a fibrant replacement. For definiteness, we write $X^{\mathrm{fib}}=\Sing|X|$ for the fibrant replacement obtained by evaluating the singular complex of the geometric realization.  

Our next aim is to understand how the loop functor $\Omega$ interacts
with the formation of homotopy colimits over $\cI$. In general, given
a small category $\cK$ and a functor $X\colon \cK \to \cS_*$ to based
spaces, we write $X_{h^*\cK}$ for the based homotopy colimit over
$\cK$. This can be defined as the quotient of the unbased homotopy
colimit $X_{h\cK}$ by the subspace $B\cK$ specified by
the inclusion of the base point in $X$ (for a full discussion, see~\cite[Proposition 18.8.4]{Hirschhorn_model}). If $B\cK$ is contractible then
the projection $X_{h\cK}\to X_{h^*\cK}$ is a weak equivalence. This
applies in particular to the categories $\cN$ and $\cI$.

Now assume that $X \colon \cK \to \cS_*$ is levelwise fibrant in positive
degrees. The advantage of the based homotopy colimit for our purpose
is that the evaluation maps $S^1 \sm \Omega(X(\bld{k})) \to X(\bld{k})$ induce
a based map
\[ 
S^1 \sm \left( \Omega X\right)_{h^*\cK} \to (S^1 \sm \Omega X)_{h^* \cK} \to X_{h^*\cK} \to (X_{h^*\cK})^{\mathrm{fib}} 
\]
with adjoint $\left( \Omega X\right)_{h^*\cK} \to \Omega
((X_{h^*\cK})^{\mathrm{fib}}).$ The levelwise fibrancy assumption and the fibrant replacement ensure that the constructions are homotopically meaningful. 
\begin{lemma}\label{NOmega}
  Let $X\colon \cI \to \cS_*$ be a based $\cI$-space that is levelwise
  fibrant in positive degrees. Then the canonical map 
  $\Omega(X)_{h^*\cN}\to \Omega((X_{h^*\cN})^{\mathrm{fib}})$ is a weak equivalence.
\end{lemma}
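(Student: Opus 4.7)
The plan is to reduce to a sequential mapping telescope and then invoke the standard fact that $\Omega$ commutes with sequential filtered homotopy colimits of pointed fibrant simplicial sets, a consequence of $S^1$ being a finite simplicial set.

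First, I would use cofinality to arrange that every level of $X$ is a Kan complex. As in the proof of Lemma~\ref{lem:hI-of-homotopy-constant}, the inclusion $\cN_{\geq 1}\to\cN$ is homotopy cofinal and both classifying spaces are contractible, so the induced maps $(\Omega X)_{h^*\cN_{\geq 1}}\to(\Omega X)_{h^*\cN}$ and $X_{h^*\cN_{\geq 1}}\to X_{h^*\cN}$ are weak equivalences. Working with $\cN_{\geq 1}$, we may assume throughout that each $X(\bld n)$ is fibrant.

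Second, I would identify $X_{h^*\cN_{\geq 1}}$ up to weak equivalence with the pointed mapping telescope $\mathrm{tel}_n X(\bld n)$ of the sequence $X(\bld 1)\to X(\bld 2)\to \dots$ (and likewise for $\Omega X$). Since the transition maps in a mapping telescope are cofibrations of simplicial sets and each $X(\bld n)$ is fibrant, $\pi_k$ of a sequential telescope is computed by the colimit of the $\pi_k$ of the stages. Applied to both sides this yields
\[ \pi_k\bigl((\Omega X)_{h^*\cN}\bigr)\cong \colim_n \pi_k\Omega X(\bld n)\cong \colim_n \pi_{k+1}X(\bld n)\cong \pi_{k+1}(X_{h^*\cN})\cong \pi_k \Omega\bigl((X_{h^*\cN})^{\mathrm{fib}}\bigr), \]
where the last isomorphism uses that loops of a fibrant space correctly compute $\pi_{k+1}$ and that the fibrant replacement does not change homotopy groups.

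The main technical point, and the step I would expect to require the most care, is to verify that the chain of isomorphisms above is actually induced by the canonical map of the statement. This amounts to unraveling the adjoint of the evaluation $S^1\wedge(\Omega X)_{h^*\cN}\to X_{h^*\cN}\to (X_{h^*\cN})^{\mathrm{fib}}$: its restriction to each stage of the telescope is, by construction, the standard evaluation $S^1\wedge \Omega X(\bld n)\to X(\bld n)$, whose effect on $\pi_k$ is the canonical identification $\pi_k\Omega X(\bld n)\cong \pi_{k+1}X(\bld n)$. Passing to the colimit over $n$ matches the displayed chain of isomorphisms stage by stage, whence the canonical map is a $\pi_*$-isomorphism of Kan complexes and therefore a weak equivalence.
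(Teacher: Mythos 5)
Your proof is correct and rests on essentially the same idea as the paper's: exploit the sequential structure of $\cN$ together with the fact that $\Omega$ commutes with sequential (homotopy) colimits, using levelwise fibrancy in positive degrees to identify the stages. The paper implements this slightly more directly — it filters $X_{h^*\cN}$ by the subspaces $X_{h^*\cN_k}$, each of which is equivalent to $X(\bld{k})$ because $\cN_k$ has a terminal object, and uses that $\Omega$ and $\Sing\geor{-}$ commute with sequential colimits of inclusions, so no explicit $\pi_*$ computation is needed — but your telescope argument is an equally valid implementation.
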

\begin{proof}
  Consider the filtration of $\cN$ by the subcategories $\cN_k$ of
  natural numbers less than or equal to $k$. This gives a filtration
  of $X_{h^*\cN}$ by the subspaces $X_{h^*\cN_k}$ and it follows from
  the definition of the homotopy colimit that the inclusions give rise
  to an isomorphism
$
\colim_k X_{h^*\cN_k}\to X_{h^*\cN}.
$
Applying this to $X$ and $\Omega(X)$ and using that $\Omega$ commutes
with sequential colimits of inclusions, we get a commutative diagram
\[
\xymatrix@-1pc{ \colim_{k}\Omega(X)_{h^*\cN_k}\ar[r]\ar[d]_{\iso}&
  \colim_k\Omega((X_{h^*\cN_k})^{\textrm{fib}})\ar[d]^{\iso}\\
  \Omega(X)_{h^*\cN}\ar[r]& \Omega((X_{h^*\cN})^{\textrm{fib}}) }
\]  
where the vertical maps are isomorphisms. Since $\cN_k$ has a terminal
object it is clear that the map in the upper row is a weak equivalence
for each fixed positive $k$, hence the map of colimits is also a weak
equivalence.
\end{proof}

We say that a based $\cI$-space is semistable if the underlying
$\cI$-space (forgetting the base point) is semistable.

\begin{proposition}\label{hocolimloop}
  Let $X$ be a semistable based $\cI$-space that is levelwise fibrant
  in positive degrees. Then $\Omega(X)$ is also semistable and there
  is a natural weak equivalence
\[
\Omega(X)_{h^*\cI}\to \Omega((X_{h^*\cI})^{\mathrm{fib}}).
\] 
\end{proposition}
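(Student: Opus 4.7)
First note that $\Omega=\Map_*(S^1,-)$ preserves fibrations and weak equivalences between Kan complexes, so if $X$ is levelwise fibrant in positive degrees, then so is $\Omega(X)$. Since the categories $\cN$ and $\cI$ have initial objects, the nerves $B\cN$ and $B\cI$ are contractible, and hence the projections $Y_{h\cN}\to Y_{h^*\cN}$ and $Y_{h\cI}\to Y_{h^*\cI}$ are weak equivalences for any based diagram $Y$ with cofibrant basepoint. Consequently, throughout the argument we can pass freely between based and unbased homotopy colimits, and we can check semistability by showing that the based version $\Omega(X)_{h^*\cN}\to\Omega(X)_{h^*\cI}$ is a weak equivalence.

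The plan is to first establish the semistability of $\Omega(X)$ using criterion (iii) of Proposition~\ref{semistabilityprop}, namely that $j_{\Omega(X)}\colon\Omega(X)\to R\Omega(X)$ is an $\cN$-equivalence. Since $R$ commutes with $\Omega$ on the nose, this amounts to showing that $\Omega(j_X)\colon\Omega(X)\to\Omega(RX)$ induces a weak equivalence after taking $(-)_{h^*\cN}$. Consider the commutative square
\[
\xymatrix@-.5pc{
\Omega(X)_{h^*\cN}\ar[r]\ar[d]_{\Omega(j_X)_{h^*\cN}} & \Omega((X_{h^*\cN})^{\mathrm{fib}})\ar[d]\\
\Omega(RX)_{h^*\cN}\ar[r] & \Omega(((RX)_{h^*\cN})^{\mathrm{fib}})
}
\]
Both $X$ and $RX$ are levelwise fibrant in positive degrees (in the case of $RX$ this uses the shift by $\eins$), so Lemma~\ref{NOmega} makes the horizontal maps weak equivalences. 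The right hand vertical map is a weak equivalence because $j_X$ is an $\cN$-equivalence by the semistability of $X$, and $\Omega$ preserves weak equivalences between Kan complexes after functorial fibrant replacement. Hence the left hand vertical map is a weak equivalence, which gives the semistability of $\Omega(X)$.

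Having established this, the stated map is handled by a second commutative square
\[
\xymatrix@-.5pc{
\Omega(X)_{h^*\cN}\ar[r]\ar[d] & \Omega((X_{h^*\cN})^{\mathrm{fib}})\ar[d]\\
\Omega(X)_{h^*\cI}\ar[r] & \Omega((X_{h^*\cI})^{\mathrm{fib}})
}
\]
where commutativity is an instance of the naturality of the construction that precedes Lemma~\ref{NOmega}. The top map is a weak equivalence by Lemma~\ref{NOmega}, the left vertical map is a weak equivalence by the semistability of $\Omega(X)$ just proved, and the right vertical map is a weak equivalence because $X_{h^*\cN}\to X_{h^*\cI}$ is a weak equivalence (semistability of $X$ combined with the contractibility of $B\cN$ and $B\cI$) and $\Omega$ preserves weak equivalences after fibrant replacement. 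By the two-out-of-three property, the bottom map is a weak equivalence, as desired. The only real content beyond diagram chasing is the use of Lemma~\ref{NOmega} together with the semistability hypothesis on $X$; once these ingredients are assembled, naturality takes care of the rest.
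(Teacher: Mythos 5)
Your proof is correct and follows essentially the same route as the paper: semistability of $\Omega(X)$ is established via criterion (iii) of Proposition~\ref{semistabilityprop} using the identification $R\Omega(X)=\Omega(RX)$ and Lemma~\ref{NOmega} (your first square is just the transpose of the one in the paper), and the natural weak equivalence is then obtained from the same $\cN$-versus-$\cI$ comparison square. No gaps.
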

\begin{proof}
  In order for $\Omega(X)$ to be semistable it suffices that $j\colon
  \Omega(X)\to R\Omega(X)$ induces a weak equivalence of based
  homotopy colimits over $\cN$. Since $R\Omega(X)$ is the same as
  $\Omega(RX)$, there is a commutative diagram
\[
\xymatrix@-1pc{
  \Omega(X)_{h^*\cN}\ar[r] \ar[d]_{\sim} & R\Omega(X)_{h^*\cN}\ar[d]^{\sim}\\
  \Omega((X_{h^*\cN})^{\textrm{fib}})\ar[r]& \Omega((RX_{h^*\cN})^{\mathrm{fib}}) }
\]
where the vertical maps are the weak equivalences from Lemma
\ref{NOmega}. The lower horizontal map is a weak equivalence by
assumption and the result follows. Using this the second statement in
the proposition follows from the commutative diagram
\[
\xymatrix@-1pc{
\Omega(X)_{h^*\cN}\ar[r]\ar[d] & \Omega((X_{h^*\cN})^{\textrm{fib}})\ar[d]\\
\Omega(X)_{h^*\cI} \ar[r] &\Omega((X_{h^*\cI})^{\textrm{fib}})
}
\]
where the vertical maps are weak equivalences since $X$ and
$\Omega(X)$ are semistable and the upper horizontal map is a weak
equivalence by Lemma \ref{NOmega}.
\end{proof}

\subsection{Group completion in \texorpdfstring{$\cI$}{I}-spaces}
In this section we show that the usual procedure for group completing
a simplicial monoid lifts to $\cI$-space monoids and we use this to prove 
Theorem~\ref{thm:intro-bar-completion} from the introduction.

Consider in general a simplicial monoid $M$, and recall that the classifying space $B(M)$ is isomorphic to the coend of the diagram 
$([k],[l])\mapsto M^{\times k}\times \Delta^l$ 
(see e.g.\ \cite[Chapter IV.1]{Goerss-J_simplicial}).  
The canonical map $M\times \Delta[1]\to B(M)$ induces a based map 
$M\wedge S^1\to B(M)$ and composing with the fibrant replacement 
$B(M)\to B(M)^{\mathrm{fib}}$ we get a based map 
$M\wedge S^1\to B(M)^{\mathrm{fib}}$. We define the group completion map 
\[
\eta_M\colon M\to\Omega(B(M)^{\mathrm{fib}})
\]
to be the adjoint of this map.
Likewise, given an $\cI$-space monoid $A$, the canonical map 
$A\times \Delta^1\to B(A)$ induces a map of based $\cI$-spaces 
$A\wedge S^1\to B(A)$ and applying the fibrant replacement functor 
$(-)^{\mathrm{fib}}$ levelwise to $B(A)$ we get a map of based $\cI$-spaces 
$A\wedge S^1\to B(A)^{\mathrm{fib}}$. We define the group completion map 
\begin{equation}\label{eq:fib-completion}
\eta_A\colon A\to \Omega(B(A)^{\mathrm{fib}})
\end{equation}
to be the adjoint of this map. The underlying map $(\eta_A)_{h\cI}$ can be compared to $\eta_{A_{h\cI}}$ as we now show.
\begin{proposition}\label{prop:fib-space-completion}
If $A$ is an $\cI$-space monoid with underlying flat and semistable $\cI$-space, then there is a chain of natural weak equivalences relating
$\Omega(B(A)^{\mathrm{fib}})_{h\cI}$ and 
$\Omega(B(A_{h\cI})^{\mathrm{fib}})$ such that the diagram
\[
\xymatrix@-1pc{
& A_{hI}\ar[dl]_{(\eta_A)_{h\cI}} \ar[dr]^{\eta_{A_{h\cI}}} &\\
\Omega(B(A)^{\mathrm{fib}})_{h\cI} \ar[rr]^{\sim}& & 
\Omega(B(A_{h\cI})^{\mathrm{fib}})
}
\]
is commutative in the homotopy category.
\end{proposition}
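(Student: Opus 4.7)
The plan is to assemble the desired chain of weak equivalences by combining Proposition~\ref{bar-bar} (which identifies $B(A)_{h\cI}$ with $B(A_{h\cI})$ up to weak equivalence when $A$ has underlying flat $\cI$-space) with Proposition~\ref{hocolimloop} (which commutes the loop functor with based $\cI$-homotopy colimits under a semistability assumption), and then to verify triangle commutativity by tracing adjoints through the resulting zig-zag.

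First I would verify the inputs for those two propositions. Since $A$ is flat and semistable, Corollary~\ref{B(A)semistable} gives that $B(A)$ is semistable. The levelwise fibrant replacement $B(A)\to B(A)^{\mathrm{fib}}$ is a levelwise (hence $\cN$- and $\cI$-) weak equivalence, which transfers semistability from $B(A)$ to $B(A)^{\mathrm{fib}}$ via the characterization in Proposition~\ref{semistabilityprop}(i). Moreover, $B(A)^{\mathrm{fib}}$ is levelwise fibrant by construction, so in particular it meets the positive-degree fibrancy hypothesis of Proposition~\ref{hocolimloop}.

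With this in hand, the zig-zag is built in four steps. Applying Proposition~\ref{hocolimloop} to $X=B(A)^{\mathrm{fib}}$ provides a natural weak equivalence
\[
\Omega(B(A)^{\mathrm{fib}})_{h^*\cI} \xrightarrow{\sim} \Omega\bigl((B(A)^{\mathrm{fib}}_{h^*\cI})^{\mathrm{fib}}\bigr).
\]
Contractibility of $B\cI$ yields weak equivalences $\Omega(B(A)^{\mathrm{fib}})_{h\cI}\simeq \Omega(B(A)^{\mathrm{fib}})_{h^*\cI}$ and $B(A)_{h\cI}\simeq B(A)_{h^*\cI}$. Functoriality of $(-)_{h^*\cI}$ applied to the levelwise equivalence $B(A)\to B(A)^{\mathrm{fib}}$ gives $B(A)_{h^*\cI}\simeq B(A)^{\mathrm{fib}}_{h^*\cI}$. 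Finally, Proposition~\ref{bar-bar} supplies a natural zig-zag $B(A)_{h\cI}\simeq B(A_{h\cI})$ through the intermediate bar construction $B(B\cI,A_{h\cI},B\cI)$. Splicing these chains and applying $\Omega((-)^{\mathrm{fib}})$ to the space-level half produces the required sequence of natural weak equivalences linking $\Omega(B(A)^{\mathrm{fib}})_{h\cI}$ with $\Omega(B(A_{h\cI})^{\mathrm{fib}})$.

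For the triangle commutativity in $\Ho(\cS)$, unwind the adjoints: $\eta_A$ is adjoint to the based $\cI$-space map $A\wedge S^1\to B(A)^{\mathrm{fib}}$ induced by the $1$-simplex inclusion $A\times\Delta^1\to B(A)$, while $\eta_{A_{h\cI}}$ is adjoint to the analogous space-level map $A_{h\cI}\wedge S^1\to B(A_{h\cI})^{\mathrm{fib}}$. The zig-zag of Proposition~\ref{bar-bar} is constructed degreewise from the monoidal structure of $(-)_{h\cI}$ given by Lemma~\ref{lem:times-boxtimes}, so it is compatible with the $1$-simplex inclusions by naturality of that monoidal structure. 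The main obstacle is to verify that each intermediate square---relating based versus unbased homotopy colimits, $\cI$-space-level versus space-level fibrant replacements, and the two applications of adjunction---commutes either strictly or up to a preferred simplicial homotopy, so that the total composite commutes in the homotopy category.
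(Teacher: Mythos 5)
Your proposal follows essentially the same route as the paper: semistability of $B(A)$ via Corollary~\ref{B(A)semistable}, the loop/homotopy-colimit interchange of Proposition~\ref{hocolimloop}, the bar-construction comparison of Proposition~\ref{bar-bar}, and the based/unbased comparison using contractibility of $B\cI$. The commutativity check you flag as the remaining obstacle is handled in the paper by passing to the reduced two-sided bar construction $\widetilde{B}(B\cI,A_{h\cI},B\cI)$, into whose loop space $A_{h\cI}$ maps compatibly with both $(\eta_A)_{h\cI}$ and $\eta_{A_{h\cI}}$, which is exactly the naturality argument you sketch.
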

\begin{proof}
  We first consider a reduced version of the equivalences relating
  $B(A)_{h\cI}$ and $B(A_{h\cI})$ in Proposition \ref{bar-bar}. Notice
  that $B(A)_{h^*\cI}$ is the quotient of $B(A)_{h\cI}$ by the
  subspace $B_0(A)_{h\cI}=B\cI$. We similarly define a reduced version
  of $B(B\cI,A_{h\cI},B\cI)$ by letting
  $\widetilde{B}(B\cI,A_{h\cI},B\cI)$ be the quotient by the subspace
  $B\cI\times B\cI$. It follows from Proposition \ref{bar-bar} that
  there are weak equivalences
  \[
  B(A)_{h^*\cI}\xleftarrow{\sim}
  \widetilde{B}(B\cI,A_{h\cI},B\cI)\xrightarrow{\sim} B(A_{h\cI})
  \]
  which give rise to weak equivalences of the associated loop spaces
  of their fibrant replacements. The conclusion then follows from the
  commutativity of the diagram
  \[
  \xymatrix{ \Omega\left(B(A)^{\mathrm{fib}}\right)_{h\cI}
    \ar[d]^{\sim}& \ar[l]_-{\left(\eta_A\right)_{h\cI}} A_{h\cI}
    \ar[r]^-{\eta_{A_{h\cI}}} \ar[dl]
    \ar[d]\ar[dr] & \Omega\left(B(A_{h\cI})^{\textrm{fib}}\right)\\
    \Omega\left(B(A)^{\mathrm{fib}}\right)_{h^*\cI}\ar[r]^-{\sim} &
    \Omega\left(\left(\left(B(A)^{\mathrm{fib}}\right)_{h^*\cI}\right)^{\textrm{fib}}\right)
    &
    \Omega\left(\widetilde{B}(B\cI,A_{h\cI},B\cI)^{\textrm{fib}}\right)
    \ar[l]_-{\sim}\ar[u]_{\sim} }
  \]
  where the vertical equivalence on the left is the canonical
  projection from the unbased to the based homotopy colimit and the
  first map in the bottom row is the equivalence from Proposition
  \ref{hocolimloop}.  Here we use that $B(A)$ is semistable by
  Corollary~\ref{B(A)semistable}.
\end{proof}

For the rest of this section we specialize to the category $\cC\cS^{\cI}$ of commutative $\cI$-space monoids. 
Since $\cC\cS^{\cI}$ is a based simplicial category, the functors
$B$ and $\Omega$ admit a categorical description when applied to a
commutative $\cI$-space monoid $A$: They are given by the tensor and
cotensor of $A$ with the based simplicial set $S^1$. The next result is therefore an immediate consequence of Proposition \ref{prop:based-simplicial}.
\begin{corollary}\label{cor:B-Omega-Qadjunction}
The functors $B$ and $\Omega$ define a Quillen adjunction
\[
B\colon\cC\cS^{\cI}\rightleftarrows\cC\cS^{\cI}\colon\Omega.
\eqno\qed
\]
\end{corollary}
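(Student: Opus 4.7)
The plan is to deduce this directly from Proposition~\ref{prop:based-simplicial}. By construction, $B(A) = A \otimes S^1$ and $\Omega(A) = A^{S^1}$ are the tensor and cotensor of $A$ with the based simplicial set $S^1 = \Delta^1/\partial\Delta^1$. The pair $(B,\Omega)$ is therefore adjoint by the defining property of the based simplicial enrichment of $\cC\cS^{\cI}$ over $\cS_*$, so only the Quillen property remains to be checked.

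To verify this, I would show that $B$ is a left Quillen functor by checking that it preserves (acyclic) cofibrations. The map $\pt \to S^1$ is a cofibration of based simplicial sets. Given an (acyclic) cofibration $A \to B$ in $\cC\cS^{\cI}$, the pushout-product axiom for based simplicial model categories (which is part of Proposition~\ref{prop:based-simplicial}) asserts that the induced map
\[
A \otimes S^1 \cup_{A \otimes \pt} B \otimes \pt \longrightarrow B \otimes S^1
\]
is an (acyclic) cofibration. Since the tensor $A \otimes \pt$ with a one-point based simplicial set equals the initial object (which in $\cC\cS^{\cI}$ coincides with the terminal object $\pt$), this pushout-product reduces to $A \otimes S^1 \to B \otimes S^1$, i.e.\ to $B(A) \to B(B)$. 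Thus $B$ preserves (acyclic) cofibrations, and $(B,\Omega)$ is a Quillen adjunction.

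There is no real obstacle here: the content is entirely formal once the based simplicial model structure of Proposition~\ref{prop:based-simplicial} is in place, and the identification of $B$ and $\Omega$ with the tensor and cotensor with $S^1$ is built into the constructions. The only mild point to keep in mind is that because the initial and terminal objects coincide in $\cC\cS^{\cI}$, the pushout-product with $\pt \to S^1$ collapses to the levelwise statement one wants.
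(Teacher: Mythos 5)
Your proof is correct and takes essentially the same route as the paper: the paper deduces the corollary as an immediate consequence of Proposition~\ref{prop:based-simplicial}, after identifying $B$ and $\Omega$ with the tensor and cotensor with $S^1$. Your explicit pushout-product computation with $\pt \to S^1$, using that $A\otimes\pt$ is the zero object of $\cC\cS^{\cI}$, is precisely the standard detail the paper leaves implicit in the word ``immediate''.
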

The unit of the adjunction is a map $A\to \Omega(B(A))$ of commutative 
$\cI$-space monoids which may be identified with the explicit map considered above for general $\cI$-space monoids. Notice, that since the space-level fibrant replacement 
$(-)^{\mathrm{fib}}$ is symmetric monoidal, the group completion map $\eta_A$ in \eqref{eq:fib-completion} is a map of commutative $\cI$-space monoids when $A$ is commutative. However,  $\eta_A$ admits a variant which is more natural from the point of view of the positive $\cI$-model structure on $\cC\cS^{\cI}$ and which is the map figuring in Theorem \ref{thm:intro-bar-completion}.
Let $B(A)\to B(A)^{\cI\textrm{-fib}}$ be a fibrant replacement in the positive $\cI$-model structure on $\cC\cS^{\cI}$ and let $\eta_A^{\cI}$ be the composite map of commutative $\cI$-space monoids
\begin{equation}\label{eq:grp-cpl-via-Ifibrant}
\eta_A^{\cI} \colon A \to \Omega(B(A))\to
 \Omega\left(B(A)^{\cI\textrm{-fib}}\right).
\end{equation}
We shall refer to this as the the \emph{derived unit} of the $(B,\Omega)$-adjunction with respect to the positive
$\cI$-model structure. A priori, $\eta_A^{\cI}$ is more
difficult to understand than $\eta_A$ since it involves a fibrant replacement in 
$\cC\cS^{\cI}$. However,
$\eta_A^{\cI}$ has the advantage that it provides a group completion
for objects that are not necessarily semistable as we show in the next proposition.

\begin{proposition}\label{prop:I-fib-space-completion}
If $A$ is a commutative $\cI$-space monoid with underlying flat
$\cI$-space, then there is a chain of natural weak equivalences relating
$\Omega(B(A)^{\textrm{$\cI$-$\mathrm{fib}$}})_{h\cI}$ and 
$\Omega(B(A_{h\cI})^{\mathrm{fib}})$ such that the diagram
\[
\xymatrix@-1pc{
& A_{hI}\ar[dl]_{(\eta^{\cI}_A)_{h\cI}} \ar[dr]^{\eta_{A_{h\cI}}} &\\
\Omega(B(A)^{\textrm{$\cI$-$\mathrm{fib}$}})_{h\cI} \ar[rr]^{\sim}& & 
\Omega(B(A_{h\cI})^{\mathrm{fib}})
}
\]
is commutative in the homotopy category.
\end{proposition}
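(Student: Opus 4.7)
The plan is to adapt the proof of Proposition~\ref{prop:fib-space-completion}, replacing the use of the semistability of $A$ (and hence of $B(A)$ via Corollary~\ref{B(A)semistable}) with the observation that the $\cI$-fibrant replacement $B(A)^{\cI\textrm{-fib}}$ is automatically semistable. Indeed, if $X$ is positive $\cI$-fibrant, then by Lemma~\ref{lem:hI-of-homotopy-constant} the map $X(\bld 1) \to X_{h\cI}$ is a weak equivalence, and the same type of argument applied to the inclusion $\cN_{\geq 1} \to \cN$ (which is homotopy cofinal with contractible classifying space) together with the fact that $X$ restricted to $\cN_{\geq 1}$ is a sequence of weak equivalences shows that $X(\bld 1) \to X_{h\cN}$ is a weak equivalence as well. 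Hence condition (i) of Proposition~\ref{semistabilityprop} holds and $X$ is semistable. Since $B(A)^{\cI\textrm{-fib}}$ is furthermore levelwise fibrant in positive degrees, Proposition~\ref{hocolimloop} applies to it and yields a natural weak equivalence
\[
\Omega(B(A)^{\cI\textrm{-fib}})_{h^*\cI} \xrightarrow{\sim} \Omega\bigl((B(A)^{\cI\textrm{-fib}}_{h^*\cI})^{\mathrm{fib}}\bigr).
\]

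Next I would splice this together with the comparison chain used in the proof of Proposition~\ref{prop:fib-space-completion}. The reduced two-sided bar construction $\widetilde{B}(B\cI, A_{h\cI}, B\cI)$ sits in a zig-zag
$B(A)_{h^*\cI} \xleftarrow{\sim} \widetilde{B}(B\cI, A_{h\cI}, B\cI) \xrightarrow{\sim} B(A_{h\cI})$,
where both arrows are weak equivalences by Proposition~\ref{bar-bar} and Lemma~\ref{lem:times-boxtimes} (both relying on the flatness of $A$). The $\cI$-equivalence $B(A) \to B(A)^{\cI\textrm{-fib}}$ further induces a weak equivalence $B(A)_{h^*\cI} \xrightarrow{\sim} B(A)^{\cI\textrm{-fib}}_{h^*\cI}$. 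Applying $\Omega((-)^{\mathrm{fib}})$ to this extended zig-zag, splicing in the displayed weak equivalence above, and using the canonical projection $X_{h\cI} \xrightarrow{\sim} X_{h^*\cI}$ (a weak equivalence since $B\cI$ is contractible), I obtain the required chain of natural weak equivalences between $\Omega(B(A)^{\cI\textrm{-fib}})_{h\cI}$ and $\Omega(B(A_{h\cI})^{\mathrm{fib}})$.

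For the commutativity of the triangle in the homotopy category, I would assemble the same large naturality diagram as at the end of the proof of Proposition~\ref{prop:fib-space-completion}, with $B(A)$ systematically replaced by $B(A)^{\cI\textrm{-fib}}$ and with the natural comparison map $B(A) \to B(A)^{\cI\textrm{-fib}}$ (and the induced maps on bar constructions and loop spaces) inserted where necessary. Commutativity then follows from the naturality of the unit of the $(B,\Omega)$-adjunction, of the $\cI$-fibrant replacement, of the reduced bar construction zig-zag, and of the equivalence supplied by Proposition~\ref{hocolimloop}.

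I expect the main obstacle to be the careful bookkeeping of basepoints and of the compatibility between the two different fibrant replacements (the levelwise one used to define $\eta_{A_{h\cI}}$ and the $\cI$-fibrant one used to define $\eta_A^{\cI}$), so that the zig-zag genuinely exhibits $(\eta_A^{\cI})_{h\cI}$ and $\eta_{A_{h\cI}}$ as homotopic rather than merely relating related-looking objects.
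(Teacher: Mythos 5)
Your proposal is correct and follows essentially the same route as the paper: the paper's proof likewise declares the argument ``completely analogous'' to that of Proposition~\ref{prop:fib-space-completion}, with $B(A)^{\mathrm{fib}}$ replaced by $B(A)^{\cI\textrm{-fib}}$ and $\eta_A$ by $\eta_A^{\cI}$, the key point being exactly that the semistability hypothesis on $A$ can be dropped because $B(A)^{\cI\textrm{-fib}}$, being positive $\cI$-fibrant, is automatically semistable. Your explicit check that a positive $\cI$-fibrant $\cI$-space satisfies condition (i) of Proposition~\ref{semistabilityprop} just spells out what the paper leaves implicit.
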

\begin{proof}
Using $B(A)^{\textrm{$\cI$-{fib}}}$ instead of $B(A)^{\textrm{fib}}$ and 
$\eta_A^{\cI}$ instead of $\eta_A$, the proof is completely analogous to the proof of Proposition~\ref{prop:fib-space-completion}. The point is that the semistability condition on $A$ can be dropped since 
$B(A)^{\textrm{$\cI$-{fib}}}$ is semistable by definition.
\end{proof}

\begin{example}\label{ex:gp_of_free_comm_ispace-monoid}
As in Example \ref{ex:free_comm_ispace-monoid}, let $C_1$ be the
free commutative $\cI$-space monoid on a point in degree one and recall that the underlying simplicial monoid $(C_1)_{h\cI}$ is equivalent to 
$\coprod_{n\geq 0}B\Sigma_n$. By the previous proposition, 
$\eta_{C_1}^{\cI}\colon C_1 \to
  \Omega\left(B(C_1)^{\cI\textrm{-fib}}\right)$ lifts the group
  completion of $(C_1)_{h\cI}$ to a map in $\cC\cS^{\cI}$, so the commutative $\cI$-space monoid   
$\Omega\left(B(C_1)^{\cI\textrm{-fib}}\right)$ is a model of $Q(S^0)$ 
by the Barratt-Priddy-Quillen theorem.
\end{example}

We record some useful consequences of the proposition.

\begin{lemma}\label{lem:group-compl-of-grouplike}
  If $A$ is a cofibrant and grouplike commutative
  $\cI$-space monoid, then $\eta_A^{\cI} \colon A \to
  \Omega\left(B(A)^{\cI\textrm{-fib}}\right)$ is an $\cI$-equivalence.
\end{lemma}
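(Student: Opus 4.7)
The plan is to reduce the statement to the classical fact that $\eta_M \colon M \to \Omega(B(M)^{\mathrm{fib}})$ is a weak equivalence whenever $M$ is a grouplike homotopy commutative simplicial monoid, by invoking Proposition~\ref{prop:I-fib-space-completion}.

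First I would verify that the hypotheses of Proposition~\ref{prop:I-fib-space-completion} are satisfied. Since $A$ is cofibrant in the positive $\cI$-model structure on $\cC\cS^{\cI}$, Proposition~\ref{prop:underlying-flat-com-I} ensures that the underlying $\cI$-space of $A$ is flat. Thus Proposition~\ref{prop:I-fib-space-completion} supplies a chain of natural weak equivalences identifying $\Omega(B(A)^{\cI\textrm{-fib}})_{h\cI}$ with $\Omega(B(A_{h\cI})^{\mathrm{fib}})$, and a commutative triangle in the homotopy category relating $(\eta_A^{\cI})_{h\cI}$ to $\eta_{A_{h\cI}}$.

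Next I would observe that since $A$ is grouplike by hypothesis, the simplicial monoid $A_{h\cI}$ is grouplike, i.e.\ $\pi_0(A_{h\cI})$ is a group. The classical fact (the group completion theorem applied to a monoid that is already a group up to homotopy) then gives that the map
\[
\eta_{A_{h\cI}} \colon A_{h\cI} \to \Omega(B(A_{h\cI})^{\mathrm{fib}})
\]
is a weak equivalence of simplicial sets. Combined with the commutative triangle of Proposition~\ref{prop:I-fib-space-completion} and the two-out-of-three property, this forces $(\eta_A^{\cI})_{h\cI}$ to be a weak equivalence. By definition of an $\cI$-equivalence, this says precisely that $\eta_A^{\cI}$ is an $\cI$-equivalence, which is what we wanted.

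The only subtle point is the invocation of the group completion theorem for grouplike $A_{h\cI}$. I would not expect any obstacle here: the $E_\infty$ structure on $A_{h\cI}$ induced via $\mu$ (recalled in Section~\ref{sec:monoid-path-comp} via \cite[\S 6.1]{Schlichtkrull_Thom-symm}) makes $A_{h\cI}$ a homotopy commutative simplicial monoid, and for such monoids the grouplike condition is well known to be equivalent to $\eta_{A_{h\cI}}$ being a weak equivalence. Hence no further analysis of the bar construction is required beyond what has already been established in Proposition~\ref{prop:I-fib-space-completion}.
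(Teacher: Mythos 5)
Your proof is correct and is essentially the paper's own argument: the paper likewise deduces the lemma from Proposition~\ref{prop:I-fib-space-completion} (whose flatness hypothesis is supplied by Proposition~\ref{prop:underlying-flat-com-I}) together with the classical fact that $\eta_{A_{h\cI}}$ is a weak equivalence when $A_{h\cI}$ is grouplike. Your write-up just spells out the two-out-of-three step that the paper leaves implicit.
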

\begin{proof}
This follows from Proposition \ref{prop:I-fib-space-completion} since
the group completion of the underlying simplicial monoid 
$A_{h\cI} \to\Omega\left(B(A_{h\cI})^{\textrm{fib}}\right)$ is a weak
equivalence if $A_{h\cI}$ is grouplike.
\end{proof}

\begin{lemma}\label{lem:bar-of-group-compl}
Let $A$ be a cofibrant commutative $\cI$-space monoid, and let 
\begin{equation}\label{eq:eta-factorization}
\xymatrix@1{A\;\ar@{>->}[r] & \;C\; \ar@{->>}[r]^-{\sim} &
    \;\Omega\left(B(A)^{\textrm{$\cI$-$\mathrm{fib}$}}\right)}
\end{equation}
be a factorization of $\eta^{\cI}_A$ into a positive $\cI$-cofibration followed by an acyclic positive $\cI$-fibration. Then the bar construction $B(-)$ maps 
$A \to C$ to an $\cI$-equivalence.
\end{lemma}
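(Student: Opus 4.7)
\emph{Plan.} The plan is to reduce to the space-level statement and apply the recognition principle for bar constructions of simplicial monoids.

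Since $i\colon A\to C$ is a positive $\cI$-cofibration with cofibrant source, $C$ is cofibrant, so by Proposition~\ref{prop:underlying-flat-com-I} both $A$ and $C$ have flat underlying $\cI$-spaces. The naturality of the weak equivalences in Proposition~\ref{bar-bar} reduces the claim that $B(i)$ is an $\cI$-equivalence to the claim that $B(i_{h\cI})\colon B(A_{h\cI})\to B(C_{h\cI})$ is a weak homotopy equivalence. Applying $(-)_{h\cI}$ to \eqref{eq:eta-factorization} yields a factorization of $(\eta^{\cI}_A)_{h\cI}$ as $A_{h\cI}\xrightarrow{i_{h\cI}} C_{h\cI}\xrightarrow{p_{h\cI}} \Omega(B(A)^{\textrm{$\cI$-fib}})_{h\cI}$ in which $p_{h\cI}$ is a weak homotopy equivalence of simplicial monoids (the image of an acyclic fibration in $\cC\cS^{\cI}$). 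Consequently $B(p_{h\cI})$ is a weak equivalence, and by two-out-of-three it suffices to prove that $B((\eta^{\cI}_A)_{h\cI})$ is.

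By Proposition~\ref{prop:I-fib-space-completion}, $(\eta^{\cI}_A)_{h\cI}$ is identified in the homotopy category of spaces with the classical group completion map $\eta_{A_{h\cI}}\colon A_{h\cI}\to \Omega(B(A_{h\cI})^{\mathrm{fib}})$ through a natural zigzag of weak equivalences. Using the triangle identity for the $(B,\Omega)$ adjunction of based simplicial sets, the composite
\[
B(A_{h\cI})\xrightarrow{B(\eta_{A_{h\cI}})} B\Omega(B(A_{h\cI})^{\mathrm{fib}})\xrightarrow{\epsilon} B(A_{h\cI})^{\mathrm{fib}}
\]
equals the fibrant replacement map and is therefore a weak equivalence, and the counit $\epsilon$ is itself a weak equivalence because $B(A_{h\cI})^{\mathrm{fib}}$ is connected (since $B_0(A_{h\cI})=*$). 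Hence $B(\eta_{A_{h\cI}})$ is a weak equivalence, and the desired conclusion follows by transporting the identification through $B$.

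The main obstacle is this last transport: the zigzag from Proposition~\ref{prop:I-fib-space-completion} consists of maps of plain spaces rather than simplicial monoid maps, so $B$ cannot be applied to it directly. The resolution is to combine the natural equivalences of Propositions~\ref{bar-bar} and~\ref{hocolimloop} with the recognition principle $B\Omega Y\simeq Y$ (applied to $Y\simeq B(A_{h\cI})^{\mathrm{fib}}$) to construct an independent zigzag of natural weak equivalences relating $B\bigl(\Omega(B(A)^{\textrm{$\cI$-fib}})_{h\cI}\bigr)$ and $B\bigl(\Omega(B(A_{h\cI})^{\mathrm{fib}})\bigr)$ compatibly with the maps from $B(A_{h\cI})$, from which the equivalence of $B((\eta^{\cI}_A)_{h\cI})$ with $B(\eta_{A_{h\cI}})$ in the homotopy category of spaces follows.
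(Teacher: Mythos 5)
Your opening reductions are fine: $C$ is cofibrant, hence flat, Proposition~\ref{bar-bar} converts the claim to $B(i_{h\cI})$ being a weak equivalence, and the two-out-of-three step leaves you with showing that $B\bigl((\eta^{\cI}_A)_{h\cI}\bigr)$ is a weak equivalence. But note that this last statement is (via the same reductions) exactly equivalent to the lemma you are trying to prove, and indeed to Theorem~\ref{thm:intro-bar-completion} itself; all of the content is therefore in your remaining argument, and that argument has two genuine gaps. First, there is no ``$(B,\Omega)$ adjunction of based simplicial sets'' of the kind you invoke: $A_{h\cI}$ is an associative, only homotopy commutative simplicial monoid, so $B(A_{h\cI})$ is not a monoid, $\Omega(B(A_{h\cI})^{\mathrm{fib}})=\Map_*(S^1,B(A_{h\cI})^{\mathrm{fib}})$ carries no strict monoid structure, and the object $B\Omega(B(A_{h\cI})^{\mathrm{fib}})$ and the counit $\epsilon$ in your displayed composite are undefined. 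The functors $B$ and $\Omega$ are adjoint only in the commutative setting, as tensor and cotensor with $S^1$ (Corollary~\ref{cor:B-Omega-Qadjunction}); this is precisely why the paper runs the triangle-identity argument inside $\cC\cS^{\cI}$ rather than at the space level. (Even granting a strictification via Moore loops, the statement $B\Omega Y\simeq Y$ for connected $Y$ is the recognition principle, a substantial input the paper deliberately avoids; what the paper does use is the easier fact that $M\to\Omega(B(M)^{\mathrm{fib}})$ is an equivalence for \emph{grouplike} $M$, packaged as Lemma~\ref{lem:group-compl-of-grouplike}.) Second, your final ``transport'' step is acknowledged but not performed, and it cannot be performed as described: the zigzag of Proposition~\ref{prop:I-fib-space-completion} passes through spaces such as $\Omega(\widetilde{B}(B\cI,A_{h\cI},B\cI)^{\mathrm{fib}})$ that are not monoids, so $B$ cannot be applied to it, and there is no homotopy-invariant way to transfer the property ``$B(f)$ is a weak equivalence'' across a non-multiplicative identification of $f$ with $\eta_{A_{h\cI}}$ without invoking something like the homological group completion theorem.

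For comparison, the paper closes the argument entirely inside $\cC\cS^{\cI}$. By the triangle identity for the genuine $(B,\Omega)$-adjunction there, the composite $B(A)\to B(C)\to B\Omega(B(A)^{\textrm{$\cI$-fib}})\to B(A)^{\textrm{$\cI$-fib}}$ is the $\cI$-fibrant replacement of $B(A)$, hence an $\cI$-equivalence; it then suffices to show the map $g\colon B(C)\to B(A)^{\textrm{$\cI$-fib}}$ is an $\cI$-equivalence. Since the relevant spaces are connected, this may be checked after applying $\Omega((-)^{\textrm{$\cI$-fib}})$, and precomposing with $\eta^{\cI}_C$ identifies the looped map, up to the $\cI$-equivalences $\eta^{\cI}_C$ (valid because $C$ is cofibrant and grouplike, Lemma~\ref{lem:group-compl-of-grouplike}) and the acyclic fibration $C\to\Omega(B(A)^{\textrm{$\cI$-fib}})$, with $\Omega$ of a fibrant replacement map. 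If you want to salvage your approach, you should replace the space-level adjunction argument by this one.
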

\begin{proof}
  Since the spaces $B(A)_{h\cI}\simeq B(A_{h\cI})$ and
  $B(C)_{h\cI}\simeq B(C_{h\cI})$ are connected, it suffices by
  Proposition \ref{prop:I-fib-space-completion} to show that $B(A) \to
  B(C)$ becomes an $\cI$-equivalence after applying the functor
  $\Omega((-)^{\textrm{$\cI$-$\mathrm{fib}$}})$.  Applying $B(-)$ to
  the second map in \eqref{eq:eta-factorization} and composing with
  the counit of the $(B,\Omega)$-adjunction, we get a map
\begin{equation}\label{eq:counit-compose}
B(C)\to B\Omega(B(A)^{\textrm{$\cI$-$\mathrm{fib}$}})\to 
B(A)^{\textrm{$\cI$-$\mathrm{fib}$}}
\end{equation}
such that the composition with $B(A)\to B(C)$ is the $\cI$-fibrant
replacement of $B(A)$. By the above and the two out of three property
for $\cI$-equivalences, it remains to prove that the composite map in
\eqref{eq:counit-compose} becomes an $\cI$-equivalence after applying
$\Omega((-)^{\textrm{$\cI$-$\mathrm{fib}$}})$. Composing with the
group completion map $\eta_C^{\cI}\colon C \to
\Omega(B(C)^{\textrm{$\cI$-$\mathrm{fib}$}})$, the map so obtained can
be identified with the composition
\[
C\to\Omega(B(A)^{\textrm{$\cI$-$\mathrm{fib}$}})
\to\Omega((B(A)^{\textrm{$\cI$-$\mathrm{fib}$}})^{\textrm{$\cI$-$\mathrm{fib}$}} )
\]
which is an $\cI$-equivalence by assumption. The result now follows from Lemma~\ref{lem:group-compl-of-grouplike} which implies that $\eta_C^{\cI}$ is an $\cI$-equivalence since $C$ is grouplike.  
\end{proof}
\begin{proof}[Proof of Theorem~\ref{thm:intro-bar-completion}]
  Since the underlying $\cI$-space of a cofibrant commutative
  $\cI$-space monoid is flat by
  Proposition~\ref{prop:underlying-flat-com-I}, the previous lemma and
  Proposition~\ref{bar-bar} imply that $(\eta_A^{\cI})_{h\cI}$ induces
  a weak equivalence when applying the bar construction.  This verifies
  that $\eta_A^{\cI}$ is a group completion in the sense of
  Section~\ref{subsec:grp-cpl}.
\end{proof}
 \section{The group completion model structure}\label{sec:csi-gp}

To set up the group completion model structure on $\cC\cS^{\cI}$, we use the description of group completion for commutative $\cI$-space monoids provided by Proposition~\ref{prop:I-fib-space-completion}. Given a commutative $\cI$-space monoid $A$, we write $\Gamma(A)$ for the commutative $\cI$-space monoid $\Omega\left( B(A)^{\cI-\textrm{fib}}\right)$ and 
$\eta^{\cI}_A \colon A \to \Gamma(A)$ for the group completion map introduced in \eqref{eq:grp-cpl-via-Ifibrant}. 

\subsection{Group completion as a fibrant replacement}
\label{sec:gp-as-fibrant}
Let $C_1 =\mC F_{\bld{1}}^{\cI}(*)$ be the free commutative $\cI$-space monoid on a point in degree one, cf.\ Example~\ref{ex:free_comm_ispace-monoid}. We begin by choosing a factorization 
\begin{equation}\label{eq:xi-factorization}
\xymatrix@1{
C_1 \;\ar@{>->}[r]^{\xi} & \;  C_1^{\gp} \;
  \ar@{->>}[r]^-{\sim}& \;\Gamma(C_1)
  }
\end{equation}
of the group completion map $\eta_{C_1}^{\cI}$ as a cofibration 
$\xi$ followed by an acyclic fibration in the positive 
$\cI$-model structure. The next lemma justifies thinking of $\xi\colon C_1\to C_1^{\gp}$ as a ``group completion in the universal example''.

\begin{lemma}\label{lem:extension-grouplike}
Let $A$ be a commutative $\cI$-space monoid which is positive $\cI$-fibrant.
Then $A$ is grouplike if and only if every map $C_1\to A$ extends to a map $C_1^{\gp}\to A$.
\end{lemma}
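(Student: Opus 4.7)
The plan is to prove the two implications separately. The ``only if'' direction is a short computation using the identification $\pi_0((C_1^{\gp})_{h\cI})\cong\mathbb Z$, while the ``if'' direction is a lifting argument built on Lemma~\ref{lem:group-compl-of-grouplike}.

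For the ``only if'' direction, assume every $f\colon C_1\to A$ extends along $\xi$. Given $[a]\in\pi_0(A_{h\cI})$, positive $\cI$-fibrancy of $A$ together with Lemma~\ref{lem:hI-of-homotopy-constant} and Example~\ref{ex:free_comm_ispace-monoid} provides a vertex $a\in A(\bld 1)$ representing $[a]$ and a corresponding map $f_a\colon C_1\to A$ sending the generator of $\pi_0((C_1)_{h\cI})\cong\mathbb{N}_0$ to $[a]$. By hypothesis $f_a$ extends to $g_a\colon C_1^{\gp}\to A$. Proposition~\ref{prop:I-fib-space-completion} applied to the flat commutative $\cI$-space monoid $C_1$ (flat by Proposition~\ref{prop:underlying-flat-com-I}), together with Example~\ref{ex:gp_of_free_comm_ispace-monoid} and the factorization~\eqref{eq:xi-factorization}, identifies $\pi_0((C_1^{\gp})_{h\cI})$ with $\mathbb Z$ in such a way that $\pi_0(\xi)$ becomes the inclusion $\mathbb N_0\hookrightarrow\mathbb Z$. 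Since $(g_a)_{h\cI}$ is a homomorphism of homotopy commutative monoids (Section~\ref{sec:monoid-path-comp}), the element $\pi_0((g_a)_{h\cI})(-1)$ is a two-sided inverse of $[a]$ in $\pi_0(A_{h\cI})$, showing that $A$ is grouplike.

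For the ``if'' direction, suppose $A$ is grouplike. The first step is to reduce to the case where $A$ is additionally cofibrant: choose a cofibrant replacement $p\colon A^c\twoheadrightarrow A$, which is an acyclic fibration; since $A$ is fibrant, $A^c$ is fibrant (as a composite of fibrations) and grouplike (since $p$ is an $\cI$-equivalence). Given $f\colon C_1\to A$, lift it along $p$ to $\tilde f\colon C_1\to A^c$ using cofibrancy of $C_1$. By Lemma~\ref{lem:group-compl-of-grouplike} the derived unit $\eta_{A^c}^{\cI}\colon A^c\to\Gamma(A^c)$ is an $\cI$-equivalence between positive $\cI$-fibrant objects; factor it as an acyclic cofibration $i\colon A^c\hookrightarrow E$ followed by an acyclic fibration $q\colon E\twoheadrightarrow\Gamma(A^c)$. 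Naturality of $\eta^{\cI}$ combined with the defining factorization $\eta_{C_1}^{\cI}=(C_1^{\gp}\xrightarrow{\sim}\Gamma(C_1))\circ\xi$ makes the square
\[
\xymatrix@-1pc{
C_1 \ar[r]^-{i\tilde f} \ar[d]_{\xi} & E \ar[d]^{q} \\
C_1^{\gp} \ar[r]_-{h} & \Gamma(A^c)
}
\]
commute, where $h$ is the composite $C_1^{\gp}\xrightarrow{\sim}\Gamma(C_1)\xrightarrow{\Gamma(\tilde f)}\Gamma(A^c)$. A lift $h'\colon C_1^{\gp}\to E$ with $h'\xi=i\tilde f$ exists because $\xi$ is a cofibration and $q$ an acyclic fibration. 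Finally, since $A^c$ is fibrant and $i$ is an acyclic cofibration, $i$ admits a retraction $r\colon E\to A^c$, and $g := p\circ r\circ h'\colon C_1^{\gp}\to A$ extends $f$ since $g\xi=p\,r\,i\tilde f=p\tilde f=f$.

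The main subtlety lies in ensuring that the lifting square above genuinely commutes, which requires functoriality of $\Gamma$ and naturality of $\eta^{\cI}$; this is arranged by fixing once and for all a natural $\cI$-fibrant replacement functor on $\cC\cS^{\cI}$, after which $\Gamma=\Omega\bigl(B(-)^{\cI\textrm{-fib}}\bigr)$ is a functor and $\eta^{\cI}$ a natural transformation. Beyond this bookkeeping, the argument is a routine combination of the lifting and factorization axioms of the positive $\cI$-model structure with the two-out-of-three property.
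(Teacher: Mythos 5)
Your proof is correct and follows essentially the same route as the paper's: the ``only if'' direction via the identification of $\pi_0((C_1^{\gp})_{h\cI})\cong\mathbb Z$ as the group completion of $\pi_0((C_1)_{h\cI})\cong\mathbb N_0$, and the ``if'' direction by reducing to a cofibrant $A$, factoring $\eta_A^{\cI}$ (an $\cI$-equivalence by Lemma~\ref{lem:group-compl-of-grouplike}), lifting against the resulting acyclic fibration, and retracting back onto the fibrant object. You merely spell out some steps the paper leaves implicit, such as transporting the map $C_1\to A$ through the cofibrant replacement and the naturality of $\eta^{\cI}$ underlying the lifting square.
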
 
\begin{proof}
Suppose first that every map $C_1\to A$ extends to $C_1^{\mathrm{gp}}$ and let $[a]$ be a class in $\pi_0(A_{h\cI})$ represented by a vertex $a$ in 
$A(\bld 1)$. As explained in Example \ref{ex:free_comm_ispace-monoid} , we can represent $a$ by a map $C_1\to A$. Extending this map to $C_1^{\mathrm{gp}}$ and identifying
$\pi_0((C_1^{\mathrm{gp}})_{h\cI})\cong\mathbb Z$ with the group completion of $\pi_0((C_1)_{h\cI})$, we see that $[a]$ is indeed invertible.

Next assume that $A$ is grouplike and consider a map $C_1\to A$. Choosing a cofibrant replacement of $A$, we may assume that $A$ is cofibrant as well as positive $\cI$-fibrant. We now proceed as in \eqref{eq:xi-factorization} by considering a  factorization of the group completion map $\eta_A^{\cI}\colon A\to \Gamma (A)$ as a cofibration $A\to A^{\mathrm{gp}}$ followed by an acyclic $\cI$-fibration  $A^{\mathrm{gp}}\to \Gamma (A)$. From this we obtain a commutative diagram
\[
\xymatrix@-1pc{C_1\ar[r]\ar[d]  & A\ar[d]^{\simeq}\\
C_1^{\mathrm{gp}} \ar[r] & A^{\mathrm{gp}}. 
}
\]
Since $\eta_A^{\cI}$ is an $\cI$-equivalence by 
Lemma~\ref{lem:group-compl-of-grouplike}, it follows that also 
$A\to A^{\mathrm{gp}}$ is an $\cI$-equivalence as indicated in the diagram.
The assumption that $A$ be positive $\cI$-fibrant therefore implies that that the latter map admits a left inverse and the composition with the bottom map in the diagram gives the required extension.
\end{proof}

The group completion model structure will be defined as the left Bousfield localization (see \cite[Chapter 3]{Hirschhorn_model}) of the positive 
$\cI$-model structure on $\cC\cS^{\cI}$ with respect to the map $\xi$. Following \cite{Hirschhorn_model}, we say that a commutative $\cI$-space monoid $W$ is \emph{$\xi$-local} if it is positive $\cI$-fibrant and the induced map
\[
\xi^*\colon \Map(C_1^{\gp},W)\to \Map(C_1,W) 
\]
is a weak equivalence. A map $A\to B$ in $\cC\cS^{\cI}$ is said to be a
\emph{$\xi$-local equivalence} if after choosing a cofibrant replacement 
$\tilde f\colon \widetilde A\to \widetilde B$ (see \cite[Chapter 8]{Hirschhorn_model}) the induced map
\[
\tilde f^*\colon\Map(\widetilde B,W)\to \Map(\widetilde A,W) 
\] 
is a weak equivalence for all $\xi$-local objects $W$. Notice, that any $\cI$-equivalence is a $\xi$-local equivalence.

\begin{proposition}
There exists a model structure on the category $\cC\cS^{\cI}$ such that
\begin{itemize}
\item
the weak equivalences are the $\xi$-local equivalences,
\item
the cofibrations are the same as for the positive $\cI$-model structure, and
\item
the fibrations are the maps that have the right lifting property with respect to the cofibrations that are also $\xi$-local equivalences.
\end{itemize}
\end{proposition}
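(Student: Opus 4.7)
The plan is to realize this as an instance of the standard existence theorem for left Bousfield localizations, in the form given by Hirschhorn~\cite[Theorem~4.1.1]{Hirschhorn_model}. That theorem asserts that if $\cM$ is a left proper cellular model category and $S$ is a set of maps in $\cM$, then the left Bousfield localization $L_S\cM$ exists, its cofibrations coincide with those of $\cM$, its weak equivalences are the $S$-local equivalences, and its fibrations are characterized by the right lifting property against acyclic cofibrations. Taking $\cM=\cC\cS^{\cI}$ with the positive $\cI$-model structure and $S=\{\xi\}$ would then give the proposition verbatim.

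To apply this theorem I need two inputs. First, left properness of $\cC\cS^{\cI}$ in the positive $\cI$-model structure: this is already recorded in Proposition~\ref{prop:I-positive-lift}, which states properness outright. Second, cellularity of $\cC\cS^{\cI}$ in the positive $\cI$-model structure: this requires exhibiting sets $I$ and $J$ of generating cofibrations and acyclic cofibrations such that the domains and codomains are compact relative to $I$, the domains of elements of $J$ are small with respect to $I$-cell complexes, and relative $I$-cell complexes are effective monomorphisms. This verification is precisely the content of Appendix~\ref{app:cellular}, which I may invoke as given.

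Once both hypotheses are in place, Hirschhorn's theorem produces a model structure on the underlying category $\cC\cS^{\cI}$ whose cofibrations agree with the positive $\cI$-cofibrations, whose weak equivalences are the $\xi$-local equivalences as defined above, and whose fibrations are exactly the maps with the right lifting property with respect to cofibrations that are also $\xi$-local equivalences. The remaining bookkeeping is that the notion of $\xi$-local equivalence used in our formulation matches Hirschhorn's, which is immediate from our use of a cofibrant replacement $\widetilde f\colon \widetilde A\to\widetilde B$ together with the definition of $\xi$-local object as a positive $\cI$-fibrant $W$ for which $\xi^*$ is a weak equivalence of mapping spaces.

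The only genuinely non-formal ingredient is the cellularity of the positive $\cI$-model structure on $\cC\cS^{\cI}$, which is the main obstacle and is deferred to the appendix; the rest is a direct appeal to the general machinery.
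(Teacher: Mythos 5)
Your proposal is correct and follows essentially the same route as the paper's own proof: both arguments consist of a direct appeal to Hirschhorn's existence theorem for left Bousfield localizations, with left properness supplied by Proposition~\ref{prop:I-positive-lift} and the cellularity of the positive $\cI$-model structure on $\cC\cS^{\cI}$ deferred to Proposition~\ref{prop:csi-cellular} in the appendix.
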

\begin{proof}
The existence of a model structure with the stated properties follows from Hirschhorn's existence theorem for left Bousfield localizations \cite[Theorem 4.1.1]{Hirschhorn_model}. This uses that $\cC\cS^{\cI}$ is  \emph{cellular} which we verify in 
Proposition~\ref{prop:csi-cellular}. 
\end{proof}

We shall refer to the model structure in the above proposition as the \emph{group completion model structure}. It will be convenient to adapt the notation $\cC\cS^{\cI}$ and $\cC\cS^{\cI}_{\gp}$ for the category of commutative $\cI$-space monoids equipped with the positive $\cI$-model structure and the group completion model structure, respectively. With this notation, the identity functor defines a Quillen adjunction 
\[
L_{\textrm{gp}} \colon \cC\cS^{\cI} \rightleftarrows \cC\cS^{\cI}_{\mathrm{gp}} \colon R_{\mathrm{gp}}
\]  
and the fact that $\cC\cS^{\cI}_{\mathrm{gp}}$ is defined as a left Bousfield localization implies that it has the following universal property: Given a model category $\mathcal M$ and a left Quillen functor 
$F\colon \cC\cS^{\cI}\to \mathcal M$ such that $F(\xi)$ is a weak equivalence in $\cM$, then $F$ is also a left Quillen functor when viewed as a functor on
$\cC\cS^{\cI}_{\mathrm{gp}}$. 

We record some further formal consequences of the definitions.

\begin{proposition}
The group completion model structure $\cC\cS^{\cI}_{\mathrm{gp}}$ is a cofibrantly generated, left proper, and simplicial model structure. 
\end{proposition}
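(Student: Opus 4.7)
The plan is to read off all three properties from Hirschhorn's general theorem on left Bousfield localizations, \cite[Theorem 4.1.1]{Hirschhorn_model}, which was already invoked to produce the model structure in the previous proposition. That theorem asserts that if $\cM$ is a left proper cellular (simplicial) model category and $C$ is a set of morphisms, then the left Bousfield localization $L_C\cM$ exists and inherits from $\cM$ the properties of being cofibrantly generated, left proper, and simplicial. So the task reduces to checking that the positive $\cI$-model structure on $\cC\cS^{\cI}$ satisfies these input hypotheses.

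First, I would note that cellularity of $\cC\cS^{\cI}$ was verified in Proposition~\ref{prop:csi-cellular}, which was already cited in the proof of the existence of $\cC\cS^{\cI}_{\gp}$. Second, left properness of $\cC\cS^{\cI}$ with the positive $\cI$-model structure is part of the statement of Proposition~\ref{prop:I-positive-lift}. Third, the simplicial structure is provided by Proposition~\ref{prop:based-simplicial} (or, since we only need the unbased version here, by the simplicial enrichment recorded just before it).

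With these inputs in hand, the conclusions are immediate from Hirschhorn's theorem: cofibrant generation and left properness of $\cC\cS^{\cI}_{\gp}$ follow from parts~(1) and~(3) of \cite[Theorem~4.1.1]{Hirschhorn_model}, and the simplicial structure follows from part~(4) of the same theorem, using that the cofibrations in $\cC\cS^{\cI}_{\gp}$ agree with those of $\cC\cS^{\cI}$ while the class of fibrations only shrinks under localization, so the pushout-product and cotensor axioms for a simplicial model category are inherited. There is no real obstacle here; the only thing to keep track of is that each of the three input conditions has already been established in the paper, so the proof is essentially a citation.
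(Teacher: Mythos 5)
Your proof is correct and matches the paper's, which consists of the single sentence ``This all follows from \cite[Theorem 4.1.1]{Hirschhorn_model}.'' You have simply spelled out the verification of the hypotheses (cellularity, left properness, simplicial structure) that the paper leaves implicit, having established them in Proposition~\ref{prop:csi-cellular}, Proposition~\ref{prop:I-positive-lift}, and Proposition~\ref{prop:based-simplicial} respectively.
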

\begin{proof}
This all follows from \cite[Theorem 4.1.1]{Hirschhorn_model}.
\end{proof}

\begin{proposition}\label{prop:B-Omega-completion-adjunction}
The $(B,\Omega)$-adjunction defines a Quillen adjunction
\[
B\colon\cC\cS^{\cI}_{\mathrm{gp}}\rightleftarrows\cC\cS^{\cI}\colon\Omega.
\]
\end{proposition}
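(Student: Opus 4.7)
The plan is to invoke the universal property of left Bousfield localization recorded in the paragraph following the proposition defining $\cC\cS^{\cI}_{\mathrm{gp}}$. From Corollary~\ref{cor:B-Omega-Qadjunction} we already know that $(B,\Omega)$ is a Quillen adjunction on $\cC\cS^{\cI}$ equipped with the positive $\cI$-model structure. Since the underlying category and the cofibrations of $\cC\cS^{\cI}_{\mathrm{gp}}$ agree with those of $\cC\cS^{\cI}$, it suffices by the universal property of the localization to show that $B$ sends the single map $\xi\colon C_1 \to C_1^{\gp}$ used to define the group completion model structure to an $\cI$-equivalence in $\cC\cS^{\cI}$.

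The second step is to verify this $\cI$-equivalence assertion. Recall that $\xi$ was constructed in \eqref{eq:xi-factorization} precisely as the cofibration half of a factorization
\[
\xymatrix@1{C_1 \;\ar@{>->}[r]^{\xi} & \; C_1^{\gp} \; \ar@{->>}[r]^{\sim} & \; \Gamma(C_1) = \Omega\bigl(B(C_1)^{\cI\text{-}\mathrm{fib}}\bigr)}
\]
of the derived unit $\eta_{C_1}^{\cI}$ into a positive $\cI$-cofibration followed by an acyclic positive $\cI$-fibration. The commutative $\cI$-space monoid $C_1 = \mC F_{\bld 1}^{\cI}(*)$ is cofibrant in $\cC\cS^{\cI}$, since $F_{\bld 1}^{\cI}(*)$ is a positive $\cI$-cofibrant $\cI$-space and $\mC$ is the left adjoint of the Quillen adjunction with the forgetful functor. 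Consequently Lemma~\ref{lem:bar-of-group-compl} applies to $A=C_1$ and this factorization, yielding that $B(\xi)\colon B(C_1)\to B(C_1^{\gp})$ is an $\cI$-equivalence, as required.

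Combining these two steps, $B$ descends to a left Quillen functor $\cC\cS^{\cI}_{\mathrm{gp}} \to \cC\cS^{\cI}$, with right adjoint $\Omega$. The substance of the argument is entirely in Lemma~\ref{lem:bar-of-group-compl}; the main conceptual point here is only that the defining map $\xi$ of the Bousfield localization was set up so as to fall under its hypotheses. No genuine obstacle remains, because the hard work of showing that the bar construction carries derived group completion maps to $\cI$-equivalences has already been done.
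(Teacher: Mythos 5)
Your proof is correct and follows the same route as the paper: reduce via the universal property of the left Bousfield localization to showing $B(\xi)$ is an $\cI$-equivalence, and then apply Lemma~\ref{lem:bar-of-group-compl} to the factorization \eqref{eq:xi-factorization} with $A=C_1$. The extra checks you supply (cofibrancy of $C_1$, matching the factorization to the hypotheses of the lemma) are exactly the details the paper leaves implicit.
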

\begin{proof}
Since $B(\xi)\colon B(C_1)\to B(C_1^{\mathrm{gp}})$ is an $\cI$-equivalence by Lemma~\ref{lem:bar-of-group-compl}, this follows from the universal property of $\cC\cS^{\cI}_{\mathrm{gp}}$ as a left Bousfield localization.
\end{proof}

\begin{lemma}\label{lem:gp-fibrant}
A commutative $\cI$-space monoid is fibrant in $\cC\cS^{\cI}_{\mathrm{gp}}$ if and only if it is positive $\cI$-fibrant and grouplike.
\end{lemma}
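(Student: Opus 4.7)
My plan is to invoke the standard characterization of fibrant objects in a left Bousfield localization: I need to show that $W$ is fibrant in $\cC\cS^{\cI}_{\gp}$ if and only if it is $\xi$-local, that is, positive $\cI$-fibrant with $\xi^* \colon \Map(C_1^{\gp}, W) \to \Map(C_1, W)$ a weak equivalence. The ``fibrant = local'' statement is built into Hirschhorn's existence theorem that gave us the model structure in the first place, so I would just quote it.

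For the ``only if'' direction, since $\xi$ is a positive $\cI$-cofibration and $W$ is positive $\cI$-fibrant, the simplicial model axiom from Proposition~\ref{prop:based-simplicial} makes $\xi^*$ a fibration of simplicial sets; it is acyclic by $\xi$-locality and hence surjective on vertices. Thus every map $C_1 \to W$ extends along $\xi$ to a map $C_1^{\gp} \to W$, so $W$ is grouplike by Lemma~\ref{lem:extension-grouplike}.

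For the harder ``if'' direction, my key observation will be that $\Omega(X)$ is $\xi$-local for \emph{every} positive $\cI$-fibrant $X$. Indeed, the Quillen adjunction $(B,\Omega)$ of Corollary~\ref{cor:B-Omega-Qadjunction} is simplicial (as $B = -\otimes S^1$ is the simplicial tensor), so I can identify $\xi^* \colon \Map(C_1^{\gp}, \Omega X) \to \Map(C_1, \Omega X)$ with $B(\xi)^* \colon \Map(B(C_1^{\gp}), X) \to \Map(B(C_1), X)$. By Lemma~\ref{lem:bar-of-group-compl}, $B(\xi)$ is an $\cI$-equivalence between cofibrant objects (since $B$ is left Quillen), so the latter is a weak equivalence when $X$ is fibrant.

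To apply this to a general positive $\cI$-fibrant, grouplike $W$, I plan to produce a zigzag of weak equivalences between fibrant objects connecting $W$ to a $\Gamma(\widetilde W)$. Concretely, take a cofibrant replacement $W^c \to W$, factor $W^c \to *$ as an acyclic cofibration $W^c \to \widetilde W$ followed by a fibration, and lift the acyclic fibration $W^c \to W$ along this acyclic cofibration to obtain a weak equivalence $\widetilde W \to W$ between fibrant objects. Then $\widetilde W$ is both cofibrant and fibrant, and grouplike since the property is invariant under $\cI$-equivalence, so Lemma~\ref{lem:group-compl-of-grouplike} makes $\eta^{\cI}_{\widetilde W} \colon \widetilde W \to \Gamma(\widetilde W)$ an $\cI$-equivalence. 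Applying $\Map(C_1,-)$ and $\Map(C_1^{\gp},-)$ to the zigzag $W \leftarrow \widetilde W \to \Gamma(\widetilde W)$ yields a commutative ladder of weak equivalences (cofibrant sources, fibrant targets) identifying $\xi^*$ for $W$ with $\xi^*$ for $\Gamma(\widetilde W)$, and the latter is a weak equivalence by the previous paragraph. The main subtlety I anticipate is orchestrating the cofibrant and fibrant replacements so that Lemma~\ref{lem:group-compl-of-grouplike} and the simplicial model axioms apply at the right stages; $W$ itself is only assumed fibrant, not cofibrant, so one cannot invoke Lemma~\ref{lem:group-compl-of-grouplike} on it directly.
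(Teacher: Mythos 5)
Your proposal is correct and follows essentially the same route as the paper: the ``only if'' direction via surjectivity on vertices and Lemma~\ref{lem:extension-grouplike}, and the ``if'' direction by reducing (via cofibrant replacement and Lemma~\ref{lem:group-compl-of-grouplike}) to showing that $\Gamma$ of a cofibrant-fibrant grouplike object is $\xi$-local, which both you and the paper do by transposing $\xi^*$ through the simplicial $(B,\Omega)$-adjunction and invoking Lemma~\ref{lem:bar-of-group-compl}. The only cosmetic difference is that you isolate ``$\Omega(X)$ is $\xi$-local for every positive $\cI$-fibrant $X$'' as a standalone observation before running the zigzag, whereas the paper performs the same identification directly on $\Gamma(A)$.
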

\begin{proof}
Since $\cC\cS^{\cI}$ is left proper by \cite[Proposition~3.2]{Sagave-S_diagram}, it follows from \cite[Proposition 3.4.1] {Hirschhorn_model} that the fibrant objects in $\cC\cS^{\cI}_{\mathrm{gp}}$
are the objects that are $\xi$-local. Suppose first that $A$ is $\xi$-local. This means that $\xi$ induces an acyclic fibration of the simplicial mapping spaces
\[
\xi^*\colon\Map(C_1^{\mathrm{gp}},A)\to \Map(C_1,A)
\]
and in particular that the map of vertices 
$\cC\cS^{\cI}(C_1^{\mathrm{gp}},A)\to \cC\cS^{\cI}(C_1,A)$ is surjective. By  
Lemma~\ref{lem:extension-grouplike} this is equivalent to $A$ being grouplike. 

Next, assuming that $A$ is positive $\cI$-fibrant and grouplike, we must show that $A$ is $\xi$-local. Choosing a cofibrant replacement, we may assume without loss of generality that $A$ is also cofibrant. Then $\eta_A^{\cI}\colon A\to \Gamma(A)$ is an $\cI$-equivalence by 
Lemma~\ref{lem:group-compl-of-grouplike} so that it suffices to show that the upper horizontal map  in the diagram 
\[
\xymatrix@-1pc{\Map(C_1^{\mathrm{gp}},\Gamma(A)) \ar[r] \ar[d]^{\cong} 
& \Map(C_1,\Gamma(A))\ar[d]^{\cong}\\
\Map(B(C_1^{\mathrm{gp}}), B(A)^{\cI-\textrm{fib}}) \ar[r] &
\Map(B(C_1), B(A)^{\cI-\textrm{fib}})
}
\]
is a weak equivalence. Here the vertical isomorphisms are induced by the 
$(B,\Omega)$-adjunction. It follows from 
Lemma~\ref{lem:bar-of-group-compl} that the induced map 
$B(C_1)\to B(C_1^{\mathrm{gp}})$ is an $\cI$-equivalence which in turn implies that the horizontal map in the bottom of the the diagram is a weak equivalence. This gives the statement of the lemma.
\end{proof}

In general, given a commutative $\cI$-space monoid $A$, we write 
$A\to A^{\mathrm{gp}}$ for a fibrant replacement in 
$\cC\cS^{\cI}_{\mathrm{gp}}$ (that is, $ A^{\mathrm{gp}}$ is fibrant in 
$\cC\cS^{\cI}_{\mathrm{gp}}$ and the map is both a cofibration and a 
$\xi$-local equivalence). The terminology is justified by the next lemma.

\begin{lemma}\label{lem:fibrant-replacement-completion}
Fibrant replacement $A\to A^{\mathrm{gp}}$ in $\cC\cS^{\cI}_{\mathrm{gp}}$ models the group completion.
\end{lemma}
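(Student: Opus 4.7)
The plan is to verify the two defining properties of a group completion in turn: that $A^{\mathrm{gp}}_{h\cI}$ is grouplike, and that the induced map of classifying spaces $B(A_{h\cI}) \to B(A^{\mathrm{gp}}_{h\cI})$ is a weak equivalence. The first property is immediate from Lemma~\ref{lem:gp-fibrant}, since $A^{\mathrm{gp}}$ is by construction a fibrant object in $\cC\cS^{\cI}_{\mathrm{gp}}$, hence positive $\cI$-fibrant and grouplike.

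For the classifying space condition, I would first treat the case that $A$ is cofibrant in $\cC\cS^{\cI}$. Since the cofibrations of $\cC\cS^{\cI}_{\mathrm{gp}}$ coincide with those of $\cC\cS^{\cI}$, the object $A^{\mathrm{gp}}$ is also cofibrant in $\cC\cS^{\cI}$, and $A \to A^{\mathrm{gp}}$ is in particular an acyclic cofibration in $\cC\cS^{\cI}_{\mathrm{gp}}$. Applying the left Quillen functor $B \colon \cC\cS^{\cI}_{\mathrm{gp}} \to \cC\cS^{\cI}$ of Proposition~\ref{prop:B-Omega-completion-adjunction} yields an $\cI$-equivalence $B(A) \to B(A^{\mathrm{gp}})$. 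Since both $A$ and $A^{\mathrm{gp}}$ have flat underlying $\cI$-spaces by Proposition~\ref{prop:underlying-flat-com-I}, Proposition~\ref{bar-bar} identifies the induced map on homotopy colimits with $B(A_{h\cI}) \to B(A^{\mathrm{gp}}_{h\cI})$, which is therefore a weak equivalence.

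For a general $A$, I would choose a cofibrant replacement $\tilde A \to A$ in $\cC\cS^{\cI}$, which is also a $\xi$-local equivalence, and factor the composite $\tilde A \to A \to A^{\mathrm{gp}}$ as an acyclic cofibration $\tilde A \to \tilde A^{\mathrm{gp}}$ followed by a fibration $\tilde A^{\mathrm{gp}} \to A^{\mathrm{gp}}$ in $\cC\cS^{\cI}_{\mathrm{gp}}$; the fibration is automatically acyclic and hence coincides with an acyclic fibration of $\cC\cS^{\cI}$, in particular an $\cI$-equivalence. The cofibrant case applied to the fibrant replacement $\tilde A \to \tilde A^{\mathrm{gp}}$, combined with the $\cI$-equivalences $\tilde A \to A$ and $\tilde A^{\mathrm{gp}} \to A^{\mathrm{gp}}$, transfers the conclusion to $A_{h\cI} \to A^{\mathrm{gp}}_{h\cI}$. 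The only substantive input is Proposition~\ref{bar-bar}, needed to pass between the internal bar construction in $\cC\cS^{\cI}$ and the space-level bar construction on $A_{h\cI}$; everything else is a formal consequence of the $(B,\Omega)$-Quillen adjunction and the fact that acyclic fibrations are preserved under left Bousfield localization.
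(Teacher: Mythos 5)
Your proof is correct, and it takes a somewhat different route from the paper's. The paper does not verify the definition of group completion directly for $A\to A^{\gp}$; instead it compares $A^{\gp}$ with the explicit model $\Gamma(C)=\Omega(B(C)^{\cI\text{-fib}})$ for a cofibrant replacement $C\to A$, via the chain of $\cI$-equivalences $A^{\gp}\ot C^{\gp}\to\Gamma(C^{\gp})\ot\Gamma(C)$ (using \cite[Theorem 3.2.18]{Hirschhorn_model}, Lemma~\ref{lem:group-compl-of-grouplike}, and Proposition~\ref{prop:B-Omega-completion-adjunction}), and then inherits the group-completion property from Proposition~\ref{prop:I-fib-space-completion}. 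You instead check the two defining conditions directly: grouplikeness from Lemma~\ref{lem:gp-fibrant}, and the bar-construction condition by feeding the acyclic cofibration $A\to A^{\gp}$ into the left Quillen functor $B$ of Proposition~\ref{prop:B-Omega-completion-adjunction} and translating with Proposition~\ref{bar-bar}. Both arguments ultimately hinge on the same fact (that $B$ carries the fibrant replacement map to an $\cI$-equivalence), but your packaging bypasses $\Gamma$, Lemma~\ref{lem:group-compl-of-grouplike}, and Proposition~\ref{prop:I-fib-space-completion} at this point, and it delivers exactly the form of the statement used in Theorem~\ref{thm:csi-gp}, namely that the map $A_{h\cI}\to(A^{\gp})_{h\cI}$ is itself a group completion rather than merely that its target has the right homotopy type. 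The reduction to the cofibrant case is also handled correctly: the factorization trick works because acyclic fibrations are unchanged under left Bousfield localization, and $\tilde A^{\gp}$ is fibrant in $\cC\cS^{\cI}_{\gp}$ since it fibers over the fibrant object $A^{\gp}$. What the paper's route buys in exchange is the explicit identification of $A^{\gp}$ with $\Gamma(C)$ up to $\cI$-equivalence, which is reused verbatim in the proof of Lemma~\ref{lem:csi-group-equivalence}.
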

\begin{proof}
  Let $A$ be a commutative $\cI$-space monoid and let $C\to A$ be a
  cofibrant replacement in the positive $\cI$-model
  structure. Applying the functor $\Gamma$ to a fibrant replacement
  $C\to C^{\mathrm{gp}}$ in $\cC\cS^{\cI}_{\mathrm{gp}}$, we get a
  commutative diagram
\[ 
\xymatrix@-1pc{ A \ar[d]  & C \ar[r] \ar[d] \ar[l]& \Gamma(C) \ar[d] \\
    A^{\textrm{gp}} & C^{\textrm{gp}} \ar[r] \ar[l]& \Gamma
    (C^{\textrm{gp}})
}
\] 
in which $C^{\mathrm{gp}}\to A^{\mathrm{gp}}$ is an $\cI$-equivalence 
by \cite[Theorem 3.2.18]{Hirschhorn_model} and $C^{\mathrm{gp}}\to\Gamma(C^{\mathrm{gp}})$ is an $\cI$-equivalence by 
Lemma~\ref{lem:group-compl-of-grouplike}. Furthermore, since the 
map $B(C)\to B(C^{\mathrm{gp}})$ is an $\cI$-equivalence by 
Proposition~\ref{prop:B-Omega-completion-adjunction}, it follows that also 
$\Gamma(C)\to \Gamma(C^{\mathrm{gp}})$ is an $\cI$-equivalence. In conclusion, $A^{\mathrm{gp}}$ is $\cI$-equivalent to the group completion of a cofibrant replacement of $A$.
\end{proof}

\begin{remark}
While the group completion $\eta_A\colon A \to \Omega
(B(A)^{\textrm{fib}})$ in \eqref{eq:fib-completion}
needed the underlying $\cI$-space of $A$ to be flat and semistable and the group completion $\eta_A^{\cI} \colon A \to
  \Omega\left(B(A)^{\cI\textrm{-fib}}\right)$ in
  \eqref{eq:grp-cpl-via-Ifibrant} needed $A$ to be cofibrant,
  the fibrant replacement $A \to A^{\textrm{gp}}$ gives a functorial
  group completion on all commutative $\cI$-space monoids.
\end{remark}

Having identified the fibrant objects in $\cC\cS^{\cI}_{\mathrm{gp}}$, we can describe the weak equivalences more explicitly.

\begin{lemma}\label{lem:csi-group-equivalence}
A map of commutative $\cI$-space monoids $C\to D$ is a weak equivalence in $\cC\cS^{\cI}_{\mathrm{gp}}$ if and only if the induced map of group completions 
\[
\Omega(B(C_{h\cI})^{\mathrm{fib}})\to 
\Omega(B(D_{h\cI})^{\mathrm{fib}})
\] 
is a weak equivalence.
\end{lemma}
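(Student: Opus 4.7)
The plan is to combine the general Bousfield-localization characterization of weak equivalences with the identification of the fibrant replacement in $\cC\cS^{\cI}_{\mathrm{gp}}$ as a group completion (Lemma~\ref{lem:fibrant-replacement-completion}) and the identification of the group completion on the level of underlying spaces (Proposition~\ref{prop:I-fib-space-completion}).

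More concretely, I would first appeal to the standard fact about left Bousfield localizations (see \cite[Theorem~3.2.13]{Hirschhorn_model}): a map $C \to D$ in $\cC\cS^{\cI}_{\mathrm{gp}}$ is a weak equivalence if and only if the induced map $C^{\mathrm{gp}} \to D^{\mathrm{gp}}$ between fibrant replacements in $\cC\cS^{\cI}_{\mathrm{gp}}$ is an $\cI$-equivalence, since maps between $\xi$-local (equivalently, fibrant in $\cC\cS^{\cI}_{\mathrm{gp}}$) objects are weak equivalences in the localization precisely when they are weak equivalences in the original positive $\cI$-model structure.

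Next, I would use Lemma~\ref{lem:fibrant-replacement-completion}: passing through a cofibrant replacement $\tilde C \to C$ and $\tilde D \to D$ in the positive $\cI$-model structure, there is a natural zigzag of $\cI$-equivalences identifying $C^{\mathrm{gp}}$ with $\Gamma(\tilde C)$ and $D^{\mathrm{gp}}$ with $\Gamma(\tilde D)$, compatibly with the maps induced by $C \to D$. Hence $C^{\mathrm{gp}} \to D^{\mathrm{gp}}$ is an $\cI$-equivalence if and only if $\Gamma(\tilde C) \to \Gamma(\tilde D)$ is. Then Proposition~\ref{prop:I-fib-space-completion}, applied to the cofibrant (hence flat by Proposition~\ref{prop:underlying-flat-com-I}) commutative $\cI$-space monoids $\tilde C$ and $\tilde D$, provides a natural chain of weak equivalences identifying $\Gamma(\tilde C)_{h\cI}$ with $\Omega(B(\tilde C_{h\cI})^{\mathrm{fib}})$ and similarly for $\tilde D$. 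Since $\tilde C \to C$ is an $\cI$-equivalence, the induced map of simplicial monoids $\tilde C_{h\cI} \to C_{h\cI}$ is a weak equivalence, so the levelwise application of $\Omega(B(-)^{\mathrm{fib}})$ produces a weak equivalence $\Omega(B(\tilde C_{h\cI})^{\mathrm{fib}}) \xrightarrow{\sim} \Omega(B(C_{h\cI})^{\mathrm{fib}})$, and likewise for $D$.

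Chaining these natural equivalences, the map $\Gamma(\tilde C) \to \Gamma(\tilde D)$ is an $\cI$-equivalence if and only if $\Omega(B(C_{h\cI})^{\mathrm{fib}}) \to \Omega(B(D_{h\cI})^{\mathrm{fib}})$ is a weak equivalence, completing the argument. The main technical point to verify carefully is the naturality of the zigzag in Proposition~\ref{prop:I-fib-space-completion}, so that the identifications made for $C$ and $D$ are compatible with the map $C \to D$; this is implicit in the construction of that zigzag but should be spelled out to ensure the two-out-of-three reasoning applies. The routine homotopy invariance of the bar construction on simplicial monoids and of $\Omega((-)^{\mathrm{fib}})$ is used without comment.
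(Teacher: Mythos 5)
Your proposal is correct and follows essentially the same route as the paper: reduce via the Bousfield-localization criterion to comparing fibrant replacements $C^{\mathrm{gp}}\to D^{\mathrm{gp}}$, identify these (after cofibrant replacement) with $\Gamma(\tilde C)\to\Gamma(\tilde D)$ using Lemma~\ref{lem:fibrant-replacement-completion}, and conclude with Proposition~\ref{prop:I-fib-space-completion}. The only difference is presentational — the paper simply assumes $C$ and $D$ cofibrant from the outset, whereas you carry the cofibrant replacements explicitly and flag the naturality of the comparison zigzag, which the paper leaves implicit.
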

\begin{proof}
It follows from \cite[Theorem 3.2.18]{Hirschhorn_model} that $C\to D$ is a $\xi$-local equivalence if and only if the induced map of fibrant replacements $C^{\mathrm{gp}}\to D^{\mathrm{gp}}$ is an $\cI$-equivalence. Choosing cofibrant replacements, it suffices to prove the statement in the lemma under the additional hypothesis that $C$ and $D$ be cofibrant. By 
Lemma~\ref{lem:fibrant-replacement-completion} and its proof, the map $C^{\mathrm{gp}}\to D^{\mathrm{gp}}$ can then be identified with the map 
$\Gamma(C)\to \Gamma(D)$ up to $\cI$-equivalence, hence the result follows from Proposition~\ref{prop:I-fib-space-completion}.
\end{proof}

\begin{proof}[Proof of Theorem \ref{thm:csi-gp}]
The characterization of the weak equivalences in $\cC\cS^{\cI}_{\mathrm{gp}}$ provided by Lemma~\ref{lem:csi-group-equivalence} can easily be translated to give the characterization in the theorem. The remaining statements follow from 
Lemmas~\ref{lem:gp-fibrant} and \ref{lem:csi-group-equivalence}.
\end{proof}

\subsection{Group completion and repletion}\label{sec:repletion}
In this section, we study the fibrations in $\cC\cS^{\cI}_{\textrm{gp}}$
and relate them to the \emph{replete} maps introduced in~\cite{Rognes_TLS}. 

Right properness is a desirable feature of a model category. In
general, a model structure that arises through a left Bousfield
localization may or may not be right proper. In the case of 
$\cC\cS^{\cI}_{\textrm{gp}}$, it is not:

\begin{example}
We employ an example from~\cite[\S 5.7]{Bousfield-F_Gamma-bisimplicial} to show that the group completion model structure $\cC\cS^{\cI}_{\textrm{gp}}$ is not right proper. Let $M = \mN_0 \cup \{0'\}$ be
the commutative monoid with $0'+0'=0$, $0+0'=0'$, and $0'+n=n$ for
$n\geq 1$. Its group completion is $\mZ$. Viewing the pullback
diagram of commutative monoids
\[\xymatrix@-1pc{ \{0,0'\} \ar[d] \ar[r] & \{0\} \ar[d] \\ M \ar[r] &
\mZ}\]
as a pullback diagram of constant commutative $\cI$-space monoids,
we see that $\cC\cS^{\cI}_{\textrm{gp}}$ is not right proper: The map 
$M \to \mZ$ is a group completion and hence a weak equivalence in
$\cC\cS^{\cI}_{\textrm{gp}}$, while the map $0 \to \mZ$ is a (positive) 
$\cI$-fibration of grouplike commutative monoids, hence a fibration in 
$\cC\cS^{\cI}_{\textrm{gp}}$.
However, $\{0,0'\} \to \{0\}$ is not a weak equivalence in 
$\cC\cS^{\cI}_{\textrm{gp}}$.
\end{example}

The following definition enables us to state and prove a weakened
form of right properness.

\begin{definition}[{{\cite[Definition 8.1]{Rognes_TLS}}}]
  A map $A \to B$ in $\cC\cS^{\cI}$ is \emph{
    virtual surjective} if the induced map of commutative monoids 
    $\pi_0(A_{h\cI})\to \pi_0(B_{h\cI})$ becomes surjective after group completion.
\end{definition}

\begin{proposition}\label{prop:weak-right-prop-csigp}
Consider a pullback square of commutative
  $\cI$-space monoids
\[\xymatrix@-1pc{A \ar[d]\ar[r] & C \ar[d] \\ B \ar[r] & D}\]
with $B \to D$ a weak equivalence in $\cC\cS^{\cI}_{\mathrm{gp}}$ and
$C \to D$ a virtual surjective fibration in
$\cC\cS^{\cI}_{\mathrm{gp}}$. Then $A \to C$ is a weak equivalence in
$\cC\cS^{\cI}_{\mathrm{gp}}$.
\end{proposition}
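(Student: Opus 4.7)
The plan is to apply the characterization of weak equivalences from Theorem~\ref{thm:csi-gp}: showing that $A\to C$ is a weak equivalence in $\cC\cS^{\cI}_{\mathrm{gp}}$ amounts to showing that the induced map $B(A_{h\cI})\to B(C_{h\cI})$ is a weak homotopy equivalence of classifying spaces. By the same theorem, the hypothesis that $B\to D$ is a weak equivalence in $\cC\cS^{\cI}_{\mathrm{gp}}$ says exactly that $B(B_{h\cI})\to B(D_{h\cI})$ is a weak equivalence. So the goal is to produce a homotopy pullback square
\[
\xymatrix@-1pc{
B(A_{h\cI}) \ar[d] \ar[r] & B(C_{h\cI}) \ar[d] \\
B(B_{h\cI}) \ar[r] & B(D_{h\cI})
}
\]
of simplicial sets; given this, right properness of $\cS$ forces the top map to be a weak equivalence, completing the argument.

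For the first step, I would observe that since fibrations in $\cC\cS^{\cI}_{\mathrm{gp}}$ form a subclass of fibrations in $\cC\cS^{\cI}$ (the trivial cofibrations of the localization include those of the original positive $\cI$-model structure), $C\to D$ is a positive $\cI$-fibration. In particular $C(\bld n)\to D(\bld n)$ is a Kan fibration for every $n\geq 1$, and because pullbacks in $\cC\cS^{\cI}$ are computed levelwise, the given pullback square is a homotopy pullback of simplicial sets at each positive level.

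The second, more delicate step is to pass to $\cI$-homotopy colimits and then apply the classifying space functor to upgrade these levelwise homotopy pullbacks to a homotopy pullback of the displayed square. This requires a version of the McDuff--Segal group completion theorem. The virtual surjectivity of $C\to D$ is the hypothesis that powers this theorem: it asserts that the induced map on $\pi_0$ becomes a surjection after group completion, which is precisely the condition that allows one to identify the homotopy fibers of $B(C_{h\cI})\to B(D_{h\cI})$ with classifying spaces of group completions of the relevant fibers of $C_{h\cI}\to D_{h\cI}$.

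The main obstacle is precisely this second step. Neither $\cI$-homotopy colimits nor the classifying space functor on simplicial monoids preserve homotopy pullbacks in general, and the example immediately preceding the proposition shows that the statement fails outright without virtual surjectivity. Making the argument go through requires carefully combining the levelwise homotopy pullbacks from the first step, the commutative $\cI$-space monoid structure (which produces the $E_\infty$ structure on the underlying spaces, so that $\pi_0$ is a commutative monoid and group completion is well-behaved), and the virtual surjectivity hypothesis, in order to invoke the group completion theorem in the form suitable for the $\cI$-space setting.
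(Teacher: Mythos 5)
Your overall strategy is the right one --- reduce to showing that the square of classifying spaces $B(A_{h\cI})\to B(C_{h\cI})$ over $B(B_{h\cI})\to B(D_{h\cI})$ is a homotopy pullback and then use that the bottom map is a weak equivalence --- and you correctly locate the difficulty and the role of virtual surjectivity. But the proof has a genuine gap: the ``second, more delicate step'' is exactly the content of the proposition, and you do not carry it out; you only assert that it ``requires carefully combining'' the hypotheses and invoking a group completion theorem. Moreover, the tool you gesture at (McDuff--Segal) is not what makes the argument work here. The paper's route is: first, arguing as in Bousfield's \cite[Lemma 9.4]{Bousfield_telescopic}, one reduces to the case where $C$ and $D$ are \emph{fibrant in} $\cC\cS^{\cI}_{\mathrm{gp}}$ --- hence positive $\cI$-fibrant and grouplike --- and $\pi_0(C_{h\cI})\to\pi_0(D_{h\cI})$ is actually surjective. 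After this reduction no group completion theorem is needed: one applies the Bousfield--Friedlander realization theorem \cite[Theorem B.4]{Bousfield-F_Gamma-bisimplicial} to the bisimplicial sets $B_{\bullet}(-)$, checking (a) a degreewise homotopy pullback, (b) the $\pi_*$-Kan condition for $B_{\bullet}(C_{h\cI})$ and $B_{\bullet}(D_{h\cI})$, which holds because these monoids are grouplike, and (c) that $B_{\bullet}(C_{h\cI})\to B_{\bullet}(D_{h\cI})$ induces a Kan fibration on the relevant $\pi_0$-level data, which is where the surjectivity of $\pi_0(C_{h\cI})\to\pi_0(D_{h\cI})$ (a surjective homomorphism of \emph{groups} after the reduction) enters.

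There is also a smaller gap in your first step. Levelwise homotopy pullbacks do not automatically give a homotopy pullback after applying $(-)_{h\cI}$, since homotopy colimits over $\cI$ do not preserve homotopy pullbacks in general; one needs the fact that for a pullback along a positive $\cI$-fibration the square of $\cI$-homotopy colimits is a homotopy pullback of simplicial monoids, which the paper cites as \cite[Corollary~11.4]{Sagave-S_diagram}. Without that input, and without the fibrant/grouplike reduction that makes the Bousfield--Friedlander theorem applicable, the argument does not close.
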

\begin{proof}
  Our argument is similar to that used in the proof of~\cite[Proposition 8.3]{Rognes_TLS}, but we make explicit some fibrancy conditions suppressed there. Arguing as in the proof of~\cite[Lemma 9.4]{Bousfield_telescopic}, we may assume without loss of generality that
  $C$ and $D$ are fibrant in $\cC\cS^{\cI}_{\textrm{gp}}$ and that
  $\pi_0(C_{h\cI}) \to \pi_0(D_{h\cI})$ is surjective. Furthermore, since the vertical maps in the diagram are positive $\cI$-fibrations, it follows from 
\cite[Corollary~11.4]{Sagave-S_diagram} that the diagram of simplicial monoids 
\[\xymatrix@-1pc{A_{h\cI} \ar[d]\ar[r] & C_{h\cI} \ar[d] \\ B_{h\cI}
  \ar[r] & D_{h\cI}}\] is a homotopy pullback. We know from
Theorem~\ref{thm:csi-gp} that in the latter diagram the bottom
horizontal map becomes a weak equivalence after applying the bar
construction and we must show that the same holds for the upper
horizontal map. For this we shall shall use the Bousfield-Friedlander
Theorem~\cite[Theorem B.4]{Bousfield-F_Gamma-bisimplicial} to show
that the diagram obtained by applying the bar construction to each of
the simplicial monoids is again a homotopy pullback. It is clear that
we have a homotopy pullback for each fixed degree of the bar
construction. We also observe that in general, given a grouplike
simplicial monoid $M$, an argument similar to proving that the bar
construction on a group is a Kan complex shows that the bar
construction on $M$ satisfy the $\pi_*$-Kan condition
\cite[B.3]{Bousfield-F_Gamma-bisimplicial}. This applies in particular
to the grouplike simplicial monoids $C_{h\cI}$ and $D_{h\cI}$.
Finally, since $\pi_0(C_{h\cI}) \to \pi_0(D_{h\cI})$ is a surjective
group homomorphism, it induces a Kan fibration after applying the bar
construction. This is all we need to apply the Bousfield-Friedlander
Theorem.
\end{proof}

\begin{definition}[{\cite[Definition 8.1]{Rognes_TLS}}]
Let $f\colon A \to B$ be a map in $\cC\cS^{\cI}$. 
\begin{enumerate}[(i)]
\item The map $f$ is \emph{exact} if the commutative square
\[\xymatrix@-1pc{A \ar[r] \ar[d] & A^{\textrm{gp}} \ar[d] \\ B \ar[r] & B^{\textrm{gp}}}\]
is a homotopy pullback in the positive $\cI$-model structure.
\item The map $f$ is \emph{replete} if it is exact and virtual
  surjective.
\end{enumerate}
\end{definition}
\begin{remark}\label{rem:repletion}
For a virtual surjective map $A\to B$, its repletion 
$A^{\textrm{rep}}\to B$ is defined to be the homotopy pullback of the diagram
$B \to B^{\textrm{gp}} \ot A^{\textrm{gp}}$, compare to ~\cite[Definition
8.2]{Rognes_TLS}. By~\cite[Proposition 8.3]{Rognes_TLS} or
  the next proposition,
  $A^{\textrm{rep}} \to B$ is indeed replete.
\end{remark}
\begin{proposition}\label{prop:csigp-fib-vs-repletion}
Let  $f\colon A \to B$ be a map in $\cC\cS^{\cI}$. 
\begin{enumerate}[(i)]
\item If $f$ is exact and a positive $\cI$-fibration, then $f$ is a fibration
in $\cC\cS^{\cI}_{\mathrm{gp}}$. 
\item If $f$ is a fibration in $\cC\cS^{\cI}_{\mathrm{gp}}$ and virtual
surjective, then $f$ is replete.
\end{enumerate} 
\end{proposition}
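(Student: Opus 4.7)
Both parts should follow from the standard Hirschhorn characterization of fibrations in a left Bousfield localization of a left proper cellular simplicial model category: a map $p\colon X \to Y$ is a fibration in $L_S M$ if and only if (a) $p$ is a fibration in $M$, and (b) the square obtained by any choice of fibrant replacements $X \to \hat X$ and $Y \to \hat Y$ in $L_S M$ is a homotopy pullback in $M$ (see~\cite[Proposition~3.4.7]{Hirschhorn_model} or its close variants). In our situation $M = \cC\cS^{\cI}$ with the positive $\cI$-model structure and $L_S M = \cC\cS^{\cI}_{\gp}$, and Lemma~\ref{lem:fibrant-replacement-completion} identifies the fibrant replacement in $\cC\cS^{\cI}_{\gp}$ with the group completion $A \to A^{\gp}$. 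Thus condition~(b) is precisely the statement that $p$ is exact.

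For part~(i), the hypotheses that $f$ is exact and a positive $\cI$-fibration give conditions~(b) and~(a) respectively, and Hirschhorn's criterion immediately yields that $f$ is a fibration in $\cC\cS^{\cI}_{\gp}$.

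For part~(ii), I first observe that a fibration in $\cC\cS^{\cI}_{\gp}$ is automatically a positive $\cI$-fibration: the identity functor is a left Quillen functor $L_{\gp}\colon \cC\cS^{\cI} \to \cC\cS^{\cI}_{\gp}$, so the cofibrations agree and every positive $\cI$-acyclic cofibration is in particular a cofibration and a $\xi$-local equivalence, hence a fibration in $\cC\cS^{\cI}_{\gp}$ has the right lifting property against it. Applying Hirschhorn's criterion in the reverse direction then shows that the square defining exactness is a homotopy pullback in $\cC\cS^{\cI}$, so $f$ is exact. Combined with the hypothesis of virtual surjectivity, this exhibits $f$ as replete.

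The only technical point to monitor is that the notation $A^{\gp}$ used in the definition of exact is well defined up to $\cI$-equivalence and matches the fibrant replacement functor in $\cC\cS^{\cI}_{\gp}$ featuring in Hirschhorn's criterion; this is handled by Lemma~\ref{lem:fibrant-replacement-completion}, together with the fact that homotopy pullbacks in $\cC\cS^{\cI}$ are invariant under $\cI$-equivalence in the corners (by left properness and the usual argument). With that identification in hand, the proposition reduces to an essentially formal consequence of the Bousfield localization machinery.
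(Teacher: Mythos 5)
Part (i) of your argument is essentially the paper's: the sufficient criterion of \cite[Proposition 3.4.7]{Hirschhorn_model}, combined with right properness of the \emph{unlocalized} model structure $\cC\cS^{\cI}$ to make the homotopy pullback condition independent of the choice of fibrant replacement, gives the claim, and Lemma~\ref{lem:fibrant-replacement-completion} identifies the relevant fibrant replacement with group completion as you say.

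Part (ii), however, rests on a direction of Hirschhorn's criterion that is not available here. The characterization of the fibrations in a left Bousfield localization as ``fibrations in $M$ whose fibrant-replacement square is a homotopy pullback'' is an if-and-only-if only when the \emph{localized} model structure is right proper; \cite[Proposition 3.4.7]{Hirschhorn_model} itself is one-directional (the direction used in (i)). The paper shows explicitly, in the example just before this proposition, that $\cC\cS^{\cI}_{\gp}$ is \emph{not} right proper, so you cannot ``apply Hirschhorn's criterion in the reverse direction''. This is also why virtual surjectivity appears as a hypothesis in (ii): it is not a final bookkeeping step upgrading ``exact'' to ``replete'', but the essential input for proving exactness in the first place. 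The paper's argument factors $A^{\gp}\to B^{\gp}$ as an acyclic cofibration followed by a positive $\cI$-fibration $C\to B^{\gp}$, forms the pullback $P$ of $B\to B^{\gp}\ot C$, and invokes the weak right properness statement of Proposition~\ref{prop:weak-right-prop-csigp} --- which requires the fibration in question to be virtually surjective --- to conclude that $P\to C$, and hence $A\to P$, is a weak equivalence in $\cC\cS^{\cI}_{\gp}$; a final application of \cite[Proposition 3.3.5]{Hirschhorn_model} to the fibrations $A\to B$ and $P\to B$ upgrades $A\to P$ to an $\cI$-equivalence, which is exactly the homotopy pullback statement defining exactness. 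You would need to supply an argument of this kind (or otherwise justify the converse direction in the absence of right properness) to close the gap.
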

\begin{proof}
  Part (i) is a formal consequence of~\cite[Proposition
  3.4.7]{Hirschhorn_model} and the right properness of
  $\cC\cS^{\cI}$.  
  For (ii) we build a commutative diagram
\[\xymatrix@-1pc{A
  \ar[dd]_{f} \ar[dr] \ar[rr] & & A^{\textrm{gp}} \ar@{>->}[d]^{\sim}\\
  & P \ar[r] \ar[dl] & C \ar@{->>}[d]\\ B \ar[rr] & & B^{\textrm{gp}}
}\] by factoring $A^{\textrm{gp}} \to B^{\textrm{gp}}$ into an acyclic
cofibration followed by a fibration in the positive $\cI$-model
structure and forming the pullback $P$.
By~\cite[Proposition 3.3.16]{Hirschhorn_model}, $C \to B^{\textrm{gp}}$ is
fibration in $\cC\cS^{\cI}_{\textrm{gp}}$, hence 
Proposition~\ref{prop:weak-right-prop-csigp} implies that $P \to C$ is a weak
equivalence in $\cC\cS^{\cI}_{\textrm{gp}}$. By the 2-out-of-3 property for weak equivalences, $A \to P$
is therefore a weak equivalence in $\cC\cS^{\cI}_{\textrm{gp}}$.  Since $A \to
B$ and $P \to B$ are fibrations in $\cC\cS^{\cI}_{\textrm{gp}}$, it
follows by~\cite[Proposition 3.3.5]{Hirschhorn_model} that $A \to P$
is an $\cI$-equivalence.
\end{proof}

\begin{remark}
  We may summarize the situation as follows: Exactness is a desirable
  property for a map of commutative $\cI$-space monoids, and one wants
  to have an ``exactification''.  The pullback construction of Remark
  \ref{rem:repletion} gives an ``exactification'' $A^{\textrm{rep}}\to
  B$ for virtual surjective maps $A \to B$. For a general map,
  $A^{\textrm{rep}}\to B$ may fail to be exact because $A \to
  A^{\textrm{rep}}$ does not induce an $\cI$-equivalence after group
  completion. However, given any map $A\to B$, a factorization
  $\xymatrix@1{A \;\ar@{>->}[r]^{\sim} & \;A'\;\ar@{->>}[r] & \;B}$
  into an acyclic cofibration followed by a fibration in
  $\cC\cS^{\cI}_{\textrm{gp}}$ gives a commutative $\cI$-space
  monoid $A'$ with maps from $A$ and to $B$ such that $A'$ is well
  defined up to $\cI$-equivalence (see \cite[Proposition
  3.3.5]{Hirschhorn_model}) and $A \to A'$ induces an
  $\cI$-equivalence after group completion. Furthermore, if $A\to B$
  is virtual surjective, then $A'\to B$ coincides with the repletion
  $A^{\textrm{rep}}\to B$ up to $\cI$-equivalence.  In this way the
  fibrations in $\cC\cS^{\cI}_{\textrm{gp}}$ generalize the replete
  maps, and the above factorization in $\cC\cS^{\cI}_{\mathrm{gp}}$
  generalizes repletion.
\end{remark}

\section{The Quillen equivalence to \texorpdfstring{$\Gamma$}{Gamma}-spaces}\label{sec:Gamma-csigp}
In this section we set up the Quillen equivalence relating $\Gamma$-spaces to the group completion model structure on commutative $\cI$-space monoids. We also discuss various ways to explicitly realize the induced equivalence of homotopy categories.

\subsection{Reminder on
  \texorpdfstring{$\Gamma$}{Gamma}-spaces}\label{subs:Gamma-reminder}
Let $\Gamma^{\op}$ be the category of based finite sets (this is the opposite of the category $\Gamma$ considered by Segal~\cite{Segal_categories}) and let us write $k^+$ for the set $\{0,\dots,k\}$ with basepoint $0$. Following Bousfield and Friedlander~\cite{Bousfield-F_Gamma-bisimplicial}, a \emph{$\Gamma$-space} is a  functor $X\colon \Gamma^{\op}\to\cS_*$ such that $X(0^+)=*$. We write $\Gamma^{\op}\cS_*$ for the category of 
$\Gamma$-spaces. Since the full subcategory of $\Gamma^{\op}$ generated by the objects $k^+$ is skeletal, we often define functors out of $\Gamma^{\op}$ only on the sets $k^+$.

It will be convenient to work with the Q-model structures on 
$\Gamma^{\op}\cS_*$ established by Schwede~\cite{Schwede_Gamma-spaces}. In the \emph{level Q-model structure}, a map of $\Gamma$-spaces is a weak equivalence (or fibration) if its evaluation at every object in $\Gamma^{\op}$ is a weak equivalence (or fibration) in $\cS_*$. The existence of such a model structure is a special case of a general construction for based diagram categories which we recall in Proposition~\ref{prop:based-projective-model}. The category 
$\Gamma^{\op}\cS_*$ also has a \emph{stable Q-model structure} with the key feature that its homotopy category is equivalent to the homotopy category of connective spectra: We say that a map of $\Gamma$-spaces $X\to Y$ is a \emph{stable Q-equivalence} if the induced map of spectra (see 
\cite[Section 4]{Bousfield-F_Gamma-bisimplicial}) is a stable equivalence. A map is a \emph{stable Q-cofibration} if it is a cofibration in the level Q-model structure, and a \emph{stable Q-fibration} if it has the right lifting property with respect to maps that are both stable Q-equivalences and stable Q-cofibrations. 
Recall from \cite{Bousfield-F_Gamma-bisimplicial} that a $\Gamma$-space $X$ is \emph{special} if the canonical map 
$X(k^+\vee l^+)\to X(k^+)\times X(l^+)$ is a weak equivalence for all 
$k,l\geq 0$, and \emph{very special} if in addition the induced monoid structure on $\pi_0(X(1^+))$ is a group structure. 

\begin{theorem}[{\cite[Theorem 1.5]{Schwede_Gamma-spaces}}]
The stable Q-equivalences, Q-fibrations, and Q-cofibrations specify a model structure on $\Gamma^{\op}\cS_*$ in which a $\Gamma$-space $X$ is fibrant if and only if it is very special and $X(k^+)$ is a fibrant simplicial set for all 
$k\geq0$. \qed
\end{theorem}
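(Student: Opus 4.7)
The plan is to construct the stable Q-model structure as a left Bousfield localization of the level Q-model structure, mirroring the technique used earlier in this paper to build $\cC\cS^{\cI}_{\mathrm{gp}}$. The level Q-model structure on $\Gamma^{\op}\cS_*$ is cofibrantly generated, left proper, cellular, and simplicial (as a diagram category valued in $\cS_*$), so Hirschhorn's existence theorem will apply as soon as an appropriate set $S$ of maps to invert is identified.

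For $S$ I would take maps that encode both the Segal condition and stability: for each $k \geq 2$, a Segal map built from a cofibrant replacement of the representable $\Gamma$-space $\Gamma^{\op}(k^+,-)$ mapping to a model of the $k$-fold wedge of copies of $\Gamma^{\op}(1^+,-)$, together with a further stabilization map forcing $\pi_0$ of the value at $1^+$ to become a group. Applying \cite[Theorem 4.1.1]{Hirschhorn_model} then yields a cofibrantly generated, left proper, simplicial model structure on $\Gamma^{\op}\cS_*$ whose cofibrations agree with the level Q-cofibrations.

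The fibrant objects of this localization are, by the general theory of left Bousfield localizations, the level-fibrant $\Gamma$-spaces $W$ such that $\Map(s,W)$ is a weak equivalence for every $s \in S$. The Segal maps make such a $W$ special in the sense of Section~\ref{subs:Gamma-reminder}, while the stabilization map promotes specialness to the very special condition, exactly matching the description of fibrant objects in the statement.

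The main technical obstacle is to verify that the $S$-local equivalences coincide with the stable Q-equivalences defined via the associated spectrum functor of Bousfield--Friedlander. One direction is routine: each generator in $S$ becomes a stable equivalence on associated spectra, essentially by Segal's classical recognition theorem that relates special $\Gamma$-spaces to $\Omega$-spectra. For the converse, using that $S$-local objects coincide with very special level-fibrant $\Gamma$-spaces — whose associated spectra are precisely the connective $\Omega$-spectra — one compares simplicial mapping spaces into $S$-local objects with stable homotopy groups of the associated spectrum, and concludes that a stable Q-equivalence is automatically an $S$-local equivalence. This comparison between derived mapping spaces and spectrum-level invariants is the heart of the theorem, and it is where Segal's recognition principle enters in an essential way.
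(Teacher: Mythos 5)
You should first note that the paper does not actually prove this statement: it is quoted verbatim from \cite[Theorem 1.5]{Schwede_Gamma-spaces} and carries a \qed precisely because the proof lives in Schwede's paper. The closest the present paper comes to a proof is the proposition immediately following the theorem, which records that the stable Q-model structure is the left Bousfield localization of the level Q-model structure at the explicit set $S$ of \eqref{eq:gamma_localization}, namely the maps $p_1^*\wdg p_2^*\colon\Gamma^k\wdg\Gamma^l\to\Gamma^{k+l}$ and $p_1^*\wdg\nabla^*\colon\Gamma^1\wdg\Gamma^1\to\Gamma^2$. Your outline is the same in spirit as that reading of Schwede's proof. Two small corrections to your description of the localizing set: the representables $\Gamma^k$ are already cofibrant in the level Q-model structure (they are free on a vertex), so no cofibrant replacement is needed; and the canonical Segal maps go \emph{from} the wedge of representables \emph{into} $\Gamma^{k+l}$, induced by the projections --- your description has the direction reversed, and no canonical map exists the other way.

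The genuine gap is in your final paragraph. Obtaining \emph{some} localized model structure from \cite[Theorem 4.1.1]{Hirschhorn_model} is the formal part; the entire content of the theorem is the identification of the $S$-local equivalences with the stable equivalences defined via the associated spectrum, and this is exactly the step you defer to ``Segal's recognition principle'' without carrying it out. Concretely, the forward direction requires checking that each map in $S$ induces a stable equivalence of associated spectra, and the converse requires showing that for every very special level-fibrant $W$ the functor $\Map(-,W)$ takes stable Q-equivalences between cofibrant $\Gamma$-spaces to weak equivalences. The standard way to do this is a cell induction over the generating cofibrations $\Gamma^k\wedge(\partial\Delta^n\to\Delta^n)_+$, using the adjunction identification $\Map(\Gamma^k\wedge S^n,W)\simeq\Omega^n W(k^+)\simeq\Omega^n(W(1^+)^{\times k})$ together with \cite[Theorem 4.2]{Bousfield-F_Gamma-bisimplicial} (a very special $\Gamma$-space prolongs to an $\Omega$-spectrum) and the assembly-map comparison of \cite[Lemma 4.1]{Bousfield-F_Gamma-bisimplicial}, so that $\Map(X,W)$ is identified with a mapping space of spectra and is therefore invariant under stable equivalences in $X$. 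None of this appears in your sketch, and without it the weak equivalences of the localization you construct have not been shown to be the stable Q-equivalences of the statement, nor have the fibrations been matched with the stable Q-fibrations as defined by lifting.
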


\begin{remark}
The letter $Q$ refers to Quillen and emphasizes that this is not the same as the model structure on $\Gamma$-spaces introduced by 
Bousfield-Friedlander~\cite{Bousfield-F_Gamma-bisimplicial}. As noted in 
\cite[Remark 1.6]{Schwede_Gamma-spaces}, the identity functor on 
$\Gamma^{\op}\cS_*$ defines a left Quillen functor from the stable Q-model structure to the stable Bousfield-Friedlander structure and this is a Quillen equivalence. Hence it follows from 
\cite[Theorem 5.8]{Bousfield-F_Gamma-bisimplicial} that the homotopy category associated to the stable Q-model structure is equivalent to the homotopy category of connective spectra. Our reason for working with the stable Q-model structure is very simple: It has more fibrant objects, so it is easier for a right adjoint functor mapping into $\Gamma$-spaces to be a right Quillen functor. 
\end{remark}

Next we recall from~\cite{Schwede_Gamma-spaces} how the stable Q-model
structure is built from the level Q-model structure. For this, we
write $\Gamma^k$ for the $\Gamma$-space $\Gamma^{\op}(k^+,-)$ and recall that $\mS=\Gamma^1$ plays the role of the sphere spectrum in 
$\Gamma$-spaces.  The projection maps
\[ p_1 \colon (k+l)^+ \to k^+ \quad \text{ and } \quad
p_2\colon(k+l)^+ \to l^+ \] induce maps of $\Gamma$-spaces 
\[ p_1^* \wdg p_2^* \colon \Gamma^k \wdg \Gamma^l \to \Gamma^{k+l}.\]
Similarly, the fold map $\nabla\colon 2^+ \to 1^+$ (given by $1\mapsto 1,
2\mapsto 1$), and the projection $p_1 \colon 2^+ \to 1^+$ induce a map
\[ p_1^* \wdg \nabla^* \colon \Gamma^1 \wdg \Gamma^1 \to \Gamma^2.\]
Consider the set $S$ of maps in $\Gamma^{\op}\cS_*$ defined by
\begin{equation}\label{eq:gamma_localization} 
S=\big\{p_1^* \wdg p_2^*\colon \Gamma^k \wdg \Gamma^l \to \Gamma^{k+l} | k,l\geq 1\big\} \cup
\big\{ p_1^* \wdg \nabla^* \colon \Gamma^1 \wdg \Gamma^1 \to \Gamma^2\big\}.
\end{equation} 
The statement of the next proposition is implicit in the proof of \cite[Theorem 1.5]{Schwede_Gamma-spaces}.
\begin{proposition}[\cite{Schwede_Gamma-spaces}]
The stable Q-model structure on $\Gamma^{\op}\cS_*$ is the left Bousfield localization of the level Q-model structure with respect to the maps in $S$. 
\qed
\end{proposition}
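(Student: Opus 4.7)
The plan is to apply Hirschhorn's existence theorem \cite[Theorem~4.1.1]{Hirschhorn_model} to the level Q-model structure on $\Gamma^{\op}\cS_*$ at the set $S$, and then show that the resulting left Bousfield localization coincides with the stable Q-model structure by matching cofibrations and fibrant objects. The prerequisites (left properness and cellularity of the level Q-model structure) are standard for a pointed diagram category built levelwise from simplicial sets and can be cited from Hirschhorn; this gives a model structure whose cofibrations are the level Q-cofibrations and whose fibrant objects are the $S$-local, level Q-fibrant $\Gamma$-spaces.

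Next I would identify the $S$-local objects. Since $\Gamma^{k} = \Gamma^{\op}(k^+,-)$ is representable, the enriched Yoneda lemma gives $\Map(\Gamma^k, X) \iso X(k^+)$ for any $\Gamma$-space $X$, and because $\Map(-,X)$ turns wedges in $\Gamma^{\op}\cS_*$ into products we obtain $\Map(\Gamma^k \wdg \Gamma^l, X) \iso X(k^+) \times X(l^+)$. Unwinding, a level Q-fibrant $\Gamma$-space $X$ is $S$-local precisely when
\[
X((k{+}l)^+) \to X(k^+) \times X(l^+) \quad (k,l \geq 1) \quad \text{and} \quad X(2^+) \to X(1^+) \times X(1^+)
\]
induced, respectively, by $p_1^{*}\wdg p_2^{*}$ and $p_1^{*}\wdg \nabla^{*}$, are weak equivalences. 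The first family is exactly the specialness condition.

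The main point — and the technical heart of the proof — is that, assuming specialness, the fold-map locality condition is equivalent to $\pi_0 X(1^+)$ being a group. This I would verify by comparing the two maps $X(2^+) \rightrightarrows X(1^+) \times X(1^+)$ induced by $(p_1^{*},p_2^{*})$ and $(p_1^{*},\nabla^{*})$: under the specialness equivalence, $(p_1^{*},\nabla^{*})$ corresponds to the shearing map $(\pi_1, \mu)\colon X(1^+)\times X(1^+) \to X(1^+)\times X(1^+)$, where $\mu$ is the multiplication obtained from specialness. Shearing is a weak equivalence precisely when $\pi_0 X(1^+)$ is a group, so the $S$-local objects are exactly the level Q-fibrant, very special $\Gamma$-spaces. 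By Schwede's theorem quoted above, these are also the fibrant objects of the stable Q-model structure.

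Finally, I would conclude that the two model structures coincide. Both have the same cofibrations (the level Q-cofibrations) and, by the previous step, the same fibrant objects. For any left proper model category with a given class of cofibrations, a map between cofibrant objects is a weak equivalence iff it induces a weak equivalence on $\Map(-,W)$ for every fibrant $W$; since cofibrant replacement uses only the cofibrations, this forces the classes of weak equivalences to coincide on all maps. Hence the left Bousfield localization at $S$ agrees with the stable Q-model structure.
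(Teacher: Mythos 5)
Your argument is correct, and it is worth pointing out that the paper does not actually prove this proposition: it is stated with a \qed and a citation, the point being that it is ``implicit in the proof of \cite[Theorem 1.5]{Schwede_Gamma-spaces}''. So you are supplying the argument the paper leaves to the reader, and your reconstruction is the expected one: Hirschhorn's existence theorem applies because the level Q-model structure (a projective model structure on based diagrams of simplicial sets) is left proper and cellular; the representables $\Gamma^k$ are projectively cofibrant, so the enriched Yoneda isomorphism $\Map(\Gamma^k,X)\iso X(k^+)$ computes the relevant homotopy function complexes and identifies the $S$-local objects among the level Q-fibrant $\Gamma$-spaces as the special ones for which the shearing map is an equivalence, i.e.\ the very special ones (the equivalence of ``shearing is a weak equivalence'' with ``$\pi_0X(1^+)$ is a group'' is the standard fact for homotopy-associative $H$-spaces); by Schwede's theorem these are exactly the fibrant objects of the stable Q-model structure. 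Two small refinements to your last step. The hypothesis needed to detect weak equivalences between cofibrant objects via $\Map(-,W)$ for $W$ fibrant is that both model structures be simplicial (they are, with the same underlying simplicial enrichment), not left properness. And the conclusion can be reached a little more directly: a left Bousfield localization is by definition the unique model structure with the unchanged cofibrations and the $S$-local equivalences as weak equivalences, so it suffices to check that the stable Q-equivalences coincide with the $S$-local equivalences --- which is precisely what your ``same cofibrations, same acyclic fibrations, same fibrant objects, both simplicial'' argument delivers.
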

In effect, the first type of maps in $S$ ensure that 
fibrant objects in the stable Q-model structure are special $\Gamma$-spaces, and adding the map $p_1^* \wdg \nabla^*$ forces the very
special condition.

\subsection{Construction of the adjunction}\label{subs:construction-adj}
We first discuss a general principle for constructing left Quillen functors out of $\Gamma^{\op}\cS_*$. Let $\cC$ be a based simplicial category with base object 
$*$. We write $\Map(-,-)$ for the based mapping spaces and $C\otimes K$ for the tensor of an object $C$ in $\cC$ with a based simplicial set $K$. Suppose that we are given a functor $F\colon \Gamma\to\cC$ with $F(0^+)=*$. From this we get a pair of adjoint functors
\begin{equation}\label{eq:Lambda-Phi-adjunction} 
\Lambda_F \colon \Gamma^{\op}\cS_* \rightleftarrows
\cC \colon \Phi_F
\end{equation}
defined by 
\[
\Lambda_F(X)=\int^{k^+ \in \Gamma^{\op}}\!\! F(k^+) \tensor X(k^+)
\qquad  \text{and} \qquad  
\Phi_F(A)=\Map(F(-),A).
\]
It is easy to see that the left adjoint $\Lambda_F$ preserves tensors with based simplicial sets and that there is a natural isomorphism 
$\Lambda_F(\Gamma^k)\cong F(k^+)$. Conversely, any functor $\Lambda\colon\Gamma^{\op}\cS_*\to\cC$ that preserves colimits and tensors becomes naturally isomorphic to $\Lambda_F$ when we define the functor $F$ by 
$F(k^+)=\Lambda(\Gamma^k)$. (This is a consequence of the fact that a 
$\Gamma$-space $X$ can be written as the coend of the $(\Gamma\times\Gamma^{\op})$-diagram $(k^+,l^+)\mapsto \Gamma^k\wedge X(l^+)$.) 

\begin{lemma}\label{lem:Lambda-Phi-Quillen-adjunction}
The $(\Lambda_F,\Phi_F)$-adjunction is a Quillen adjunction with respect to the level Q-model structure on $\Gamma^{\op}\cS_*$ if and only if 
$F(k^+)$ is cofibrant in $\cC$ for $k\geq 0$. 
\end{lemma}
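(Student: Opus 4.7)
The plan is to reduce the Quillen condition to a check on generating (acyclic) cofibrations, exploiting the fact that $\Lambda_F$ preserves tensors. By Proposition~\ref{prop:based-projective-model}, the level Q-model structure on $\Gamma^{\op}\cS_*$ is the projective model structure on based diagrams, so its generating cofibrations (respectively acyclic cofibrations) are the maps $\Gamma^k \sm i$ for $k \geq 1$ and $i$ a generating cofibration (respectively acyclic cofibration) in $\cS_*$. Note that $k = 0$ contributes nothing, since $\Gamma^0 = *$ and thus $\Gamma^0 \sm i = *$. Since $\Lambda_F$ preserves tensors with based simplicial sets and $\Lambda_F(\Gamma^k) \iso F(k^+)$, it sends $\Gamma^k \sm i$ to the map $F(k^+) \tensor i$ in $\cC$.

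For the ``if'' direction, I would assume that $F(k^+)$ is cofibrant for every $k$, so that $* \to F(k^+)$ is a cofibration in $\cC$. The pushout-product axiom for $\cC$, viewed as a based simplicial model category, then ensures that $F(k^+) \tensor i$ is a cofibration, and acyclic whenever $i$ is acyclic. Hence $\Lambda_F$ takes generating (acyclic) cofibrations to (acyclic) cofibrations in $\cC$, and therefore $(\Lambda_F,\Phi_F)$ is a Quillen adjunction.

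For the ``only if'' direction, assume $(\Lambda_F,\Phi_F)$ is Quillen. For $k \geq 1$, the representable $\Gamma^k = \Gamma^k \sm S^0$ is cofibrant in the level Q-model structure, since $\Gamma^k \sm (-)$ is left adjoint to the right Quillen functor $\mathrm{Ev}_k$ and $S^0$ is cofibrant in $\cS_*$. Hence the unique map $* \to \Gamma^k$ from the initial object is a cofibration, and applying the left Quillen functor $\Lambda_F$ yields a cofibration $* \to F(k^+)$ in $\cC$, proving $F(k^+)$ cofibrant. The case $k = 0$ is covered by the standing hypothesis $F(0^+) = *$.

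The argument is essentially a direct application of the pushout-product axiom combined with the observation that $\Lambda_F$ preserves tensors, so there is no serious obstacle; the only point requiring care is to identify the correct generating (acyclic) cofibrations in the based projective model structure and to remember that $* \to F(k^+)$ being a cofibration is precisely the definition of $F(k^+)$ being cofibrant in the pointed category~$\cC$.
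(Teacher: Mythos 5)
Your proof is correct. It establishes the same underlying fact as the paper, but from the adjoint side: the paper's (one-line) proof checks that the right adjoint $\Phi_F$ preserves fibrations and acyclic fibrations, using the mapping-space form of the simplicial model category axiom ($\Map(F(k^+),-)$ preserves (acyclic) fibrations when $F(k^+)$ is cofibrant, and the level structure has levelwise fibrations), whereas you check that $\Lambda_F$ sends the generating (acyclic) cofibrations $\Gamma^k\sm i$ of the based projective structure to (acyclic) cofibrations, using the tensor/pushout-product form of the same axiom together with $\Lambda_F(\Gamma^k\sm i)\iso F(k^+)\tensor i$. The two are equivalent by adjunction (as the paper itself notes when citing Hovey), so the mathematical content is the same; your version requires the extra (correct) identification of the generating cofibrations, while the paper's is shorter. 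You also spell out the ``only if'' direction — $\Gamma^k$ is cofibrant as the image of $S^0$ under the left Quillen functor $\Gamma^k\sm(-)$, so a left Quillen $\Lambda_F$ forces $F(k^+)\iso\Lambda_F(\Gamma^k)$ to be cofibrant — which the paper's proof leaves implicit; this is a genuine (if easy) completion of the argument.
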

\begin{proof}
The assumption that $\cC$ be a simplicial model category implies that the functor $\Map(F(k^+),-)$ preserves fibrations and acyclic fibrations provided that $F(k^+)$ is cofibrant.
\end{proof}

Given an object $E$ in $\cC$, the above discussion shows how to set up an adjunction $\Gamma^{\op}\cS_*\rightleftarrows \cC$ taking $\mathbb S$ to the prescribed value $E$: letting $F(k^+)=E^{\times k}$, the 
$(\Lambda_F,\Phi_F)$-adjunction has this property.
\begin{example}
Let $\cC$ be the category of spectra (in the sense of \cite{Bousfield-F_Gamma-bisimplicial}), let $\mathbb S$ be the sphere spectrum, and let $F$ be the functor defined by $F(k^+)=\mathbb S^{\times k}$. In this case the adjunction 
\eqref{eq:Lambda-Phi-adjunction} is the one used by Segal~\cite{Segal_categories} and 
Bousfield-Friedlander~\cite{Bousfield-F_Gamma-bisimplicial} to establish an equivalence between the stable homotopy category of $\Gamma$-spaces and the homotopy category of connective spectra. 
\end{example}

\begin{example}
Let $\cC$ be the category of simplicial abelian groups and let $F$ be the functor defined by $F(k^+)=\mathbb Z^{\times k}$, where we think of $\mathbb Z$ as a constant simplicial group. In this case the functor $\Phi_F$ can be identified with the usual Eilenberg-Mac~Lane functor that takes a simplicial abelian group $M$ to the $\Gamma$-space $k^+\mapsto M^{\times k}$. The resulting adjunction \eqref{eq:Lambda-Phi-adjunction} has been analyzed by 
Schwede~\cite{Schwede_Gamma-spaces}.
\end{example}

Now let $\cC$ be the based simplicial category $\cC\cS^{\cI}$, and let $C_1^{\gp}$ be the commutative $\cI$-space monoid considered in 
Section~\ref{sec:gp-as-fibrant}. We know from 
Example~\ref{ex:gp_of_free_comm_ispace-monoid} that $C_1^{\gp}$ represents the infinite loop space $Q(S^0)$ and the idea is to consider the adjunction \eqref{eq:Lambda-Phi-adjunction} associated to the functor $k^+\mapsto (C_1^{\gp})^{\times k}$. However, this adjunction fails to be a Quillen adjunction since the cartesian products $(C_1^{\gp})^{\times k}$ are not cofibrant in $\cC\cS^{\cI}$. In order to overcome this difficulty we appeal to the following general result on based diagram categories whose proof is analogous to the unbased case considered for instance in 
\cite[Section 11.6]{Hirschhorn_model}.

\begin{proposition}\label{prop:based-projective-model}
Let $\cK$ be a based small category and let $\cC$ be a based cofibrantly generated model category. Then there is a cofibrantly generated model structure on the category of based $\cK$-diagrams in $\cC$ such that a map of diagrams is a weak equivalence (respectively a fibration) if and only if it is an objectwise weak equivalence (respectively fibration) in $\cC$. In this model structure the cofibrant diagrams are objectwise cofibrant. \qed 
\end{proposition}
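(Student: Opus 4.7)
The plan is to construct this model structure by invoking the recognition theorem for cofibrantly generated model categories (as in \cite[Theorem~11.3.2]{Hirschhorn_model}), following the template for the unbased case in \cite[Section~11.6]{Hirschhorn_model} with minor modifications to accommodate the basepoint.

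First I would fix notation. Write $*_\cK$ for the base object of $\cK$, and observe that every hom-set $\cK(k,\ell)$ is canonically pointed by the composite $k\to *_\cK\to\ell$. For each $k\in\cK$ the evaluation functor $\Ev_k$ then admits a left adjoint $F_k$ given by $F_k(A)(\ell)=\cK(k,\ell)\sm A$, where the smash is formed with respect to this basepoint. At $\ell=*_\cK$ the formula gives $\ast$, so $F_k(A)$ is indeed an object of the category of based diagrams. Writing $I$ and $J$ for the generating cofibrations and acyclic cofibrations of $\cC$, I take $I_\cK=\{F_k(i)\colon k\in\cK,\,i\in I\}$ and $J_\cK=\{F_k(j)\colon k\in\cK,\,j\in J\}$ as candidate generators for the sought-after model structure.

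The verification of the hypotheses of the recognition theorem then proceeds by standard adjointness arguments. Since $F_k$ preserves colimits, the domains of $I_\cK$ and $J_\cK$ are small relative to the subcategory of all maps. By adjointness, the maps with the right lifting property against $I_\cK$ (respectively $J_\cK$) are precisely the objectwise acyclic fibrations (respectively objectwise fibrations), which are the prescribed acyclic fibrations (respectively fibrations). Since $F_k(j)(\ell)=\cK(k,\ell)\sm j$ is a wedge sum of copies of the acyclic cofibration $j$ and hence itself an acyclic cofibration in $\cC$, any relative $J_\cK$-cell complex is objectwise an acyclic cofibration and in particular a weak equivalence. This verifies all the inputs to the recognition theorem and produces the required model structure, with weak equivalences and fibrations detected objectwise.

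The final assertion that cofibrant diagrams are objectwise cofibrant follows from the analogous argument applied to $I_\cK$ in place of $J_\cK$: each $F_k(i)(\ell)=\cK(k,\ell)\sm i$ is a wedge of copies of the cofibration $i$, hence a cofibration in $\cC$, so every relative $I_\cK$-cell complex with domain the zero diagram is objectwise cofibrant, and the property is closed under retracts. The only step requiring real care throughout is the bookkeeping with the basepoint of $\cK$, namely checking that $F_k$ takes values in based diagrams and that colimits in the category of based diagrams are computed pointwise at non-basepoint objects. Once these are verified, the entire argument reduces cleanly to the unbased template.
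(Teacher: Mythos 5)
Your proof is correct and is precisely the argument the paper has in mind: the paper offers no proof beyond citing the analogy with the unbased projective model structure of \cite[Section~11.6]{Hirschhorn_model}, and your construction of the based free functors $F_k$ via the canonically pointed hom-sets, together with the appeal to the recognition theorem, fills in exactly that analogy. The one imprecision is the claim that the domains of $I_\cK$ and $J_\cK$ are small relative to \emph{all} maps — in the topological setting the domains of $I$ and $J$ are only small relative to the relevant relative cell complexes — but the same adjointness argument, combined with your observation that relative $I_\cK$-cell complexes are objectwise (acyclic) cofibrations, yields smallness relative to $I_\cK$-cell, which is all the recognition theorem requires.
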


This proposition applies in particular to the category of based $\Gamma$-diagrams in $\cC\cS^{\cI}$, and we use the model structure to choose a cofibrant replacement of the diagram $(C_1^{\gp})^{\times}\colon k^+\mapsto (C_1^{\gp})^{\times k}$. This means that we have a cofibrant $\Gamma$-diagram $C$ together with a natural transformation $C\to (C_1^{\gp})^{\times}$ such that $C(k^+)\to 
(C_1^{\gp})^{\times k}$ is an acyclic fibration in $C\cS^{\cI}$ for 
$k\geq 0$ (hence a positive level equivalence).
By Lemma~\ref{lem:Lambda-Phi-Quillen-adjunction}, the diagram $C$ gives rise to a Quillen adjunction
\begin{equation}\label{eq:GammaS_CSI_adj} 
\Lambda = \Lambda_C \colon \Gamma^{\op}\cS_*
\rightleftarrows \cC\cS^{\cI} \colon \Phi = \Phi_C,
\end{equation} with respect to the level Q-model structure on
$\Gamma^{\op}\cS_*$, sending $\mS$ to the object $C(1^+)$. 

\begin{lemma} \label{lem:Lambda_maps_S_to_we} The left adjoint
  $\Lambda$ in \eqref{eq:GammaS_CSI_adj} sends all maps in the set
  $S$ defined in \eqref{eq:gamma_localization} to $\cI$-equivalences.
\end{lemma}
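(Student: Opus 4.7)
The plan is to compute $\Lambda$ on each map in $S$, reduce from the cofibrant replacement $C$ to the diagram $(C_1^{\gp})^{\times}$, and then conclude using Proposition~\ref{prop:boxtimes-times} together with the grouplike property of $C_1^{\gp}$. Since $\Lambda$ is a left adjoint and the coproduct of commutative monoids in $\cC\cS^{\cI}$ is given by $\boxtimes$, one has $\Lambda(\Gamma^k\vee\Gamma^l)\iso C(k^+)\boxtimes C(l^+)$. Functoriality of $\Lambda_F$ in $F$ applied to the natural transformation $C\to(C_1^{\gp})^{\times}$ then produces for each $\phi\in S$ a commutative square
\[
\xymatrix@-1pc{
C(k^+)\boxtimes C(l^+) \ar[r]^-{\Lambda(\phi)} \ar[d] & C((k+l)^+) \ar[d] \\
(C_1^{\gp})^{\times k}\boxtimes (C_1^{\gp})^{\times l} \ar[r] & (C_1^{\gp})^{\times(k+l)}
}
\]
in which the right vertical map is an $\cI$-equivalence by construction, and the left vertical map is an $\cI$-equivalence by two applications of Proposition~\ref{prop:flat-boxtimes-preserves-I-and-N-equiv}: $C(k^+)$ is flat (being cofibrant in $\cC\cS^{\cI}$ by Proposition~\ref{prop:based-projective-model}, hence flat by Proposition~\ref{prop:underlying-flat-com-I}), and $(C_1^{\gp})^{\times l}$ is flat by Proposition~\ref{prop:boxtimes-times-flat}. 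So it suffices to prove that the bottom horizontal maps are $\cI$-equivalences.

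For the first type of map, I would unpack the functoriality of $(C_1^{\gp})^{\times}$ on the projections $p_i$ (which fills the vacated coordinates with the monoid unit) and combine with the universal property of $\boxtimes$ as the coproduct of commutative monoids to identify the bottom map with the natural comparison
\[
\rho\colon (C_1^{\gp})^{\times k}\boxtimes (C_1^{\gp})^{\times l}\to (C_1^{\gp})^{\times k}\times (C_1^{\gp})^{\times l}\iso (C_1^{\gp})^{\times(k+l)}
\]
of Proposition~\ref{prop:boxtimes-times}. That proposition yields the desired $\cI$-equivalence provided both factors are flat (already noted) and semistable. To establish semistability, I would first show that a product of semistable $\cI$-spaces is semistable by combining Lemma~\ref{lem:hI-of-product} with the fact that $(X\times Y)_{h\cN}\to X_{h\cN}\times Y_{h\cN}$ is a weak equivalence (by filteredness of $\cN$), reducing the question to semistability of $C_1^{\gp}$. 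This holds because the acyclic fibration $C_1^{\gp}\twoheadrightarrow\Gamma(C_1)$ is a positive level equivalence to the positive $\cI$-fibrant object $\Gamma(C_1)$ and is hence an $\cN$-equivalence by homotopy cofinality of $\cN_{\geq1}\subset\cN$.

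For the second type of map, using that $\nabla$ induces the diagonal on $(C_1^{\gp})^{\times}$, the same unpacking identifies the bottom map as a map $C_1^{\gp}\boxtimes C_1^{\gp}\to (C_1^{\gp})^{\times 2}$ whose two coordinate components are the monoid multiplication $\mu$ and the second projection. Applying $(-)_{h\cI}$ and using Lemmas~\ref{lem:times-boxtimes} and~\ref{lem:hI-of-product} to identify both $(C_1^{\gp}\boxtimes C_1^{\gp})_{h\cI}$ and $((C_1^{\gp})^{\times 2})_{h\cI}$ with $(C_1^{\gp})_{h\cI}\times(C_1^{\gp})_{h\cI}$, the map becomes the shearing $(a,b)\mapsto(a\cdot b,b)$ on the simplicial commutative monoid $(C_1^{\gp})_{h\cI}$; this is a weak equivalence by the standard characterization of grouplike monoids, which applies because $C_1^{\gp}$ is grouplike (being $\cI$-equivalent to the group completion $\Gamma(C_1)$ of $C_1$). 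The main obstacle I anticipate is the explicit combinatorial identification of the bottom maps on the diagram $(C_1^{\gp})^{\times}$; once this bookkeeping is in place, Proposition~\ref{prop:boxtimes-times} together with the semistability and grouplike properties of $C_1^{\gp}$ finish the argument.
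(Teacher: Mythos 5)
Your proposal is correct and follows essentially the same route as the paper: the same commutative square reducing to the bottom map on cartesian powers of $C_1^{\gp}$, Proposition~\ref{prop:boxtimes-times} (via flatness and semistability of those powers) for the maps $p_1^*\vee p_2^*$, and the shearing-map/grouplike argument after applying $(-)_{h\cI}$ for $p_1^*\vee\nabla^*$. The extra details you supply on the semistability of $C_1^{\gp}$ and of products of semistable $\cI$-spaces are correct and merely make explicit what the paper compresses into citations of Proposition~\ref{semistabilityprop} and Lemma~\ref{lem:hI-of-product}.
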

\begin{proof}
It follows from the discussion at the beginning of 
Section~\ref{subs:construction-adj} 
  that $\Lambda$ maps $p_1^* \wdg p_2^* \colon \Gamma^k \wdg
  \Gamma^{l} \to \Gamma^{k+l}$ to the top horizontal map in the
  commutative square
\[\xymatrix@-1pc{
  C(k^+) \boxtimes C(l^+) \ar[r] \ar[d] &  C((k+l)^+) \ar[d] \\
  (C_1^{\gp})^{\times k} \boxtimes (C_1^{\gp})^{\times l} \ar[r] &
  (C_1^{\gp})^{\times (k+l)}.  }\] Here the right hand vertical map is
an $\cI$-equivalence as noted above.  The left hand vertical map is an
$\cI$-equivalence by
Propositions~\ref{prop:flat-boxtimes-preserves-I-and-N-equiv},
\ref{prop:boxtimes-times-flat}, and \ref{prop:underlying-flat-com-I}.
The bottom map is an instance of the map studied in
Proposition~\ref{prop:boxtimes-times}. It is an $\cI$-equivalence
because it follows from Propositions~\ref{prop:underlying-flat-com-I},
\ref{prop:boxtimes-times-flat}, and
\ref{semistabilityprop}(iii) that cartesian powers of
$C_1^{\gp}$ have underlying flat and semistable $\cI$-spaces. Hence $\Lambda$ sends $p_1^*\vee p_2^*$ to an
$\cI$-equivalence.

For the second type of map in $S$, notice that $\Lambda$ sends $p_1^* \wdg \nabla^* \colon \Gamma^1 \wdg
\Gamma^1 \to \Gamma^2$ to the map $C(1^+) \boxtimes C(1^+) \to C(2^+)$
induced by $p_1^*$ and $\nabla^*$.
As above, showing that this is an $\cI$-equivalence is equivalent to
showing that the map $C_1^{\gp} \boxtimes C_1^{\gp} \to C_1^{\gp} \times C_1^{\gp}$
induced by $p_1^*$ and $\nabla^*$ is an $\cI$-equivalence.
Applying homotopy colimits over $\cI$
and composing with the monoidal structure map and the map induced by
the diagonal $\cI \to \cI \times \cI$, we get a chain of maps
\[ (C_1^{\gp})_{h\cI} \times (C_1^{\gp})_{h\cI} \to (C_1^{\gp}\boxtimes
C_1^{\gp})_{h\cI} \to (C_1^{\gp} \times C_1^{\gp})_{h\cI} \to (C_1^{\gp})_{h\cI}
\times (C_1^{\gp})_{h\cI}. \] 
Here Lemma \ref{lem:times-boxtimes} and Lemma
\ref{lem:hI-of-product} imply that the first and the last map in the
composite are $\cI$-equivalences, so it is sufficient to show the the
composite is an $\cI$-equivalence. It follows from the proof of
Proposition~\ref{prop:boxtimes-times} that the composite is homotopic to the map given by $(x,y)\mapsto (x,\mu(x,y))$ and the claim follows because the simplicial monoid $(C_1^{\gp})_{h\cI}$ is grouplike.
\end{proof}

The last lemma and the universal property of the stable Q-model structure on $\Gamma^{\op}\cS_*$ as a left Bousfield localization (see \cite[Proposition 3.3.18]{Hirschhorn_model}) has the following consequence.
\begin{proposition}
The adjunction \eqref{eq:GammaS_CSI_adj} is a Quillen adjunction with 
respect to the stable Q-model structure on $\Gamma^{\op}\cS_*$ and the positive $\cI$-model structure on $\cC\cS^{\cI}$. \qed
\end{proposition}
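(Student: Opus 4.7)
My plan is to deduce the proposition from the universal property of left Bousfield localizations. Lemma \ref{lem:Lambda-Phi-Quillen-adjunction} already provides that $(\Lambda, \Phi)$ is a Quillen adjunction between the level Q-model structure on $\Gamma^{\op}\cS_*$ and the positive $\cI$-model structure on $\cC\cS^{\cI}$, since by construction each $C(k^+)$ is cofibrant in $\cC\cS^{\cI}$. Because the stable Q-model structure is defined as the left Bousfield localization of the level Q-model structure with respect to the set $S$ of \eqref{eq:gamma_localization}, the statement reduces to verifying that $\Lambda$ carries each map in $S$ (after cofibrant replacement, if needed) to a weak equivalence in the positive $\cI$-model structure on $\cC\cS^{\cI}$; see \cite[Theorem 3.3.20]{Hirschhorn_model}.

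The real work has already been done in Lemma \ref{lem:Lambda_maps_S_to_we}, which shows that $\Lambda$ sends each map in $S$ directly to an $\cI$-equivalence. To apply the universal property without having to invoke a derived version, I will point out that the sources and targets of the maps in $S$ are smash products of the representable $\Gamma$-spaces $\Gamma^k$, which are cofibrant in the level Q-model structure as free $\Gamma$-spaces, and that such smash products remain cofibrant in the projective (level Q) structure. Hence $\Lambda$ applied to an $S$-map is automatically its total left derived image, and Lemma \ref{lem:Lambda_maps_S_to_we} supplies exactly the condition needed. The main obstacle, if any, is the standard but slightly delicate check of this cofibrancy claim for the domains and codomains of $S$-maps; once this is in place, the conclusion is immediate.
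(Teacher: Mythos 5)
Your proof is correct and follows the same route as the paper: the paper's proof is precisely ``Lemma~\ref{lem:Lambda_maps_S_to_we} plus the universal property of the left Bousfield localization'' (citing \cite[Proposition 3.3.18]{Hirschhorn_model}), and the cofibrancy of the domains and codomains of the maps in $S$ is what makes the derived condition reduce to the underived one. One small slip: the sources of the maps in $S$ are \emph{wedges} $\Gamma^k\vee\Gamma^l$ of representables, not smash products, so their cofibrancy in the level Q-model structure is immediate (coproducts of cofibrant objects) and the ``slightly delicate check'' you worry about is not needed.
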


Composing with the canonical Quillen adjunction $\cC\cS^{\cI}\rightleftarrows \cC\cS^{\cI}_{\gp}$, we get the Quillen adjunction
\begin{equation}\label{eq:GammaS_CSIgrp_adj} 
\Lambda \colon \Gamma^{\op}\cS_*
\rightleftarrows \cC\cS^{\cI}_{\mathrm{gp}} \colon \Phi,
\end{equation}
again with respect to the stable Q-model structure on $\Gamma^{\op}\cS_*$.

\begin{remark}
If in the definition of the adjunction $(\Lambda,\Phi)$, we had 
used the positive $\cI$-fibrant replacement 
$C_1^{\textrm{$\cI$-$\mathrm{fib}$}}$ instead of $C_1^{\gp}$, the first part of the argument in Lemma~\ref{lem:Lambda_maps_S_to_we} would apply to give a Quillen adjunction between $\cC\cS^{\cI}$ with the positive 
$\cI$-model structure and $\Gamma^{\op}\cS_*$ with a different model structure in which the fibrant objects are the special (and not necessarily very special) $\Gamma$-spaces. The latter model category of $\Gamma$-spaces is  compared to $E_{\infty}$ spaces in~\cite{Santhanam_units}.
\end{remark}

\subsection{Comparison of \texorpdfstring{$\Gamma$}{Gamma}-spaces
built from commutative \texorpdfstring{$\cI$}{I}-space monoids}
Let $A$ be a fibrant object in $\cC\cS^{\cI}_{\gp}$.
Being a right Quillen functor, $\Phi$ takes $A$ to a very special 
$\Gamma$-space $\Phi(A)$ and there is a chain of weak equivalences
  \begin{equation}\label{eq:Phi-of-1plus} 
    \Phi(A)(1^+)=\Map(C(1^+),A) \xleftarrow{\sim} \Map(C_1^{\gp}, A)
    \xrightarrow{\sim} \Map(C_1, A)\iso A(\bld{1})
\end{equation} 
induced by the $\cI$-equivalence $C(1^+)\xrightarrow{\simeq} C_1^{\gp}$ and the weak equivalence $C_1\xrightarrow{\simeq} C_1^{\gp}$ in $\cC\cS^{\cI}_{\gp}$. The last isomorphism exists because the involved 
free/forgetful adjunctions are compatible with the simplicial structure.

A priori, we do not know if the $\Gamma$-space $\Phi(A)$ captures the
``correct'' infinite loop space structure associated to $A$. To see this, we shall compare $\Phi(A)$ to the 
$\Gamma$-space constructed from $A$ by the second author 
in ~\cite{Schlichtkrull_units}.  

We first review some definitions from~\cite[\S 5.2]{Schlichtkrull_units}. For
a finite based set $S$, let $\ovl{S}$ be the unbased set obtained by removing the basepoint and let $\cP(\ovl{S})$ be the category of subsets
and inclusions in $\ovl{S}$. A map $\alpha\colon S \to T$ of based finite sets induces a functor $\alpha^*\colon
\cP(\ovl{T})\to\cP(\ovl{S})$ with $\alpha^*(U) = \alpha^{-1}(U)$.  The
category $\cD(S)$ of $\ovl{S}$-indexed sum diagrams in $\cI$ is
defined to be the full subcategory of the functor category 
$\Fun(\cP(\ovl{S}),\cI)$ whose objects are functors $\theta$ that take disjoint unions to coproducts of finite sets, i.e., if $U \cap V = \emptyset$, then 
$\theta_U \to \theta_{U\cup V} \ot \theta_V$ exhibits $\theta_{U\cup V}$ as a
coproduct of finite sets. An object $\theta$ in $\cD(S)$ is determined by its
values $\theta_s$ for $s \in \ovl{S}$ and a choice of an injection
$\theta_s \to \theta_U$ whenever $s \in U$, such that the induced map
$\concat_{s \in U}\theta_s \to \theta_U$ (with any ordering of the
summands) is an isomorphism in $\cI$.

The construction of the category $\cD(S)$ is functorial in 
$\Gamma^{\op}$: A map $\alpha \colon S
\to T$ induces a functor $\alpha_* \colon \cD(S) \to \cD(T)$ with
$\alpha_*(\theta) = \theta\alpha^*$. Notice, that the restriction to one-point
subsets induces an equivalence of categories $\cD(S) \to
\cI^{\ovl{S}}$. The reason for using $\cD(S)$ instead of $\cI^{\ovl{S}}$
is that the latter is not functorial in $S$.

Next we define a functor $\mC F_S \colon \cD(S)^{\op} \to \cC\cS^{\cI}$ by
$\mC F_S(\theta) = \boxtimes_{s \in \ovl{S}} 
\mC(F_{\theta_s}^{\cI}(*)).$ This uses that $\mC(F_{\bld{n}}^{\cI}(*))$ is
contravariantly functorial with respect to the object $\bld{n}$ in $\cI$.  
A map $\alpha \colon S \to T$ in $\Gamma^{\op}$ induces a natural transformation
$M_{\alpha}\colon \mC F_T \circ \alpha_* \to \mC F_S$. To see this,
fix an object $\theta$ of $\cD(S)$ and observe that 
$(\alpha_*\theta)_t = \theta_{\alpha^{-1}(t)}$ for $t\in \ovl{T}$. It is enough to give a
map \[\mC(F_{\theta_{\alpha^{-1}(t)}}^{\cI}(*)) \to \boxtimes_{s \in
  \alpha^{-1}(t)} \mC(F_{\theta_s}^{\cI}(*))\] for every $t \in
\ovl{T}$, and to give such a map is equivalent to specifying a point in the
evaluation of the codomain at $\theta_{\alpha^{-1}}(t)$. Choosing an
ordering of the set $\alpha^{-1}(t)$, the isomorphism
$\concat_{s\in \alpha^{-1}(t)}\theta_s \to
\theta_{\alpha^{-1}(t)}$ coming from $\theta$ together with the
canonical points $\eins_{\theta_{s}}$ in $\mC
  (F_{\theta_{s}}^{\cI}(*))(\theta_{s})$ (defined as the image of 
  $\eins_{\theta_s}\in F_{\theta_s}^{\cI}(*)(\theta_s)$ under the canonical map 
$F_{\theta_s}^{\cI}(*)\to \mathbb C(F_{\theta_s}^{\cI}(*))$ of $\cI$-spaces)  
  represent the desired element in the iterated $\boxtimes$-product. It is easy to see that this is independent of the ordering and natural in $\theta$. Moreover, if $\beta \colon T \to U$ is another map in $\Gamma^{\op}$, then we have the equality $M_{\beta  \alpha} = M_{\alpha} \circ (M_{\beta} \alpha_*)$.

Now let $A$ be a commutative $\cI$-space monoid. We use the previous
construction to define a $\Gamma$-space $A_{h\cD}$ by
\[ 
A_{h\cD}\colon \Gamma^{\op}\to \cS_*,\qquad
A_{h\cD}(S) = \Map(\mC F_S, A)_{h \cD(S)}.\] The map $A_{h\cD}(S)
\to A_{h\cD}(T)$ induced by a morphism 
$\alpha \colon S \to T$ is defined by 
\[\Map(\mC F_S, A)_{h \cD(S)} \xrightarrow{M_{\alpha}^*} \Map(\mC F_T \circ
\alpha_*, A)_{h \cD(S)} \to \Map(\mC F_T, A)_{h \cD(T)}.\]
Returning to the construction in \cite{Schlichtkrull_units}, we write $A(S)$ for the $\cD(S)$-diagram
\begin{equation}\label{eq:A(S)-definition}
A(S)\colon \cD(S)\to \cS,\qquad A(S)(\theta)=\textstyle\prod_{s\in \overline S}A(\theta_s)
\end{equation}
and observe that $A(S)$ is naturally isomorphic to the $\cD(S)$-diagram 
$\Map(\mathbb CF_S,A)$. Under this isomorphism, the natural transformation $M_{\alpha}^*$ can be identified with the natural transformation 
$A(S)\to A(T)\circ \alpha_*$ defined in \cite[\S 5.2]{Schlichtkrull_units}. This implies the statement of the next lemma.

\begin{lemma}
The $\Gamma$-space $A_{h\cD}$ is canonically isomorphic to the 
$\Gamma$-space $A_{h\cI}$ introduced in~\cite[\S 5.2]{Schlichtkrull_units}. 
\qed
\end{lemma}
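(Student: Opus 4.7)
The plan is to exhibit the isomorphism $\Map(\mC F_S,A)\cong A(S)$ of $\cD(S)$-diagrams by a sequence of adjunctions, and then to verify that this isomorphism intertwines the structure maps. The starting point is the fact that in $\cC\cS^{\cI}$ the $\boxtimes$-product realizes the categorical coproduct (just as the tensor product of commutative rings is their coproduct), so for each object $\theta$ of $\cD(S)$ there is a natural isomorphism
\[
\Map\Big(\boxtimes_{s\in \ovl{S}} \mC(F_{\theta_s}^{\cI}(*)), A\Big)\cong \prod_{s\in \ovl{S}}\Map(\mC(F_{\theta_s}^{\cI}(*)), A).
\]
Combining this with the free/forgetful adjunction $\mC\colon \cS^{\cI}\rightleftarrows \cC\cS^{\cI}$ and the adjunction $F_{\bld{n}}^{\cI}\colon \cS\rightleftarrows \cS^{\cI}\colon \Ev_{\bld{n}}$, the right-hand side simplifies further to $\prod_{s\in \ovl{S}}A(\theta_s)=A(S)(\theta)$, cf.\ \eqref{eq:A(S)-definition}.

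Next I would check naturality in $\theta\in \cD(S)$. A morphism $\theta\to\theta'$ in $\cD(S)$ is a compatible collection of injections $\theta_s\to\theta_s'$, which acts contravariantly on $\mC(F_{\theta_s}^{\cI}(*))$ via the contravariant functoriality used in the definition of $\mC F_S$, and covariantly on $A(\theta_s)$ via the $\cI$-structure of $A$; under the above chain of adjunctions these correspond because the unit point $\eins_{\theta_s}\in\mC(F_{\theta_s}^{\cI}(*))(\theta_s)$ is tracked to $\eins_{\theta_s'}$ by the contravariant structure map. Taking based homotopy colimits over $\cD(S)$ then produces an isomorphism of pointed simplicial sets at each $S\in \Gamma^{\op}$.

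The remaining step, which I expect to be the main obstacle, is to identify $M_\alpha^*$ on the left with the $\Gamma$-space structure map from~\cite[\S 5.2]{Schlichtkrull_units} on the right. Unwinding the definition of $M_\alpha$, the piece contributing to a fixed $t\in \ovl{T}$ is the map $\mC(F_{\theta_{\alpha^{-1}(t)}}^{\cI}(*))\to \boxtimes_{s\in\alpha^{-1}(t)}\mC(F_{\theta_s}^{\cI}(*))$ built from the chosen isomorphism $\concat_{s\in\alpha^{-1}(t)}\theta_s\cong \theta_{\alpha^{-1}(t)}$ in $\cI$ together with the canonical unit points $\eins_{\theta_s}$. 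Applying $\Map(-,A)$ and unravelling the adjunctions gives the map that takes a tuple $(a_s)_{s\in \alpha^{-1}(t)}\in \prod_{s\in\alpha^{-1}(t)}A(\theta_s)$, multiplies the $a_s$ in $A$ using the monoid structure, and transports the result along the chosen isomorphism in $\cI$. This matches the description in~\cite[\S 5.2]{Schlichtkrull_units}, and both sides are manifestly independent of the chosen ordering because $A$ is commutative. The identification of $A_{h\cD}(S)\to A_{h\cD}(T)$ with the structure map of~\cite{Schlichtkrull_units} then follows by composing with the functor $\alpha_*\colon\cD(S)\to\cD(T)$ and passing to based homotopy colimits.
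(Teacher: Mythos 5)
Your proposal is correct and follows exactly the route the paper takes: the paper's (implicit) proof consists precisely of the observation that $\Map(\mC F_S,A)\cong A(S)$ as $\cD(S)$-diagrams, via the coproduct property of $\boxtimes$ in $\cC\cS^{\cI}$ and the free/forgetful and evaluation adjunctions, together with the identification of $M_{\alpha}^*$ with the structure map of \cite[\S 5.2]{Schlichtkrull_units}. You have simply spelled out the verifications that the paper leaves to the reader.
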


The point of defining $A_{h\cD}$ in terms of the mapping spaces 
$\Map(\mathbb C F_S,A)$ is to facilitate the comparison to $\Phi(A)$ in the next proposition.

\begin{proposition}\label{prop:comparison-gamma-spaces}
  Let $A$ be a commutative $\cI$-space monoid and suppose that $A$ is grouplike and positive $\cI$-fibrant. Then there is a zig-zag chain of level equivalences  between $\Gamma$-spaces relating $A_{h\cD}$ and $\Phi(A)$.
\end{proposition}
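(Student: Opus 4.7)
The plan is to produce a natural zig-zag of level equivalences by comparing both $\Gamma$-spaces against a common intermediate and using that $A$ is very special on both sides. The basic strategy is that, under the standing assumptions on $A$, both $\Phi(A)$ and $A_{h\cD}$ are very special $\Gamma$-spaces whose value at $1^+$ is naturally weakly equivalent to $A(\bld 1)$, and the bookkeeping reduces to producing compatible comparison maps in a $\Gamma^{\op}$-natural way.

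First I would show both sides are very special with level-$1^+$ value $A(\bld 1)$. For $\Phi(A)$, the right Quillen functor in \eqref{eq:GammaS_CSIgrp_adj} sends the fibrant object $A$ of $\cC\cS^{\cI}_{\mathrm{gp}}$ to a fibrant $\Gamma$-space, hence to a very special one; the weak equivalence $\Phi(A)(1^+)\simeq A(\bld 1)$ is exactly \eqref{eq:Phi-of-1plus}. For $A_{h\cD}$ the identification $\cD(1^+)\simeq\cI$ and Lemma~\ref{lem:hI-of-homotopy-constant} yield $A_{h\cD}(1^+)=A_{h\cI}\simeq A(\bld 1)$, while the very-special property is the content of the main results of \cite{Schlichtkrull_units}.

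Next I would assemble the zig-zag using the intermediate $\Gamma$-space $S\mapsto \Map(C_1^{\boxtimes \overline S},A)\iso A(\bld 1)^{\times \overline S}$, which is $\Gamma^{\op}$-functorial through the fold and projection maps on the cocartesian structure of $C_1$ in $\cC\cS^{\cI}$. On the $\Phi$-side I would exploit the chain $C_1^{\boxtimes \overline S}\to (C_1^{\mathrm{gp}})^{\boxtimes \overline S}\to (C_1^{\mathrm{gp}})^{\times \overline S}\xleftarrow{\sim} C(S)$: applying $\Map(-,A)$ gives a zig-zag of weak equivalences since (i) $C_1\to C_1^{\mathrm{gp}}$ is a weak equivalence in $\cC\cS^{\cI}_{\mathrm{gp}}$ and $A$ is fibrant there, (ii) Proposition~\ref{prop:boxtimes-times} together with the flatness and semistability of cartesian powers of $C_1^{\mathrm{gp}}$ (via Propositions~\ref{prop:underlying-flat-com-I}, \ref{prop:boxtimes-times-flat} and \ref{semistabilityprop}) shows the map $(C_1^{\mathrm{gp}})^{\boxtimes \overline S}\to(C_1^{\mathrm{gp}})^{\times \overline S}$ is an $\cI$-equivalence between flat objects, and (iii) $C(S)\to (C_1^{\mathrm{gp}})^{\times \overline S}$ is an acyclic fibration from a cofibrant object. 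On the $A_{h\cD}$ side I would evaluate at the distinguished object $\theta_0\in\cD(S)$ with $\theta_{0,s}=\bld 1$, giving $\Map(\mathbb{C}F_S(\theta_0),A)=A(\bld 1)^{\times \overline S}$ together with the canonical map into the homotopy colimit; this is a weak equivalence because the inclusion of the full subcategory $\cD^+(S)\subset\cD(S)$ of $\theta$ with all $\theta_s\geq\bld 1$ is homotopy cofinal (via the equivalence $\cD(S)\simeq\cI^{\overline S}$ and cofinality of $\cI_{\geq 1}\hookrightarrow\cI$), and on $\cD^+(S)$ the positive $\cI$-fibrancy of $A$ forces $\Map(\mathbb{C}F_S,A)$ to be a diagram of weak equivalences with contractible classifying space.

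The main obstacle will be making these comparisons genuinely natural in $S\in\Gamma^{\op}$, since the picks of $\theta_0^S$ and of the cofibrant replacement $C(S)$ need to be coordinated with the structure maps $\alpha_*$ on $\cD(S)$ and the natural transformations $M_\alpha$ on $\mathbb{C}F$. I expect to handle this either by replacing the ad hoc intermediate with a $\Gamma^{\op}$-diagram of homotopy colimits (so that all functoriality is automatic), or by invoking the bi-$\Gamma$-space machinery of Appendix~\ref{app:bi-Gamma}, which encodes both constructions simultaneously and delivers the required natural zig-zag once each leg is shown to be a level equivalence as above.
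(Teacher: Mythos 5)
The central object of your argument --- the ``intermediate $\Gamma$-space'' $S\mapsto\Map(C_1^{\boxtimes\overline{S}},A)\cong A(\bld 1)^{\times\overline{S}}$ --- is not a $\Gamma$-space, and this is not a bookkeeping issue to be deferred but the entire difficulty of the proposition. For the fold map $\nabla\colon 2^+\to 1^+$ the required structure map would be $A(\bld 1)\times A(\bld 1)\to A(\bld 1)$, i.e.\ a strict multiplication on $A(\bld 1)$; but the multiplication of $A$ lands in $A(\bld 2)$. Equivalently, since $\boxtimes$ is the coproduct in $\cC\cS^{\cI}$, the assignment $S\mapsto C_1^{\boxtimes\overline{S}}$ is \emph{covariant} on $\Gamma^{\op}$ (via codiagonals and unit maps), so $\Map(C_1^{\boxtimes\overline{(-)}},A)$ has the wrong variance, and the contravariant functoriality you would need fails because there is no diagonal $C_1\to C_1\boxtimes C_1$. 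The same defect afflicts the node $\Map(\mC F_S(\theta_0),A)$ on the $A_{h\cD}$ side: a choice of $\theta_0^S$ with all $\theta_{0,s}=\bld 1$ cannot be made natural in $S$, since $\alpha_*\theta_0^S$ has entries of cardinality greater than one whenever $\alpha$ is not injective. Your level-by-level identifications of both sides with $A(\bld 1)^{\times\overline{S}}$ are correct for each fixed $S$, but they do not assemble into maps of $\Gamma$-spaces, and what the proposition asserts is precisely the existence of a natural zig-zag.

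The paper's proof is devoted entirely to circumventing this obstruction. It forms the Grothendieck construction $\Gamma^{\op}\!\int\cD(-)$, replaces the diagram $\mC F$ fibrantly in the group-completion model structure (this is where grouplikeness of $A$ enters, to make the resulting $\Gamma$-spaces special), adjoins a second $\Gamma$-direction by cartesian powers, and cofibrantly replaces the whole bi-diagram to obtain $\overline{C}$; the bi-$\Gamma$-space $X(S,k^+)=\Map(\overline{C}(S,-,k^+),A)_{h\cD(S)}$ then restricts along the two axes to objects level-equivalent to $A_{h\cD}$ and $\Phi(A)$, and Proposition~\ref{prop:equivalent-Gamma-spaces} supplies the zig-zag --- not through a common value, but through the associated bi-$\Omega$-spectrum and the symmetric stabilization $X_2$. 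Your closing sentence correctly names this machinery as a fallback, but constructing the bi-$\Gamma$-space that encodes both constructions simultaneously \emph{is} the proof, and it is absent from the proposal. (A secondary point: in your chain on the $\Phi$-side, $\Map((C_1^{\gp})^{\times\overline{S}},A)$ involves mapping out of a non-cofibrant object, so neither leg touching it is automatically a weak equivalence; this is exactly why the paper routes everything through the cofibrant replacement $C(S)$ rather than through $(C_1^{\gp})^{\times\overline{S}}$ itself.)
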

\begin{proof}
  Viewing $\cD(S)$ as the value of a functor $\cD(-)\colon
  \Gamma^{\op} \to \textrm{Cat}$, we form the Grothendieck
  construction $\Gamma^{\op}\int \cD(-)$, 
  see \cite{Thomason_homotopy-colimt}. The objects of this category are pairs $(S,\theta)$ given by an object $S$ in 
$\Gamma^{\op}$ and an object $\theta$ in $\cD(S)$.  
A  morphism $(\alpha,f)\colon (S,\theta) \to (T,\omega)$ is given by a pair of morphisms $\alpha \colon S \to T$ in $\Gamma^{\op}$ and $f\colon
  \alpha_*(\theta) \to \omega$ in $\cD(T)$. The functors $\mC F_S$
  considered above assemble to a functor $\mC F\colon (\Gamma^{\op}
  {\textstyle \int} \cD(-))^{\op} \to \cC\cS^{\cI}$ that sends
  $(S,\theta)$ to $\mC F_S(\theta)$ and $(\alpha,f)$ to
  \[
  \mC F_T(\omega)\xrightarrow{f_*} \mC F_T(\alpha_*(\theta))
  \xrightarrow{M_{\alpha}}\mC F_S(\theta).
  \]
Using Proposition \ref{prop:based-projective-model}, we choose a fibrant replacement $\mathbb CF^{\gp}$ of this diagram in the projective model structure on $(\Gamma^{\op}{\textstyle \int} \cD(-))^{\op}$-diagrams inherited from the group completion model structure $\cC\cS_{\gp}^{\cI}$. Thus, we have a map of diagrams $\mathbb C F\to \mathbb C F^{\gp}$, such that 
$\mathbb C F^{\gp}_S(\theta)$ is fibrant and $\mC F_S(\theta)\to  \mC F_S^{\gp}(\theta)$ is an acyclic cofibration in $\cC\cS_{\gp}^{\cI}$ for all objects $(S,\theta)$.  We create a second $\Gamma$-space direction by defining
\[
  (\mC F^{\gp})^{\times} \colon (\Gamma^{\op} {\textstyle \int} \cD(-))^{\op}
  \times \Gamma\to \cC \cS^{\cI} \quad \textrm{where} \quad 
  (\mC F^{\gp})^{\times}(S,\theta,k^+) = \left(\mC F_S^{\gp}(\theta)\right)^{\times k}.
\]
This is a based diagram in each variable in the sense that for a fixed object 
$(S,\theta)$ in $(\Gamma^{\op} {\textstyle \int} \cD(-))^{\op}$ it defines a based $\Gamma$-diagram in $\cC\cS^{\cI}$ and for a fixed object $k^+$ in $\Gamma$ it defines a based $(\Gamma^{\op} {\textstyle \int} \cD(-))^{\op}$-diagram in 
$\cC\cS^{\cI}$. Since $(\mC F^{\gp})^{\times}$ does not take values in cofibrant commutative $\cI$-space monoids, we need to replace it by a diagram with cofibrant values while maintaining the property of being based in each variable. For this purpose we apply Proposition~\ref{prop:based-projective-model} with 
$\cC$ the category of based $\Gamma$-diagrams in $\cC\cS^{\cI}$. By adjointness, we may view 
$(\mC F^{\gp})^{\times}$ as a based $(\Gamma^{\op} {\textstyle \int} \cD(-))^{\op}$-diagram in $\cC$ and choosing a cofibrant replacement 
$\overline C\to (\mC F^{\gp})^{\times}$ we get a $(\Gamma^{\op} {\textstyle \int} \cD(-))^{\op}\times \Gamma$ diagram $\overline C$ with the required properties. 

  Now we are in a position to apply the results about
  bi-$\Gamma$-spaces from Appendix~\ref{app:bi-Gamma}. The commutative
  $\cI$-space monoid $A$ gives rise to a bi-$\Gamma$-space $X$ with
  \[ X(S,k^+) = \Map(\ovl{C}(S,-,k^+),A)_{h\cD(S)}.\] 
  Fixing the second variable $k^+=1^+$, the chain of weak equivalences 
 \[
 \Map(\ovl{C}(S,-,1^+),A)_{h\cD(S)}\xleftarrow{\simeq}
 \Map(\mC F_S^{\gp},A)_{h\cD(S)}\xrightarrow{\simeq}
 \Map(\mC F_S,A)_{h\cD(S)}
 \]
 defines a chain of level equivalences relating $X(-,1^+)$ and
 $A_{h\cD}$.  Next we fix the first variable $S=1^+$. An argument
 similar to that used in Example~\ref{ex:free_comm_ispace-monoid}
 shows that the $\cI^{\op}$-diagram $\bld n\mapsto \mC(F_{\bld
   n}^{\cI}(*))$ is a diagram of $\cI$-equivalences in positive
 degrees. Identifying $\cD(1^+)$ with $\cI$, this implies that
 $\ovl{C}(1^+,-,k^+)$ is a diagram of $\cI$-equivalences in positive
 degrees. Using that the cofibrant replacement $\ovl{C}$ is an
 objectwise cofibrant diagram, we obtain weak equivalences
 \[
 \Map(\ovl{C}(1^+,-,k^+),A)_{h\cI}\xleftarrow{\simeq} 
 \Map(\ovl{C}(1^+,\bld 1,k^+),A)\xrightarrow{\simeq}
 \Map(C(k^+),A),
 \]
where the first map is induced by the inclusion of $\{\bld 1\}$ in $\cI$, and the $\Gamma$-diagram $C$ is as in Section~\ref{subs:construction-adj}. These maps define a chain of level equivalences relating $X(1^+,-)$ and $\Phi(A)$.
Finally, having replaced $\mathbb CF$ by $\mathbb CF^{\mathrm{gp}}$ in the first step of the proof, we can use Proposition~\ref{prop:boxtimes-times}
(as in the proof of Lemma~\ref{lem:Lambda_maps_S_to_we}) and
\cite[Proposition 5.3]{Schlichtkrull_units} to show that $X$ is bi-special. Since $X(-,1^+)$ is very special by~\cite[Proposition 5.3]{Schlichtkrull_units}, we conclude from Lemma~\ref{lem:two-monoid-structures} that $X$ is bi-very special. Hence Proposition \ref{prop:equivalent-Gamma-spaces} provides a
zig-zag chain of level equivalences between $X(-,1^+)$ and $X(1^+,-)$.
\end{proof}

Anticipating the definition of the units model structure in Section~\ref{sec:un-csi}, we deduce from Proposition~\ref{prop:comparison-gamma-spaces} the following consistency result for the definition of the spectrum of units associated to a commutative symmetric ring spectrum. 
\begin{corollary}
  Let $R$ be a commutative symmetric ring spectrum. The $\Gamma$-space
  of units $\gl_1(R)$ defined in \eqref{eq:gl_1-def} is stably
  equivalent to the $\Gamma$-space of units associated to $R$ 
  in~\cite[Proposition 5.5]{Schlichtkrull_units}.  \qed
\end{corollary}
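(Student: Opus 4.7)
The plan is to unwind the definition \eqref{eq:gl_1-def} of $\gl_1(R)$ through the chain of derived functors in \eqref{diag:all-adjunctions}, identify the ultimate input to $\Phi^{\mR}$ as a suitable bifibrant model of $\GL_1(R)$, and then apply Proposition~\ref{prop:comparison-gamma-spaces} together with the preceding lemma comparing $A_{h\cD}$ with the $\Gamma$-space $A_{h\cI}$ of~\cite[\S 5.2]{Schlichtkrull_units}.

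First I would track the four derived functors one at a time. Applying $\Omega^{\cI}_{\mR}$ to $R$ yields $\Omega^{\cI}(R^{\mathrm{fib}})$ for a positive fibrant replacement $R \to R^{\mathrm{fib}}$ in $\cC\Spsym$; set $A := \Omega^{\cI}(R^{\mathrm{fib}})$. Since $R_{\mathrm{un}}$ is the identity functor and $A$ is already positive $\cI$-fibrant, $R_{\mathrm{un}}^{\mR}$ just regards $A$ as an object of $\cC\cS^{\cI}_{\mathrm{un}}$. Next, $L_{\mathrm{un/gp}}^{\mL}$ requires a cofibrant replacement in $\cC\cS^{\cI}_{\mathrm{un}}$, and by Theorem~\ref{thm:intro-units} such a replacement is canonically $\cI$-equivalent to $A^{\times} = \GL_1(R)$. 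Finally, $\Phi^{\mR}$ calls for a fibrant replacement in $\cC\cS^{\cI}_{\mathrm{gp}}$; because $\GL_1(R)$ is grouplike by construction, Lemma~\ref{lem:gp-fibrant} says this is just an ordinary positive $\cI$-fibrant replacement. Calling the resulting object $\widetilde{A}$, we obtain $\gl_1(R) \simeq \Phi(\widetilde{A})$ in $\Ho(\Gamma^{\op}\cS_*)$, where $\widetilde{A}$ is positive $\cI$-fibrant, grouplike, and $\cI$-equivalent to $\GL_1(R)$.

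Now Proposition~\ref{prop:comparison-gamma-spaces} applies to $\widetilde{A}$ and produces a zig-zag of level equivalences $\Phi(\widetilde{A}) \simeq \widetilde{A}_{h\cD}$. Combined with the homotopy invariance of $(-)_{h\cD}$ under $\cI$-equivalences of positive $\cI$-fibrant objects (essentially the content of~\cite[Proposition 5.3]{Schlichtkrull_units}, transported via the canonical identification $A_{h\cD} \cong A_{h\cI}$), this yields a further chain of level equivalences $\widetilde{A}_{h\cD} \simeq \GL_1(R)_{h\cD}$. Invoking the preceding lemma once more to interpret the right-hand side as the $\Gamma$-space of units of~\cite[Proposition 5.5]{Schlichtkrull_units}, we arrive at the required stable equivalence.

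The main obstacle I anticipate is the first paragraph: namely, verifying that the derived-functor calculus really does terminate at an object satisfying the hypotheses of Proposition~\ref{prop:comparison-gamma-spaces} and naturally $\cI$-equivalent to $\GL_1(R)$ itself. This requires inspecting the characterization of cofibrant objects in $\cC\cS^{\cI}_{\mathrm{un}}$ given in Theorem~\ref{thm:intro-units} and of fibrant objects in $\cC\cS^{\cI}_{\mathrm{gp}}$ given by Lemma~\ref{lem:gp-fibrant} carefully enough so that the intermediate replacements can be chosen compatibly, and checking that the homotopy invariance of the $\cD(S)$-construction legitimately permits the final passage from $\widetilde{A}$ back to $\GL_1(R)$. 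Once this bookkeeping is set up, the corollary reduces cleanly to Proposition~\ref{prop:comparison-gamma-spaces} and the identification $A_{h\cD} \cong A_{h\cI}$.
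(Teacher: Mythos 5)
Your argument is correct and is exactly what the paper intends: the corollary is stated with a bare \qed as an immediate consequence of Proposition~\ref{prop:comparison-gamma-spaces} and the preceding lemma identifying $A_{h\cD}$ with the $\Gamma$-space of~\cite[\S 5.2]{Schlichtkrull_units}, and your unwinding of the derived functors in \eqref{eq:gl_1-def} supplies precisely the bookkeeping the authors leave implicit. The one point worth stating explicitly in your last step is that a weak equivalence in $\cC\cS^{\cI}_{\mathrm{gp}}$ between grouplike objects is already an $\cI$-equivalence, which is why the $\cC\cS^{\cI}_{\mathrm{gp}}$-fibrant replacement of the (grouplike) cofibrant replacement stays $\cI$-equivalent to $\GL_1(R)$.
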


\begin{remark}
The notion of units for a commutative symmetric ring spectra considered here has been compared by Lind~\cite{Lind-diagram} to the corresponding notion of units for commutative $S$-algebras in the sense 
of~\cite{EKMM}.  
\end{remark}

Finally, we discuss another, conceptually simpler, way to associate a
$\Gamma$-space to a commutative $\cI$-space monoid $A$. For this we
view finite based sets as discrete based spaces. Via the resulting
functor $\Gamma^{\op}\to\cS_*$, the tensor defines a $\Gamma$-object
$S\mapsto A\otimes S$ in $\cC\cS^{\cI}$. We define a $\Gamma$-space
$A_{\Gamma}\colon\Gamma^{\op}\to \cS_*$ by evaluating the based
homotopy colimit over $\cI$ levelwise: 
$A_{\Gamma}(S)=(A\otimes S)_{h^*\cI}$. Here $(-)_{h^*\cI}$ denotes the based homotopy colimit as in Section~\ref{subsec:loop-functor}. It is necessary to pass to the based homotopy colimit in order for $A_{\Gamma}$ to be a based diagram. Notice, that $A_{\Gamma}(k^+)$ can be identified with $(A^{\boxtimes k})_{h^*\cI}$.   

We wish to compare $A_{\Gamma}$ with the $\Gamma$-space $A_{h\cD}$ considered above and for this purpose we introduce an auxiliary 
$\Gamma$-space $A_{h\mathcal E}$. The definition of the latter uses a modified version $\cE(S)$ of the categories $\cD(S)$ used in the definition of 
$A_{h\cD}$. The objects of $\mathcal E(S)$ are pairs $(\bld{m},M)$
consisting of an object $\bld{m}$ of $\cI$ and a functor $M\colon
\cP(\ovl{S}) \to (\cI\!\downarrow\!\bld{m})$ such that the underlying
functor $P_S(\bld{m},M)\colon \cP(\ovl{S}) \to \cI$, obtained by
composing with the projection $(\cI\!\downarrow\!\bld{m})\to\cI$ that
forgets the augmentation, is an object of $\cD(S)$. A morphism
$(\bld{m},M)\to(\bld{n},N)$ is an injective map
$\kappa\colon\bld{m}\to\bld{n}$ together with a natural transformation
$\kappa_*\circ M \to N$ of functors $\cP(\ovl{S}) \to
(\cI\!\downarrow\!  \bld{n})$. Notice that a choice of ordering
$\{s_1,\dots,s_k\}$ of $\ovl{S}$ determines an equivalence of
categories between $\mathcal E(S)$ and the comma category
$(\concat^{k}\! \downarrow \!\cI)$. As in the case of the $\cD(S)$, a
map $\alpha\colon S \to T$ in $\Gamma^{\op}$ induces a functor 
$\alpha^*\colon\cP(\ovl{T}) \to \cP(\ovl{S})$ and hence a functor 
$\alpha_*\colon\mathcal E(S) \to \mathcal E(T)$ sending 
$(\bld{m},M)$ to $(\bld{m},M \circ \alpha^*)$. Writing $P_S\colon \cE(S)\to
\cD(S)$ for the functor introduced above, we have the equality 
$\alpha_*P_S=P_T\alpha_*$.

Let again $A$ be a commutative $\cI$-space monoid and let us view the 
$\cD(S)$-diagram $A(S)$ in \eqref{eq:A(S)-definition} as a diagram of based spaces with base points specified by the unit of $A$. Hence we can form the 
$\Gamma$-space $A_{h^*\cD}$ defined by the based homotopy colimits 
$A_{h^*\cD}(S)=A(S)_{h^*\cD(S)}$. The canonical map of $\Gamma$-spaces 
$A_{h\cD}\to A_{h^*\cD}$ is a levelwise equivalence since the categories $\cD(S)$ have contractible classifying spaces. Using the same notation $A(S)$ for the $\cE(S)$-diagram obtained by composing with $P_S$, we similarly define a $\Gamma$-space $A_{h^*\cE}$ with $A_{h^*\cE}(S)=A(S)_{h^*\cE(S)}$. As for $A_{h^*\cD}$, the structure map 
induced by a morphism $\alpha\colon S \to T$ in $\Gamma^{\op}$ is defined by 
\[ A_{h^* \mathcal E}(S)=A(S)_{h^*\mathcal E(S)} \to
(A(T)\circ\alpha_*)_{h^*\mathcal E(S)} \to A(T)_{h^*\mathcal E(T)}= 
A_{h^* \mathcal E}(T).\]

\begin{lemma}\label{lem:E-D-equivalence}
  The functors $P_S$ induce a natural level equivalence of
  $\Gamma$-spaces $A_{h^*\mathcal E} \to A_{h^*\mathcal D}$.
\end{lemma}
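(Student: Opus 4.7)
The plan is to prove the lemma by combining naturality in $S$ with a homotopy cofinality argument. The map of $\Gamma$-spaces will be constructed levelwise from $P_S$, and naturality in $\Gamma^{\op}$ follows immediately from the equality $\alpha_*\circ P_S=P_T\circ \alpha_*$ recorded just before the lemma; this ensures commutativity of the squares
\[
\xymatrix@-1pc{
A(S)_{h^*\cE(S)}\ar[r]\ar[d] & A(T)_{h^*\cE(T)}\ar[d]\\
A(S)_{h^*\cD(S)}\ar[r] & A(T)_{h^*\cD(T)}
}
\]
for each $\alpha\colon S\to T$ in $\Gamma^{\op}$.

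To verify that each level map is a weak equivalence, I will first reduce from based to unbased homotopy colimits. Both $\cD(S)$ and $\cE(S)$ have initial objects: under the identification $\cD(S)\simeq \cI^{\ovl S}$ the initial object is $(\bld 0,\dots,\bld 0)$, while the chosen ordering identifies $\cE(S)$ with $(\sqcup^k\!\downarrow \cI)$ in which $(\bld 0,\dots,\bld 0,\mathrm{id}\colon\bld 0\to \bld 0)$ is initial. Consequently $B\cD(S)$ and $B\cE(S)$ are contractible, so the canonical projections from the unbased to the based homotopy colimits are weak equivalences. It therefore suffices to show that $P_S$ induces a weak equivalence on unbased homotopy colimits of the diagrams $A(S)$.

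By the Bousfield--Kan cofinality theorem, this reduces to showing that the overcategory $(\theta\!\downarrow\!P_S)$ has contractible nerve for every $\theta\in\cD(S)$. The key step is to exhibit an initial object in this overcategory: the \emph{tautological lift} of $\theta$, given by $\bld m=\theta(\ovl S)$ and $M(U)=(\theta(U),\iota_U\colon \theta(U)\to \theta(\ovl S))$, where $\iota_U$ is the canonical inclusion arising from applying $\theta$ to $U\subseteq \ovl S$. Since $P_S(\bld m,M)=\theta$, this object comes equipped with $\mathrm{id}_\theta$ as its structure map in $\cD(S)$. Given any other object $((\bld m',N),\phi\colon \theta\to P_S(\bld m',N))$, the sum-diagram condition on $N$ forces any natural transformation $\nu\colon \kappa_*M\to N$ to be determined by its values on singletons; compatibility with $\phi$ pins these down as $\nu_{\{s\}}=\phi_s$, and compatibility with the augmentations on the full set $\ovl S$ then uniquely determines $\kappa$ as $f_{N(\ovl S)}\circ(\sqcup_{s}\phi_s)$. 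This proves initiality.

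The main obstacle is tracking this uniqueness argument carefully — specifically, ensuring that both the natural transformation component and the augmentation-covering map $\kappa$ are pinned down by the data of $\phi$ alone — but the explicit description of the tautological lift makes this essentially automatic, and no input beyond the sum-diagram axiom is needed.
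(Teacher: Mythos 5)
Your proposal is correct and follows essentially the same route as the paper: naturality from the relation $\alpha_*P_S=P_T\alpha_*$, and the weak equivalence via homotopy (right) cofinality of $P_S$, established by exhibiting an initial object in each comma category $(\theta\downarrow P_S)$ — your ``tautological lift'' is exactly the paper's initial object, written in $\cE(S)$ rather than transported through the equivalence with $(\concat^k\downarrow\cI)$. Your explicit reduction from based to unbased homotopy colimits via contractibility of $B\cD(S)$ and $B\cE(S)$ is a harmless (and slightly more careful) elaboration of a step the paper leaves implicit.
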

\begin{proof}
  The map $A(S)_{h^*\mathcal E(S)} \to A(S)_{h^*\cD(S)}$ induced by $P_S$
  is natural in $S$. To see that it is a weak equivalence, we check that $P_S$ is
  \emph{homotopy right cofinal} in the sense of 
  \cite[Theorem~19.6.13]{Hirschhorn_model}.
  By the above equivalences of categories, it is enough to prove this for the 
  projection  $P_k\colon (\concat^{k} \!\downarrow\! \cI) \to \cI^{k}$. 
  Given an object $(\bld{n_1},\dots,\bld{n_k})$ in $\cI^k$, the conclusion now follows from the fact that $((\bld{n_1},\dots,\bld{n_k})\!\downarrow\! P_k )$ has
  an initial object and therefore a contractible classifying space.
\end{proof}
The point in defining $A_{h*\mathcal E}$ is that it admits a map of
$\Gamma$-spaces to $A_{\Gamma}$. For this we let $\varepsilon_S \colon
\mathcal E(S)\to\cI$ be the functor sending $(\bld{m},M)$ to
$\bld{m}$ and claim that there is a natural transformation $A(S) \to
\varepsilon_S^*(A\tensor S)$
of functors $\mathcal E(S)\to\cS$. Indeed, writing $P_S(\bld m,M)=\theta$, we define such a natural transformation by mapping an element $\{a_s\in A(\theta_s)\}_{s\in \overline S}$ in $A(S)$ to the element of $(A\otimes S)(\bld m)$ specified by the 
$\overline S$-tuple of objects $\{\theta_s\}_{s\in \overline S}$ in $\cI$, the morphism 
$\concat_{s\in\overline S}\theta_s\to \theta_{\overline S}\to \bld m$,  and 
the $\overline S$-tuple of elements $\{a_s\}_{s\in\overline S}$. Here the map 
$\theta_{\overline S}\to \bld m$ is part of the structure defining $(\bld m,M)$ and the resulting element of $(A\otimes S)(\bld m)$ does not depend on the ordering of 
$\overline S$ used to define $\concat_{s\in \overline S}\theta_s$. On homotopy colimits we obtain a map
\[ A_{h^*\mathcal E}(S) \iso A(S)_{h^*\mathcal E(S)} \to
\left(\varepsilon_S^*(A\tensor S)\right)_{h^*\mathcal E(S)} \to
(A\tensor S)_{h^*\cI}.
\]
\begin{lemma}
  The above map induces a natural map of $\Gamma$-spaces
  $A_{h^*\mathcal E} \to A_{\Gamma}$. If the underlying $\cI$-space of $A$ is flat, 
  then the $\Gamma$-space $A_{\Gamma}$ is special and 
  $A_{h^*\mathcal E} \to A_{\Gamma}$ is a level equivalence.
\end{lemma}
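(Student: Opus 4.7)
The plan is to prove naturality first, and then analyze the map levelwise, exploiting the flatness hypothesis on $A$ to identify both $A_{h^*\cE}(k^+)$ and $A_\Gamma(k^+)$ with the $k$-fold product $(A_{h\cI})^k$.

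For naturality, let $\alpha\colon S\to T$ be a morphism in $\Gamma^{\op}$. The task is to compare two composites $A(S)_{h^*\cE(S)}\to A_\Gamma(T)$: one going through $A_\Gamma(S)$ via the tensor structure map $A\otimes\alpha$, and the other going through $A(T)_{h^*\cE(T)}$ via the functor $\alpha_*\colon \cE(S)\to\cE(T)$ and the natural transformation $A(S)\to A(T)\circ \alpha_*$. Unwinding the definitions, both composites send a representative $\{a_s\in A(\theta_s)\}_{s\in\overline{S}}$ at an object $(\bld m,M)$ to the same element of the colimit defining $(A\otimes T)(\bld m)$, namely the class of the $\overline T$-tuple obtained by applying the multiplication in $A$ along $\concat_{s\in\alpha^{-1}(t)}\theta_s\to\theta_{\alpha^{-1}(t)}$ for each $t\in\overline T$, together with the induced map $\concat_{t}\theta_{\alpha^{-1}(t)}\to \theta_{\overline T}\to \bld m$. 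This is essentially the same bookkeeping calculation that produced the natural transformation $M_\alpha$ in the previous discussion.

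For the second statement, assume $A$ is flat. The identification $A\otimes k^+\cong A^{\boxtimes k}$, reflecting the fact that $A\otimes k^+$ is the $k$-fold coproduct of $A$ in the based category $\cC\cS^{\cI}$, gives $A_\Gamma(k^+)\cong (A^{\boxtimes k})_{h^*\cI}$. Iterating Corollary~\ref{cor:flat-boxtimes-times} and using Proposition~\ref{prop:boxtimes-times-flat} at each stage to preserve flatness produces a natural weak equivalence $(A^{\boxtimes k})_{h\cI}\simeq (A_{h\cI})^k$. Under this identification the Segal map $A_\Gamma(k^+\vee l^+)\to A_\Gamma(k^+)\times A_\Gamma(l^+)$ becomes the canonical isomorphism $(A_{h\cI})^{k+l}\cong (A_{h\cI})^{k}\times (A_{h\cI})^{l}$, so $A_\Gamma$ is special.

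To identify the $\cE$-side, choose an ordering of $\overline{k^+}$ so that $\cE(k^+)$ becomes equivalent to the comma category $(\concat^k\downarrow\cI)$, under which the diagram $A(k^+)$ corresponds to $((\bld{n_i}),\concat_i\bld{n_i}\to\bld m)\mapsto \prod_i A(\bld{n_i})$. The projection $p\colon (\concat^k\downarrow\cI)\to\cI^k$ is a Grothendieck opfibration whose fibers $(\concat_i\bld{n_i}\downarrow\cI)$ each have an initial object and hence contractible classifying spaces; consequently $p$ is homotopy cofinal. Since $A(k^+)$ factors through $p$ as an external product, a Fubini argument gives $A_{h^*\cE}(k^+)\simeq A(k^+)_{h\cE(k^+)}\simeq (A_{h\cI})^k$. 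A direct check on representatives, using the defining formula for the natural transformation $A(S)\to \varepsilon_S^*(A\otimes S)$, verifies that the comparison map $A_{h^*\cE}(k^+)\to A_\Gamma(k^+)$ corresponds to the identity on $(A_{h\cI})^k$ under these identifications, establishing the level equivalence. The main technical input is the homotopy cofinality of $p$, and the main bookkeeping task is tracking representatives through the zigzag identifications of both sides with $(A_{h\cI})^k$.
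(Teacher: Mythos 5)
Your proof is correct and rests on the same two inputs as the paper's: the homotopy cofinality of the projection $(\concat^{k}\!\downarrow\!\cI)\to\cI^{k}$ (the content of Lemma~\ref{lem:E-D-equivalence}) and Corollary~\ref{cor:flat-boxtimes-times} for the specialness of $A_{\Gamma}$. The organizational difference lies in the level-equivalence step: the paper first notes that $A_{h^*\cE}$ is special (via the level equivalence with $A_{h^*\cD}$ and the specialness of $A_{h\cD}$ quoted from \cite[Proposition 5.3]{Schlichtkrull_units}) and that $A_{\Gamma}$ is special, so that it suffices to check the map at $1^+$, where the cofinality argument applies verbatim; you instead identify both sides with $(A_{h\cI})^{k}$ at every level and verify compatibility there. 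Your route avoids the external citation at the cost of a longer diagram chase at general $k$. Two small imprecisions, neither fatal: first, the projection $(\concat^{k}\!\downarrow\!\cI)\to\cI^{k}$ is not a Grothendieck opfibration ($\cI$ does not have the required pushouts of injections), but what you actually need --- that the comma categories $((\bld{n_1},\dots,\bld{n_k})\downarrow P_k)$ have initial objects and hence contractible nerves --- is exactly the paper's cofinality argument and does hold. Second, the triangle comparing your two identifications with $(A_{h\cI})^{k}$ does not commute on representatives on the nose: the cofinality map records $a_i$ in $A(\theta_i)$, while the composite through $A_{\Gamma}(k^+)$ and the projections records its image in $A(\bld{m})$ under $\theta_i\to\concat_s\theta_s\to\bld{m}$, so the two agree only up to the homotopy furnished by these structure maps of the homotopy colimit; ``corresponds to the identity'' should be read as ``commutes up to a natural homotopy,'' which is routine to supply and suffices for the two-out-of-three argument.
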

\begin{proof}
Unraveling the definitions, we see that a morphism $\alpha\colon S\to T$ in $\Gamma^{\op}$ gives rise to the commutative diagram
\[\xymatrix@-1pc{A(S)\ar[r] \ar[d] & \varepsilon^*_S(A\tensor S) \ar[d] \\
  \alpha^*(A(T)) \ar[r] & \alpha^*(\varepsilon^*_T(A\tensor T))}\]
of $\mathcal E(S)$-spaces. From this it easily follows that $A_{h\mathcal E} \to
A_{\Gamma}$ is a map of $\Gamma$-spaces. The $\Gamma$-space $A_{h^*\mathcal E}$ is special by Lemma~\ref{lem:E-D-equivalence} and the fact that $A_{h^*\cD}$ is special (see \cite[Proposition 5.3]{Schlichtkrull_units}). Furthermore, assuming that the underlying $\cI$-space of $A$ is flat, it follows from Corollary~\ref{cor:flat-boxtimes-times} that $A_{\Gamma}$ is also special. So it is enough to show that 
$A(1^+)_{h^*\mathcal E(1^+)}\to (A\tensor 1^+)_{h^*\cI}$ is a
weak equivalence, and this follows from the cofinality argument in the proof of 
Lemma~\ref{lem:E-D-equivalence}.
\end{proof}
Combining the last two lemmas, we get the desired comparison of $\Gamma$-spaces. 
\begin{proposition}\label{prop:D-E-Gamma}
  For a flat commutative $\cI$-space monoid $A$, there are
  level equivalences $A_{h\cD} \to A_{h^*\cD}\ot A_{h^*\mathcal E} \to A_{\Gamma}$
  between special $\Gamma$-spaces.\qed
\end{proposition}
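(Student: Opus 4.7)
The plan is to assemble the two preceding lemmas with an elementary comparison between the unbased and based forms of the homotopy colimit defining $A_{h\cD}$. Under this plan, there is really no separate hard step—the content of the proposition lies in Lemma~\ref{lem:E-D-equivalence} and the lemma immediately before the proposition—so the task reduces to checking naturality in $S\in\Gamma^{\op}$ and propagating specialness along the chain.

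First I would construct the map $A_{h\cD}\to A_{h^*\cD}$ as the canonical projection from the unbased to the based homotopy colimit, applied levelwise in $S$. The basepoints of the $\cD(S)$-diagram $A(S)(\theta)=\prod_{s\in\ovl S}A(\theta_s)$ come from the unit of $A$, which is preserved by the structure maps $M_\alpha^*$ used to form the $\Gamma$-space $A_{h\cD}$. Hence the projections are compatible as $S$ varies, so they form a morphism of $\Gamma$-spaces. To see each level is a weak equivalence it suffices that $B\cD(S)$ be contractible, and this holds because restriction to one-point subsets gives an equivalence $\cD(S)\simeq\cI^{\ovl S}$ and $B\cI$ is contractible since $\bld 0$ is initial in $\cI$.

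Next, Lemma~\ref{lem:E-D-equivalence} provides the natural level equivalence $A_{h^*\cE}\to A_{h^*\cD}$, and the lemma just before the proposition—applied using the flatness hypothesis on $A$—provides the natural level equivalence $A_{h^*\cE}\to A_{\Gamma}$. Concatenating these with the first map yields the zig-zag
\[ A_{h\cD} \xrightarrow{\sim} A_{h^*\cD} \xleftarrow{\sim} A_{h^*\cE} \xrightarrow{\sim} A_{\Gamma} \]
of level equivalences asserted in the proposition.

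It remains to verify that all four $\Gamma$-spaces are special. The $\Gamma$-space $A_{h\cD}$ is special by \cite[Proposition 5.3]{Schlichtkrull_units}, and the remaining three inherit specialness from the established level equivalences. The one subtlety worth attending to in the writeup is the naturality in $S$ of the projection $A_{h\cD}\to A_{h^*\cD}$; once the compatibility with the basepoints coming from the unit of $A$ is spelled out, the proposition follows immediately.
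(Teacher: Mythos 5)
Your proposal is correct and follows essentially the same route as the paper: the proposition is just the concatenation of the levelwise equivalence $A_{h\cD}\to A_{h^*\cD}$ (from contractibility of $B\cD(S)$), Lemma~\ref{lem:E-D-equivalence}, and the preceding lemma, with specialness propagated along the chain from \cite[Proposition 5.3]{Schlichtkrull_units}. The extra care you take with naturality in $S$ of the unbased-to-based projection is a reasonable elaboration of what the paper leaves implicit.
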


\begin{remark}
Whereas the $\Gamma$-space $\Phi(A)$ requires $A$ to be a fibrant object in 
$\cC\cS^{\cI}_{\mathrm{gp}}$, the $\Gamma$-space $A_{\Gamma}$ requires $A$ to have an underlying flat $\cI$-space in order to be homotopically well-behaved. In contrast, the 
$\Gamma$-space $A_{h\cD}$ represents the correct stable homotopy type for all commutative $\cI$-space monoids $A$.
\end{remark}

We shall be particularly interested in the case where $A$ is the commutative $\cI$-space monoid $C_1$. Recall from Example~\ref{ex:free_comm_ispace-monoid} that $(C_1)_{h\cI}$ can be identified with the disjoint union of the spaces $B(\bld n\downarrow \cI)/\Sigma_n$. Let $\mathbb S\to (C_1)_{\Gamma}$ be the map of $\Gamma$-spaces specified by the vertex $\id_{\bld 1}$ in $B(\bld 1\downarrow \cI)$. 

\begin{lemma}\label{lem:S-C-1-Gamma}
The map of $\Gamma$-spaces $\mathbb S\to (C_1)_{\Gamma}$ specified above is a stable equivalence. 
\end{lemma}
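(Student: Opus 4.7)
The plan is to reduce the assertion that $\mS \to (C_1)_{\Gamma}$ is a stable Q-equivalence to a $\pi_0$ computation, using the Barratt--Priddy--Quillen theorem together with the rigidity of the self-maps of the sphere spectrum.

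First, since $C_1$ is cofibrant in the positive $\cI$-model structure on $\cC\cS^{\cI}$ (being the free commutative $\cI$-space monoid on the cofibrant $\cI$-space $F^{\cI}_{\bld{1}}(*)$), its underlying $\cI$-space is flat by Proposition~\ref{prop:underlying-flat-com-I}. Hence Proposition~\ref{prop:D-E-Gamma} applies and $(C_1)_{\Gamma}$ is a special $\Gamma$-space, so the stable equivalence class of its associated Bousfield--Friedlander spectrum is determined by the $E_{\infty}$-space $(C_1)_{\Gamma}(1^+)$ together with its canonical group completion.

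Next, I would evaluate at level $1^+$. By definition $(C_1)_{\Gamma}(1^+) = (C_1)_{h^*\cI}$, and the projection from the unbased to the based homotopy colimit is a weak equivalence $(C_1)_{h\cI}\xrightarrow{\sim} (C_1)_{h^*\cI}$ since $B\cI$ is contractible. Example~\ref{ex:free_comm_ispace-monoid} identifies $(C_1)_{h\cI}$ with $\coprod_{n \geq 0} B\Sigma_n$ up to weak equivalence, and the classical Barratt--Priddy--Quillen theorem identifies the group completion of this $E_{\infty}$-space with $QS^0$. Consequently, the Bousfield--Friedlander spectrum of $(C_1)_{\Gamma}$ is stably equivalent to the sphere spectrum. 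Since the spectrum of $\mS$ is the sphere spectrum by construction, the induced map of spectra is, up to the chosen Barratt--Priddy--Quillen equivalence, a self-map of $\mS$; such a self-map is a stable equivalence precisely when its action on $\pi_0\mS \iso \mZ$ is $\pm 1$.

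The final step is the $\pi_0$ computation. By construction, the generator of $\pi_0\mS$ maps to the class $[\id_{\bld{1}}] \in \pi_0((C_1)_{h\cI}) \iso \mN_0$, which is a generator of the group completion $\mZ$, so the action on $\pi_0$ is $\pm 1$. The main obstacle is invoking Barratt--Priddy--Quillen to identify the group completion of $\coprod_{n\geq 0} B\Sigma_n$ with $QS^0$; once this classical input is accepted, the remainder of the argument is formal via the rigidity of the self-maps of the sphere spectrum.
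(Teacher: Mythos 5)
Your argument is correct in outline but follows a genuinely different route from the paper. The paper's proof is a direct levelwise connectivity estimate: it identifies $C_1\otimes K$ with the commutative $\cI$-space monoid $K^{\bullet}\colon\bld{n}\mapsto K^n$, so that the $n$th space of the spectrum of $(C_1)_{\Gamma}$ is $((S^n)^{\bullet})_{h^*\cI}$, and then quotes \cite[Lemma 3.5]{Schlichtkrull_infinite} to conclude that $S^n\to ((S^n)^{\bullet})_{h^*\cI}$ is $(2n-1)$-connected, whence the map of spectra is a $\pi_*$-isomorphism. You instead identify the target spectrum abstractly and reduce to a $\pi_0$ check via the rigidity of self-maps of $\mS$. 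Both arguments rest on a Barratt--Priddy--Quillen-type input (the lemma the paper cites is precisely the ``homotopy infinite symmetric product represents stable homotopy'' form of that theorem), so neither is more elementary; the trade-off is elsewhere. The soft spot in your version is the step ``the Bousfield--Friedlander spectrum of $(C_1)_{\Gamma}$ is stably equivalent to the sphere spectrum'': to get this from the classical statement you must match the infinite loop space structure that the special $\Gamma$-space $(C_1)_{\Gamma}$ induces on $(C_1)_{h^*\cI}\simeq\coprod_{n\geq 0}B\Sigma_n$ with the standard block-sum structure on the symmetric groups used in Barratt--Priddy--Quillen. This identification is plausible (and is asserted without proof in Example~\ref{ex:free_comm_ispace-monoid} and its sequel), but it is exactly the kind of comparison of $E_{\infty}$ structures that the rest of Section~5 goes to some trouble to carry out carefully, and the paper's proof of Lemma~\ref{lem:S-C-1-Gamma} is designed to sidestep it entirely by working one simplicial sphere at a time. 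Granting that identification, your remaining steps (specialness of $(C_1)_{\Gamma}$ via Propositions~\ref{prop:underlying-flat-com-I} and~\ref{prop:D-E-Gamma}, the reduction to degree $\pm 1$, and the computation that $\id_{\bld{1}}$ generates $\pi_0((C_1)_{h\cI})\cong\mN_0$ and hence its group completion $\mZ$) are all sound.
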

\begin{proof}
Consider in general a based simplicial set $K$. It follows from the universal property of the tensor that $C_1\otimes K$ can be identified with the commutative $\cI$-space monoid $K^{\bullet}\colon \bld n\mapsto K^n$ (see \cite{Schlichtkrull_infinite}). Thus, the map $K\to (C_1\otimes K)_{h^*\cI}$ can be identified with the map 
$K\to (K^{\bullet})_{h^*\cI}$ induced by the inclusion $\{\bld 1\}\to \cI$. The result now follows from \cite[Lemma 3.5]{Schlichtkrull_infinite} which states that this map is 
$(2n-1)$-connected for $K=S^n$.
\end{proof}

In the next corollary we consider the map of $\Gamma$-spaces $\mS\to 
\Phi(C(1^+))$ induced by the map $1^+\to \Map(C(1^+),C(1^+))$ sending the non-base point to the identity of $C(1^+)$.

\begin{corollary}\label{cor:Phi-of-S}
The above map of $\Gamma$-spaces $\mS\to \Phi(C(1^+))$ is a stable equivalence.
\end{corollary}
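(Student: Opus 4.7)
The idea is to compare the map $\mS \to \Phi(C(1^+))$ with the known stable equivalence $\mS \to (C_1)_{\Gamma}$ of Lemma~\ref{lem:S-C-1-Gamma} through a zig-zag of level equivalences together with one further stable equivalence implementing the group completion.

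First I would replace $C(1^+)$ by the positive $\cI$-fibrant object $\Gamma(C_1)=\Omega(B(C_1)^{\cI\textrm{-fib}})$. The acyclic fibration $C(1^+)\to C_1^{\gp}$ coming from the cofibrant replacement $C\to (C_1^{\gp})^{\times}$ composes with the acyclic fibration $C_1^{\gp}\to \Gamma(C_1)$ of \eqref{eq:xi-factorization} to give an acyclic fibration $C(1^+)\to\Gamma(C_1)$ in $\cC\cS^{\cI}$. Since each $C(k^+)$ is cofibrant in $\cC\cS^{\cI}$ by Proposition~\ref{prop:based-projective-model}, applying $\Map(C(k^+),-)$ yields an acyclic fibration of simplicial sets at every $k$, so the induced map $\Phi(C(1^+))\to\Phi(\Gamma(C_1))$ is a level equivalence of $\Gamma$-spaces.

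Since $\Gamma(C_1)$ is grouplike and positive $\cI$-fibrant, Proposition~\ref{prop:comparison-gamma-spaces} supplies a zig-zag of level equivalences between $\Phi(\Gamma(C_1))$ and $\Gamma(C_1)_{h\cD}$. The group completion $C_1\to\Gamma(C_1)$ induces a map of $\Gamma$-spaces $(C_1)_{h\cD}\to \Gamma(C_1)_{h\cD}$, and both sides are special by \cite[Proposition~5.3]{Schlichtkrull_units}. Evaluated at $1^+$ this map is identified, via Proposition~\ref{prop:I-fib-space-completion}, with the group completion of the underlying simplicial monoids $(C_1)_{h\cI}\to\Gamma(C_1)_{h\cI}$. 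A map of special $\Gamma$-spaces whose value at $1^+$ is a group completion of simplicial monoids deloops to an equivalence of the associated connective spectra, hence is a stable equivalence. Finally, $C_1$ is cofibrant in $\cC\cS^{\cI}$ (being the image under the left Quillen functor $\mC$ of the cofibrant $\cI$-space $F^{\cI}_{\bld 1}(*)$) and hence flat by Proposition~\ref{prop:underlying-flat-com-I}, so Proposition~\ref{prop:D-E-Gamma} yields a chain of level equivalences $(C_1)_{h\cD}\simeq (C_1)_{\Gamma}$.

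Concatenating these with Lemma~\ref{lem:S-C-1-Gamma}, and verifying by naturality that the resulting composite in the homotopy category of $\Gamma$-spaces agrees with the map $\mS\to\Phi(C(1^+))$ in the statement, we conclude that the latter is a stable equivalence. The main obstacle is the stability step for $(C_1)_{h\cD}\to\Gamma(C_1)_{h\cD}$: it is an instance of the classical group completion principle for special $\Gamma$-spaces and must be invoked without appealing circularly to the full Quillen equivalence of Theorem~\ref{thm:Gamma-csigp}.
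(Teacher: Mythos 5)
Your overall strategy---reduce to a grouplike positive fibrant replacement of $C(1^+)$, run the zig-zag of Propositions~\ref{prop:comparison-gamma-spaces} and \ref{prop:D-E-Gamma}, and feed in Lemma~\ref{lem:S-C-1-Gamma}---is the same as the paper's, but you place the group completion step differently. The paper applies both comparison propositions to $C_1^{\gp}$ itself (which, being cofibrant, is flat, so Proposition~\ref{prop:D-E-Gamma} applies to it), leaving only the comparison $(C_1)_{\Gamma}\to(C_1^{\gp})_{\Gamma}$; since the $n$th space of the spectrum associated with $A_\Gamma$ is $(B^n(A))_{h^*\cI}$, that map is a positive-level equivalence of spectra directly from Lemma~\ref{lem:bar-of-group-compl}, with no external delooping input. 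You instead perform the group completion at the level of $(-)_{h\cD}$ and invoke the principle that a map of special $\Gamma$-spaces which is a group completion at $1^+$ is a stable equivalence. That principle (Segal's identification $X(S^1)\simeq B(X(1^+))$ for special $X$, applied inductively to $X(S^n\sm -)$) is true and not circular with Theorem~\ref{thm:Gamma-csigp}, but it is an extra input the paper does not need here, and you would also have to check that the monoid structure on $(A)_{h\cD}(1^+)\cong A_{h\cI}$ induced by the special $\Gamma$-space structure agrees up to homotopy with the one used in Proposition~\ref{prop:I-fib-space-completion}. What you buy is conceptual uniformity (everything happens in $(-)_{h\cD}$); what the paper's route buys is that the stable equivalence is visible on the nose from the bar construction.

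The one genuine soft spot is your closing step: ``verifying by naturality that the resulting composite agrees with the map $\mS\to\Phi(C(1^+))$'' is not something naturality will deliver, since the zig-zag of Proposition~\ref{prop:comparison-gamma-spaces} passes through cofibrant replacements of diagrams and the bi-$\Gamma$-space of Appendix~\ref{app:bi-Gamma}, and there is no evident way to trace the specific vertex $\id_{C(1^+)}$ through it. The paper sidesteps this entirely: the zig-zag only shows that the target is abstractly stably equivalent to $\mS$, and one then observes that the given map sends the non-basepoint of $1^+$ to a generator of $\pi_0(\Phi(C_1^{\gp})(1^+))\cong\mZ$ (via \eqref{eq:Phi-of-1plus} it corresponds to the image of the generator of $C_1$ in $C_1^{\gp}(\bld 1)$, a generator of the group completion of $\mN_0$), and any map from $\mS$ to a spectrum equivalent to $\mS$ hitting a generator of $\pi_0$ is a stable equivalence. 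Replace your naturality claim with this $\pi_0$ argument and your proof goes through.
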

\begin{proof}
The acyclic $\cI$-fibration $C(1^+)\to C_1^{\mathrm{gp}}$ induces a level equivalence 
of $\Gamma$-spaces $\Phi(C(1^+))\to \Phi(C_1^{\mathrm{gp}})$, so it suffices to show that the composite map $\mathbb S\to \Phi(C_1^{\mathrm{gp}})$ is a stable equivalence.
Combining Propositions~\ref{prop:comparison-gamma-spaces} and 
\ref{prop:D-E-Gamma}, we get a zig-zag chain of level equivalences relating the 
$\Gamma$-spaces $\Phi(C_1^{\mathrm{gp}})$ and $(C_1^{\mathrm{gp}})_{\Gamma}$. Furthermore, it follows from Lemma~\ref{lem:bar-of-group-compl} that the map 
$C_1\to C_1^{\mathrm{gp}}$ induces a stable equivalence of $\Gamma$-spaces 
$(C_1)_{\Gamma}\to (C_1^{\mathrm{gp}})_{\Gamma}$. Thus, composing with the stable equivalence $\mathbb S\to (C_1)_{\Gamma}$ in Lemma~\ref{lem:S-C-1-Gamma}, we get a stable equivalence  $\mathbb S\to (C_1^{\mathrm{gp}})_{\Gamma}$. Since by definition the map $\mathbb S\to \Phi(C_1^{\mathrm{gp}})$ takes the non-base point in $1^+$ to a generator for the infinite cyclic group of components of $\Phi(C_1^{\mathrm{gp}})(1^+)$, this suffices to prove the result.
\end{proof}

\subsection{Stabilization and the proof of Theorem~\ref{thm:Gamma-csigp}.}
\label{subsec:stabilization-proof}
In this section we finish the proof of Theorem \ref{thm:Gamma-csigp} by showing that the Quillen adjunction \eqref{eq:GammaS_CSIgrp_adj}  is in fact a Quillen equivalence with respect to the stable $Q$-model structure on $\Gamma^{\op}\cS_*$.

We begin by making some general remarks on Quillen adjunctions and derived units. Thus, consider a Quillen adjunction $F\colon \cC\rightleftarrows \cD \!: \!G$
relating the model categories $\cC$ and $\cD$. For an object $X$ of $\cC$, the adjoint of a functorial fibrant replacement $F(X) \to F(X)^{\textrm{fib}}$ in $\cD$ is a map $\epsilon_X \colon X \to G(F(X)^{\textrm{fib}})$. These maps
assemble to a natural transformation $\epsilon$ that we call the 
\emph{derived unit} of the adjunction. It is often interesting to know if
$\epsilon$ is a natural weak equivalence on the full subcategory of
cofibrant objects in $\cC$. This is equivalent to asking for the unit of the induced adjunction of homotopy categories to be a natural isomorphism. Recall that the right adjoint $G$ is said to \emph{reflect} weak equivalences between fibrant objects if a morphism $f\colon Y\to Y'$ between fibrant objects in $\cD$ is a weak equivalence provided that $G(f)$ is a weak equivalence in 
$\cC$. By \cite[Corollary 1.3.16]{Hovey_model}, a Quillen adjunction $(F,G)$ as above is a Quillen equivalence if and only if $G$ reflects weak equivalences between fibrant objects and the derived unit $\epsilon$ is a weak equivalence on cofibrant objects. For future reference, we analyze the derived unit of the composition of a pair a Quillen adjunctions  
\[ \xymatrix@-1pc{ \cC \ar@<.5ex>[r]^{F} & \cD \ar@<.5ex>[r]^{H}
  \ar@<.5ex>[l]^{G} & \mathcal{E} \ar@<.5ex>[l]^{K}}.
\]
Let again $\epsilon$ be the derived unit of the $(F,G)$-adjunction, and let us write $\nu$ for the derived unit of the $(H,K)$-adjunction. Checking from the definitions, it is easy to verify the statement in the next lemma.
We will later use various 2-out-of-3 statements derived from this.

\begin{lemma}\label{lem:derived_units_composite}
  The derived unit of the composed Quillen adjunction $(HF,GK)$
  evaluated at $X$ is weakly equivalent to the map
\[ X \xrightarrow{\epsilon_X} G( F(X)^{\mathrm{fib}}) \xrightarrow{G(
  \nu_{F(X)^{\mathrm{fib}}})} G(K(H(F(X)^{\mathrm{fib}})^{\mathrm{fib}}
)).\eqno\qed
\] 
\end{lemma}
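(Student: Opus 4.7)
The strategy is to unravel the two adjunctions step by step and recognise the claimed composite as the adjoint (under $HF \dashv GK$) of a composite which is, up to homotopy, a fibrant replacement of $HF(X)$ in $\mathcal{E}$.

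First, by definition of $\epsilon_X$ under the $(F,G)$-adjunction, it is the adjoint of the chosen fibrant replacement $\alpha\colon F(X)\to F(X)^{\mathrm{fib}}$ in $\cD$. Likewise $\nu_{F(X)^{\mathrm{fib}}}$ is the adjoint under $(H,K)$ of the fibrant replacement $\beta\colon H(F(X)^{\mathrm{fib}})\to H(F(X)^{\mathrm{fib}})^{\mathrm{fib}}$ in $\mathcal{E}$. Naturality of adjunction units shows that the displayed composite $G(\nu_{F(X)^{\mathrm{fib}}})\circ\epsilon_X \colon X\to GK(H(F(X)^{\mathrm{fib}})^{\mathrm{fib}})$ is the adjoint under the composite adjunction $(HF, GK)$ of the map
\[
\gamma \colon HF(X)\xrightarrow{H(\alpha)} H(F(X)^{\mathrm{fib}})\xrightarrow{\beta} H(F(X)^{\mathrm{fib}})^{\mathrm{fib}}.
\]
Thus it suffices to show that, for cofibrant $X$, $\gamma$ is a fibrant replacement of $HF(X)$ in $\mathcal{E}$; the conclusion then follows by uniqueness of fibrant replacement up to weak equivalence between cofibrant-fibrant objects.

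Next I verify that $\gamma$ is indeed a fibrant replacement. Its codomain $H(F(X)^{\mathrm{fib}})^{\mathrm{fib}}$ is fibrant by construction, so the issue is to exhibit $\gamma$ as a weak equivalence. The second map $\beta$ is a weak equivalence by construction. For the first map, I would take the functorial fibrant replacement in $\cD$ to be obtained from a functorial factorization into an acyclic cofibration followed by a fibration, so that $\alpha$ is an acyclic cofibration. Since $X$ is cofibrant and $F$ is a left Quillen functor, $F(X)$ is cofibrant in $\cD$. The left Quillen functor $H$ therefore preserves the acyclic cofibration $\alpha$, making $H(\alpha)$ a weak equivalence. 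By two-out-of-three, $\gamma$ is a weak equivalence.

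Finally, the derived unit of $(HF,GK)$ at $X$ is by definition the adjoint of any fibrant replacement $HF(X)\to (HF(X))^{\mathrm{fib}}$. Since $HF(X)$ is cofibrant (both $F$ and $H$ being left Quillen) and any two fibrant replacements of a cofibrant object are connected by a weak equivalence in $\mathcal{E}$, passing to adjoints under $(HF,GK)$ shows the displayed composite $G(\nu_{F(X)^{\mathrm{fib}}})\circ\epsilon_X$ is weakly equivalent to the derived unit at $X$. The only mildly subtle point, and hence the chief obstacle, is ensuring that the intermediate $\alpha$ can be arranged to be an acyclic cofibration so that the left Quillen functor $H$ preserves it; this is guaranteed by choosing the functorial fibrant replacement from the $\bigl(\mathrm{acyclic\ cofibration},\ \mathrm{fibration}\bigr)$ factorization in $\cD$.
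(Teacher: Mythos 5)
Your argument is correct and is precisely the verification that the paper leaves to the reader (its ``proof'' consists of the single remark that the statement is easy to check from the definitions): you identify the composite $G(\nu_{F(X)^{\mathrm{fib}}})\circ\epsilon_X$ as the $(HF,GK)$-adjoint of $HF(X)\xrightarrow{H(\alpha)}H(F(X)^{\mathrm{fib}})\xrightarrow{\beta}H(F(X)^{\mathrm{fib}})^{\mathrm{fib}}$ and show this is a fibrant replacement, which is the only natural route. Your observation that $\alpha$ should be taken to be an acyclic cofibration (so that the left Quillen functor $H$ preserves it) is exactly the right point to isolate, and the remaining comparison of the two fibrant replacements after applying $GK$ uses only Ken Brown's lemma for the right Quillen functor $GK$ on weak equivalences between fibrant objects.
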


Now let us return to the $(\Lambda,\Phi)$-adjunction in \eqref{eq:GammaS_CSIgrp_adj}. It is easy to verify the first part of the condition for this to be a Quillen equivalence.

\begin{lemma}\label{lem:we_between_fibrants}
The right adjoint $\Phi$ in \eqref{eq:GammaS_CSIgrp_adj} reflects
weak equivalences between fibrant objects in $\cC\cS^{\cI}_{\gp}$.
\end{lemma}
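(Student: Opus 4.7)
The plan is to evaluate $\Phi(f)$ at $1^+$ and exploit the identification $\Phi(A)(1^+)\simeq A(\bld{1})$ from \eqref{eq:Phi-of-1plus}, together with Lemma~\ref{lem:hI-of-homotopy-constant}, to upgrade a weak equivalence on values at $\bld{1}$ to an $\cI$-equivalence.

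In more detail, let $f\colon A\to A'$ be a map between fibrant objects of $\cC\cS^{\cI}_{\gp}$ such that $\Phi(f)$ is a stable $Q$-equivalence. First, since $\Phi$ is a right Quillen functor into $\Gamma^{\op}\cS_*$ with the stable $Q$-model structure, the $\Gamma$-spaces $\Phi(A)$ and $\Phi(A')$ are stably $Q$-fibrant, hence very special and levelwise Kan. Because the stable $Q$-model structure is a left Bousfield localization of the level $Q$-model structure, a stable $Q$-equivalence between stably $Q$-fibrant $\Gamma$-spaces is automatically a level equivalence (see \cite[Theorem~3.2.13]{Hirschhorn_model}). In particular, $\Phi(f)(1^+)$ is a weak equivalence of simplicial sets.

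Next, I would apply the naturality of the zig-zag \eqref{eq:Phi-of-1plus} in the commutative $\cI$-space monoid variable. Since the maps $C(1^+)\to C_1^{\gp}$ and $C_1\to C_1^{\gp}$ are cofibrations that are $\xi$-local equivalences between cofibrant objects, and $A$, $A'$ are $\xi$-local (being fibrant in $\cC\cS^{\cI}_{\gp}$), the induced maps
\[ \Map(C(1^+),A) \xleftarrow{\sim} \Map(C_1^{\gp}, A) \xrightarrow{\sim} \Map(C_1,A) \cong A(\bld{1}) \]
and the analogous maps for $A'$ assemble into a commutative diagram of simplicial sets whose horizontal maps are weak equivalences and whose left vertical map is $\Phi(f)(1^+)$. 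Hence $f(\bld{1}) \colon A(\bld{1})\to A'(\bld{1})$ is a weak equivalence.

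Finally, since $A$ and $A'$ are positive $\cI$-fibrant, Lemma~\ref{lem:hI-of-homotopy-constant} yields weak equivalences $A(\bld{1})\xrightarrow{\sim} A_{h\cI}$ and $A'(\bld{1})\xrightarrow{\sim} A'_{h\cI}$, so $f_{h\cI}$ is a weak equivalence and $f$ is an $\cI$-equivalence. Every $\cI$-equivalence is a $\xi$-local equivalence, so $f$ is a weak equivalence in $\cC\cS^{\cI}_{\gp}$, as required. The only mildly subtle step is the identification of weak equivalences between fibrant objects in the localized model structure with $\cI$-equivalences, which is a formal consequence of \cite[Theorem~3.2.13]{Hirschhorn_model}; there is no real technical obstacle, as the substantive work has been done in setting up the adjunction \eqref{eq:GammaS_CSIgrp_adj} and the chain \eqref{eq:Phi-of-1plus}.
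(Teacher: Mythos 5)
Your proposal is correct and follows essentially the same route as the paper: reduce to a level equivalence at $1^+$ using that fibrant objects have very special (hence level-determined) images under $\Phi$, transport this along the natural zig-zag \eqref{eq:Phi-of-1plus} to conclude that $f(\bld{1})$ is a weak equivalence, and then use positive $\cI$-fibrancy to upgrade to an $\cI$-equivalence. The only cosmetic difference is that you justify the first step by the general theory of left Bousfield localizations rather than directly from the $\Omega$-spectrum property of very special $\Gamma$-spaces (and the map $C(1^+)\to C_1^{\gp}$ is an acyclic fibration, not a cofibration, though this does not affect your argument).
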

\begin{proof}
  Let $f \colon A \to B$ be a map between fibrant objects in
  $\cC\cS^{\cI}_{\textrm{gp}}$ and assume that $\Phi(f)$ is a weak
  equivalence.  Since $\Phi(A)$ and $\Phi(B)$ are fibrant and hence very special, the map $\Phi(f)(1^+)$ is a weak equivalence of spaces. The weak
  equivalences in \eqref{eq:Phi-of-1plus} therefore imply that $f(\bld{1})\colon A(\bld 1)\to B(\bld 1)$ is also a weak equivalence. Hence $f$ is an $\cI$-equivalence since $A$ and $B$ are positive $\cI$-fibrant.
\end{proof}

The next aim is to show that the derived unit of the adjunction is a
weak equivalence on cofibrant objects. For this we proceed as in 
\cite{Hovey_symmetric-general} by forming the stabilizations of the categories 
$\Gamma^{\op}\cS_*$ and $\cC\cS^{\cI}$ (with respect to the tensor with $S^1$) to get the categories of spectra $\Sp^{\mN}(\Gamma^{\op}\cS_*)$ and 
$\Sp^{\mN}(\cC\cS^{\cI})$. We equip these categories of spectra with the stable model structures defined in \cite[\S 3]{Hovey_symmetric-general}. It is then an immediate consequence of Lemma~\ref{lem:bar-of-group-compl} that the suspension spectrum functor $F_0\colon \cC\cS^{\cI}\to \Sp^{\mN}(\cC\cS^{\cI})$ takes the map $\xi$ in \eqref{eq:xi-factorization} to a stable equivalence. Hence $F_0$ defines a left Quillen functor 
$F_0\colon \cC\cS^{\cI}_{\gp}\to \Sp^{\mN}(\cC\cS^{\cI})$ and we obtain a diagram of Quillen adjunctions

\begin{equation}\label{eq:stabilization_square} 
\xymatrix@-.5pc{\Gamma^{\op}\cS_*
\ar@<.5ex>[r]^{\Lambda} \ar@<-.5ex>[d]_{F_0} &
\cC\cS^{\cI}_{\textrm{gp}}
\ar@<.5ex>[l]^{\Phi}\ar@<-.5ex>[d]_{F_0} \\ Sp^{\mN}(\Gamma^{\op}\cS_*)
\ar@<.5ex>[r]^{\Lambda^{\textrm{st}}} \ar@<-.5ex>[u]_{\Ev_0}
& Sp^{\mN}(\cC\cS^{\cI}). \ar@<.5ex>[l]^{\Phi^{\textrm{st}}}
\ar@<-.5ex>[u]_{\Ev_0}}
\end{equation}
The functors $\Lambda^{\textrm{st}}$ and $\Phi^{\textrm{st}}$ are the prolongations of $\Lambda$ and $\Phi$ to maps of spectra. For 
$\Lambda^{\textrm{st}}$ this uses that $\Lambda$ preserves the tensor with $S^1$ and for $\Phi^{\textrm{st}}$ we use the natural transformation
\[
\Phi(A)\otimes S^1\to \Phi\Lambda(\Phi(A)\otimes S^1)\cong 
\Phi(\Lambda\Phi(A)\otimes S^1)\to \Phi(A\otimes S^1)
\] 
defined by the unit and counit of the $(\Lambda,\Phi)$-adjunction. It follows from \cite[Proposition~5.5]{Hovey_symmetric-general} that the 
$(\Lambda^{\textrm{st}},\Phi^{\textrm{st}})$-adjunction is a Quillen adjunction. 
We recall from \cite[Chapter 7]{Hovey_model} that the homotopy categories of 
$\Sp^{\mN}(\Gamma^{\op}\cS_*)$ and $\Sp^{\mN}(\cC\cS^{\cI})$ are triangulated and that the total derived functors $L\Lambda^{\textrm{st}}$ and 
$R\Phi^{\textrm{st}}$ are exact.
 
\begin{remark}
There is a diagram analogous to that in \eqref{eq:stabilization_square} but with $\Sp^{\mN}(\cC\cS^{\cI}_{\gp})$ instead of $\Sp^{\mN}(\cC\cS^{\cI})$. Our preference for the latter category is that this makes it easier to derive the compactness statement in Lemma~\ref{lem:compactness} below.
 \end{remark}
 
\begin{lemma}\label{lem:F-Ev-adjunctions}
The derived units of the $(F_0,\Ev_0)$-adjunctions in \eqref{eq:stabilization_square} are weak equivalences on cofibrant objects.
\end{lemma}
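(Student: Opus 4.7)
The plan is to identify the derived unit of each $(F_0,\Ev_0)$-adjunction explicitly and then verify it is a weak equivalence using results already established. For cofibrant $X$ in either base category $\cC$, the derived unit $X \to \Ev_0(F_0(X)^{\mathrm{fib}})$ is by definition the $0$-th level of a stable fibrant replacement of the suspension spectrum $F_0(X) = (X, BX, B^2X, \dots)$, where $B = -\otimes S^1$. By the standard construction of stable fibrant replacements in $\Sp^{\mN}(\cC)$ from~\cite{Hovey_symmetric-general}, the $0$-th level can be identified, up to weak equivalence in $\cC$, with the sequential homotopy colimit
\[
\Ev_0(F_0(X)^{\mathrm{fib}}) \simeq \hocolim_k \Omega^k\bigl((B^kX)^{\mathrm{fib}}\bigr),
\]
whose $k$-th transition map is the derived unit $B^k X \to \Omega B(B^k X)$ of the $(B,\Omega)$-adjunction at $B^k X$.

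For the right-hand $(F_0,\Ev_0)$-adjunction, I observe that for cofibrant $X$ the commutative $\cI$-space monoid $B^k X$ is connected for every $k \geq 1$: by Proposition~\ref{bar-bar} its underlying space $(B^k X)_{h\cI}$ is weakly equivalent to $B^k(X_{h\cI})$, which is $(k-1)$-connected. Hence $B^k X$ is grouplike for $k \geq 1$, and the combination of Lemma~\ref{lem:group-compl-of-grouplike} with Proposition~\ref{prop:I-fib-space-completion} implies that the transition map $B^k X \to \Omega(B^{k+1}X)^{\cI\textrm{-fib}}$ is an $\cI$-equivalence in this range. Applying $\Omega^k$, which preserves $\cI$-equivalences between positive $\cI$-fibrant objects, the transition maps in the colimit are $\cI$-equivalences for $k \geq 1$. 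Thus the derived unit is $\cI$-equivalent to the single map $\eta_X^{\cI}\colon X \to \Omega(BX)^{\cI\textrm{-fib}}$ of \eqref{eq:grp-cpl-via-Ifibrant}, which is a weak equivalence in $\cC\cS^{\cI}_{\gp}$ by Lemma~\ref{lem:fibrant-replacement-completion} and Theorem~\ref{thm:csi-gp}.

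For the left-hand $(F_0,\Ev_0)$-adjunction the argument is analogous but more direct: the map $X \to \Omega(BX)^{\mathrm{fib}}$ on $\Gamma$-spaces becomes, upon passing to the associated spectrum as in~\cite{Bousfield-F_Gamma-bisimplicial}, the classical unit of the $(\Sigma,\Omega)$-adjunction of spectra, which is always a stable equivalence. Hence the first transition is already a stable $Q$-equivalence and, by the same reasoning, so are the subsequent ones; the derived unit $X \to \hocolim_k \Omega^k(B^k X)^{\mathrm{fib}}$ is therefore a stable $Q$-equivalence.

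The main technical point will be the initial identification of $\Ev_0(F_0(X)^{\mathrm{fib}})$ with the sequential colimit above. This follows from Hovey's explicit model of stable fibrant replacements in $\Sp^{\mN}(\cC)$ but requires some care with intermediate fibrant replacements so that the transition maps are honest morphisms in $\cC$ rather than zig-zags; once this identification is in place, the rest is a matter of recognizing the previously constructed group completion map.
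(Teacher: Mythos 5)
Your argument is essentially correct and rests on the same two key inputs as the paper's (grouplikeness of $B^kX$ for $k\geq 1$ via Lemma~\ref{lem:group-compl-of-grouplike}, and the Bousfield--Friedlander results for $\Gamma$-spaces), but it reaches the derived unit by a genuinely different route. You compute $\Ev_0$ of the stable fibrant replacement via the sequential colimit $\hocolim_k\Omega^k((B^kX)^{\mathrm{fib}})$, i.e.\ Hovey's $\Theta^{\infty}$-type construction, and then observe that all transition maps from stage $1$ onward are equivalences, so the derived unit collapses to the group completion map (resp.\ to a stable $Q$-equivalence of $\Gamma$-spaces). The paper instead avoids the colimit formula altogether: it notes that a \emph{level} fibrant replacement of $F_0(X)$ is already an $\Omega$-spectrum (in positive degrees, in the $\cC\cS^{\cI}$ case), using \cite[Lemma~4.1, Theorem~4.2]{Bousfield-F_Gamma-bisimplicial} on the $\Gamma$-space side and Lemma~\ref{lem:group-compl-of-grouplike} on the $\cI$-space side, so that the stable fibrant replacement is visible without any colimit and the derived unit is directly identified with a fibrant replacement in the localized base category. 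The trade-off is that your route hinges on the deferred identification $\Ev_0(F_0(X)^{\mathrm{fib}})\simeq\hocolim_k\Omega^k((B^kX)^{\mathrm{fib}})$, which is not automatic: it requires the base categories to be (almost) finitely generated and $\Omega$ to commute with sequential colimits in the sense of \cite[\S 4]{Hovey_symmetric-general}. These hypotheses do hold here (the paper itself invokes finite generation of $\cC\cS^{\cI}$ in Lemma~\ref{lem:compactness}, and for the stable $Q$-structure on $\Gamma^{\op}\cS_*$ one checks that the localizing maps have finitely presented domains and codomains), but verifying them is real work, whereas the paper's argument needs only the general fact that the stably fibrant objects are the level fibrant $\Omega$-spectra. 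You should also be careful that the fibrant replacements in your colimit are positive $\cI$-fibrant replacements in $\cC\cS^{\cI}$ (resp.\ stable $Q$-fibrant replacements of $\Gamma$-spaces), not just the space-level $(-)^{\mathrm{fib}}$, so that looping is homotopically meaningful; with that adjustment your argument goes through.
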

\begin{proof}
Consider first the case of $\Gamma^{\op}\cS_*$. Given a $\Gamma$-space $X$ we claim that a fibrant replacement of $F_0(X)$ in the level model structure on $\Sp^{\mN}(\Gamma^{\op}\cS_*)$ (see  \cite[\S 1]{Hovey_symmetric-general}) is in fact an $\Omega$-spectrum, hence also a fibrant replacement in the stable model structure. Assuming this, the derived unit of the adjunction can be identified with a fibrant replacement in $\Gamma^{\op}\cS_*$ and is therefore a weak equivalence. To prove the claim, let $X\to \widehat X$ be a fibrant replacement in $\Gamma^{\op}\cS_*$ and observe that by 
\cite[Lemma 4.1]{Bousfield-F_Gamma-bisimplicial}, the $\Gamma$-space 
$k^+\mapsto \widehat X(k^+\wedge S^n)^{\textrm{fib}}$ 
(where $(-)^{\textrm{fib}}$ denotes fibrant replacement in $\cS_*$) is a fibrant replacement of $X\wedge S^n$ for all $n\geq 0$. The claim made above now follows from \cite[Theorem~4.2]{Bousfield-F_Gamma-bisimplicial}.

Next consider the case of $\cC\cS^{\cI}$ and let $A$ be a cofibrant commutative $\cI$-space monoid. Using 
Lemma~\ref{lem:group-compl-of-grouplike} we see that a fibrant replacement of $F_0(A)$ in the level model structure on $\Sp^{\mN}(\cC\cS^{\cI})$ is in fact an $\Omega$-spectrum in positive degrees. This implies that the derived unit of the adjunction can be identified with a fibrant replacement in 
$\cC\cS^{\cI}_{\gp}$, hence is a weak equivalence. 
\end{proof} 
 
\begin{lemma}\label{lem:unit_we_on_sphere}
The derived unit of the adjunction $(\Lambda^{\mathrm{st}},\Phi^{\mathrm{st}})$ is a weak equivalence when evaluated at $F_0(\mS)$.
\end{lemma}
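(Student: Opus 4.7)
My strategy is to apply $\Ev_0$ to reduce to a claim about $\Gamma$-spaces and then invoke Corollary~\ref{cor:Phi-of-S}. Since the square of left adjoints in \eqref{eq:stabilization_square} commutes ($\Lambda^{\mathrm{st}} F_0 = F_0 \Lambda$) and $F_0(\mS)$ is cofibrant, the derived unit of $(\Lambda^{\mathrm{st}}, \Phi^{\mathrm{st}})$ at $F_0(\mS)$ is represented by a morphism
\[
\zeta\colon F_0(\mS) \to \Phi^{\mathrm{st}}(F_0(C(1^+))^{\mathrm{fib}})
\]
in $\Sp^{\mN}(\Gamma^{\op}\cS_*)$, with $(-)^{\mathrm{fib}}$ a stable fibrant replacement in $\Sp^{\mN}(\cC\cS^{\cI})$. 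The $(F_0, \Ev_0)$-adjunction is a Quillen equivalence between $\Gamma^{\op}\cS_*$ (with stable $Q$-model structure) and $\Sp^{\mN}(\Gamma^{\op}\cS_*)$ by Hovey's general stabilization theorem~\cite{Hovey_symmetric-general}, so it suffices to show that $R\Ev_0(\zeta)$ is a stable equivalence. Using the identity $\Ev_0 \circ \Phi^{\mathrm{st}} = \Phi \circ \Ev_0$, this reduces the problem to showing that the map
\[
\mS \to \Phi(\Ev_0(F_0(C(1^+))^{\mathrm{fib}}))
\]
is a stable equivalence of $\Gamma$-spaces.

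Since $C(1^+)$ is cofibrant, Lemma~\ref{lem:F-Ev-adjunctions} applied to the right vertical adjunction in \eqref{eq:stabilization_square} identifies $C(1^+) \to \Ev_0(F_0(C(1^+))^{\mathrm{fib}})$ with a fibrant replacement in $\cC\cS^{\cI}_{\gp}$. On the other hand, the commutative $\cI$-space monoid $C_1^{\gp}$ from \eqref{eq:xi-factorization} is positive $\cI$-fibrant (as the source of the acyclic positive $\cI$-fibration $C_1^{\gp} \to \Gamma(C_1)$ onto the positive $\cI$-fibrant object $\Gamma(C_1) = \Omega(B(C_1)^{\textrm{$\cI$-fib}})$) and grouplike (being $\cI$-equivalent to the group completion $\Gamma(C_1)$), hence fibrant in $\cC\cS^{\cI}_{\gp}$ by Lemma~\ref{lem:gp-fibrant}. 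Thus the acyclic fibration $C(1^+) \to C_1^{\gp}$, arising from $C(1^+)$ being a cofibrant replacement of $C_1^{\gp}$, exhibits $C_1^{\gp}$ as a second fibrant replacement of $C(1^+)$ in $\cC\cS^{\cI}_{\gp}$. Factoring the weak equivalence $C(1^+) \to \Ev_0(F_0(C(1^+))^{\mathrm{fib}})$ as an acyclic cofibration $C(1^+) \hookrightarrow Z$ followed by an acyclic fibration, and lifting $C(1^+) \to C_1^{\gp}$ through the acyclic cofibration, produces a weak equivalence $Z \to C_1^{\gp}$ between fibrant objects under $C(1^+)$.

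The right Quillen functor $\Phi$ now takes both $Z \to C_1^{\gp}$ and $Z \to \Ev_0(F_0(C(1^+))^{\mathrm{fib}})$ (the latter being the acyclic fibration from the factorization) to stable equivalences, and sends the acyclic fibration $C(1^+) \to C_1^{\gp}$ to an acyclic fibration $\Phi(C(1^+)) \to \Phi(C_1^{\gp})$. Combining these facts with Corollary~\ref{cor:Phi-of-S}, which asserts that $\mS \to \Phi(C(1^+))$ is a stable equivalence, a diagram chase relating $\mS \to \Phi(\Ev_0(F_0(C(1^+))^{\mathrm{fib}}))$ and $\mS \to \Phi(C_1^{\gp})$ via the common intermediary $\Phi(Z)$, combined with 2-out-of-3, shows that $\mS \to \Phi(\Ev_0(F_0(C(1^+))^{\mathrm{fib}}))$ is a stable equivalence, as required.

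The principal obstacle is organizing the comparison between the abstract fibrant replacement $\Ev_0(F_0(C(1^+))^{\mathrm{fib}})$ coming from the stabilization of $\cC\cS^{\cI}_{\gp}$ and the explicit fibrant model $C_1^{\gp}$ built in \eqref{eq:xi-factorization}, so that Corollary~\ref{cor:Phi-of-S} can be transported across this identification.
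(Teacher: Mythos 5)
The second half of your argument---comparing the abstract fibrant replacement $\Ev_0(F_0(C(1^+))^{\mathrm{fib}})$ of $C(1^+)$ in $\cC\cS^{\cI}_{\gp}$ with the explicit fibrant model $C_1^{\gp}$, and transporting Corollary~\ref{cor:Phi-of-S} across that identification---is sound, and it is essentially how the paper handles spectrum degree $0$. The problem is the reduction step itself. You assert that $F_0\colon \Gamma^{\op}\cS_* \rightleftarrows \Sp^{\mN}(\Gamma^{\op}\cS_*)\colon \Ev_0$ is a Quillen equivalence ``by Hovey's general stabilization theorem''. It is not: Hovey's theorem yields a Quillen equivalence only when the functor being inverted ($-\wedge S^1$) is already a Quillen equivalence on the base category, and suspension is not invertible on $\Gamma^{\op}\cS_*$ with the stable $Q$-model structure, whose homotopy category is that of \emph{connective} spectra (for instance $H\mathbb{Z}$ is not a suspension, since $\pi_0$ of a suspension of a connective spectrum vanishes). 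This is precisely why Lemma~\ref{lem:F-Ev-adjunctions} claims only that the derived unit of $(F_0,\Ev_0)$ is a weak equivalence on cofibrant objects, not that the adjunction is a Quillen equivalence.

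Consequently, knowing that $R\Ev_0(\zeta)$ is a stable equivalence does not imply that $\zeta$ is one: $R\Ev_0$ applied to an $\Omega$-spectrum object returns its degree-$0$ level, i.e.\ a connective cover, and two objects of $\Sp^{\mN}(\Gamma^{\op}\cS_*)$ can have equivalent connective covers without being stably equivalent. Your reduction therefore only proves the statement in non-negative degrees; you must additionally show that the target $\Phi^{\mathrm{st}}(F_0(C(1^+))^{\mathrm{fib}})$ is connective, so that the map of $\Omega$-spectra is controlled by its degree-$0$ part. The paper supplies exactly this missing input: both a level fibrant replacement $U$ of $F_0(\mS)$ and $\Phi^{\mathrm{st}}(V)$ are $\Omega$-spectra whose degree-$n$ term is a very special $\Gamma$-space $X$ with $X(1^+)$ being $(n-1)$-connected (the $n$th level of $V$ models the $n$-fold bar construction on $C(1^+)$), and for such $\Omega$-spectra a weak equivalence in degree $0$ propagates to all degrees. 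Without an argument of this kind your proof is incomplete.
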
 
\begin{proof}
Since $\Lambda$ preserves tensors and $\Lambda(\mS)\cong C(1^+)$, we may identify $\Lambda^{\textrm{st}}(F_0(\mS))$ with $F_0(C(1^+))$. Let 
$V$ be a fibrant replacement of $F_0(C(1^+))$ in the level model structure on $\Sp^{\mN}(\cC\cS^{\cI})$. Then Lemma~\ref{lem:group-compl-of-grouplike} implies that $V$ is in fact an $\Omega$-spectrum and hence fibrant in the stable model structure on $\Sp^{\mN}(\cC\cS^{\cI})$. Consequently, 
$F_0(\mS)\to \Phi^{\textrm{st}}(V)$ is a model of the derived unit and 
Corollary~\ref{cor:Phi-of-S} shows it to be a weak equivalence in spectrum degree 0. 
Now choose a fibrant replacement $F_0(\mS)\to U$ in the level model structure on $\Sp^{\mN}(\Gamma^{\op}\cS_*)$ and notice that $U$ is an 
$\Omega$-spectrum. The above map factors through $U$ as a map of 
$\Omega$-spectra $U\to \Phi^{\mathrm{st}}(V)$ which is a weak equivalence in spectrum degree $0$. This implies the result since both spectra $U$ and 
$\Phi^{\mathrm{st}}(V)$ have the property that the very special $\Gamma$-space $X$ in spectrum degree $n$ has an $(n-1)$-connected space $X(1^+)$ .
\end{proof}

Let us in general write $[-,-]$ for the abelian groups of morphisms in a triangulated category. Recall that an object $K$ in a triangulated category is said to be \emph{compact} if for any family of objects $\{X_i:i\in I\}$, indexed by a set $I$ and with coproduct $\coprod_{i\in I}X_i$, the canonical map 
$\bigoplus_{i\in I}[K,X_i]\to [K,\coprod_{i\in I} X_i]$ is an isomorphism. 
 
 \begin{lemma}\label{lem:compactness}
 The suspension spectrum $F_0(C(1^+))$ is compact in $\Ho(\Sp^{\mN}(\cC\cS^{\cI}))$.
 \end{lemma}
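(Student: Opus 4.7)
The plan is to reduce compactness of $F_0(C(1^+))$ to compactness of $F_0(C_1)$ and then appeal to the general theory of Hovey's stable model structure on spectra over a cellular model category.

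First, I would identify $F_0(C(1^+))$ with $F_0(C_1)$ in $\Ho(\Sp^{\mN}(\cC\cS^{\cI}))$. The factorization $\xymatrix@1{C_1 \ar@{>->}[r]^{\xi} & C_1^{\gp} & C(1^+) \ar@{->>}[l]_-{\sim}}$ of the group completion of $C_1$ provides the needed comparison. The map $\xi$ is a cofibration and a weak equivalence in $\cC\cS^{\cI}_{\gp}$; combining Lemma~\ref{lem:bar-of-group-compl} with the construction of the diagram \eqref{eq:stabilization_square} ensures that $F_0$ descends to a left Quillen functor $\cC\cS^{\cI}_{\gp} \to \Sp^{\mN}(\cC\cS^{\cI})$, so $F_0(\xi)$ is a stable equivalence. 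The map $C(1^+) \to C_1^{\gp}$ is an acyclic positive $\cI$-fibration, hence $F_0$ sends it to a level equivalence, and in particular a stable equivalence. Thus $F_0(C(1^+)) \simeq F_0(C_1^{\gp}) \simeq F_0(C_1)$ in the stable homotopy category, and it is enough to show that $F_0(C_1)$ is compact.

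Next, I would exploit the cellular structure. Recall that $C_1 = \mC F_{\bld{1}}^{\cI}(*)$ arises as the codomain of the generating cofibration
\[
\mC F_{\bld{1}}^{\cI}(\emptyset) = \ast \longrightarrow \mC F_{\bld{1}}^{\cI}(\Delta^0) = C_1
\]
of the positive $\cI$-model structure on $\cC\cS^{\cI}$. In particular, $C_1$ is $\omega$-small with respect to the subcategory of cofibrations generated by this class. The positive $\cI$-model structure on $\cC\cS^{\cI}$ is cellular by Proposition~\ref{prop:csi-cellular}, so by~\cite{Hovey_symmetric-general} the stable model structure on $\Sp^{\mN}(\cC\cS^{\cI})$ is again a cellular, left proper, simplicial model structure whose generating cofibrations are of the form $F_n(j)$ for $n \geq 0$ and $j$ a generating cofibration of $\cC\cS^{\cI}$.

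Finally, the standard argument in the theory of compactly generated triangulated categories applies: since $F_0(C_1)$ is the codomain of a generating cofibration of a cellular stable model structure whose source cofibrant object $C_1$ is $\omega$-compact in $\cC\cS^{\cI}$, the object $F_0(C_1)$ is compact in the triangulated category $\Ho(\Sp^{\mN}(\cC\cS^{\cI}))$. The main obstacle in making this argument rigorous is checking that the smallness of $C_1$ in $\cC\cS^{\cI}$ transports to compactness (commutation with arbitrary coproducts) of $F_0(C_1)$ in the stable homotopy category; this is facilitated by the fact that coproducts in $\Ho(\Sp^{\mN}(\cC\cS^{\cI}))$ are modeled by colimits in $\Sp^{\mN}(\cC\cS^{\cI})$ against which $F_0(C_1)$ is small, together with the identification of $[F_0(C_1),Y]$ with $\pi_0 Y_0^{\mathrm{fib}}(\bld{1})$ from the $(F_0,\Ev_0)$-adjunction.
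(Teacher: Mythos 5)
Your first step --- reducing $F_0(C(1^+))$ to $F_0(C_1)$ via the stable equivalences $F_0(C(1^+))\to F_0(C_1^{\gp})\leftarrow F_0(C_1)$ --- is exactly what the paper does and is fine. The gap is in the second half. Being the codomain of a generating cofibration of a cellular model structure, together with point-set smallness of $C_1$, does not yield compactness in the triangulated category $\Ho(\Sp^{\mN}(\cC\cS^{\cI}))$. Compactness concerns morphisms in the homotopy category into an arbitrary coproduct $\coprod_i X_i$, and computing $[F_0(C_1),\coprod_i X_i]$ requires a stably fibrant replacement of that coproduct; a coproduct of $\Omega$-spectra is in general not an $\Omega$-spectrum, and point-set smallness of $F_0(C_1)$ against colimits in $\Sp^{\mN}(\cC\cS^{\cI})$ says nothing about this replacement. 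In particular, your identification of $[F_0(C_1),Y]$ with $\pi_0(Y_0(\bld 1))$ via the $(F_0,\Ev_0)$-adjunction is only valid for stably fibrant $Y$, which is precisely the case one cannot assume. Cellularity (Proposition~\ref{prop:csi-cellular}) is also not the relevant hypothesis here; the paper needs it for the Bousfield localizations, not for compactness.

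The property that actually does the work is that the positive $\cI$-model structure on $\cC\cS^{\cI}$ is \emph{finitely generated} in the sense of \cite[Definition 4.1]{Hovey_symmetric-general}. The paper's proof uses \cite[Corollary 4.13]{Hovey_symmetric-general} to identify $[F_0(C_1),X]$ with the sequential colimit $\colim_n\,[C_1\otimes S^n,X_n]$ of morphism sets computed in $\Ho(\cC\cS^{\cI})$ --- a formula in which no stably fibrant replacement of $X$ appears --- and then \cite[Theorem 7.4.3]{Hovey_model} to see that each $C_1\otimes S^n$ is compact in $\Ho(\cC\cS^{\cI})$. Since sequential colimits commute with direct sums, compactness of $F_0(C_1)$ follows. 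To repair your argument you would need to replace the appeal to cellularity by finite generation and supply this colimit description of $[F_0(C_1),-]$; without it, the passage from smallness to compactness is unjustified.
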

 \begin{proof}
 First observe that there are stable equivalences of spectra 
 \[
 F_0(C(1^+))\xr{\sim} F_0(C_1^{\gp})\xl{F(\xi)} F_0(C_1)
 \]
 so it suffices to show that $F_0(C_1)$ is compact. Let $X$ be an object in 
 $\Sp^{\mN}(\cC\cS^{\cI})$. Using that the positive $\cI$-model structure on 
 $\cC\cS^{\cI}$ is finitely generated in the sense of 
 \cite[Definition 4.1]{Hovey_symmetric-general}, we deduce from \cite[Corollary 4.13]{Hovey_symmetric-general} that $[F_0(C_1),X]$ is isomorphic to the colimit of the morphism sets $[C_1\otimes S^n, X_n]$ in $\Ho(\cC\cS^{\cI})$. Thus, the result is a consequence of the fact that the tensors $C_1\otimes S^n$ are compact in the homotopy category of $\cC\cS^{\cI}$ by 
 \cite[Theorem 7.4.3]{Hovey_model}.
 \end{proof}

\begin{lemma}\label{lem:stable-derived-unit}
The derived unit of the adjunction $(\Lambda^{\textrm{st}},
\Phi^{\textrm{st}})$ is a weak equivalence on cofibrant
  objects.
\end{lemma}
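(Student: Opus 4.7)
The plan is to prove that the class $\cT$ of objects in $\Ho(\Sp^{\mN}(\Gamma^{\op}\cS_*))$ for which the derived unit of $(\Lambda^{\mathrm{st}}, \Phi^{\mathrm{st}})$ is an isomorphism coincides with the whole homotopy category. For a cofibrant $X$ in $\Sp^{\mN}(\Gamma^{\op}\cS_*)$ the derived unit is represented by the ordinary unit $X \to \Phi^{\mathrm{st}}((\Lambda^{\mathrm{st}} X)^{\mathrm{fib}})$, so this will yield the lemma.

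Since both $L\Lambda^{\mathrm{st}}$ and $R\Phi^{\mathrm{st}}$ are exact functors between triangulated categories, the derived unit is a natural transformation of exact functors; consequently $\cT$ is closed under shifts, cofiber sequences, and retracts. By Lemma~\ref{lem:unit_we_on_sphere}, $F_0(\mS)$ lies in $\cT$.

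The crucial step is to show that $\cT$ is closed under arbitrary coproducts. The functor $L\Lambda^{\mathrm{st}}$ preserves coproducts for free, since it is a left adjoint. For $R\Phi^{\mathrm{st}}$ I would, given $\{X_i\}_{i \in I} \subseteq \cT$, apply $[F_0(\mS)[n], -]$ for $n \in \mZ$ to the derived unit on $\bigoplus_i X_i$. Using the $(\Lambda^{\mathrm{st}}, \Phi^{\mathrm{st}})$-adjunction together with the compactness of $F_0(\mS)$ in $\Ho(\Sp^{\mN}(\Gamma^{\op}\cS_*))$ and of $F_0(C(1^+)) \simeq L\Lambda^{\mathrm{st}} F_0(\mS)$ in $\Ho(\Sp^{\mN}(\cC\cS^{\cI}))$ (Lemma~\ref{lem:compactness}), the coproducts can be pulled out of the Hom-groups on both sides of the derived unit; the hypothesis $X_i \in \cT$ then identifies these Hom-groups on each summand. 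Since shifts of $F_0(\mS)$ form a generating set for $\Ho(\Sp^{\mN}(\Gamma^{\op}\cS_*))$, this proves $\bigoplus_i X_i \in \cT$.

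Combining the previous steps, $\cT$ is a localizing triangulated subcategory of $\Ho(\Sp^{\mN}(\Gamma^{\op}\cS_*))$ containing the compact generator $F_0(\mS)$, and is therefore the whole homotopy category. The main obstacle I anticipate is justifying compact generation of $\Ho(\Sp^{\mN}(\Gamma^{\op}\cS_*))$ by $F_0(\mS)$ cleanly in this setup; this should follow formally from~\cite[Corollary 4.13]{Hovey_symmetric-general} applied to the finitely generated stable $Q$-model structure on $\Gamma^{\op}\cS_*$, together with the fact that $\mS = \Gamma^1$ generates $\Ho(\Gamma^{\op}\cS_*)$ (via the Quillen equivalence of the stable $Q$-model structure with the homotopy theory of connective spectra).
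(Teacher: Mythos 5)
Your proposal is correct and follows essentially the same Morita-theoretic argument as the paper: build the triangulated subcategory $\cT$ where the derived unit is an isomorphism, note it contains $F_0(\mS)$ by Lemma~\ref{lem:unit_we_on_sphere}, and use the compactness of $F_0(C(1^+))$ from Lemma~\ref{lem:compactness} to show closure under infinite coproducts, concluding via compact generation of $\Ho(\Sp^{\mN}(\Gamma^{\op}\cS_*))$ by $F_0(\mS)$. Your direct test against $[F_0(\mS)[n],-]$ is just a repackaging of the paper's verification that $R\Phi^{\mathrm{st}}$ preserves coproducts, and your suggested justification of compact generation (via the Quillen equivalence with connective spectra) matches what the paper implicitly uses.
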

\begin{proof}
This argument uses Morita theory. It follows the proof
of~\cite[Theorem 5.3]{Schwede-S_uniqueness}.
The first thing to show is that $R\Phi^{\textrm{st}}$ preserves
infinite coproducts. We look at the composition of Quillen adjunctions
\[ \ Sp^{\mN} = Sp^{\mN}(\cS_*) \rightleftarrows Sp^{\mN}(\Gamma\cS_*)
\rightleftarrows Sp^{\mN}(\cC\cS^{\cI}). \] The left hand adjunction
is a Quillen equivalence, so it is enough to show that the total
derived functor of the composition $\widetilde{\Phi}$ of the two right
adjoints preserves infinite coproducts. Its left adjoint
$\widetilde{\Lambda}$ sends $\mS$ to $F_0(C(1^+))$. Writing $(-)[n]$ for the $n$-fold shift functor on spectra, this gives 
\[ \pi_n(R\widetilde{\Phi}(Y)) \iso [
\mS[n],R\widetilde{\Phi}(Y)]^{\Ho(Sp^{\mN}(\cS_*))} \iso
[F_0(C(1^+))[n], Y]^{\Ho(Sp^{\mN}(\cC\cS^{\cI}))}.\]
The object $F_0(C(1^+))$ is compact by Lemma~\ref{lem:compactness} which implies that $\pi_n(R\widetilde{\Phi}(-))$ preserves infinite coproducts and the same therefore holds for $R\widetilde{\Phi}$.

Now consider the full subcategory $\cT$ of $\Ho(Sp^{\mN}(\Gamma\cS_*))$
on objects $X$ for which the unit of the derived adjunction $X \to (R\Phi^{\textrm{st}})(L \Lambda^{\textrm{st}})(X)$ is an isomorphism. This is a
triangulated subcategory of $\Ho(Sp^{\mN}(\Gamma\cS_*))$ since both $L
\Lambda^{\textrm{st}} $ and $R\Phi^{\textrm{st}}$ are exact functors of
triangulated categories and it contains $F_0(\mS)$ by Lemma \ref{lem:unit_we_on_sphere}. Furthermore, since $R\Phi^{\textrm{st}}$ preserves infinite coproducts, it follows that $\cT$ is closed under infinite coproducts. Because $F_0(\mS)$ is a compact generator for
$\Ho(Sp^{\mN}(\Gamma\cS_*))$, we deduce that $\cT$ is the whole category
$\Ho(Sp^{\mN}(\Gamma\cS_*))$
\end{proof}

\begin{proposition}
The derived unit of the $(\Lambda, \Phi)$-adjunction is a
  weak equivalence on cofibrant objects.
\end{proposition}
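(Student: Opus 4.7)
The plan is to exploit the commutative square of Quillen adjunctions \eqref{eq:stabilization_square} together with Lemma~\ref{lem:derived_units_composite}. Since $F_0 \circ \Lambda$ and $\Lambda^{\mathrm{st}} \circ F_0$ are naturally isomorphic left Quillen functors $\Gamma^{\op}\cS_* \to Sp^{\mN}(\cC\cS^{\cI})$ (with correspondingly isomorphic right adjoints), the derived unit of the composed adjunction is well defined up to weak equivalence and can be modeled along either route.

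For a cofibrant $X$ in $\Gamma^{\op}\cS_*$, I would first compute the derived unit along the route through $Sp^{\mN}(\Gamma^{\op}\cS_*)$. By Lemma~\ref{lem:derived_units_composite} it is weakly equivalent to
\[ X \xrightarrow{\epsilon_X} \Ev_0(F_0(X)^{\mathrm{fib}}) \xrightarrow{\Ev_0(\nu_{F_0(X)^{\mathrm{fib}}})} \Ev_0(\Phi^{\mathrm{st}}(\Lambda^{\mathrm{st}}(F_0(X)^{\mathrm{fib}})^{\mathrm{fib}})), \]
where $\epsilon$ is the derived unit of $(F_0,\Ev_0)$ on $\Gamma^{\op}\cS_*$ and $\nu$ that of $(\Lambda^{\mathrm{st}},\Phi^{\mathrm{st}})$. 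Lemma~\ref{lem:F-Ev-adjunctions} supplies that $\epsilon_X$ is a weak equivalence. Choosing the fibrant replacement $F_0(X) \to F_0(X)^{\mathrm{fib}}$ to be an acyclic cofibration ensures that $F_0(X)^{\mathrm{fib}}$ is cofibrant, so Lemma~\ref{lem:stable-derived-unit} shows that $\nu_{F_0(X)^{\mathrm{fib}}}$ is a weak equivalence between fibrant spectra, which the right Quillen functor $\Ev_0$ preserves. Hence the entire displayed composite is a weak equivalence.

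Computing the same derived unit along the other route through $\cC\cS^{\cI}_{\gp}$ gives
\[ X \xrightarrow{\eta_X} \Phi(\Lambda(X)^{\mathrm{fib}}) \xrightarrow{\Phi(\mu_{\Lambda(X)^{\mathrm{fib}}})} \Phi(\Ev_0(F_0(\Lambda(X)^{\mathrm{fib}})^{\mathrm{fib}})), \]
where $\eta_X$ is the derived unit I want to analyze and $\mu$ is the derived unit of $(F_0,\Ev_0)$ on $\cC\cS^{\cI}_{\gp}$. Taking the fibrant replacement $\Lambda(X) \to \Lambda(X)^{\mathrm{fib}}$ as an acyclic cofibration makes $\Lambda(X)^{\mathrm{fib}}$ cofibrant, so Lemma~\ref{lem:F-Ev-adjunctions} shows that $\mu_{\Lambda(X)^{\mathrm{fib}}}$ is a weak equivalence between fibrant objects, which the right Quillen functor $\Phi$ preserves. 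The 2-out-of-3 property now forces $\eta_X$ to be a weak equivalence.

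The main obstacle will be the bookkeeping of cofibrancy and fibrancy through the long chain of adjunctions: arranging for functorial fibrant replacements to preserve cofibrancy, and checking that the intermediate objects are sufficiently fibrant that the right Quillen functors $\Ev_0$ and $\Phi$ preserve the relevant weak equivalences via Ken Brown's lemma. Once this is in place, the argument reduces to a routine 2-out-of-3 in the homotopy category.
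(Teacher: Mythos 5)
Your proposal is correct and follows essentially the same route as the paper: both compute the derived unit of the composite adjunction two ways via Lemma~\ref{lem:derived_units_composite} and the isomorphism $F_0\Lambda\cong\Lambda^{\mathrm{st}}F_0$, establish that it is a weak equivalence using Lemmas~\ref{lem:F-Ev-adjunctions} and \ref{lem:stable-derived-unit}, and then conclude by 2-out-of-3 along the route through $\cC\cS^{\cI}_{\mathrm{gp}}$. Your explicit bookkeeping of cofibrancy of the fibrant replacements and the use of Ken Brown's lemma for $\Ev_0$ and $\Phi$ fills in details the paper leaves implicit.
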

\begin{proof}
Notice first that there is a natural isomorphism 
$\Lambda^{\textrm{st}}F_0\cong F_0\Lambda$ since $\Lambda$ preserves tensors. We know from Lemmas~\ref{lem:F-Ev-adjunctions} and \ref{lem:stable-derived-unit} that the derived units of the adjunctions 
$(F_0,\Ev_0)$ and $(\Lambda^{\textrm{st}},\Phi^{\textrm{st}})$ are weak equivalences on cofibrant objects. By Lemma~\ref{lem:derived_units_composite} this implies that the same holds for the composite adjunction $(F_0\Lambda,\Phi\Ev_0)$ and using 
Lemma~\ref{lem:derived_units_composite} once more we get the result.
\end{proof}

\begin{proof}[Proof of Theorem~\ref{thm:Gamma-csigp}]
Using~\cite[Corollary 1.3.16]{Hovey_model}, 
Lemma \ref{lem:we_between_fibrants} together with the last proposition show that the
$(\Lambda,\Phi)$-adjunction is a Quillen equivalence.
\end{proof}

\begin{proof}[Proof of Corollary~\ref{cor:B-infty-A}]
By Theorem~\ref{thm:Gamma-csigp} and the equivalence between the homotopy categories of $\Gamma$-spaces and connective spectra defined in 
\cite[Theorem 5.8]{Bousfield-F_Gamma-bisimplicial}, this is now a consequence of Propositions~\ref{prop:comparison-gamma-spaces} and \ref{prop:D-E-Gamma}.
\end{proof}

\section{The units model structure}\label{sec:un-csi}
The aim of this section is to prove Theorem \ref{thm:intro-units} from the
introduction which constructs the units of a commutative
$\cI$-space monoid as a cofibrant replacement in a ``units'' model
structure $\cC\cS^{\cI}_{\un}$ on commutative $\cI$-space monoids.

\begin{construction}
  Let $A$ be a commutative $\cI$-space monoid. We define $A^{\times}$
  to be the sub commutative $\cI$-space monoid of invertible path
  components of $A$. That is, $A^{\times}(\bld{n})$ is the sub
  simplicial set consisting of all those path components of
  $A(\bld{n})$ whose vertices represent units in the monoid $\pi_0(A_{h\cI})$. 
  This construction is functorial in $A$, and the inclusion defines a natural map $A^{\times} \to A$ of commutative $\cI$-space monoids.  
\end{construction}
The definition of the units is clearly homotopy invariant in the sense that an 
$\cI$-equivalence $A\to B$ of commutative $\cI$-space monoids induces an $\cI$-equivalence $A^{\times}\to B^{\times}$. 
Recall that in Section \ref{sec:monoid-path-comp} we defined a
commutative $\cI$-space monoid $A$ to be grouplike if the commutative monoid
of path components $\pi_0(A_{h\cI})$ is a group. This condition is equivalent to the equality $A^{\times}=A$. Consequently, a map $A \to B$ with $A$ grouplike factors uniquely as 
$A \to B^{\times} \to B$.

\subsection{Units and cobase change}
For the construction of the units model structure, we need the following result
about how forming the units behaves with respect to cobase change:

\begin{lemma}\label{lem:cobase-change-units}
Let $A$ and $B'$ be commutative $\cI$-space monoids with $B'$ grouplike,
let $A^{\times} \to B'$ be a map in $\cC\cS^{\cI}$, and let $B$ be the pushout
of $B' \ot A^{\times} \to A$ in $\cC\cS^{\cI}$. Then the induced map $B' \to B$
is isomorphic to $B^{\times} \to B$. 
\end{lemma}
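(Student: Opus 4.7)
The plan is to check that the map $B'\to B$ factors through $B^{\times}$ and then to exhibit the resulting map $B'\to B^{\times}$ as an isomorphism by producing an explicit inverse. Write $\phi\colon A^{\times}\to B'$ for the given map.

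The factorization is immediate from $B'$ being grouplike: any vertex of $B'(\bld n)$ represents a unit class in $\pi_0(B'_{h\cI})$, and monoid homomorphisms preserve units, so its image in $\pi_0(B_{h\cI})$ is again a unit and the image vertex lies in $B^{\times}(\bld n)$.

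Next I would analyze $B$ concretely using that pushouts of commutative monoids in a cocomplete symmetric monoidal category are relative tensor products, so $B = A\boxtimes_{A^{\times}}B'$ is presented as the coequalizer of the two natural maps $A\boxtimes A^{\times}\boxtimes B'\rightrightarrows A\boxtimes B'$. A simplex of $B(\bld n)$ is thus a class $[\alpha,x,y]$ with $\alpha\colon\bld k\sqcup\bld l\to\bld n$ in $\cI$, $x$ a simplex of $A(\bld k)$, and $y$ a simplex of $B'(\bld l)$, modulo the $\boxtimes$-coend identifications together with the pushout relation $[\alpha,xu,y]=[\alpha,x,\phi(u)y]$ for $u\in A^{\times}$. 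The core step is the claim that $[\alpha,x,y]\in B^{\times}(\bld n)$ iff $x\in A^{\times}(\bld k)$. The reverse direction is the identity $[\alpha,x,y]=[\alpha,1,\phi(x)y]$ combined with the previous paragraph. For the forward direction, I would use that the functor $\pi_0\circ(-)_{h\cI}\colon\cC\cS^{\cI}\to\text{CommMon}$ is left adjoint to the embedding of commutative monoids as constant discrete commutative $\cI$-space monoids, hence preserves colimits, yielding
\[ \pi_0(B_{h\cI})\iso \pi_0(A_{h\cI})\sqcup_{\pi_0(A^{\times}_{h\cI})}\pi_0(B'_{h\cI}) \]
as commutative monoids. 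Since $\pi_0(A^{\times}_{h\cI})=\pi_0(A_{h\cI})^{\times}$ and $\pi_0(B'_{h\cI})$ is a group, an elementary calculation in this pushout (every class is of the form $[m,n]$, and it is invertible iff $m$ is invertible in $\pi_0(A_{h\cI})$) gives the claim.

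The inverse map $\sigma\colon B^{\times}\to B'$ is then defined by $\sigma([\alpha,x,y])=B'(\alpha)(\phi(x)\cdot y)$ for any representative with $x\in A^{\times}(\bld k)$; this is well defined since $\phi(xu)y=\phi(x)\phi(u)y$ handles the pushout relation while naturality handles the $\boxtimes$-coend identifications. Both composites are identities: $y\mapsto[\mathrm{id},1,y]\mapsto\phi(1)y=y$, and $[\alpha,x,y]\mapsto B'(\alpha)(\phi(x)y)\mapsto[\mathrm{id},1,B'(\alpha)(\phi(x)y)]=[\alpha,x,y]$ by the pushout relation. The main obstacle is making the computation of $\pi_0(B_{h\cI})$ as a commutative monoid pushout precise in conjunction with the combinatorial description of $B$ as a relative $\boxtimes$-product, since $(-)_{h\cI}$ does not in general preserve pushouts taken in $\cC\cS^{\cI}$ (as opposed to in $\cS^{\cI}$).
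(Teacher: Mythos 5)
Your argument is correct in substance, but it takes a genuinely different route from the paper's. The paper first proves a structural decomposition $A\cong A^{\times}\coprod\widehat{A}$ in $\cC\cS^{\cI}$, where $\widehat{A}$ (the non-invertible components) carries the structure of a non-unital $A^{\times}$-algebra; it then uses that $B'\boxtimes_{A^{\times}}(-)$ distributes over $\coprod$ to split $B\cong B'\coprod(B'\boxtimes_{A^{\times}}\widehat{A})$, and reads off the units from the augmentation $p\colon B\to B'\coprod *$, whose units are exactly $B'$. You instead work with explicit representatives $[\alpha,x,y]$ of $B=A\boxtimes_{A^{\times}}B'$ and detect the units on $\pi_0$. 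Your key observation --- that $\pi_0((-)_{h\cI})\cong\pi_0\circ\colim_{\cI}$ is a composite of strong symmetric monoidal left adjoints and therefore sends the pushout in $\cC\cS^{\cI}$ to a pushout of commutative monoids, where the unit computation is elementary --- is correct and is a clean alternative way to locate $B^{\times}$ inside $B$; in particular the ``main obstacle'' you flag at the end is already resolved by your own adjunction argument. What your write-up leaves implicit is precisely what the paper's decomposition lemma supplies: for $\sigma$ to be well defined you must know that the equivalence relation presenting $B$ as a quotient of $A\boxtimes B'$ never links a representative with $x\in A^{\times}$ to one with $x\notin A^{\times}$ (so that the chain of elementary identifications between two admissible representatives stays in the region where your formula makes sense), and that each elementary coend or coequalizer identification there preserves the value $B'(\alpha)(\phi(x)\cdot y)$. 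The first point does follow from your $\pi_0$ computation, or directly from the fact that $A^{\times}$ is a union of path components closed under the structure maps and under multiplication by units; spelling both points out essentially reconstructs the splitting $A\boxtimes B'\cong(A^{\times}\boxtimes B')\sqcup(\widehat{A}\boxtimes B')$ that the paper takes as its starting point. The trade-off: the paper's argument is representative-free and its nuca decomposition is reused elsewhere (e.g.\ for the units model structure), while yours isolates the purely algebraic statement about pushouts of commutative monoids and produces the inverse map explicitly.
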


To prove the lemma, we need to analyze how a commutative
$\cI$-space monoid decomposes into its units and ``non-units''. It is
convenient to use the following terminology which is analogous to the terminology for
non-unital $\mS$-algebras (`nucas') studied by Basterra
\cite{Basterra_Andre-Quillen}.
\begin{definition}
  Let $C$ be a commutative $\cI$-space monoid. A non-unital
  commutative $C$-algebra ($C$-nuca for short) is a non-unital commutative monoid
  in the category of $C$-modules.  We write
  $C\text{-}\cN\cS^{\cI}$ for the category of $C$-nucas.
\end{definition}

This definition uses that the category of modules $\Mod_C$ over a commutative
$\cI$-space monoid $C$ inherits a symmetric monoidal
product $\boxtimes_{C}$ with $E\boxtimes_C F$ being defined as a
coequalizer of the diagram $E\boxtimes C \boxtimes F \rightrightarrows E\boxtimes
F$ defined by the $C$-actions on $E$ and $F$. A $C$-nuca is a $C$-module $E$ with an associative and commutative
multiplication $E\boxtimes_{C} E \to E$. Equivalently, it is an algebra
over the monad $\mN$ in $\Mod_C$ defined by $\mN X = \coprod_{n > 0}
X^{\boxtimes_C^n}/\Sigma_n$.

For $A$ a commutative $\cI$-space monoid, we write $\widehat A$ for
the sub $\cI$-space of $A$ given by the complement of $A^{\times}$. In
other words, $\widehat A$ consists of the non-invertible path
components of $A$.

\begin{lemma}
  The multiplication of $A$ induces the structure of an
  $A^{\times}$-nuca on $\widehat A$.
\end{lemma}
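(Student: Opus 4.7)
The plan rests on the elementary fact that in any commutative monoid $M$ the complement of the units $M^{\times}$ is an ideal: if $xy \in M^{\times}$ with inverse $z$, then $y \cdot z$ is an inverse of $x$, so $x \in M^{\times}$; dually for $y$. Applying this to $M = \pi_0(A_{h\cI})$ gives the two containments
\[
M^{\times} \cdot \widehat{M} \subseteq \widehat{M} \qquad \text{and} \qquad \widehat{M} \cdot \widehat{M} \subseteq \widehat{M}.
\]

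Next I would translate this to the $\cI$-space level. By construction $A^{\times}(\bld n)$ consists exactly of those path components of $A(\bld n)$ whose vertices represent units of $\pi_0(A_{h\cI})$, so $\widehat{A}(\bld n)$ consists of those whose vertices represent non-units. The description of the multiplication on $\pi_0(A_{h\cI})$ recalled in Section~\ref{sec:monoid-path-comp} shows that for any morphism $\bld{n}_1 \sqcup \bld{n}_2 \to \bld n$ and vertices $a \in A^{\times}(\bld{n}_1)$, $b \in \widehat{A}(\bld{n}_2)$, the image of $(a,b)$ in $A(\bld n)$ represents the class $[a][b]$, which is a non-unit by the first containment above; hence this image lies in $\widehat{A}(\bld n)$. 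The same argument with $a,b$ both in $\widehat{A}$ uses the second containment. Because $A^{\times} \boxtimes \widehat{A}$ and $\widehat{A} \boxtimes \widehat{A}$ are colimits over $(\sqcup \downarrow \bld n)$ of the corresponding products, this shows that the multiplication $\mu \colon A \boxtimes A \to A$ restricts to well-defined natural maps of $\cI$-spaces
\[
A^{\times} \boxtimes \widehat{A} \to \widehat{A} \qquad \text{and} \qquad \widehat{A} \boxtimes \widehat{A} \to \widehat{A}.
\]

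From this the $A^{\times}$-nuca structure is formal. The first map, together with the associativity and unitality of $\mu$, makes $\widehat{A}$ an $A^{\times}$-module. Commutativity of $A$ implies that the second map is $A^{\times}$-balanced (the two $A^{\times}$-actions on $\widehat{A} \boxtimes \widehat{A}$ agree after multiplying), so it descends through the coequalizer defining $\boxtimes_{A^{\times}}$ to a map $\widehat{A} \boxtimes_{A^{\times}} \widehat{A} \to \widehat{A}$; the associativity and commutativity of this multiplication are inherited directly from those of $\mu$. The only real obstacle is the verification carried out in the middle paragraph: one must check uniformly across all the summands $A^{\times}(\bld{n}_1) \times \widehat{A}(\bld{n}_2)$ (respectively $\widehat{A}(\bld{n}_1) \times \widehat{A}(\bld{n}_2)$) of the colimit presentation of the $\boxtimes$-product that the restriction of $\mu$ actually lands in $\widehat{A}$, which is precisely what the ideal property of non-units in $\pi_0(A_{h\cI})$ provides.
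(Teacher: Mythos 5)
Your proposal is correct and follows essentially the same route as the paper: the key input in both is the ideal property of the non-units in $\pi_0(A_{h\cI})$, transported to the $\cI$-space level via the colimit description of the $\boxtimes$-product, after which the $A^{\times}$-module structure and the descent to $\widehat{A}\boxtimes_{A^{\times}}\widehat{A}$ are formal consequences of the associativity and commutativity of $A$. The only (immaterial) difference is that the paper uses the single containment "anything times a non-unit is a non-unit" to get an $A$-module structure on $\widehat{A}$ and then restricts, whereas you verify the two containments separately.
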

\begin{proof}
  The product of an element of $A$ with a non-unit is a non-unit, so $A \boxtimes
  A \to A$ restricts to an $A$-module structure on $\widehat A$. This in turn restricts to an $A^{\times}$-module structure and induces a map 
$\widehat A \boxtimes_{A^{\times}} \widehat A\to
  \widehat A$. The commutativity and associativity of $A$ imply that
  $\widehat A$ is an $A^{\times}$-nuca.
\end{proof}

For a $C$-nuca $E$, we can equip the coproduct of the underlying
$\cI$-spaces $C {\textstyle\coprod} E$ with the structure of a
commutative $\cI$-space monoid by defining
\[ (C {\textstyle\coprod} E)\boxtimes(C {\textstyle\coprod} E)
\iso (C\boxtimes C) {\textstyle\coprod} (C\boxtimes E)
{\textstyle\coprod} (E\boxtimes C) {\textstyle\coprod} (E\boxtimes E)
\to C\boxtimes E \] using the monoid structure of $C$, the $C$-module
structure of $E$, and the composition $E \boxtimes E \to E
\boxtimes_{C} E \to E$.

\begin{lemma}\label{lem:units-nonunits-decomp}
  If $A$ is a commutative $\cI$-space monoid, then $A$ and $A^{\times}
  {\textstyle\coprod} \widehat A$ are isomorphic as commutative
  $\cI$-space monoids.\qed
\end{lemma}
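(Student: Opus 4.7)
The plan is to first identify the underlying $\cI$-spaces and then check that the two commutative monoid structures on this common underlying object agree. By the very definition of $\widehat{A}$ as the complement of $A^{\times}$ in $A$, we have a canonical isomorphism of $\cI$-spaces $A \iso A^{\times} \sqcup \widehat{A}$. Since the $\boxtimes$-product commutes with coproducts in each variable, the multiplication $A\boxtimes A \to A$ decomposes via this isomorphism into four summands indexed by the pairs $(A^{\times},A^{\times})$, $(A^{\times},\widehat{A})$, $(\widehat{A},A^{\times})$, $(\widehat{A},\widehat{A})$, and I would check the claim one summand at a time.

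The first summand $A^{\times}\boxtimes A^{\times}\to A$ factors through $A^{\times}$ because a product of units is a unit, and this yields precisely the commutative monoid structure that $A^{\times}$ inherits as a submonoid. For the mixed summands $A^{\times}\boxtimes \widehat{A}\to A$ and $\widehat{A}\boxtimes A^{\times}\to A$, the image lies in $\widehat{A}$: if a vertex $a\in A^{\times}(\bld{m})$ and a vertex $b\in \widehat{A}(\bld{n})$ had a product representing a unit in $\pi_0(A_{h\cI})$, then multiplying by an inverse of $[a]$ would show that $[b]$ is a unit, contradicting $b\in\widehat{A}$. These factored maps are exactly the $A^{\times}$-module structure maps on $\widehat{A}$ used to define the nuca structure in the previous lemma. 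For the final summand $\widehat{A}\boxtimes\widehat{A}\to A$, commutativity of $A$ ensures that a product of non-units cannot be a unit (an inverse of the product would provide one-sided, hence two-sided, inverses for each factor), so this map factors through $\widehat{A}$ as well.

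By the construction given in the proof of the preceding lemma, the induced map $\widehat{A}\boxtimes\widehat{A}\to \widehat{A}$ is exactly the composite $\widehat{A}\boxtimes\widehat{A}\to \widehat{A}\boxtimes_{A^{\times}}\widehat{A}\to \widehat{A}$ defining the nuca multiplication. Comparing with the explicit description of the commutative monoid structure on $A^{\times}\sqcup\widehat{A}$ given just before the lemma statement, the four components of the multiplication on $A$ agree summand-by-summand with the four components of the multiplication on $A^{\times}\sqcup\widehat{A}$. The unit elements agree since both come from $A(\bld{0})$ through the inclusion $A^{\times}\to A$. Hence the canonical isomorphism of underlying $\cI$-spaces upgrades to an isomorphism of commutative $\cI$-space monoids.

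The proof is essentially a bookkeeping exercise, and the only nonformal input is the elementary monoid-theoretic observation that in a commutative monoid the product of two elements is a unit only if each factor is already a unit; this is what makes the decomposition $A\iso A^{\times}\sqcup\widehat{A}$ compatible with multiplication. There is no real obstacle beyond confirming that the ad hoc monoid structure on the $\cI$-space coproduct $C\sqcup E$ defined via a $C$-nuca structure on $E$ matches, in the case $C=A^{\times}$ and $E=\widehat{A}$, the multiplication inherited from $A$.
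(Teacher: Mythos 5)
Your argument is correct and is exactly the verification the paper leaves implicit: the lemma is stated with no written proof precisely because it amounts to the summand-by-summand bookkeeping you carry out, using that $\boxtimes$ preserves coproducts, that in the commutative monoid $\pi_0(A_{h\cI})$ a product is a unit only if both factors are, and that the ad hoc multiplication on $C\coprod E$ specializes to the one inherited from $A$ when $C=A^{\times}$ and $E=\widehat{A}$. Nothing is missing.
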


Contrary to the square-zero extensions in algebra or the corresponding
constructions for $\mS$-algebras~\cite{Basterra_Andre-Quillen}, 
$C{\textstyle\coprod} E$ fails to be augmented over $C$ because
$\Mod_C$ has no zero object. However, we can use the projection $E \to
*$ to the terminal $\cI$-space to view $C {\textstyle\coprod} E$ as
an object in the category 
of commutative $C$-algebras over $C{\textstyle\coprod}*$.

\begin{proof}[Proof of Lemma \ref{lem:cobase-change-units}]
  Using Lemma~\ref{lem:units-nonunits-decomp}, we define a map of commutative $\cI$-space monoids  $A\cong A^{\times}{\textstyle\coprod}\widehat A\to B'{\textstyle\coprod}*$ by projecting $\widehat A$ onto $*$. By the universal property of the pushout, this induces a map $p\colon B\to B'{\textstyle\coprod}*$, so we may view $B$ as a commutative $B'$-algebra augmented over $B'{\textstyle\coprod}*$. Since $p$ must map the units of $B$ to $B'$, it therefore suffices to show that the map $B'\to B$ maps $B'$ isomorphically onto $p^{-1}(B')$. For this purpose we identify the underlying $\cI$-space of $B$ with the coequalizer $B'\boxtimes_{A^{\times}}\!A$ of the diagram $B'\boxtimes A^{\times}\boxtimes A\rightrightarrows B'\boxtimes A$ defined by the $A^{\times}$-actions on $B'$ and $A$. The result now follows from the chain of isomorphisms
\[
B'\boxtimes_{A^{\times}}\!A\cong B'\boxtimes_{A^{\times}}\!(A^{\times}
{\textstyle\coprod}\widehat A)\cong (B'\boxtimes_{A^{\times}}\!A^{\times}){\textstyle
\coprod}(B'\boxtimes_{A^{\times}}\!\widehat A)\cong B'{\textstyle\coprod}
(B'\boxtimes_{A^{\times}}\!\widehat A)  
\]     
of $\cI$-spaces over $B'{\textstyle\coprod}*$. 
\end{proof}

\subsection{The model structure}
To prove the existence of the units model structure, we will use the
method of ``$Q$-structures'' introduced by Bousfield and
Friedlander~\cite{Bousfield-F_Gamma-bisimplicial} and refined by
Bousfield~\cite[\S 9]{Bousfield_telescopic}. In the formulation
of~\cite[\S 9]{Bousfield_telescopic}, this method produces a left
Bousfield localization (that has fewer fibrant objects than the
original model structure). However, to prove Theorem
\ref{thm:intro-units} we need to construct a right Bousfield
localization (with fewer cofibrant objects than the original model
structure). The key point here is that the results of~\cite[\S
9]{Bousfield_telescopic} dualize to give right Bousfield localizations
because they do not make use of generating (acyclic) cofibrations. (We
thank Jens Hornbostel for pointing us to this.) This is not the case
with most other localization techniques. For the readers convenience,
we formulate the dual version of Bousfield's localization
theorem~\cite[Theorem 9.3]{Bousfield_telescopic} and the necessary
prerequisites.

Let $\cC$ be a model category, let $P\colon \cC \to \cC$ be a functor, 
and let $\beta \colon P \to \id_{\cC}$ be a natural transformation satisfying
the following axioms:
\begin{enumerate}[(B1)]
\item If $f \colon A \to B$ is a weak equivalence, then so is $P(f)$.
\item For every object $A$ of $\cC$, the maps $P(\beta_A)$ and
  $\beta_{P(A)}$ are weak equivalences.
\item For a pushout square
\[\xymatrix@-1pc{ C \ar[r]^{g} \ar[d]_{f} & A \ar[d] \\ D \ar[r]^{h} & B}\]
in $\cC$ with $f$ a cofibration of cofibrant objects and
$\beta_{C}$, $\beta_{D}$, and $P(g)$ weak equivalences, the map
$P(h)$ is also a weak equivalence.
\end{enumerate}
We call a map $f$ in $\cC$ a \emph{$P$-equivalence} if $P(f)$ is a weak
equivalence in $\cC$, a \emph{$P$-fibration} if it is a fibration in
$\cC$, and a \emph{$P$-cofibration} if it has the left lifting
property with respect to all maps that are $P$-fibrations and
$P$-equivalences. Dualizing the statement and the proof
of~\cite[Theorem 9.3]{Bousfield_telescopic}, we get the following result.
\begin{proposition}\label{prop:Q-structures-dualized}
  Let $\cC$ be a proper model category with a functor
  $P\colon \cC \to \cC$ and a natural transformation $\beta \colon P
  \to \id_{\cC}$ satisfying (B1)-(B3). Then the $P$-equivalences,
  $P$-cofibrations, and $P$-fibrations form a proper model structure
  on $\cC$, and a map $f \colon C \to D$ is a $P$-cofibration if and
  only if it is a cofibration in $\cC$ and
\[\xymatrix@-1pc{
  P(C) \ar[rr]^{\beta_C} \ar[d]_{P(f)} && C \ar[d]^{f} \\
  P(D) \ar[rr]^{\beta_D} && D }\] is a homotopy pushout square in $\cC$.\qed
\end{proposition}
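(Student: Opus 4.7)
The plan is to prove Proposition~\ref{prop:Q-structures-dualized} by mirroring the proof of Bousfield's Theorem 9.3 in~\cite{Bousfield_telescopic} with every argument dualized. As emphasized in the remark preceding the proposition, Bousfield's argument is carried out without reference to generating (acyclic) cofibrations, so after formally reversing the roles of cofibrations and fibrations in his constructions, every step goes through. Let me describe the structure more concretely.

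I would first verify the 2-out-of-3 and retract axiom for the class of $P$-equivalences. Both are immediate from (B1): applying $P$ sends a pair of $P$-equivalences (respectively a retract of a $P$-equivalence) to a pair of weak equivalences (respectively a retract of a weak equivalence) in $\cC$, and the 2-out-of-3 and retract axioms for weak equivalences in $\cC$ close the argument. Next I would establish the characterization of $P$-cofibrations stated in the second half of the proposition. Call a cofibration $f\colon C\to D$ in $\cC$ \emph{admissible} if the square with vertices $P(C), P(D), C, D$ is a homotopy pushout. To see that admissible maps are $P$-cofibrations, consider a lifting problem against a $P$-fibration $p\colon E\to B$ which is a $P$-equivalence. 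Applying $P$ and using (B2), the derived unit $\beta$ makes $P(p)$ weakly equivalent to $p$, so $p$ becomes an acyclic fibration after applying $P$; the homotopy pushout condition and (B3) then let one produce the required lift using only ordinary lifting in $\cC$. Conversely, any $P$-cofibration is a cofibration (since acyclic fibrations in $\cC$ are $P$-acyclic $P$-fibrations), and the universal property identifies the canonical map from the pushout $P(D)\cup_{P(C)} C$ to $D$ as a $P$-equivalence, which yields the homotopy pushout condition.

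With this characterization in hand, the factorization axioms are obtained as follows. Given $f\colon X\to Y$, the factorization as a $P$-cofibration followed by an acyclic $P$-fibration is produced by factoring $f$ in $\cC$ as a cofibration $X\to Z$ followed by an acyclic fibration $Z\to Y$; axiom (B2) together with the naturality of $\beta$ guarantees that the $P$-square for $X\to Z$ is a homotopy pushout, so $X\to Z$ is admissible. For the factorization as a $P$-equivalence $P$-cofibration followed by a $P$-fibration, I would factor $f$ in $\cC$ as an acyclic cofibration $X\to W$ followed by a fibration $W\to Y$; applying (B1), $X\to W$ is already a $P$-equivalence, and a small diagram chase using (B3) and cofibrancy replacement shows that it is admissible, hence a $P$-cofibration. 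The remaining lifting axiom (that admissible $P$-equivalences lift against $P$-fibrations) follows from the original lifting axiom in $\cC$ after using the homotopy pushout square to trade the admissible $P$-equivalence for an acyclic cofibration in $\cC$. Finally, properness is inherited: right properness follows from right properness of $\cC$ and (B1) applied to the pullback square, while left properness combines left properness of $\cC$ with (B3) to propagate the $P$-equivalence across the pushout.

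The main obstacle is the factorization into a $P$-equivalence $P$-cofibration followed by a $P$-fibration, because verifying admissibility of the acyclic cofibration produced in $\cC$ requires a careful homotopy pushout analysis that uses all three axioms (B1)--(B3) simultaneously, together with a cofibrant replacement argument. This is precisely where Bousfield's original proof does its delicate work, and transcribing it to the dual setting requires checking that every appeal to factorization in~\cite{Bousfield_telescopic} may be replaced by the dual factorization, and every appeal to a fibrant replacement by a cofibrant replacement in $\cC$. Once this is carried out, the conclusion of the proposition follows.
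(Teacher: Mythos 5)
Your overall strategy --- dualize the proof of Bousfield's Theorem 9.3 step by step --- is exactly what the paper does (its entire proof is the sentence preceding the statement), so the question is whether your transcription of the dual argument is sound. It is not: your construction of the factorization into a $P$-cofibration followed by an acyclic $P$-fibration is wrong. Factoring $f$ in $\cC$ as a cofibration $X\to Z$ followed by an acyclic fibration $Z\to Y$ does make $Z\to Y$ an acyclic $P$-fibration (it is a fibration and, by (B1), a $P$-equivalence), but $X\to Z$ is merely a cofibration of $\cC$, and your claim that ``(B2) together with the naturality of $\beta$'' forces its $P$-square to be a homotopy pushout is false. Axiom (B2) only says $P(\beta_A)$ and $\beta_{P(A)}$ are weak equivalences; it says nothing about the square of an arbitrary cofibration. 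Indeed the whole point of this right localization is that the class of cofibrations \emph{shrinks}: in the application $P=(-)^{\times}$, a cofibration $*\to Z$ with $Z$ a cofibrant non-grouplike monoid has $P$-square a homotopy pushout only if $Z^{\times}\to Z$ is a weak equivalence, which fails. This factorization is precisely the delicate one in the dual setting; the correct argument must use the functor $P$ itself (cofibrantly approximate $P(f)$, push out along $\beta$, and invoke (B3) to control the $P$-homotopy type of the pushout), which is the dual of the step in Bousfield's proof where the coaugmented functor and his axiom (A.6) do the work.

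Relatedly, you have the hard and easy factorizations swapped. The factorization you single out as the ``main obstacle'' --- $P$-equivalence $P$-cofibration followed by $P$-fibration --- is the easy one here: factor $f$ as an acyclic cofibration followed by a fibration in $\cC$; the acyclic cofibration has both vertical maps of its $P$-square weak equivalences (using (B1)), hence the square is automatically a homotopy pushout and the map is an acyclic $P$-cofibration, with no appeal to (B3) needed. Until the genuinely hard factorization is constructed correctly, the proof is incomplete.
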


\begin{proposition}\label{prop:units-structure}
The category $\cC\cS^{\cI}$ equipped with the functor $(-)^{\times}\colon \cC\cS^{\cI}\to
\cC\cS^{\cI}$ and the natural inclusion $\beta_A\colon A^{\times}\to A$ satisfies (B1)-(B3).
\end{proposition}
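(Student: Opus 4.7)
The plan is to verify axioms (B1), (B2), (B3) in turn. Axiom (B1) is just the homotopy invariance noted right after the construction of $A^{\times}$: since $A^{\times}$ is determined by those path components of $A$ representing units in $\pi_0(A_{h\cI})$, an $\cI$-equivalence $A \to B$ restricts to an $\cI$-equivalence $A^{\times} \to B^{\times}$. Axiom (B2) is immediate from the observation that $A^{\times}$ is grouplike by construction, which forces the equality $(A^{\times})^{\times} = A^{\times}$; under this identification both $\beta_{A^{\times}} \colon (A^{\times})^{\times} \to A^{\times}$ and $P(\beta_A) = (\beta_A)^{\times} \colon (A^{\times})^{\times} \to A^{\times}$ coincide with the identity of $A^{\times}$.

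The content lies in (B3). Given a pushout square
\[\xymatrix@-1pc{C \ar[r]^g \ar[d]_f & A \ar[d] \\ D \ar[r]^h & B}\]
with $f$ a cofibration of cofibrant objects and $\beta_C$, $\beta_D$, $P(g)$ weak equivalences, the first two conditions say that $C$ and $D$ are grouplike. Since $C$ is grouplike, $g$ factors canonically as $g = \beta_A \circ \tilde g$ with $\tilde g \colon C = C^{\times} \to A^{\times}$, and under the identification $C = C^{\times}$ the map $\tilde g$ is precisely $P(g)$, so it is a weak equivalence. My plan is to form the auxiliary pushout $B'$ of the diagram $D \xleftarrow{f} C \xrightarrow{\tilde g} A^{\times}$ in $\cC\cS^{\cI}$. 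By left properness of the positive $\cI$-model structure on $\cC\cS^{\cI}$ (Proposition~\ref{prop:I-positive-lift}), the pushout map $D \to B'$ is an $\cI$-equivalence; since $D$ is grouplike, it follows that $B'$ is grouplike as well.

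By the pushout-pasting lemma, the original pushout $B$ is canonically isomorphic to the pushout of $A \xleftarrow{\beta_A} A^{\times} \to B'$. Lemma~\ref{lem:cobase-change-units}, applied with $B'$ as the grouplike commutative $\cI$-space monoid, then identifies the canonical map $B' \to B$ with the inclusion $\beta_B \colon B^{\times} \to B$. Chasing through the construction, the composite $D \to B' \cong B^{\times}$ agrees with the unique factorization $P(h) \colon D = D^{\times} \to B^{\times}$ of $h$ through the units of $B$, and we have already shown $D \to B'$ to be an $\cI$-equivalence. Hence $P(h)$ is a weak equivalence, as required.

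The main obstacle is the construction and analysis of the intermediate pushout $B'$, specifically the verification that $B'$ is grouplike so that Lemma~\ref{lem:cobase-change-units} applies; this is exactly the step where both the left properness of $\cC\cS^{\cI}$ and the hypothesis that $f$ be a cofibration are essential. Everything else is formal manipulation of the factorizations forced by grouplikeness.
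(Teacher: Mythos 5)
Your proposal is correct and follows essentially the same route as the paper's proof: (B1) and (B2) are immediate, and for (B3) one factors the pushout through the auxiliary pushout $B'$ of $D \ot C \to A^{\times}$, applies left properness to see $D\to B'$ is an $\cI$-equivalence, and invokes Lemma~\ref{lem:cobase-change-units} to identify $B'\to B$ with $\beta_B$. Your explicit check that $B'$ is grouplike (needed to apply the lemma) is a small but welcome addition that the paper leaves implicit.
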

\begin{proof}
The condition in (B1) is clearly satisfied and the maps in (B2) are even isomorphisms.  
For (B3) we first consider the outer pushout square in the diagram
  \[\xymatrix@-1pc{
    C \ar[d] \ar@{-->}[rr] \ar@/^.6pc/[rrrr] & & A^{\times} \ar[d]
    \ar[rr]
    & & A \ar[d]\\
    D \ar[rr] \ar@/_.6pc/[rrrr]& & B' \ar[rr] & & B.  }\] 
  Because
  $\beta_C$ and $\beta_D$ are $\cI$-equivalences, both $C$ and $D$ are
  grouplike. Hence $C \to A$ factors through $A^{\times}$
  as indicated by the dotted arrow and we define $B'$ to be
  the pushout of $D \ot C \to A^{\times}$. It follows that the pushout
  in the outer square is isomorphic to the iterated pushouts defined by the
  two inner squares. By assumption, $C=C^{\times} \to A^{\times}$ is an
  $\cI$-equivalence and as $C\to D$ is assumed to be a cofibration,
  left properness of the positive $\cI$-model structure implies that $D \to B'$
  is an $\cI$-equivalence. Since $B' \to B$ is isomorphic to
  $\beta_B$ by Lemma~\ref{lem:cobase-change-units}, this
  implies that $D^{\times} \to B^{\times}$ is an $\cI$-equivalence
  and (B3) is proved.
\end{proof}

We write $\cC\cS^{\cI}_{\un}$ for the ``units'' model structure on commutative 
$\cI$-space monoids specified by Propositions~\ref{prop:Q-structures-dualized} and 
\ref{prop:units-structure}. (Here we use that that the positive $\cI$-model structure on $\cC\cS^{\cI}$ is proper by \cite[Proposition~3.5]
{Sagave-S_diagram}).
  
The units model structure can also be characterized as a right Bousfield localization of the positive $\cI$-model structure. We refer the reader to 
\cite[Section 3.3]{Hirschhorn_model} for the definition and properties of right Bousfield localizations.  

\begin{proposition}\label{prop:units-right-Bousfield}
The units model structure $\cC\cS^{\cI}_{\un}$ is the right Bousfield localization of the positive $\cI$-model structure with respect to the class of maps $A^{\times}\to A$ for $A$ positive $\cI$-fibrant.  
\end{proposition}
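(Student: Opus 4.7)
The plan is to show that the two model structures have the same classes of weak equivalences and fibrations; since they share the same underlying category, the cofibrations are then determined by the lifting property, and the two structures coincide.

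First I would identify the fibrations. By construction via Proposition~\ref{prop:Q-structures-dualized} applied to the functor $(-)^{\times}$ together with the inclusion $\beta$, the fibrations of $\cC\cS^{\cI}_{\un}$ are the positive $\cI$-fibrations. A right Bousfield localization leaves the fibrations unchanged, so both model structures share the positive $\cI$-fibrations as their fibrations.

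Second I would verify that the weak equivalences agree. The weak equivalences of $\cC\cS^{\cI}_{\un}$ are the $(-)^{\times}$-equivalences, namely the maps $f\colon A\to B$ such that $f^{\times}\colon A^{\times}\to B^{\times}$ is a positive $\cI$-equivalence. On the other hand, the weak equivalences in the right Bousfield localization with respect to the class $\cH=\{A^{\times}\to A\}$ are the maps becoming weak equivalences after passage to a colocal replacement. The crucial observation is that any map $W\to A$ from a grouplike commutative $\cI$-space monoid $W$ factors uniquely through $A^{\times}$, so that the inclusion induces an isomorphism $\Map(W,A^{\times})\to \Map(W,A)$ whenever $W$ is grouplike. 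Since $A^{\times}$ is itself grouplike, this factorization applied to $A^{\times}$ in place of $W$ shows that $\beta_A\colon A^{\times}\to A$ plays the role of the colocal replacement of $A$, so the colocal equivalences coincide with the $(-)^{\times}$-equivalences.

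Finally, combining both identifications, the cofibrations in the two structures agree by the standard lifting argument, completing the proof. The main obstacle lies in the second step: one must carefully analyze the colocal objects in the right Bousfield localization and show that the natural transformation $\beta$ functorially provides colocal replacements, so that $(-)^{\times}$ computes the derived colocalization functor. The set-theoretic issue that $\cH$ is a proper class is handled by replacing it with a small set of representatives, which is possible because $\cC\cS^{\cI}$ is cellular by Proposition~\ref{prop:csi-cellular}, and one must also verify that the resulting right Bousfield localization exists and is independent of the choice of representatives.
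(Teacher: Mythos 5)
Your proposal follows essentially the same route as the paper: both model structures have the positive $\cI$-fibrations as their fibrations, and the identification reduces to showing that the colocal equivalences with respect to the maps $A^{\times}\to A$ are exactly the maps inducing $\cI$-equivalences on units, which is done by observing that cofibrant grouplike objects are colocal and that the units construction supplies a functorial colocalization. The one refinement you need is that $\beta_A\colon A^{\times}\to A$ is not literally a colocalization because $A^{\times}$ need not be cofibrant (colocal objects are cofibrant by definition); the paper instead uses the composite $\widetilde A^{\times}\to A^{\times}\to A$ with $\widetilde A^{\times}$ a cofibrant replacement of $A^{\times}$, which is still grouplike and hence colocal, and then applies Hirschhorn's characterization of colocal equivalences. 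Your worry about existence and set-theoretic issues is unnecessary here: the model structure $\cC\cS^{\cI}_{\un}$ was already constructed by the dual Bousfield--Friedlander method, so the proposition is purely an identification of an existing model structure with the right Bousfield localization, not an appeal to an existence theorem.
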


\begin{proof}[Proof of Proposition~\ref{prop:units-right-Bousfield}]
We use the standard terminology (as in \cite[Section~3]{Hirschhorn_model}) concerning colocal objects and colocal equivalences with respect to the class of maps specified in the proposition. If $W$ is a cofibrant and grouplike object in 
$\cC\cS^{\cI}$, then the inclusion $A^{\times}\to A$ induces an isomorphism
$
\Map(W,A^{\times})\to \Map(W,A)
$
for all $A$, which implies that $W$ is colocal. For a general object $A$, we consider the composition $\widetilde A^{\times}\to A^{\times}\to A$, where the first map is a functorial cofibrant replacement of $A^{\times}$ in the positive $\cI$-model structure. Since this is a functorial colocalization of $A$, we conclude from 
\cite[Theorem 3.2.18]{Hirschhorn_model} that a map $A\to B$ of commutative $\cI$-space monoids is a colocal equivalence if and only if the map of units $A^{\times}\to B^{\times}$ is an $\cI$-equivalence. This implies the statement of the proposition.   
\end{proof}

\begin{proof}[Proof of Theorem~\ref{thm:intro-units}]
It is clear that the natural inclusion $A^{\times}\to A$ induces an isomorphism 
$(A^{\times})_{h\cI}\to (A_{h\cI})^{\times}$ which gives the description of the weak equivalences in the theorem. The characterization of the cofibrant objects follows from 
Proposition~\ref{prop:Q-structures-dualized} and this in turn implies the last statement in the theorem. 
\end{proof}

\begin{appendix}

\section{Cellularity of the positive \texorpdfstring{$\cI$}{I}-model structure}\label{app:cellular}
In this section we verify that the positive $\cI$-model structure on $\cC\cS^{\cI}$ is cellular both in the simplicial and in the topological setting. This is needed for the construction of the group completion model structure as a left Bousfield localization. Recall from~\cite[Definition~12.1.1]{Hirschhorn_model} that a
cellular model category is a cofibrantly generated model category
$\cC$ with generating cofibrations $I$ and generating acyclic
cofibrations $J$ such that the domains and codomains of the maps in
$I$ are compact relative to $I$~\cite[Definition
10.8.1]{Hirschhorn_model}, the domains of the maps in $J$ are small
relative to the subcategory of relative $I$-cell
complexes~\cite[Definition 10.4.1]{Hirschhorn_model}, and the
cofibrations are effective monomorphisms~\cite[Definition
10.9.1]{Hirschhorn_model}.
\begin{proposition}\label{prop:csi-cellular}
Let $\cS$ be either the category of simplicial sets or the category of compactly generated weak Hausdorff topological spaces. Then the positive $\cI$-model structure on 
$\cC\cS^{\cI}$ is cellular.
\end{proposition}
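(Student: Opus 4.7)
The plan is to verify the three non-standard conditions in Hirschhorn's definition of a cellular model category \cite[Definition 12.1.1]{Hirschhorn_model}: compactness of the domains and codomains of a set of generating cofibrations $I$, smallness of the domains of a set of generating acyclic cofibrations $J$ relative to $I$-cell complexes, and that every cofibration is an effective monomorphism.

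By \cite[Proposition 3.5]{Sagave-S_diagram}, the positive $\cI$-model structure on $\cC\cS^{\cI}$ is cofibrantly generated, with $I$ and $J$ obtained by applying the composite $\mC \circ F^{\cI}_{\bld n}$, for $n \geq 1$, to the standard generating sets $I_{\cS}$ and $J_{\cS}$ of $\cS$. In both the simplicial and the topological setting, the base generators have compact domains and codomains, and all objects in $\cS$ are small with respect to level cofibrations (respectively closed $T_1$-inclusions). The key technical input for transferring compactness and smallness to $\cC\cS^{\cI}$ is the word-length filtration of the free commutative monoid functor $\mC$ employed in \cite[\S 6]{Sagave-S_diagram}: this expresses the underlying $\cI$-space of a relative $I$-cell complex in $\cC\cS^{\cI}$ as a transfinite composition, in each $\cI$-space degree, of cobase changes of iterated pushout-products of the base generators modulo symmetric group actions. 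In particular, relative $I$-cell complexes are level cofibrations, and smallness and compactness of the generators in $\cC\cS^{\cI}$ reduce to the corresponding statements in $\cS$ applied levelwise.

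The main obstacle is the effective monomorphism property. Since effective monomorphisms are stable under retracts and every cofibration in $\cC\cS^{\cI}$ is a retract of a relative $I$-cell complex, it suffices to show that every relative $I$-cell complex $A \to B$ is an effective monomorphism. I would argue this by identifying $A$ with the equalizer in $\cC\cS^{\cI}$ of its cokernel pair $B \rightrightarrows B \sqcup_A^{\cC\cS^{\cI}} B$. Because the forgetful functor $\cC\cS^{\cI} \to \cS^{\cI}$ preserves limits and equalizers in $\cS^{\cI}$ are computed levelwise, this reduces to the following claim at each level $\bld k$: the map $A(\bld k) \to B(\bld k)$ is the equalizer in $\cS$ of the pair $B(\bld k) \rightrightarrows \bigl(B \sqcup_A^{\cC\cS^{\cI}} B\bigr)(\bld k)$. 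From the first part of the plan, $A(\bld k) \to B(\bld k)$ is a level cofibration, hence a monomorphism in $\cS$ (simplicial) or a closed $T_1$-inclusion (topological), both of which are effective monomorphisms in $\cS$.

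The hard point is that $B \sqcup_A^{\cC\cS^{\cI}} B$ equals $B \boxtimes_A B$, which differs from the naive coproduct in $\cS^{\cI}$, so one cannot just read off the equalizer from the underlying pushout in $\cS^{\cI}$. To handle this I would again invoke the word-length filtration, this time for the relative smash product $B \boxtimes_A B$: writing $B$ as the cellular attachment $A \cup_{\mC F^{\cI}_{\bld n}(K)} \mC F^{\cI}_{\bld n}(L)$, the relative free commutative monoid construction gives a filtration of $(B \boxtimes_A B)(\bld k)$ whose successive layers are built from iterated pushout-products of the cell attachment maps. An inductive analysis of this filtration identifies the levelwise equalizer of $B(\bld k) \rightrightarrows (B \boxtimes_A B)(\bld k)$ with $A(\bld k)$, using at each inductive step the effective monomorphism property of the level cofibration just attached. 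Finally, passing to transfinite compositions uses stability of effective monomorphisms in $\cS$ under sequential colimits along level cofibrations, which holds in both the simplicial and topological cases.
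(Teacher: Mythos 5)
Your treatment of the compactness and smallness conditions matches the paper's: both reduce to levelwise statements in $\cS$ using the structure of the free functors, the fact that (sequential) colimits in $\cC\cS^{\cI}$ are created in $\cS^{\cI}$, and the fact that cofibrations in $\cC\cS^{\cI}$ are $h$-cofibrations (equivalently, your word-length filtration) of underlying $\cI$-spaces. Your reduction of the effective monomorphism condition is also set up correctly: retract closure, creation of equalizers in $\cS^{\cI}$, and the factorization of $B\rightrightarrows B\boxtimes_A B$ through the underlying pushout $B\coprod_A B$ all appear in the paper's argument, and the equalizer of $B\rightrightarrows B\coprod_A B$ is indeed $A$ because the underlying map of $\cI$-spaces is levelwise an effective monomorphism.

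The gap is in what you yourself flag as the hard point. Your proposal to run ``an inductive analysis of the word-length filtration of $B\boxtimes_A B$'' identifying the levelwise equalizer with $A(\bld k)$ is precisely where all the content lies, and it is asserted rather than carried out: one must show that the passage from $B\coprod_A B$ to $B\boxtimes_A B$ — which involves $\boxtimes$-colimits over comma categories and quotients by symmetric group actions in each filtration layer — introduces no new identifications between the two copies of $B$. The paper isolates this as a single lemma, that $B\coprod_A B\to B\boxtimes_A B$ is a monomorphism for any cofibration $A\to B$ in $\cC\cS^{\cI}$, and proves it not by a cell-by-cell analysis but by a model-categorical argument: the map is the pushout-product of $A\to B$ with itself for the relative product $\boxtimes_A$ on $A$-modules; a cofibration in $\cC\cS^{\cI}$ is a cofibration of $A$-modules in the flat model structure lifted from $\cS^{\cI}$ via the monoid axiom; the pushout-product axiom then makes $B\coprod_A B\to B\boxtimes_A B$ a flat cofibration, hence an $h$-cofibration, hence levelwise injective. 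If you want to complete your proof, either import this lemma (which also lets you treat arbitrary cofibrations directly, bypassing the cell-induction and transfinite-composition bookkeeping entirely), or be prepared to do a genuinely delicate combinatorial analysis of the filtration quotients $(L\sqcup L/K\sqcup K)^{\Box i}/\Sigma_i$ to verify the injectivity by hand.
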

The proof is based on the next lemma. Consider a map $A\to B$ of commutative $\cI$-space monoids and the diagram $B\ot A\to B$. We write $B\boxtimes_A B$ for the pushout of this as a diagram in $\cC\cS^{\cI}$ and $B\coprod_A B$ for the pushout of the underlying diagram of $\cI$-spaces.  

\begin{lemma}
If $A\to B$ is a cofibration in $\cC\cS^{\cI}$, then the canonical map of $\cI$-spaces $B\coprod_AB\to B\boxtimes_AB$ is a monomorphism.
\end{lemma}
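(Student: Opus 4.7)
My plan is to argue by cell induction on the cofibration $A \to B$. Since the positive $\cI$-model structure on $\cC\cS^{\cI}$ is cofibrantly generated, with generating cofibrations coming from applying the free commutative monoid functor $\mathbb{C}$ to the maps $F^{\cI}_{\bld n}(\partial\Delta^k \to \Delta^k)$ with $n\geq 1$, every cofibration is a retract of a transfinite composition of pushouts of such generators. The class of level-wise monomorphisms in $\cS^{\cI}$ is closed under retracts, pushout along arbitrary $\cI$-space maps, and transfinite composition, since all of these properties hold level-wise for monomorphisms of simplicial sets. Consequently it suffices to treat the case in which $A \to B$ is obtained from $A$ by a single cobase change along a generating cofibration.

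Reducing to this case we have $B \cong A \boxtimes_{\mathbb{C}(X)} \mathbb{C}(Y)$ for a cofibration of $\cI$-spaces $X \to Y$ of the form $F^{\cI}_{\bld n}(\partial\Delta^k \to \Delta^k)$. A pushout of this shape in commutative monoids admits the standard ``symmetric power'' filtration $A = B_0 \to B_1 \to \cdots$ with $\colim_r B_r = B$, where each step $B_{r-1} \to B_r$ is realized as a pushout in the underlying category $\cS^{\cI}$ along a map built from $A$ boxtimes the $r$-fold iterated pushout-product of $X \to Y$, modulo the $\Sigma_r$-action. This kind of filtration of pushouts of commutative monoids is by now standard and can be extracted using the explicit description of $\mathbb{C}$ and the analysis of flat $\cI$-spaces in \cite{Sagave-S_diagram}.

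Running this filtration independently in each of the two factors produces a compatible bifiltration of $B \boxtimes_A B$ in $\cS^{\cI}$. The pushout $B \coprod_A B$ corresponds, under the comparison map, to the union of the ``axis'' pieces of this bifiltration, i.e.\ the bidegrees $(r,0)$ and $(0,s)$ glued along the $(0,0)$-piece $A$. The statement that $B \coprod_A B \to B \boxtimes_A B$ is a level-wise monomorphism then reduces to showing that at each bifiltration stage the axis embeds injectively into the full $(r,s)$-piece, and this in turn reduces to a pushout-product statement for the cofibration $X \to Y$, available from the flat model structure and Lemma~\ref{lem:flat-characterization}.

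The main obstacle will be setting up the symmetric-power filtration sufficiently explicitly and checking that it is compatible with both the $\cI$-space pushout and the $\boxtimes_A$-pushout under the comparison map; once this bookkeeping is in place, injectivity at each bifiltration layer is a direct consequence of the cofibrancy hypothesis on $X \to Y$.
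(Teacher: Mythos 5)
Your reduction to a single generating cofibration does not work as stated, and this is where the argument breaks down. The property you need to propagate through a cell induction is not that $A\to B$ is a levelwise monomorphism but that the comparison map $B\coprod_A B\to B\boxtimes_A B$ is one, and the assignment sending $A\to B$ to this comparison map is not compatible with transfinite composition or cobase change in the way your first paragraph suggests: if $A=B_0\to B_1\to\cdots\to B$ is a cell filtration, the comparison map for $A\to B$ is neither a transfinite composition nor a cobase change of the comparison maps for the individual attachments $B_\alpha\to B_{\alpha+1}$, because both tensor factors vary simultaneously. The inductive step you would actually have to carry out is the relative one, where $A\to B_\alpha$ is an arbitrary cellular map and one further cell is attached --- exactly the situation the single-cell reduction was meant to avoid. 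Moreover, your third paragraph defers the remaining work to ``a pushout-product statement'', but identifying $B\coprod_A B$ with the union of the axis pieces of your bifiltration already presupposes that $B\boxtimes_A A$ and $A\boxtimes_A B$ embed into $B\boxtimes_A B$ with intersection $A$, which is essentially the claim being proved; as written this is circular.

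The missing observation that makes all of this bookkeeping unnecessary is that $B\coprod_A B\to B\boxtimes_A B$ \emph{is} the pushout-product of $A\to B$ with itself with respect to $\boxtimes_A$ in the category of $A$-modules. The paper's proof exploits this: the absolute flat model structure on $\cS^{\cI}$ is monoidal and satisfies the monoid axiom, hence lifts to a monoidal model structure on $A$-modules with product $\boxtimes_A$; a cofibration $A\to B$ in $\cC\cS^{\cI}$ is in particular a flat cofibration of $A$-modules (as in \cite[Proposition~12.5]{Sagave-S_diagram}); the pushout-product axiom then shows that the comparison map is a flat cofibration of $A$-modules; and flat cofibrations are $h$-cofibrations, hence levelwise injective (as in \cite[Proposition~12.7]{Sagave-S_diagram}). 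This packages precisely the filtration and pushout-product analysis you are attempting to redo by hand, and it sidesteps the problematic reduction entirely. If you want to keep a hands-on argument, you must at minimum prove the relative inductive step and establish the injectivity of the axis inclusions independently, which amounts to reproving the pushout-product axiom in this special case.
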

\begin{proof}
For the proof we need the (absolute) flat model structure on $\cS^{\cI}$ introduced in 
\cite[Section 3]{Sagave-S_diagram}. This is a monoidal model structure which satisfies the monoid axiom, so by~\cite[Theorem 4.1(2)]{Schwede-S_algebras} it lifts to a
``flat'' model structure on the category of $A$-modules for the commutative $\cI$-space monoid $A$. Furthermore, the flat model structure on $A$-modules is monoidal with respect to the symmetric monoidal product $\boxtimes_A$ inherited from the 
$\boxtimes$-product. Arguing as in the proof of 
\cite[Proposition~12.5]{Sagave-S_diagram}, one shows that the cofibrancy assumption on $A\to B$ as a map in $\cC\cS^{\cI}$ implies that it is a cofibration as a map of 
$A$-modules with respect to the flat model structure.  Now observe that the map in the lemma can be identified with the pushout-product of the map $A\to B$ with itself (with respect to the $\boxtimes_A$-product as a map of $A$-modules). 
Hence it follows from the pushout-product axiom that the map in question is a cofibration in the flat model structure on $A$-modules. By an argument similar to that used in the proof of \cite[Proposition~12.7]{Sagave-S_diagram} this shows it to be an $h$-cofibration in the sense of \cite[Section~7]{Sagave-S_diagram}, which in turn implies that it is levelwise injective and hence a monomorphism. 
\end{proof}

\begin{proof}[Proof of Proposition \ref{prop:csi-cellular}]
For the compactness and smallness assertions, we recall from \cite[Section~6]{Sagave-S_diagram} that the objects in question are obtained by applying free functors (that is, left adjoints of evaluation functors) to compact objects in $\cS$. The assertions therefore hold because sequential colimits in $\cC\cS^{\cI}$ are created in $\cS^{\cI}$ and cofibrations in $\cC\cS^{\cI}$ are $h$-cofibrations in $\cS^{\cI}$ by \cite[Section~7]{Sagave-S_diagram}. 

By definition of an effective monomorphism, we have to show that if $A\to B$ is a cofibration in $\cC\cS^{\cI}$, then it is an equalizer of the canonical maps 
$B\rightrightarrows B\boxtimes_A B$. Since equalizers are created in the underlying category $\cS^{\cI}$, it suffices to show that this holds for the underlying maps of $\cI$-spaces.  As maps of $\cI$-spaces we have the factorization $B\rightrightarrows B\coprod_A B\to B\boxtimes_A B$ where the second map is a monomorphism by the previous lemma. It therefore suffices to show that $A\to B\rightrightarrows B\coprod_A B$ is an equalizer diagram and this again follows from the fact that the map of $\cI$-spaces underlying $A\to B$ is an $h$-cofibration by \cite[Proposition~12.7]{Sagave-S_diagram}.   
\end{proof}

\section{Bi-\texorpdfstring{$\Gamma$}{Gamma}-spaces}\label{app:bi-Gamma}
As in Section \ref{subs:Gamma-reminder}, let $\Gamma^{\op}$ be the
category of based finite sets, and let $\cS_*$ denote the category of based simplicial sets. 

\begin{definition}
  A \emph{bi-$\Gamma$-space} is a functor $\Gamma^{\op}\times
  \Gamma^{\op} \to \cS_*$ with $X(k^+,0^+) = *$ and $X(0^+,k^+)=*$ for all
  objects $k^+$ of $\Gamma^{\op}$.
\end{definition}
For $i=1,2$, the projections $(k_1 + k_2)^+ \to k_i^+$ and $(l_1 +
l_2)^+ \to l_i^+$ induce a map
\begin{equation}\label{eq:bi-sp-map}
  X((k_1 + k_2)^+, (l_1 + l_2)^+) \to X(k_1^+,l_1^+)\times X( k_1^+,l_2^+)\times X(k_2^+, l_1^+)\times X(k_2^+,l_2^+)
\end{equation}

\begin{definition}
The bi-$\Gamma$-space $X$ is \emph{bi-special} if the map
\eqref{eq:bi-sp-map} is a weak equivalence for all 
$k_1,k_2,l_1,l_2 \geq 0$.
\end{definition}

It is clear from the definition that $X$ is bi-special if and only if the 
$\Gamma$-spaces $X(k^+,-)$ and $X(-,k^+)$ are special in the sense of Bousfield-Friedlander \cite{Bousfield-F_Gamma-bisimplicial} for each 
$k\geq 0$. If $X$ is bi-special, then the two projections $2^+ \to 1^+$ and the
fold map $2^+ \to 1^+$ induce a monoid structure on $\pi_0(X(1^+,1^+))$ via
\[ \pi_0(X(1^+,1^+)) \times \pi_0(X(1^+,1^+)) \xleftarrow{\iso}
\pi_0(X(2^+,1^+)) \to \pi_0(X(1^+,1^+)).\] A second monoid structure
arises from the induced maps in the second variable.
\begin{lemma}\label{lem:two-monoid-structures}
The two monoid structures on $\pi_0(X(1^+,1^+))$ coincide. 
\end{lemma}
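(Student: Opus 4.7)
The plan is to argue by a standard Eckmann–Hilton-style interchange, with the bi-speciality condition supplying the key isomorphism on $\pi_0$.

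First I would identify the object $\pi_0(X(2^+,2^+))$. Applying $\pi_0$ to the four-fold projection map \eqref{eq:bi-sp-map} with $k_1=k_2=l_1=l_2=1$ yields an isomorphism
\[ \pi_0(X(2^+,2^+)) \xrightarrow{\iso} \pi_0(X(1^+,1^+))^{\times 4},\]
so any quadruple $(a,b,c,d)$ of classes in $\pi_0(X(1^+,1^+))$ is represented by a unique class in $\pi_0(X(2^+,2^+))$. Throughout, write $\mu_1$ and $\mu_2$ for the two monoid structures on $\pi_0(X(1^+,1^+))$, induced by the fold map $\nabla\colon 2^+ \to 1^+$ in the first respectively second coordinate, and write $e$ for the basepoint of $X(1^+,1^+)$. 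Because $X(0^+,1^+)=*$ and $X(1^+,0^+)=*$, the class $[e]$ is a two-sided identity for both $\mu_1$ and $\mu_2$.

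Next I would exploit the commuting square of fold maps. The two composites
\[ (2^+,2^+) \xrightarrow{\nabla\times\id} (1^+,2^+) \xrightarrow{\id\times\nabla} (1^+,1^+)
\quad\text{and}\quad
(2^+,2^+) \xrightarrow{\id\times\nabla} (2^+,1^+) \xrightarrow{\nabla\times\id} (1^+,1^+) \]
agree in $\Gamma^{\op}\times \Gamma^{\op}$, and by bi-speciality each of the intermediate $\pi_0$'s decomposes as a product of two copies of $\pi_0(X(1^+,1^+))$. Tracking the class represented by $(a,b,c,d)$ through both composites produces the interchange law
\[ \mu_2(\mu_1(a,c),\mu_1(b,d)) = \mu_1(\mu_2(a,b),\mu_2(c,d)).\]

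Finally I would invoke the Eckmann–Hilton argument: specializing the interchange identity to the four cases with $[e]$ inserted in various slots shows first that $\mu_1 = \mu_2$, and then that this common multiplication is commutative. No step is a serious obstacle; the only subtlety is to check that the maps $\nabla\times\id$ and $\id\times\nabla$ commute on the nose as morphisms in $\Gamma^{\op}\times\Gamma^{\op}$ (which they do, because they act in independent coordinates), so that the two routes around the square genuinely induce the same map on $\pi_0(X(2^+,2^+))$.
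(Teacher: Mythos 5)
Your proof is correct and follows essentially the same route as the paper: the paper's proof also observes that both ways of multiplying four elements are given by the composite $\pi_0(X(1^+,1^+))^{\times 4} \xleftarrow{\iso} \pi_0(X(2^+,2^+)) \to \pi_0(X(1^+,1^+))$ coming from the bi-speciality isomorphism and the double fold map, and then invokes Eckmann--Hilton. You simply spell out the interchange law and the role of the basepoint as common unit in more detail than the paper does.
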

\begin{proof}
This follows by a version of the Eckmann-Hilton argument:  The two
ways to multiply 4 elements coincide because they are both
given by 
\[\pi_0(X(1^+,1^+))^{\times 4} \xleftarrow{\iso} \pi_0(X(2^+,2^+)) \to  
\pi_0(X(1^+,1^+))
\]
where the first isomorphism is induced by the weak equivalence in \eqref{eq:bi-sp-map}. 
 \end{proof}
\begin{definition}
  A bi-$\Gamma$-space $X$ is bi-very special if it is bi-special and the
  monoid $\pi_0(X(1^+,1^+))$ is a group.
\end{definition}
Again the condition of being bi-very special is equivalent to each of the 
$\Gamma$-spaces $X(k^+,-)$ and $X(-,k^+)$ being very special in the sense of \cite{Bousfield-F_Gamma-bisimplicial} for all 
$k\geq 0$. The following is the bi-$\Gamma$-space analogue of the construction in~\cite[\S 4]{Bousfield-F_Gamma-bisimplicial}. One can prolong a bi-$\Gamma$-space $X$ to a functor on pairs of based (not
necessarily finite) sets by a left Kan extension. If $K$ and $L$ are
based simplicial sets, we may then evaluate the prolonged functor in each bisimplicial degree $(K_k, L_l)$. Forming the diagonal of the resulting tri-simplicial set $X(K_k,L_l)_n$, we get a simplicial set $X(K,L)$. As in the case of $\Gamma$-spaces, there are natural assembly maps
\[X(K,L)\sm P \to X(K\sm P,L) \quad \textrm{ and }\quad X(K,L)\sm Q \to X(K,L \sm Q),\] and similar assembly maps acting from the left.
Setting $X_{m,n} = X(S^m,S^n)$ for $m,n\geq 0$, we get a bispectrum with
structure maps 
\[ X_{m,n} \sm S^1 \to X_{m+1,n} \quad \textrm{ and }\quad X_{m,n} \sm
S^1 \to X_{m,n+1}\] induced by the assembly maps. Let us write 
$(-)^{\mathrm{fib}}$ for the fibrant replacement functor on $\cS_*$ that takes a based simplicial set to the simplicial complex of its topological realization.

\begin{definition}
A bispectrum $X$ is a \emph{bi-$\Omega$-spectrum} if the structure maps
induce weak equivalences
\[ X^{\textrm{fib}}_{m,n} \to \Omega(X^{\textrm{fib}}_{m+1,n}) \quad \textrm{ and }\quad  X^{\textrm{fib}}_{m,n} \to 
\Omega(X^{\textrm{fib}}_{m,n+1})\]
in all bi-degrees $(m,n)$.
\end{definition}

\begin{lemma}\label{lem:bi-Gamma-Omega}
If the bi-$\Gamma$-space $X$ is bi-very special, then the associated bispectrum is a bi-$\Omega$-spectrum.
\end{lemma}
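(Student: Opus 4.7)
The plan is to reduce the bi-$\Omega$-spectrum condition to the single-variable Bousfield--Friedlander theorem (\cite[Theorem 4.2]{Bousfield-F_Gamma-bisimplicial}) by slicing $X$ in one variable at a time. By symmetry it suffices to show that the structure maps $X_{m,n}^{\mathrm{fib}} \to \Omega X_{m,n+1}^{\mathrm{fib}}$ are weak equivalences, so I fix $m\geq 0$ and consider the $\Gamma$-space $Y_m\colon \Gamma^{\op}\to\cS_*$ defined by $Y_m(k^+)=X(S^m,k^+)$. A Fubini-type argument for the diagonal of a trisimplicial set shows that the prolongation of $Y_m$ to $\cS_*$ satisfies $Y_m(S^n)=X(S^m,S^n)=X_{m,n}$, so once I know $Y_m$ is very special, Bousfield--Friedlander will immediately give the claim for $Y_m$ and hence for $X$ in the second coordinate.

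The next step is to verify that $Y_m$ is very special. Specialness of $Y_m$ is a direct consequence of the bi-special condition for $X$: fixing $k_1^+$ and $k_2^+$, for each simplicial degree $p$ of $S^m$ the bi-special map (taking $l_1=k_1$, $l_2=k_2$ and combining with $X(S^m_p,0^+)=*$) provides a weak equivalence $X(S^m_p,(k_1+k_2)^+)\to X(S^m_p,k_1^+)\times X(S^m_p,k_2^+)$, and taking diagonals of bisimplicial sets preserves levelwise weak equivalences. For the very special condition, I need $\pi_0(Y_m(1^+))=\pi_0(X(S^m,1^+))$ to be a group. When $m=0$ this holds by the bi-very-special hypothesis since $S^0=1^+$.

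For $m\geq 1$, I apply the single-variable Bousfield--Friedlander theorem a first time to the $\Gamma$-space $Z(k^+):=X(k^+,1^+)$, which is very special by bi-very-special (again using the bi-special map and that $\pi_0(Z(1^+))=\pi_0(X(1^+,1^+))$ is a group). Their Theorem~4.2 then gives that $Z(S^m)^{\mathrm{fib}}=X(S^m,1^+)^{\mathrm{fib}}$ is connected for $m\geq 1$, so $\pi_0(X(S^m,1^+))=0$ is trivially a group. Hence $Y_m$ is very special for every $m$.

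Applying Bousfield--Friedlander a second time, now to the very special $\Gamma$-space $Y_m$, shows that $n\mapsto Y_m(S^n)^{\mathrm{fib}}=X_{m,n}^{\mathrm{fib}}$ is an $\Omega$-spectrum, yielding the structure-map weak equivalences in the second coordinate. The identical argument with the two coordinates interchanged handles the first coordinate. The main technical point to verify carefully is the identification $Y_m(S^n)=X(S^m,S^n)$, which requires unwinding the prolongation conventions and applying the commutativity of iterated diagonals of multisimplicial sets; beyond this, everything reduces cleanly to the established single-variable theorem.
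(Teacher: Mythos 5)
Your proposal is correct and follows essentially the same route as the paper, which likewise reduces to the statement that $X(S^m,-)$ and $X(-,S^n)$ are very special and then invokes \cite[Theorem~4.2]{Bousfield-F_Gamma-bisimplicial}; you simply spell out the verification that these slices are (very) special. The only stylistic difference is that your connectivity step for $\pi_0(X(S^m,1^+))$, $m\geq 1$, which you obtain from a first application of Bousfield--Friedlander, also follows elementarily from the fact that $S^m$ is reduced in simplicial degree $0$, so that the relevant diagonal has a single vertex.
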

\begin{proof}
The condition that $X$ be bi-very special implies that the 
$\Gamma$-spaces $X(S^n,-)$ and $X(-,S^n)$ are very special for all 
$n\geq 0$. By \cite[Theorem~4.2]{Bousfield-F_Gamma-bisimplicial} this implies the statement of the lemma.
\end{proof}

Given a bi-$\Gamma$-space $X$, we let $X_0$ be the $\Gamma$-space with
$X_0(s^+) = X(s^+,1^+)$. We define a new $\Gamma$-space $X_1$ 
by setting 
\[ X_1(s^+) = \hocolim_{(m,n)\in\cN \times \cN}
\Omega^{m+n}(X(s^+\sm S^m,S^n)^{\textrm{fib}})\]
where the category $\cN$ is as in Section~\ref{subsec:semistability}. 
The map from the initial vertex into the
homotopy colimit induces a map of $\Gamma$-spaces $X_0 \to
X_1$. Similarly, we define a $\Gamma$-space $X_2$ by
\[ X_2(s^+) = \hocolim_{(m,n)\in\cN \times \cN}\Omega^{m+n
}(s^+\sm X(S^m,S^n)^{\textrm{fib}})\] 
and notice that the assembly map for the
left action defines a map of $\Gamma$-spaces $X_2 \to X_1$.
\begin{lemma}
If $X$ is bi-very special, then both maps $X_0 \to X_1$ and  $X_2 \to X_1$
are level equivalences. 
\end{lemma}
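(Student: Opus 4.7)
The plan is to use Lemma~\ref{lem:bi-Gamma-Omega} in combination with the following strengthening: for each finite based set $s^+$ the $\Gamma$-spaces $X(s^+,-)$ and $X(-,s^+)$ are themselves very special. Indeed, the special condition is immediate from~\eqref{eq:bi-sp-map} by setting one of $k_2$ or $l_2$ to zero, and very specialness of $X(s^+,-)$ follows because the natural equivalence $X(s^+,1^+)\simeq X(1^+,1^+)^{\times s}$ identifies $\pi_0(X(s^+,1^+))$ with $\pi_0(X(1^+,1^+))^{\times s}$, a finite product of copies of the group $\pi_0(X(1^+,1^+))$. Prolonging as in Lemma~\ref{lem:bi-Gamma-Omega} and invoking~\cite[Theorem~4.2]{Bousfield-F_Gamma-bisimplicial} then shows that for each finite $s^+$ and $l^+$ the spectra $m\mapsto X(s^+\sm S^m,l^+)$ and $n\mapsto X(k^+,S^n)$ are $\Omega$-spectra after fibrant replacement.

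For the map $X_0\to X_1$, I would fix $s^+$ and rewrite $X_1(s^+)$ as a nested hocolim, using that $\Omega$ commutes with filtered homotopy colimits over $\cN$:
\[ X_1(s^+)\simeq \hocolim_{m}\,\Omega^m\Bigl(\hocolim_{n}\,\Omega^{n}\, X(s^+\sm S^m,S^n)^{\mathrm{fib}}\Bigr). \]
For each fixed $m$ the inner hocolim receives a weak equivalence from $X(s^+\sm S^m,1^+)^{\mathrm{fib}}$ by the $\Omega$-spectrum property in the second variable, so that $X_1(s^+)\simeq \hocolim_m \Omega^m X(s^+\sm S^m,1^+)^{\mathrm{fib}}$. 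Applying the $\Omega$-spectrum property in the first variable yields a weak equivalence $X(s^+,1^+)^{\mathrm{fib}}\xrightarrow{\sim} \hocolim_m \Omega^m X(s^+\sm S^m,1^+)^{\mathrm{fib}}$, and composing with fibrant replacement identifies $X_0(s^+)\to X_1(s^+)$ as a weak equivalence.

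For the map $X_2\to X_1$, I would first observe that the left assembly $1^+\sm Z\to Z$ is the identity, so $X_2(1^+)\to X_1(1^+)$ is an isomorphism. It therefore suffices to show that both $X_1$ and $X_2$ are special $\Gamma$-spaces, since a map of special $\Gamma$-spaces that is a weak equivalence at $1^+$ is automatically a level equivalence via the natural decompositions $Y(k^+)\simeq Y(1^+)^{\times k}$. Specialness of $X_1$ follows from the bi-special hypothesis on $X$ together with the fact that loops and filtered hocolims both commute with finite products. Specialness of $X_2$ exploits that the repeated looping $\Omega^{m+n}$ in the definition provides the stabilization needed to turn the canonical map $k^+\sm X(S^m,S^n)^{\mathrm{fib}}\to \prod_k X(S^m,S^n)^{\mathrm{fib}}$ into a weak equivalence in the hocolim; the increasing connectivity of the spaces $X(S^m,S^n)^{\mathrm{fib}}$ that comes from bi-very specialness makes the wedge-to-product comparison a weak equivalence after sufficient looping.

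The main obstacle will be the careful handling of prolongations of $\Gamma$-spaces to based simplicial sets, in particular the preservation of the very special condition so that~\cite[Theorem~4.2]{Bousfield-F_Gamma-bisimplicial} may be applied to spectra like $m\mapsto X(s^+\sm S^m,S^n)$ for arbitrary $n$. This can be handled by a degree-wise argument in the simplicial direction of $S^n$, combined with the fact that the diagonal of a bisimplicial set of weak equivalences between objects satisfying the $\pi_*$-Kan condition is again a weak equivalence; the very special hypothesis ensures that the relevant fibrancy conditions are met.
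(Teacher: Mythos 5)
Your argument is correct, and the two halves fare differently against the paper's proof. For $X_0\to X_1$ you are doing essentially what the paper does: the paper observes that $X(s^+\wedge -,-)$ is again bi-very special, applies Lemma~\ref{lem:bi-Gamma-Omega} to conclude that the $\cN\times\cN$-diagram defining $X_1(s^+)$ consists of weak equivalences, and reads off the equivalence from the initial vertex; your nested-hocolim formulation is the same argument (just be careful that in the simplicial setting commuting $\Omega$ past a homotopy colimit over $\cN$ requires a fibrant replacement, as in Lemma~\ref{NOmega}). For $X_2\to X_1$ your route is genuinely different. The paper simply quotes \cite[Lemma~4.1]{Bousfield-F_Gamma-bisimplicial}, which says the assembly map $s^+\sm X(S^-,S^n)\to X(s^+\sm S^-,S^n)$ is a stable equivalence for each $n$, and concludes levelwise. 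You instead reduce to the level $1^+$, where the map is the identity, by showing that $X_1$ and $X_2$ are both special and that a map of special $\Gamma$-spaces which is an equivalence at $1^+$ is a level equivalence; specialness of $X_2$ is then a Freudenthal-type wedge-versus-product estimate using the $(m+n-1)$-connectivity of $X(S^m,S^n)$. This is sound, but note that the connectivity estimate is precisely the content of the proof of the cited Bousfield--Friedlander lemma, so you are reproving that lemma in the case $K=s^+$ while additionally having to verify specialness of $X_1$ (which needs the standard fact that prolongation preserves the special condition, handled degreewise as you indicate). The paper's citation is shorter; your version is more self-contained and makes the stabilization mechanism explicit.
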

\begin{proof}
The condition that $X$ be bi-very special implies that each of the 
bi-$\Gamma$-spaces $X(s^+\wedge-,-)$ is also bi-very special. Therefore 
Lemma~\ref{lem:bi-Gamma-Omega} implies that $X_1$ is the homotopy colimit of a sequence of level equivalences which gives the result for 
$X_0\to X_1$.   
For $X_2 \to X_1$, the claim follows because the map of spectra induced by the assembly map $s^+ \sm X(S^{-},S^n) \to X(s^+ \sm S^{-},S^n)$ is a stable
equivalence for each $n\geq 0$ by 
\cite[Lemma  4.1]{Bousfield-F_Gamma-bisimplicial}. 
\end{proof}
Since $X_2$ is symmetric in $m$ and $n$, the last lemma and its dual version have the following consequence.
\begin{proposition}\label{prop:equivalent-Gamma-spaces}
If the bi-$\Gamma$-space $X$ is bi-very special, then there is a
zig-zag chain of level equivalences of $\Gamma$-spaces between 
$X(-,1^+)$ and $X(1^+,-)$. \qed
\end{proposition}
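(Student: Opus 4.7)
The plan is to chain together the level equivalences produced by the preceding lemma in the two variables separately, using the evident symmetry of $X_2$ to glue the two chains.

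First I would record that the preceding lemma gives, for any bi-very special $X$, a diagram of $\Gamma$-spaces $X_0 \to X_1 \leftarrow X_2$ in which both maps are level equivalences, where $X_0(s^+) = X(s^+, 1^+)$ coincides with $X(-,1^+)$. Applying the same construction to the bi-$\Gamma$-space $X'$ defined by $X'(s^+,t^+) = X(t^+,s^+)$ (which is again bi-very special because the condition in \eqref{eq:bi-sp-map} is symmetric in the two variables) yields a second diagram $X'_0 \to X'_1 \leftarrow X'_2$ of level equivalences, with $X'_0 = X(1^+,-)$ and
\[
X'_2(s^+) = \hocolim_{(m,n)\in\cN\times\cN}\Omega^{m+n}\bigl(s^+\sm X(S^n,S^m)^{\mathrm{fib}}\bigr).
\]

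Next I would observe that $X_2$ and $X'_2$ are canonically isomorphic as $\Gamma$-spaces: the swap automorphism $(m,n)\mapsto(n,m)$ of the indexing category $\cN\times\cN$ identifies the two homotopy colimits, since the expression $\Omega^{m+n}(s^+\sm X(S^m,S^n)^{\mathrm{fib}})$ only depends on $X$ evaluated at $(S^m,S^n)$ and on the sum $m+n$, and this isomorphism is natural in $s^+$.

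Splicing the two chains along this isomorphism produces the desired zig-zag of level equivalences
\[
X(-,1^+) \;=\; X_0 \;\xrightarrow{\sim}\; X_1 \;\xleftarrow{\sim}\; X_2 \;\xrightarrow{\iso}\; X'_2 \;\xrightarrow{\sim}\; X'_1 \;\xleftarrow{\sim}\; X'_0 \;=\; X(1^+,-).
\]
No step poses a genuine obstacle: the only thing to verify is that the preceding lemma may be applied to $X'$, which is immediate from the symmetry of the bi-very special condition, and that the comparison $X_2 \iso X'_2$ is natural in the $\Gamma$-space variable $s^+$, which is clear from the formula.
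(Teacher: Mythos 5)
Your proposal is correct and is essentially the argument the paper gives: the paper invokes ``the last lemma and its dual version'' together with the observation that $X_2$ is symmetric in $m$ and $n$, which is exactly your transposition of $X$ plus the identification $X_2\iso X_2'$ via the swap on $\cN\times\cN$. You have merely made explicit (checking that the transpose is again bi-very special and that the swap isomorphism is natural in $s^+$) what the paper leaves implicit.
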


\section{Group completion and units in the topological context}
\label{app:topological}
In this section we show how to deduce the theorems from the introduction in the topological context. Thus, let $\cU$ denote the category of compactly generated weak Hausdorff topological spaces, and consider the corresponding category of topological $\cI$-spaces $\cU^{\cI}$ which was also studied in~\cite{Sagave-S_diagram}. We continue to let $\cS$ denote the category of simplicial sets. The first observation is that geometric realization $\geor{-}$ and the singular complex $\Sing$ define a pair of adjoint functors 
$\geor{-}\colon \cS^{\cI}\rightleftarrows \cU^{\cI}\! :\!\Sing$, which is a Quillen equivalence with respect to the positive $\cI$-model structures on $\cS^{\cI}$ and $\cU^{\cI}$. Viewing these categories as symmetric monoidal categories under the 
$\boxtimes$-product, the functor $\geor{-}$ is strong symmetric monoidal and $\Sing$ is (lax) symmetric monoidal. Since the unit and counit for the adjunction are monoidal natural transformations, this implies that there is an induced Quillen equivalence between the categories of commutative monoids 
$\geor{-}\colon \cC\cS^{\cI}\rightleftarrows \cC\cU^{\cI}\! :\!\Sing$, again with respect to the positive $\cI$-model structures.    

\subsection{Group completion in the topological context}
The main technical difference encountered in the topological setting is that the geometric realization functor $\geor{-}$ from simplicial spaces to spaces is homotopy invariant only for simplicial spaces that are ``good'' in the sense of 
Segal~\cite{Segal_categories}. Thus, when forming the geometric realization we should either stipulate that the simplicial spaces be good, or otherwise use the ``fat'' realization $\lVert-\rVert$ considered in \cite[Appendix A]{Segal_categories}. For a topological monoid $M$ this means that when forming the bar construction we should either require the unit of $M$ to be a non-degenerate base point, or otherwise use the fat realization of the usual simplicial bar construction $B_{\bullet}(M)$. On the other hand, since all objects in $\cU$ are fibrant, the levelwise fibrant replacement $(-)^{\mathrm{fib}}$ used in Section~\ref{sec:bar-construction} can be dropped in the topological setting. 
 
For our results on group completion, the above discussion has the following implications.
The assumption that the commutative $\cI$-space monoid $A$ in 
Theorem~\ref{thm:intro-bar-completion} be cofibrant implies that the underlying $E_{\infty}$ space $A_{h\cI}$ is non-degenerately based, and the statement of the theorem therefore remains valid in the topological setting. Since Appendix~\ref{app:cellular} about cellularity includes the topological case, the construction of the group completion model structure $\cC\cU^{\cI}_{\gp}$ proceeds as in the simplicial case. The topological version of Theorem~\ref{thm:csi-gp} holds with the understanding that the notation $B(A_{h\cI})\to B(A'_{h\cI})$ indicates the fat realization of the simplicial map 
$B_{\bullet}(A_{h\cI})\to B_{\bullet}(A'_{h\cI})$. Consequently, we have the Quillen equivalence
\[
\geor{-}\colon \cC\cS^{\cI}_{\gp}\rightleftarrows \cC\cU^{\cI}_{\gp} :\!\Sing.
\]  
This in turn implies that the discussion of group completion and repletion in 
Section~\ref{sec:repletion} carries over to the topological setting.  

\subsection{The relation to topological \texorpdfstring{$\Gamma$}{Gamma}-spaces}
It follows from \cite[Theorem B1]{Schwede_Gamma-spaces}, that the category of topological $\Gamma$-spaces $\Gamma^{\op}\cU_*$ has a stable $Q$-model structure such that the geometric realization and singular complex functors induce a Quillen equivalence 
$\geor{-}\colon \Gamma^{\op}\cS_*\rightleftarrows \Gamma^{\op}\cU_*\! :\!\Sing$. 
We wish to relate the categories $\Gamma^{\op}\cU_*$ and $\cC\cU^{\cI}$ directly, and for this we observe that the general criteria in \cite[Proposition~VII~2.10]{EKMM} and \cite[Proposition~4.2.19]{Hovey_model} imply that the category $\cC\cU^{\cI}$ is enriched, tensored, and cotensored over the category of based spaces $\cU_*$, such that the positive $\cI$-model structure makes it a based topological model category 
(a $\cU_*$-category in the sense of \cite[Definition~4.2.18]{Hovey_model}). Thus, we can imitate the definition of the adjoint functor pair $(\Lambda,\Phi)$ in 
Section~\ref{subs:construction-adj} to get a Quillen adjunction
\begin{equation}\label{Gamma-top-csigp}
\Lambda\colon \Gamma^{\op}\cU_*\rightleftarrows \cC\cU^{\cI}_{\gp} :\!\Phi.
\end{equation}
This fits in a commutative diagram of Quillen adjunctions
\[
\xymatrix@-.5pc{\Gamma^{\op}\cS_*
\ar@<.5ex>[r]^{\Lambda} \ar@<-.5ex>[d]_{\geor{-}} &
\cC\cS^{\cI}_{\textrm{gp}}
\ar@<.5ex>[l]^{\Phi}\ar@<-.5ex>[d]_{\geor{-}} \\ 
\Gamma^{\op}\cU_*
\ar@<.5ex>[r]^{\Lambda} \ar@<-.5ex>[u]_{\Sing}
& \cC\cU^{\cI}_{\gp} \ar@<.5ex>[l]^{\Phi}
\ar@<-.5ex>[u]_{\Sing}}
\]
and it therefore follows from Theorem~\ref{thm:Gamma-csigp} and the 2-out-of-3 property for Quillen equivalences that the $(\Lambda,\Phi)$-adjunction in 
\eqref{Gamma-top-csigp} is a Quillen equivalence. Using this, we also get a topological analogue of Corollary~\ref{cor:B-infty-A}. 

\subsection{Units in the topological context}
The units $A^{\times}$ of an object $A$ in $\cC\cU^{\cI}$ is defined as in the simplicial setting by letting $A^{\times}(\bld n)$ be the union of the path components in $A(\bld n)$ that represent units in the commutative monoid $\pi_0(A_{h\cI})$. Thus, we have a natural inclusion $A^{\times}\to A$ of commutative $\cI$-space monoids and $A$ is grouplike if and only if this is an equality. In analogy with the definition of $\cC\cS^{\cI}_{\un}$, we define the units model structure $\cC\cU^{\cI}_{\un}$ to be the right Bousfield localization of the positive $\cI$-model structure with respect to the inclusions $A^{\times}\to A$ for $A$ positive $\cI$-fibrant. The weak equivalences and the cofibrant objects in 
$\cC\cU^{\cI}_{\un}$ can then be described as in Theorem~\ref{thm:intro-units} and 
we have the Quillen equivalence 
$
\geor{-}\colon \cC\cS^{\cI}_{\un}\rightleftarrows \cC\cU^{\cI}_{\un} :\!\Sing.
$
However, since it is not clear that the decomposition $A=A^{\times}\coprod \widehat A$ in Lemma~\ref{lem:units-nonunits-decomp} holds topologically, an additional argument is needed in order to see that the axiom (B3) required for using the Bousfield-Friedlander localization principle in Proposition~\ref{prop:Q-structures-dualized} is satisfied. The problem with the topological decomposition of a commutative $\cI$-space monoid in its units and non-units is that this is a decomposition in path components and these may not agree with the connected components. However, it is clear that the analogue of 
Lemma~\ref{lem:units-nonunits-decomp} holds for an object in $\cC\cU^{\cI}$ that is obtained by geometric realization from an object in $\cC\cS^{\cI}$. Arguing as in the proof of Proposition~\ref{prop:units-structure} it follows that axiom (B3) is satisfied for such objects. The general case of axiom (B3) follows from this by considering the diagram
\[
\xymatrix@-1pc{
D & C \ar[r]\ar[l]  & A\\
D' \ar[u]_{\simeq}& \geor{\Sing C} \ar[u]_{\simeq} \ar[r] \ar[l] & \ar[u]_{\simeq}\geor{\Sing A,}
} 
\]
where the left hand square is defined by factoring the map $\geor{\Sing C}\to C\to D$ as a cofibration $\geor{\Sing C}\to D'$ followed by an acyclic fibration $D'\to D$. (The objects 
$\geor{\Sing C}$ and $D'$ may not be cofibrant but this does not affect the proof.) Since (B3) holds for the pushout defined by the bottom diagram, it follows from left properness that it also holds for the pushout defined by the upper diagram.

Summarizing the above discussion, there is a topological analogue of the diagram of Quillen equivalences in \eqref{diag:all-adjunctions}. From this we then get a spectrum of units functor which to a topological commutative symmetric ring spectrum $R$ associates the topological $\Gamma$-space $\gl_1(R)$. 

\end{appendix}

\end{document}